\DeclareMathOperator{\Trace}{Trace}
\renewcommand{\mod}{{~\sf mod~}}
\def \1{\mathds{1}}
\def \HC{{\sf HC}}
\def \Tr{{\sf Tr}}
\def \Z{\mathbb{Z}}
\newcommand{\E}{\mathbb{E}}
\def \square#1#2#3#4{\begin{array}{|c@{\hspace{2pt}}c|}\hline #1 & #2 \\ #4 & #3\\\hline \end{array}}
\def \ssquare#1#2#3#4{\begin{array}{|c@{\hspace{2pt}}c|}\hline #4 & #3 \\ #1 & #2\\\hline \end{array}}
\def \tt#1#2#3#4#5#6#7#8{{\sf T}{\compact\small
    \square#1#2#3#4 \square#5#6#7#8}}
\newcommand\xoutpars[1]{\let\helpcmd\xout\parhelp#1\par\relax\relax}
\newcommand\soutpars[1]{\let\helpcmd\sout\parhelp#1\par\relax\relax}
\long\def\parhelp#1\par#2\relax{%
  \helpcmd{#1}\ifx\relax#2\else\par\parhelp#2\relax\fi%
}
\def \Sub#1#2{{\sf Seq}_{#2}(#1)}
\def \Subn#1#2#3{{\sf Sub}^{#3}_{#2}(#1)}
\def \Sys#1{{\sf Sys}(#1)}
\def \app#1#2#3#4#5{\begin{array}{rccl} #1:&#2&\longrightarrow&#3\\ &#4&\longmapsto&#5\end{array}}
\def \N{\mathbb{N}}
\def \R{\mathbb{R}}
\def \bZ{{\bf Z}}
\def \bar{\overline}
\def \ba{\begin{align}}
\def \ea{\end{align}}
\def \be{\begin{eqnarray*}}
\def \ee{\end{eqnarray*}}
\def \ben{\begin{eqnarray}}
\def \een{\end{eqnarray}}
\def \bp{{\bf p}}
\def \beq{\begin{equation}}
\def \eq{\end{equation}}
\def \bs{{\bf s}}
\def \build#1#2#3{\mathrel{\mathop{\kern 0pt#1}\limits_{#2}^{#3}}}
\def \ba{{\bf a}}
\def \captionn#1{\begin{center}\begin{minipage}{15cm}\sf\caption{\small #1}\end{minipage}\end{center}}
\def \Sq{{\sf Sq}}
\def \dis{\displaystyle}
\def \EkZ{E_{\kappa}^{\mathbb{Z}}}
\def \EkL{E_{\kappa}^{L}}
\def \Ek{E_{\kappa}}
\def \Lineq{{\sf Line}}
\def \Rep{{\sf Replace}}
\def \NLineq{{\sf NLine}}
\def \ME{{\sf Master}}
\def \Cycle{{\sf Cycle}}
\def \NCycle{{\sf NCycle}}
\def \equi{\Leftrightarrow}
\def \eref#1{(\ref{#1})}
\def \bF{{\bf F}}
\def \imp{\Rightarrow}
\def \l{\left}
\def \r{\right}
\def \sous#1#2{\mathrel{\mathop{\kern 0pt#1}\limits_{#2}}}
\def \sur#1#2{\mathrel{\mathop{\kern 0pt#1}\limits^{#2}}}
\def \P{\mathbb{P}}
\newcommand{\compact}{ \topsep0pt   \itemsep=0pt   \partopsep=0pt   \parsep=0pt}
\newcounter{c}
\def \bir{\begin{itemize}\compact \setcounter{c}{0}}
\def \itr{\addtocounter{c}{1}\item[($\roman{c}$)]}
\def \eir{\end{itemize}\vspace{-2em}~}
\newcounter{d}
\def \bia{\begin{itemize}\compact \setcounter{d}{0}}
\def \ita{\addtocounter{d}{1}\item[(\alph{d})]} % a b
\def \eia{\end{itemize}\vspace{-2em}~}
\newcounter{b}
\def \bi{\begin{itemize}\compact \setcounter{b}{0}}
\def \ei{\end{itemize}\vspace{-2em}~}
\def \bpar#1{\left\{\begin{array}{#1} }
\def \epar { \end{array}\right.}
\newtheorem{lem}{Lemma}[section]
\newtheorem{defi}[lem]{Definition}
\newtheorem{pro}[lem]{Proposition}
\newtheorem{theo}[lem]{Theorem}
\newtheorem{cor}[lem]{Corollary}
\newtheorem{rem}[lem]{Remark}
\def \bs{\tiny $\blacksquare$}
\def \Sq{{\sf Sq}}
\renewcommand{\mod}{\,{\sf mod}\,}
\def \tb{\noindent {\tiny $\blacksquare$ }}
\def \T#1#2{
	\begingroup
	\expandarg% locally in this group
	\StrLen{#1}[\leng]
	\ifnum\leng<3
	{\sf T}_{\left[#1
	\else
	\StrLeft{#1}{1}[\prefic]%
	\StrRight{#1}{1}[\sufic]%
	\StrSubstitute[1]{\prefic}{[}{ }[\prefic]%
	\StrSubstitute[1]{\sufic}{]}{ }[\sufic]%
	\StrSubstitute[1]{\prefic}{(}{ }[\prefic]%
	\StrSubstitute[1]{\sufic}{)}{ }[\sufic]%
	\StrMid{#1}{2}{\leng}[\auxword]%
	\StrLen{\auxword}[\leng]%
	\StrMid{#1}{2}{\leng}[\worc]%
	{\sf T}_{\left[\prefic\worc\sufic
	\fi
	\StrLen{#2}[\lengw]%
	\ifnum\lengw<3
	|#2\right]}
	\else
	\StrLeft{#2}{1}[\preficw]%
	\StrRight{#2}{1}[\suficw]%
	\StrSubstitute[1]{\preficw}{[}{ }[\preficw]%
	\StrSubstitute[1]{\preficw}{(}{ }[\preficw]%
	\StrSubstitute[1]{\suficw}{]}{ }[\suficw]%
	\StrSubstitute[1]{\suficw}{)}{ }[\suficw]%
	\StrMid{#2}{2}{\lengw}[\auxwordw]%
	\StrLen{\auxwordw}[\lengw]%
	\StrMid{#2}{2}{\lengw}[\worcw]%
	|\preficw\worcw\suficw\right]}
	\fi
	\endgroup
}
\def \TTT#1#2#3{
	\begingroup
	\expandarg% locally in this group
	\StrLen{#1}[\leng]
	\ifnum\leng<3
	{\sf T}^{#3}_{\left[#1
	\else
	\StrLeft{#1}{1}[\prefic]%
	\StrRight{#1}{1}[\sufic]%
	\StrSubstitute[1]{\prefic}{[}{ }[\prefic]%
	\StrSubstitute[1]{\sufic}{]}{ }[\sufic]%
	\StrSubstitute[1]{\prefic}{(}{ }[\prefic]%
	\StrSubstitute[1]{\sufic}{)}{ }[\sufic]%
	\StrMid{#1}{2}{\leng}[\auxword]%
	\StrLen{\auxword}[\leng]%
	\StrMid{#1}{2}{\leng}[\worc]%
	{\sf T}^{#3}_{\left[\prefic\worc\sufic
	\fi
	\StrLen{#2}[\lengw]%
	\ifnum\lengw<3
	|#2\right]}
	\else
	\StrLeft{#2}{1}[\preficw]%
	\StrRight{#2}{1}[\suficw]%
	\StrSubstitute[1]{\preficw}{[}{ }[\preficw]%
	\StrSubstitute[1]{\preficw}{(}{ }[\preficw]%
	\StrSubstitute[1]{\suficw}{]}{ }[\suficw]%
	\StrSubstitute[1]{\suficw}{)}{ }[\suficw]%
	\StrMid{#2}{2}{\lengw}[\auxwordw]%
	\StrLen{\auxwordw}[\lengw]%
	\StrMid{#2}{2}{\lengw}[\worcw]%
	|\preficw\worcw\suficw\right]}
	\fi
	\endgroup
}
\def \Tp#1#2{
	\begingroup
	\expandarg% locally in this group
	\StrLen{#1}[\leng]
	\ifnum\leng<3
	{\sf T}'_{\left[#1
	\else
	\StrLeft{#1}{1}[\prefic]%
	\StrRight{#1}{1}[\sufic]%
	\StrSubstitute[1]{\prefic}{[}{ }[\prefic]%
	\StrSubstitute[1]{\sufic}{]}{ }[\sufic]%
	\StrSubstitute[1]{\prefic}{(}{ }[\prefic]%
	\StrSubstitute[1]{\sufic}{)}{ }[\sufic]%
	\StrMid{#1}{2}{\leng}[\auxword]%
	\StrLen{\auxword}[\leng]%
	\StrMid{#1}{2}{\leng}[\worc]%
	{\sf T}'_{\left[\prefic\worc\sufic
	\fi
	\StrLen{#2}[\lengw]%
	\ifnum\lengw<3
	|#2\right]}
	\else
	\StrLeft{#2}{1}[\preficw]%
	\StrRight{#2}{1}[\suficw]%
	\StrSubstitute[1]{\preficw}{[}{ }[\preficw]%
	\StrSubstitute[1]{\preficw}{(}{ }[\preficw]%
	\StrSubstitute[1]{\suficw}{]}{ }[\suficw]%
	\StrSubstitute[1]{\suficw}{)}{ }[\suficw]%
	\StrMid{#2}{2}{\lengw}[\auxwordw]%
	\StrLen{\auxwordw}[\lengw]%
	\StrMid{#2}{2}{\lengw}[\worcw]%
	|\preficw\worcw\suficw\right]}
	\fi
	\endgroup
}
\def \TE#1#2{
	\begingroup
	\expandarg% locally in this group
	\StrLen{#1}[\leng]
	\ifnum\leng<3
	{\sf T}^E_{\left[#1
	\else
	\StrLeft{#1}{1}[\prefic]%
	\StrRight{#1}{1}[\sufic]%
	\StrSubstitute[1]{\prefic}{[}{ }[\prefic]%
	\StrSubstitute[1]{\sufic}{]}{ }[\sufic]%
	\StrSubstitute[1]{\prefic}{(}{ }[\prefic]%
	\StrSubstitute[1]{\sufic}{)}{ }[\sufic]%
	\StrMid{#1}{2}{\leng}[\auxword]%
	\StrLen{\auxword}[\leng]%
	\StrMid{#1}{2}{\leng}[\worc]%
	{\sf T}^E_{\left[\prefic\worc\sufic
	\fi
	\StrLen{#2}[\lengw]%
	\ifnum\lengw<3
	|#2\right]}
	\else
	\StrLeft{#2}{1}[\preficw]%
	\StrRight{#2}{1}[\suficw]%
	\StrSubstitute[1]{\preficw}{[}{ }[\preficw]%
	\StrSubstitute[1]{\preficw}{(}{ }[\preficw]%
	\StrSubstitute[1]{\suficw}{]}{ }[\suficw]%
	\StrSubstitute[1]{\suficw}{)}{ }[\suficw]%
	\StrMid{#2}{2}{\lengw}[\auxwordw]%
	\StrLen{\auxwordw}[\lengw]%
	\StrMid{#2}{2}{\lengw}[\worcw]%
	|\preficw\worcw\suficw\right]}
	\fi
	\endgroup
}
\def \TN#1#2{
	\begingroup
	\expandarg% locally in this group
	\StrLen{#1}[\leng]
	\ifnum\leng<3
	{\sf T}^N_{\left[#1
	\else
	\StrLeft{#1}{1}[\prefic]%
	\StrRight{#1}{1}[\sufic]%
	\StrSubstitute[1]{\prefic}{[}{ }[\prefic]%
	\StrSubstitute[1]{\sufic}{]}{ }[\sufic]%
	\StrSubstitute[1]{\prefic}{(}{ }[\prefic]%
	\StrSubstitute[1]{\sufic}{)}{ }[\sufic]%
	\StrMid{#1}{2}{\leng}[\auxword]%
	\StrLen{\auxword}[\leng]%
	\StrMid{#1}{2}{\leng}[\worc]%
	{\sf T}^N_{\left[\prefic\worc\sufic
	\fi
	\StrLen{#2}[\lengw]%
	\ifnum\lengw<3
	|#2\right]}
	\else
	\StrLeft{#2}{1}[\preficw]%
	\StrRight{#2}{1}[\suficw]%
	\StrSubstitute[1]{\preficw}{[}{ }[\preficw]%
	\StrSubstitute[1]{\preficw}{(}{ }[\preficw]%
	\StrSubstitute[1]{\suficw}{]}{ }[\suficw]%
	\StrSubstitute[1]{\suficw}{)}{ }[\suficw]%
	\StrMid{#2}{2}{\lengw}[\auxwordw]%
	\StrLen{\auxwordw}[\lengw]%
	\StrMid{#2}{2}{\lengw}[\worcw]%
	|\preficw\worcw\suficw\right]}
	\fi
	\endgroup
}
\def \To#1{
	\begingroup
	\expandarg% locally in this group
	\StrLen{#1}[\leng]
	\ifnum\leng<3
	{\sf T}^{\sf out}_{[#1]}
	\else
	\StrLeft{#1}{1}[\prefic]%
	\StrRight{#1}{1}[\sufic]%
	\StrSubstitute[1]{\prefic}{[}{ }[\prefic]%
	\StrSubstitute[1]{\sufic}{]}{ }[\sufic]%
	\StrSubstitute[1]{\prefic}{(}{ }[\prefic]%
	\StrSubstitute[1]{\sufic}{)}{ }[\sufic]%
	\StrMid{#1}{2}{\leng}[\auxword]%
	\StrLen{\auxword}[\leng]%
	\StrMid{#1}{2}{\leng}[\worc]%
	{\sf T}^{\sf out}_{[\prefic\worc\sufic]}
	\fi
	\endgroup
}
\def \ToN#1{
	\begingroup
	\expandarg% locally in this group
	\StrLen{#1}[\leng]
	\ifnum\leng<3
	{\sf T}^{N,\sf out}_{[#1]}
	\else
	\StrLeft{#1}{1}[\prefic]%
	\StrRight{#1}{1}[\sufic]%
	\StrSubstitute[1]{\prefic}{[}{ }[\prefic]%
	\StrSubstitute[1]{\sufic}{]}{ }[\sufic]%
	\StrSubstitute[1]{\prefic}{(}{ }[\prefic]%
	\StrSubstitute[1]{\sufic}{)}{ }[\sufic]%
	\StrMid{#1}{2}{\leng}[\auxword]%
	\StrLen{\auxword}[\leng]%
	\StrMid{#1}{2}{\leng}[\worc]%
	{\sf T}^{N,\sf out}_{[\prefic\worc\sufic]}
	\fi
	\endgroup
}
\def \ToE#1{
	\begingroup
	\expandarg% locally in this group
	\StrLen{#1}[\leng]
	\ifnum\leng<3
	{\sf T}^{E,\sf out}_{[#1]}
	\else
	\StrLeft{#1}{1}[\prefic]%
	\StrRight{#1}{1}[\sufic]%
	\StrSubstitute[1]{\prefic}{[}{ }[\prefic]%
	\StrSubstitute[1]{\sufic}{]}{ }[\sufic]%
	\StrSubstitute[1]{\prefic}{(}{ }[\prefic]%
	\StrSubstitute[1]{\sufic}{)}{ }[\sufic]%
	\StrMid{#1}{2}{\leng}[\auxword]%
	\StrLen{\auxword}[\leng]%
	\StrMid{#1}{2}{\leng}[\worc]%
	{\sf T}^{E,\sf out}_{[\prefic\worc\sufic]}
	\fi
	\endgroup
}
\def \Ti#1{
	\begingroup
	\expandarg% locally in this group
	\StrLen{#1}[\leng]
	\ifnum\leng<3
	{\sf T}^{\sf{in}}_{[#1]}
	\else
	\StrLeft{#1}{1}[\prefic]%
	\StrRight{#1}{1}[\sufic]%
	\StrSubstitute[1]{\prefic}{[}{ }[\prefic]%
	\StrSubstitute[1]{\sufic}{]}{ }[\sufic]%
	\StrSubstitute[1]{\prefic}{(}{ }[\prefic]%
	\StrSubstitute[1]{\sufic}{)}{ }[\sufic]%
	\StrMid{#1}{2}{\leng}[\auxword]%
	\StrLen{\auxword}[\leng]%
	\StrMid{#1}{2}{\leng}[\worc]%
	{\sf T}^{\sf{in}}_{[\prefic\worc\sufic]}
	\fi
	\endgroup
}
\def \TD#1{
	\begingroup
	\expandarg% locally in this group
	\StrLen{#1}[\leng]
	\ifnum\leng<3
	{\sf T}^{\Delta}_{#1}
	\else
	\StrLeft{#1}{1}[\prefic]%
	\StrRight{#1}{1}[\sufic]%
	\StrSubstitute[1]{\prefic}{[}{ }[\prefic]%
	\StrSubstitute[1]{\sufic}{]}{ }[\sufic]%
	\StrSubstitute[1]{\prefic}{(}{ }[\prefic]%
	\StrSubstitute[1]{\sufic}{)}{ }[\sufic]%
	\StrMid{#1}{2}{\leng}[\auxword]%
	\StrLen{\auxword}[\leng]%
	\StrMid{#1}{2}{\leng}[\worc]%
	{\sf T}^{\Delta}_{\prefic\worc\sufic}
	\fi
	\endgroup
}
\def \TT{{\sf T}}
\def \TTp{{\sf T}'}
\def \Sup{{\sf Supp}}
\def \Out{{\sf out}}
\def \S#1#2{
	\begingroup
	\expandarg% locally in this group
	\StrLen{#1}[\leng]
	\ifnum\leng<3
	{\sf S}_{\left[#1
	\else
	\StrLeft{#1}{1}[\prefic]%
	\StrRight{#1}{1}[\sufic]%
	\StrSubstitute[1]{\prefic}{[}{ }[\prefic]%
	\StrSubstitute[1]{\sufic}{]}{ }[\sufic]%
	\StrSubstitute[1]{\prefic}{(}{ }[\prefic]%
	\StrSubstitute[1]{\sufic}{)}{ }[\sufic]%
	\StrMid{#1}{2}{\leng}[\auxword]%
	\StrLen{\auxword}[\leng]%
	\StrMid{#1}{2}{\leng}[\worc]%
	{\sf S}_{\left[\prefic\worc\sufic
	\fi
	\StrLen{#2}[\lengw]%
	\ifnum\lengw<3
	|#2\right]}
	\else
	\StrLeft{#2}{1}[\preficw]%
	\StrRight{#2}{1}[\suficw]%
	\StrSubstitute[1]{\preficw}{[}{ }[\preficw]%
	\StrSubstitute[1]{\preficw}{(}{ }[\preficw]%
	\StrSubstitute[1]{\suficw}{]}{ }[\suficw]%
	\StrSubstitute[1]{\suficw}{)}{ }[\suficw]%
	\StrMid{#2}{2}{\lengw}[\auxwordw]%
	\StrLen{\auxwordw}[\lengw]%
	\StrMid{#2}{2}{\lengw}[\worcw]%
	|\preficw\worcw\suficw\right]}
	\fi
	\endgroup
}
\def \SS{{\sf S}}
\def \bF{{\bf F}}
\def \wZ{Z}
\def \ZnZ{\mathbb{Z}/n\mathbb{Z}}
\def \AI{{\sf AlgInv }}
\def \CAI{{\sf CycAlgInv }}
\def \LL{\mathbb{L}}
\def \W#1{
	\begingroup
	\expandarg% locally in this group
	\StrLen{#1}[\leng]
	\ifnum\leng<3
	{\sf W}_{#1}
	\else
	\StrLeft{#1}{1}[\prefic]%
	\StrRight{#1}{1}[\sufic]%
	\StrSubstitute[1]{\prefic}{[}{ }[\prefic]%
	\StrSubstitute[1]{\sufic}{]}{ }[\sufic]%
	\StrSubstitute[1]{\prefic}{(}{ }[\prefic]%
	\StrSubstitute[1]{\sufic}{)}{ }[\sufic]%
	\StrMid{#1}{2}{\leng}[\auxword]%
	\StrLen{\auxword}[\leng]%
	\StrMid{#1}{2}{\leng}[\worc]%
	{\sf W}_{\prefic\worc\sufic}
	\fi
	\endgroup
}
\def\cro#1{\llbracket#1\rrbracket}
\begin{document}

\begin{center}  \bf \LARGE Invariant measures of interacting particle systems: Algebraic aspects \\ \sf
   \Large~\\
  Luis Fredes \& Jean-François Marckert\\~\\
 \normalsize  CNRS, LaBRI, Universit\'e de Bordeaux, \\
  351 cours de la Libération\\
  33405 Talence cedex, France
  \end{center}
\begin{abstract}
Consider a continuous time particle system $\eta^t=(\eta^t(k),k\in \LL)$, indexed by a lattice $\LL$ which will be either $\Z$,  $\Z/n\Z$, a segment $\{1,\cdots, n\}$, or $\Z^d$, and taking its values in the set $\Ek^{\LL}$ where $\Ek=\{0,\cdots,\kappa-1\}$ for some fixed $\kappa\in\{\infty, 2,3,\cdots\}$. Assume that the Markovian evolution of the particle system (PS) is driven by some translation invariant local dynamics with bounded range, encoded by a jump rate matrix $\TT$. These are standard settings, satisfied by the TASEP, the voter models, the contact processes... The aim of this paper is to provide some sufficient and/or necessary conditions on the matrix $\TT$   so that this Markov process admits some simple invariant distribution, as a product measure (if $\LL$ is any of the spaces mentioned  above), as the law of a Markov process indexed by $\Z$ or $[0,n]\cap \Z$ (if $\LL=\Z$ or $\{1,\cdots,n\}$), or a Gibbs measure if $\LL=\ZnZ$. 
   \par
Multiple  applications follow: efficient ways to find invariant Markov laws for a given jump rate matrix or to prove that none exists. The voter models and the contact processes are shown not to possess any Markov laws as invariant distribution (for any memory $m$)\footnote{As usual, a random process $X$ indexed by $\mathbb{Z}$ or $\mathbb{N}$ is said to be a Markov chain with memory $m\in\{0,1,2,\cdots\}$ if for $\P(X_k \in A ~| X_{k-i}, i \geq 1)= \P(X_k \in A ~| X_{k-i}, 1\leq i\leq m)$, for any $k$.}. We also prove that some models close to these models do. We exhibit PS admitting hidden Markov chains as invariant distribution and design many PS on $\Z^2$, with jump rates indexed by $2\times 2$ squares, admitting product invariant measures.
\color{black}

\end{abstract}

\noindent {\bf Acknowledgements : } This works has been partially supported by ANR GRAAL (ANR-14-CE25-0014)

\section{Introduction}
\label{sec:intro}
\subsection*{Some notation }
We let $\N=\Z^+=\{0,1,2,\cdots\}$ and $\N^{\star}=\N\setminus \{0\}$. 
For $-\infty \leq a \leq b \leq +\infty$, define $\cro{a,b}:=[a,b]\cap \Z$ as the set of integers in $[a,b]$. We will call such a set a  $\Z$-interval. \par
If $J$ is a finite subset of $\Z^d$, then $x(J)$ stands for the sequence $(x_i,i\in J)$ sorted according to the lexicographical order of the indices, so that, for example, if $x_{(1,3)} = a$, $x_{(7,2)}=c, x_{(7,5)}=b$, then $x(\{(1,3),(7,5),(7,2)\})=(a,c,b)$. If $I$ is a $\Z$-interval, for example $I=\cro{3,6}$,  $x(I)=(x_3,x_4,x_5,x_6)$, and we will often write $x\cro{3,6}$ instead. \par
 
If $E$ is a set and $I$ a subset of $\Z$, or a sequence in $\Z$, we denote by
\[E^I:=\{ x(I): \textrm{ the entries in }x(I) \textrm{ belong to } E \}\]
the set of sequences in $E$ indexed by $I$.
For $y=x(I)$, a sequence indexed by a set $I$, and for $A\subset \Z$, set  
\[y^{A} = x(I \setminus A),\]
the word obtained by suppressing the letters in position belonging to $A$ in $y$. Following the same idea, we denote by $M^{\{i\}}$ the matrix $M$ with the column and row $i$ suppressed.\\
For any set $E$, we denote by ${\cal M}(E)$ the set of probability measures on $E$ (for a topology which will be specified in the context).\\
A function $g:A\to \R$ is said to be equivalent to 0, we write $g\equiv 0$, if its image is reduced to 0.

\subsection{Models and presentation of results}
\label{sec:MPR}
All the results presented in this article (apart from Theorem \ref{theo:ppppp}) concern space and time homogeneous particle systems (PS), with finite range interactions defined on a lattice $\LL$, which will be $\Z$, $\Z/n\Z$, $\Z^d$, or a segment  $\cro{1,n}$. 
The set of colours is $\Ek=\cro{0,\kappa-1}$, where $\kappa$ (the number of colours) belongs to $\{2,3,\cdots\}\cup\{+\infty\}$. An element of the set of \textit{configurations} $\Ek^{\LL}$, is a colouring of the sites of $\LL$ by the elements of $\Ek$ (neighboring sites may have the same colour). When well defined, the PS will be a continuous time Markov process $\eta:=(\eta^t,t\geq 0),$ where for any $t$, $\eta^t=(\eta^t(k),k\in \LL) \in \Ek^\LL$. The set $\Ek^{\LL}$ is equipped with the product $\sigma$-algebra.  \par

The construction of the family of PS considered here is illustrated on $\Z$ first, but considerations for the analogues on $\Z/n\Z$, $\cro{1,n}$ and $\Z^d$ will appear progressively. 

\begin{defi}\label{defi:JRM} We call jump rate matrix (JRM) with range $L\in \N^\star$, a matrix  
  \ben\label{eq:JRM}
  \TT=\begin{bmatrix}\T{u}{v}\end{bmatrix}_{u,v\in \EkL},
  \een indexed by the size $L$ words on the alphabet $\Ek$, with non negative entries and with zeroes on the diagonal. 
\end{defi}
Assume for a moment {\bf that $\kappa$, the number of colours, is finite} and fix a JRM $\TT$ with range $L$.
With any element of the ``possible jumps set''
\ben\label{eq:J}
J= \l\{(i,w,w'), i \in \Z, w \in \Ek^L,w'\in \Ek^L\r\}=  \Z\times (\Ek^L)^2,
\een
where:\\
$\bullet$ $i$ encodes an abscissa  in an infinite word,\\
$\bullet$ $w$ and $w'$ encode respectively some size $L$ initial and final words,\\
associate the ``local map''
\ben\label{eq:miwwp}
\app{m_{i,w,w'}}{\Ek^\Z}{\EkZ}{\eta}{m_{i,w,w'}(\eta)},
\een
which:\\
-- if the subword  $\eta\cro{i+1,i+L} \neq w$ keeps $\eta$ unchanged (so that $m_{i,w,w'}(\eta)=\eta$)\\
-- if the subword  $\eta\cro{i+1,i+L}=w$, transforms this subword into $w'$ (formally: $m_{i,w,w'}(\eta)= \eta'$ with $\eta'_j=\eta_j$ if $j\notin \cro{i+1,i+L}$, and $\eta_{i+k}' = w'_k$,  the $k$th letter of $w'$ if $1\leq k \leq L$).

Define the generator
\ben\label{eq:gen}
(Gf)(\eta)=\sum_{(i,w,w')\in J} \T{w}{w'} \l[f(m_{i,w,w'}(\eta))-f(\eta)\r],
\een
acting on continuous functions $f$ sufficiently smooth, for example:\\
-- the set of bounded cylinder functions $g:\Ek^{\Z}\to \R$ (see e.g.  Kipnis \& Landim \cite[Section 2]{KL}) or,\\
-- following Liggett \cite{LTIPS} (starting p.21) or Swart \cite{Swart} (starting p.72), the class $C_\Delta$ of continuous functions $g:\Ek^{\Z}\to \R$ such that $\sum_x \Delta_g(x)<\infty$ for
\[\Delta_g(i)=\sup\l\{|g(\eta)-g(\xi)|, \eta,\xi\in \Ek^{\Z} \textrm{ and } \eta(j)=\eta(i), \forall j \neq i \r\}.\]
The sum \eref{eq:gen} represents a word $\eta$ indexed by $\Z$ whose size $L$ subwords jump: a subword equals to $w$ is transformed into $w'$ with rate $\T{w}{w'}$ (a jump is then possible only when $\T{w}{w'}>0$).
When $\kappa$ is finite, such a particle system is well defined (see references given above for all details).

Many such models have been studied in the literature, for example: \\
$\bullet$ \textit{The contact process}, for which $\kappa=2$, $L=3$, and all the entries of $\TT$ are 0 except $\T{(a,1,b)}{(a,0,b)}=1$ for any $(a,b)\in \{0,1\}^2$ (recovery rate),  $\T{(a,0,b)}{(a,1,b)}=\lambda (a+b)$ for some $\lambda>0$ the infection rate (the same model can be expressed using a JRM with range $L=2$ instead: $\T{(1,0)}{(1,1)}=\T{(0,1)}{(1,1)}=\lambda$, $\T{(1,1)}{(0,1)}=\T{(1,0)}{(0,0)}=1$).\\
$\bullet$ \textit{The voter model}, for which $\kappa=2$, $L=3$, $\T{(a,1-c,b)}{(a,c,b)}=\1_{c=b}+\1_{c=a}$ for any $(a,b)\in \{0,1\}^2$, the other entries of $\TT$ being 0: an individual makes its neighbors adopt its opinion after an exponential random time.\\
$\bullet$ \textit{The stochastic Ising model}, for which $\kappa=2$, $L=3$ and JRM $\TT$ with zero entries except for
\beq \label{eq:rsgsf}
\T{(a,b,c)}{(a,1-b,c)}=e^{-\beta (2b-1)(2a+2c-2)} \textrm{ for any }(a,b,c)\in \{0,1\}^3.
\eq
Here the state 1 represents a vertex on the line with positive magnetization, 0 a vertex with negative magnetization and $\beta$ a positive parameter, which, depending on its sign, favours or penalizes configurations in which vertices magnetization are aligned. \\
$\bullet$ \textit{The TASEP} on $\mathbb{Z}$ with $\kappa=2$, $L=2$, $\T{(1,0)}{(0,1)}=1$ and the others $\T{u}{v}$ being 0.\medskip

A distribution  $\mu$ on $\Ek^\Z$ is said to be \textit{invariant} by $\TT$ if $\eta^t\sim\mu$ for any $t\geq 0$, when $\eta^0\sim \mu$ (where the notation  $\sim$ means ``distributed as''). Following the discussion given below \eref{eq:gen}, this property can be rephrased  when $\kappa$ is finite, as  $\int Gfd\mu = 0$ for any $f$ bounded cylinder function $f$ (or function of $C_{\Delta}$). A simple argument (\cite[Lem. 1.3 p 23]{KL}) shows that it is also characterized by
$\int Gf\mu =0$ for any indicator function $f$ of the type
\ben\label{eq:IF} f(\eta)=\1_{\eta\cro{n_1,n_2}=x\cro{n_1,n_2}}\een for some fixed word $x\cro{n_1,n_2}$ and fixed indices $n_1\leq n_2$: this is the balance between the (infinitesimal) creation and destruction of the subword $x\cro{n_1,n_2}$ in the interval $\cro{n_1,n_2}$ under the distribution $\mu$. \par
Recall that under the product $\sigma$-algebra, a measure $\mu\in {\cal M}(\Ek^\Z)$ is characterized by its finite dimensional distributions.
\color{black}

\paragraph{ We are interested in the following question:} for what JRM $\TT$ does there exist a simple invariant distribution\,? Here the word ``simple'' stands for distributions as product measures, Markov laws or Gibbs measures (depending on the underlying graph where is defined the particle system).
It turns out that this question has a rich algebraic nature, and we then decided to focus on this question only.
The algebra in play depends on $\TT$ and on the fixed family of distributions whose invariance is under investigation. \medskip

Consider a function $f$ as given in \eref{eq:IF}.
The {\bf single} jumps of the PS that may affect the value of $f(\eta)$ take place in the \textit{dependence set} of $\cro{n_1,n_2}$ which is larger than $\cro{n_1,n_2}$:
\beq\label{def:DD}
D\cro{n_1,n_2}=\cro{n_1-(L-1),n_2+L-1}.
\eq
For any $w$ and $z$  in $\Ek^{\cro{n_1,n_2}}$, set the induced transition rate $\T wz$ from $w$ to $z$ as:
\ben\label{eq:Tind}
\T wz= \sum_{\cro{a+1,a+L}\subset \cro{n_1,n_2}}   \T{ w\cro{a+1,a+L}}{z\cro{a+1,a+L}} ~\1_{ w_j=z_j \textrm{ for all } j \in \cro{n_1,n_2}\setminus\cro{a+1,a+L}},
\een that is the sum of the transition rates which makes this transition possible in {\bf a single jump totally included} in $w$.
For a fixed pair $(w,z)$ the contribution of the  $\Z$-interval $\cro{a+1,a+L}$ is 0 if $\T{w\cro{a+1,a+L}}{z\cro{a+1,a+L}}=0$ (jump not allowed), or if $w$ and $z$ do not coincide outside $\cro{a+1,a+L}$. This includes the case where $n_2-n_1$ is too small, that is $<L-1$.\par
Notice that taking the same notation for the transition rate between two words as for the JRM is possible since they coincide if the lengths of $w$ and $z$ are both $L$.\par
We want to reformulate in a Lemma what has been said so far concerning the cases where $\kappa$ is finite:
\begin{lem}\label{lem:inva}Let $\kappa<+\infty$. A probability measure $\nu\in {\cal M}\l(\EkZ\r)$ is invariant under $\TT$ on the line if it solves the system of equations $\Sys{\Z,\nu,\TT}$ defined by 
	\beq\label{eq:mastereq}
	\left\{ \Lineq^{\Z}(x\cro{n_1,n_2},\nu)=0,~\textrm{ for any }n_1\leq n_2, \textrm{ for any }x\cro{n_1,n_2}\in \Ek^{\cro{n_1,n_2}}\right.,
	\eq
	where
	\beq\label{eq:lineq}
        \begin{array}{l}
          \Lineq^{\Z}(x\cro{n_1,n_2},\nu)=\dis\sum_{w,z \in E_{\kappa}^{D\cro{n_1,n_2}}} \l(\nu_{D\cro{n_1,n_2}}(w) \T wz- \nu_{D\cro{n_1,n_2}}(z) \T zw\r)\\
          \hspace{5.4 cm}\times 1_{z\cro{n_1,n_2}=x\cro{n_1,n_2}}.
          \end{array}
	\eq
  \end{lem}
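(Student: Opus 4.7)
The plan is to use the criterion recalled just above the lemma: when $\kappa<+\infty$, $\nu$ is invariant iff $\int Gf\,d\nu=0$ for every indicator function $f(\eta)=\1_{\eta\cro{n_1,n_2}=x\cro{n_1,n_2}}$ as in \eref{eq:IF}. It therefore suffices to establish, for each such $f$, the identity $\int Gf\,d\nu = \Lineq^{\Z}(x\cro{n_1,n_2},\nu)$; the conjunction of these identities then identifies invariance of $\nu$ with the system $\Sys{\Z,\nu,\TT}$.

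The first step is to restrict the generator sum \eref{eq:gen} to its effective terms. A jump $(i,w,w')\in J$ leaves $f(\eta)$ unchanged unless its window $\cro{i+1,i+L}$ meets $\cro{n_1,n_2}$, i.e.\ unless $\cro{i+1,i+L}\subset D\cro{n_1,n_2}$; and for such a jump the event $\{\eta\cro{i+1,i+L}=w\}$, as well as both $\eta\cro{n_1,n_2}$ and $m_{i,w,w'}(\eta)\cro{n_1,n_2}$, depend on $\eta$ only through the restriction $W:=\eta[D\cro{n_1,n_2}]$. Integration against $\nu$ therefore produces a finite sum weighted by the marginals $\nu_{D\cro{n_1,n_2}}(W)$. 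Writing $Z\in E_\kappa^{D\cro{n_1,n_2}}$ for the configuration obtained from $W$ by replacing the window $w$ with $w'$ (so $W$ and $Z$ coincide outside $\cro{i+1,i+L}$), I exchange the sum over $(i,w,w')$ for an outer sum over pairs $(W,Z)$ together with an inner sum over windows $\cro{i+1,i+L}\subset D\cro{n_1,n_2}$ outside of which $W$ and $Z$ coincide. For each such pair $(W,Z)$, that inner sum, of the jump rates indexed by the restrictions of $W$ and $Z$ to the window, is by definition the induced rate $\T WZ$ of \eref{eq:Tind} (pairs $(W,Z)$ that cannot be related by a single local jump contributing $0$ automatically via the indicator in that definition), so that
\[ \int Gf\,d\nu \;=\; \sum_{W,Z \in E_\kappa^{D\cro{n_1,n_2}}} \nu_{D\cro{n_1,n_2}}(W)\,\T WZ\, \bigl[ \1_{Z\cro{n_1,n_2}=x\cro{n_1,n_2}} - \1_{W\cro{n_1,n_2}=x\cro{n_1,n_2}} \bigr]. \]
A final relabeling $(W,Z)\mapsto(Z,W)$ in the piece carrying $\1_{W\cro{n_1,n_2}=x\cro{n_1,n_2}}$ aligns both indicators and rewrites the expression as $\Lineq^{\Z}(x\cro{n_1,n_2},\nu)$ of \eref{eq:lineq}.

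The main obstacle is keeping the bookkeeping clean in the reindexing step: one has to verify that summing the local jump rates over admissible windows really does rebuild $\T WZ$ in all cases, including the boundary cases where $\cro{i+1,i+L}\subset D\cro{n_1,n_2}$ but not $\cro{n_1,n_2}$, and the degenerate case $n_2-n_1<L-1$ where no $L$-window fits inside $\cro{n_1,n_2}$. Everything else is elementary, and is made fully rigorous by the assumption $\kappa<+\infty$ which keeps every sum finite.
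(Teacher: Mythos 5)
Your proof is correct and follows essentially the same route as the paper: the paper presents this lemma as a reformulation of the criterion (citing Kipnis--Landim) that invariance is characterized by $\int Gf\,d\nu=0$ for cylinder indicator functions, and the identity $\int Gf\,d\nu=\Lineq^{\Z}(x\cro{n_1,n_2},\nu)$ that you verify is exactly the bookkeeping the paper leaves implicit. Your reindexing of the generator sum over the windows contained in $D\cro{n_1,n_2}$, recovering the induced rate $\T{w}{z}$ of \eref{eq:Tind}, and your treatment of the degenerate case $n_2-n_1<L-1$ are accurate.
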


We now define the notion of \bf algebraic invariance \rm of a probability measure with respect to a particle system. The aim of this notion is to disconnect the problem of well definition of a particle system which brings its own technical difficulties and obstructions when $\kappa=+\infty$ (see discussion in Section \ref{sec:OWD}) to the resolution of the systems \eref{eq:mastereq} which is ``just'' an algebraic system, which can be solved independently from other considerations.  
\begin{defi} For $\kappa$ finite or infinite, % (we write $\nu$ is \AI by $\TT$ on the line)
  a probability measure $\nu\in {\cal M}\l(\EkZ\r)$ is said to be algebraically invariant under $\TT$ on the line (we write $\nu$ is \AI by $\TT$ on the line) if it solves the system of equations \eref{eq:mastereq}.
\end{defi}

Again, in the case where $\kappa<+\infty$, standard invariance of measures and algebraic invariance are equivalent notions. When $\kappa=+\infty$, difficulties arise (see Section \ref{sec:OWD}) and the notion of algebraic invariance is indeed useful.  
\paragraph{Extension on $\Z/n\Z$.}
The previous considerations for PS $\eta$  indexed by $\Z$ can be extended to $\Z/n\Z$ (the finitness of $\Z/n\Z$ provides a more favourable setting). 
\begin{lem}Let $\kappa$ be finite. A probability measure  $\mu_n \in {\cal M}\l(\Ek^{\ZnZ}\r)$ is invariant under $\TT$ on the circle of length $n$ if
\beq\label{eq:mastereqCyl2}
\Sys{\ZnZ,\nu,\TT}~:=\left\{ \Cycle_n(x,\mu_n)=0,\textrm{ for any }x\in \Ek^{\ZnZ}\right.
\eq
for
\ben\label{eq:yc}
\Cycle_n(x,\mu_n)=\dis\sum_{w \in E_{\kappa}^{\ZnZ}} \mu_{n}(w) \T wx- \mu_{n}(x) \T xw,
\een
where $\T{w}z$ has to be adapted to fit with the structure of $\ZnZ$~:
\beq \label{eq:sfe}
\T wz= \sum_{{\cro{a+1,a+L}}\subset \ZnZ}   \T{ w{\cro{a+1,a+L}}}{z{\cro{a+1,a+L}}} 1_{ w_j=z_j \textrm{ for all } j \in (\ZnZ) \setminus {\cro{a+1,a+L}}},
\eq
where in this context, ${\cro{a+1,a+L}}$ stands for $(a+1\mod n, \dots, a+L \mod n)$.
\end{lem}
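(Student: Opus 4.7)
The plan is to mirror the argument sketched above for Lemma \ref{lem:inva}, adapted to the finite state space $\Ek^{\Z/n\Z}$. Since $\kappa<\infty$ and $\Z/n\Z$ is finite, the configuration space is finite; consequently $\mu_n$ is invariant under the Markov generator $G$ (defined exactly as in \eref{eq:gen} but with $i$ ranging over $\Z/n\Z$ and $m_{i,w,w'}$ acting cyclically) if and only if $\int G f\, d\mu_n=0$ for every $f$ in a separating family of test functions. Because the state space is finite, the indicator functions $f_x(\eta)=\1_{\eta=x}$, for $x\in \Ek^{\Z/n\Z}$, already separate measures, so it suffices to verify $\int G f_x\, d\mu_n=0$ for each $x$.

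Next I would plug $f_x$ into the expression for $G$. For a fixed configuration $\eta$ and a jump $(i,w,w')\in (\Z/n\Z)\times(\Ek^L)^2$, the contribution $f_x(m_{i,w,w'}(\eta))-f_x(\eta)$ is nonzero only in two situations: either $\eta=x$ (loss of mass at $x$), provided the cyclic subword $x\cro{i+1,i+L}$ equals $w$ and $w\neq w'$, contributing $-\T{w}{w'}$; or $m_{i,w,w'}(\eta)=x$ with $\eta\neq x$ (gain of mass at $x$), which forces $\eta$ and $x$ to agree outside $\cro{i+1,i+L}$ and $w=\eta\cro{i+1,i+L}$, $w'=x\cro{i+1,i+L}$, contributing $+\T{w}{w'}$.

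Summing over $(i,w,w')$ and integrating against $\mu_n$, the gain contribution rearranges as
\[
\sum_{\eta\in \Ek^{\Z/n\Z}} \mu_n(\eta)\, \sum_{\cro{a+1,a+L}\subset \Z/n\Z} \T{\eta\cro{a+1,a+L}}{x\cro{a+1,a+L}}\,\1_{\eta_j=x_j,\, j\notin\cro{a+1,a+L}},
\]
which, in view of the cyclic definition \eref{eq:sfe} of the induced rate, is exactly $\sum_{\eta}\mu_n(\eta)\,\T{\eta}{x}$. Similarly the loss contribution becomes $\mu_n(x)\sum_{w}\T{x}{w}=\sum_{w}\mu_n(x)\T{x}{w}$. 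The resulting balance $\int G f_x\, d\mu_n = \Cycle_n(x,\mu_n)$ yields \eref{eq:mastereqCyl2}.

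The only subtlety, and the one place I would slow down, is bookkeeping the cyclic indexing in \eref{eq:sfe}: one must check that every single local jump contained in a cyclic window $\cro{a+1,a+L}$ is counted exactly once, including the windows that wrap around $0\in \Z/n\Z$, and that no spurious contributions arise from jumps whose source and target disagree outside their window. Once the cyclic dependence set coincides with the whole of $\Z/n\Z$ (so the analogue of $D\cro{n_1,n_2}$ disappears), the computation above is essentially the same as in Lemma \ref{lem:inva}, and the lemma follows.
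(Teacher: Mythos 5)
Your proposal is correct and follows exactly the route the paper implicitly relies on: the paper states this lemma without proof as an instance of the standard global balance condition $\int G f\,d\mu_n=0$ for finite-state continuous-time Markov chains, tested on the indicators $f_x=\1_{\eta=x}$ (which span all functions on the finite space $\Ek^{\Z/n\Z}$, so checking them suffices), and your gain/loss bookkeeping correctly recovers $\Cycle_n(x,\mu_n)$ with the cyclic induced rates \eref{eq:sfe}. No gaps.
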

When $\kappa$ is finite, the existence of a measure $\mu_n$ solving the system \eref{eq:mastereqCyl2} is granted from the theory of finite state space Markov processes.\par
Again, we disconnect the problem of existence of particle systems with the solution of the algebraic system: 
\begin{defi}\label{eq:cycy}
For $\kappa$ finite or infinite, we say that a probability measure $\mu_n \in {\cal M}\l(\Ek^{\ZnZ}\r)$ is cyclically algebraic invariant  under $\TT$ on the circle of length $n$ (we write $\mu_n$ is \CAI by $\TT$ on the circle of length $n$) if it solves $\Sys{\ZnZ,\nu,\TT}$ as stated in \eref{eq:mastereqCyl2}.
\end{defi}
Invariance and algebraic invariance are equivalent when $\kappa<+\infty$.

\subsubsection{The results.}
\label{sec:tr}
\begin{defi}\label{defi:Mar}
\tb For $-\infty < a \leq b <+\infty$, a process $(X_k,k\in \cro{a,b})$ is said to be a Markov chain on $\Ek$, or to have a Markov law, if there exists $M:=\begin{bmatrix}M_{i,j}\end{bmatrix}_{i,j\in \Ek}$, a Markov kernel (we will say also simply kernel), and an initial distribution $\nu \in{\cal M}(\Ek)$  such that,
 \[\P(X_{k}=x_k, a\leq k \leq b )= \nu_{x_a}\prod_{j=a}^{b-1} M_{x_j,x_{j+1}},\textrm{ for any } x \in \Ek^{\cro{a,b}}.\]
 For short, we will say that $X$ (resp. $\mu$) is a $(\nu,M)$-Markov chain on $\cro{a,b}$ (resp. $(\nu,M)$-Markov law) if its kernel is $M$, and its initial distribution is $\nu$.  \par
\tb We will say that a law $\rho$ in ${\cal M}(\Ek)$ is invariant for $M$ (or for this Markov chain) if $\rho M=\rho$, for $\rho$ seen as a row vector. If the initial distribution is $\rho$, we say that $X$ is a $M$ Markov chain under (one of) its invariant distribution. \par
\tb For $\rho\in{\cal M}(\Ek)$ invariant for $M$, we call $(\rho,M)$-Markov chain $(X_k, k\in \Z)$ a process indexed by $\Z$ whose finite dimensional distribution are given by $\P(X_{k}=x_k, a\leq k \leq b )= \rho_{x_a}\prod_{j=a}^{b-1} M_{x_j,x_{j+1}},\textrm{ for any } x \in \Ek^{\cro{a,b}}$. Its distribution is called $(\rho,M)$-Markov law.
\end{defi}
-- A $M$-Markov law on $\Ek$ is said to be \it positive recurrent \rm if under this kernel, a Markov chain is {\bf positive recurrent} (we will say also that $M$ is positive recurrent).\\
-- If all the $M_{i,j}$'s are  positive, we say that $M$ is {\bf positive}, and write $M>0$. \par
Consider a Markov chain with kernel $M$ on $\Ek$ under its invariant distribution $\rho$.

Let us define 
\beq\label{eq:lineq2222}
\Lineq_n^{\rho,M,\TT}(x\cro{1,n}):=\Lineq^\Z(x\cro{1,n},\nu,\TT), \textrm{ for any }x\cro{1,n}\in\Ek^n
\eq
where $\nu(a\cro{1,m})=\rho_{a_1}\prod_{j=1}^{m-1}M_{a_j,a_{j+1}}$: in words, $\Lineq^{\rho,M,\TT}_n(\cdot)$ is the function which coincides with $\Lineq^\Z(\cdot,\nu,\TT)$ on $\Ek^n$ when $\nu$ is the $(\rho,M)$-Markov law (see Definition \ref{defi:Mar}). 

The system of equations $\{\Lineq^{\rho,M,\TT}_n \equiv 0 \textrm{, for any }n\}$, (as stated in \eref{eq:erhrehr2}) provides the necessary and sufficient algebraic relations between $\rho,M$ and $\TT$ for the $\AI$ of the $M$-Markov law. This is an infinite system of equations even when $\Ek$ is finite. It is linear in $\TT$, with unbounded degree in $M$. \medskip 

\noindent{\bs}  The first goal of this article is to produce an equivalent {\bf finite system} of algebraic equations to characterize the invariance of $(\rho,M)$-Markov law  by $\TT$ when {\bf the set $\Ek$ is finite.} The main result is the proof of equivalence of $\{\Lineq^{\rho,M,\TT}_n\equiv 0, \textrm{ for any } n\}$ with each of several (equivalent) algebraic systems of degree 6 in $M$ and linear in $\TT$ (Theorem \ref{theo:t0}, and Theorem \ref{theo:t2}, when the range is $L=2$ and the memory of the Markov chain is $m=1$).   These equivalent systems are finite, and moreover, they can be explicitly solved using some linear algebra arguments (Theorem \ref{theo:cand3}): in words, it is possible to decide if a PS with JRM $\TT$ possesses an invariant Markov law, or to describe the class of all $\TT$ that do (which provide some  applications discussed in Section \ref{sec:ap}). 
\begin{itemize}
\item[--] When the cardinality of $\Ek$ is infinite some additional complications arise (Section \ref{sec:kappainfinite}), but some results still hold.
\item[--] When $M$ possesses some zero entries, a plurality of algebraic behaviours for these systems of equations (and solutions) makes a global approach probably impossible (Section \ref{sec:relax}).
\end{itemize}
\noindent{\bs} Similar criteria are developed to characterize product measures $\rho^\Z$ invariant by $\TT$. In this case the finite representations use equations of degree 3 in $\rho$ and linear in $\TT$ (Theorem \ref{theo:t3b}, when the range $L=2$).\\
\noindent{\bs} The invariance of the Gibbs distribution with kernel $M$ on the circle $\Z/n\Z$ is also studied, when $\Ek$ is finite.
In Theorem \ref{theo:t0} the equivalence between the invariance of a Gibbs measure (see Definition \ref{defi:Gibbs}) with Markov kernel $M$ on $\Z/n\Z$ for $n=7$ with the invariance of the $(\rho,M)$-Markov law (for $\rho$ such that $\rho M=\rho)$ on the line $\Z$ is established  (Theorem \ref{theo:t0}). Besides, Corollary \ref{cor:t444} implies that if the Gibbs distribution with kernel $M$ is invariant by $\TT$ on $\Z/n\Z$ for $n=7$, then it is also invariant by $\TT$ on $\Z/n\Z$ for any $n\geq 3$ (when the range is $L=2$).\\
\noindent{\bs} When considering a PS indexed by the segment $\cro{1,n}$, some interactions $\beta^r$ and $\beta^\ell$ with the boundaries are introduced (Section \ref{sec:SB}). When the range $L=2$, if a Markov law is invariant for $n\geq 7$ on the segment (with fixed boundaries interactions), then it is invariant on the line (Theorem \ref{theo:segm}). Some relations between invariant measures on the line and on the segment are provided. %The relation with the matrix ansatz is discussed in Section \ref{sec:ann}.
\\
\noindent{\bs} The 2D case and beyond will be discussed in Theorem \ref{theo:2D}, where a simple necessary and sufficient condition for the invariance of a product measure will be provided (Section \ref{sec:G2D}).\\ 
\noindent{\bs} The case where $\TT$ has a larger range $L$ and/or where the invariant distribution is a Markov law with larger memory $m$ is discussed in Section \ref{sec:Ext}. 

Many extensions discussed in Section \ref{sec:Ext} to larger range and memory, are proved by the same ideas as those for $L=2$, with some extra technical complications. We think that the presentation of the proof in the case $L=2$ is needed in order to make the arguments understandable.

\subsubsection{Applications.}
\label{sec:ap}
As said above, the theorems we provide allow one to decide if there exists a Markov law  with kernel $M$ (with memory $m$) invariant under the dynamics of a PS with a given $\TT$. This is done ``by explicitly'' solving a finite polynomial system with ``small degree in $M$''. These kinds of problems are solved using some algebra, for example, the computation of a Gr\"obner basis (see Section \ref{sec:Grobn}), using some Computer algebra systems if needed.  The theorems also allow to find pairs $(\TT,M)$ for which this invariance occurs, and then, to design some PS having a simple known invariance distribution. \par
  Hence, having in hands a simple algebraic characterization of PS admitting invariant Markov law, allows to extend considerably the family of PS for which explicit invariant distributions can be found, and we think that, as illustrated by what we are saying below, the interest of these results  go far beyond invariant Markov laws.

In the sequel, when we say that we use a specific model with general rates, we mean that we let the positive rates as free variables. In addition to the results presented in the preceding section we present here several applications of our work.\\
  {\bs} In Section \ref{sec:MTSM2}, we prove that the voter models does not admit any Markov law of any memory as invariant distribution.  The general rate version is explored and the parameters for which there exist Markov law invariant on the line are discussed. \\
{\bs} In Section \ref{sec:MTSMcontact}, the contact process is discussed: we prove that this process does not have a Markov law of any memory $m\geq 0$ as invariant distribution.\\
{\bs} In Section \ref{sec:MTSM}, the TASEP and some variants are explored: Zero-range type processes, 3 colours TASEP and PushASEP.\\
-- For the zero range type processes we prove that there exists a family of distributions $F$, such that depending on $\TT$, either all the product measures $\rho^\Z$ are invariant by $\TT$ for all $\rho\in F$, or none of them is invariant by $\TT$.\\
  -- In the general rate 3-colour TASEP some sufficient and necessary conditions on $\TT$ are given so that there exists a Markov law with positive kernel $M$ that is invariant by $\TT$.\\
-- For the PushASEP we explain how some special types of $PS$ with range $L=\infty$ can be transformed and solved with our results.\\
{\bs} In Section \ref{sec:MTSMIsing}, the stochastic Ising model is analyzed and its well known Markov invariant measure on the line (Gibbs on the cycle) is found based on our results.\\
{\bs} The possibility offered by our theorems to find automatically parameters $(T,M)$, say, on the space $E_3=\{0,1,2\}$ (with 3 colours) and $L=2$ for which the PS with JRM $\TT$ let the Markov law with kernel $M$ invariant, allows to find some PS on $E_2=\{0,1\}$ with 2 colours and $L=3$ which possesses some hidden Markov chain distributions as invariant distributions, using some projection from $E_3$ to $E_2$. As far as we are aware of, this is the first time that a hidden Markov chain is shown to be invariant under a PS on the line. % le case is when all the $\T{u}{v}$ are zeros except for $\T{[0,0]}{[0,1]}=\T{[0,1]}{[0,0]}=1$)
This is discussed in Section \ref{sec:HMC}. We think that this method will allow in the future to find many invariant distribution for PS with 2 colours, or more.\\
{\bs} In Section \ref{sec:AIMD}, the set of pairs $(\TT,M)$ for which the Markov law  with positive kernel $M$ is invariant under $\TT$, in the case $\kappa=2$ and $L=2$ is totally explicitly solved. This case corresponds to standard PS on the line, where 1 and 0 are used to model the presence, or absence of particles at each position. Under these assumptions and mass preservation (see Def. \ref{def:mp}) we prove that the unique Markov kernels that are AI by this type of $\TT$'s are the i.i.d. measures. \\
{\bs} In Section \ref{sec:AIMD2},  the set of pairs $(\rho,\TT)$ for which the product measure with marginal $\rho$ is invariant under $\TT$, in the case $\kappa=2$ and $L=2$ is totally explicitly solved.\\
{\bs} In Section \ref{sec:2dapp} we use our criteria of invariance of product measures under the dynamics of a PS defined on $\Z^2$, to provide many explicit PS admitting product measures as invariant measure.

\color{black}

\subsection{Some pointers to related papers}
\label{sec:OWD}
Given an infinitesimal generator (or a JRM) of a particle system, the existence of a stochastic Markov process with this generator can be proved when the number of colour is finite, or if $\sup_{w}\sum_{w'}\T{w}{w'}<+\infty$ 
is uniformly bounded, using for example the  so-called graphical representation due to Harris \cite{HT} see also Swart \cite{Swart}, or by the Hille-Yosida theorem and other considerations coming from functional analysis and measure theory (see e.g. Liggett \cite{LTIPS}, Swart \cite{Swart},  Kipnis \& Landim \cite{KL}) see also  Andjel \cite{AE}, where proofs of existence and construction can be found in some particular cases. \par
When the state space $\Ek$ is infinite, complications arise since even the state at a site may diverge in a finite time, and then, in general a JRM $\TT$ does not allow to define a particle system properly. Sufficient condition for the well definition of a particle system in the infinite case can be found in Liggett \cite[Chap. IX]{LTIPS},  Kipnis \& Landim\cite{KL}, Bal{\'a}zs \& al. \cite{BM2}, Andjel \cite{AE},  Fajfrová \& al. \cite{FGS}.

Other works related to the present one concern the computation of invariant distribution(s) of a given PS, or the characterization of its ergodicity (Blythe \& Evans \cite{BE}, Crampe \& al. \cite{CRV}, Fajfrová \& al. \cite{FGS}, Greenblatt \& Lebowitz \cite{GL}). 
Numerous results concern works not directly related to the present paper:  study of PS out of equilibrium, their speed of convergence, their time to reach a certain state, among others. 

As far as we are aware, the paper whose point of view is the closest to the present work, is  Fajfrová \& al. \cite{FGS}, in which some conditions for the invariance of product measures are designed, for mass migration processes (see  def. \ref{def:mp2} and below).\medskip   

We add that the present work has been inspired by some similar works on probabilistic cellular automata, where the transition matrices for which simple invariant measures exist, have been  deeply investigated, and are at the heart of the theory, Toom \& al. \cite{dkt},  Dai Pra \& al. \cite{PLR}, Marcovici \& Mairesse \cite{1207.5917}, Casse \& the second author \cite{CM}.

\subsection{Main results}

The case $L=1$ being non interesting here, we examine in details the case where the range is $L=2$ ,  representative of this kind of models as will be seen in  Section \ref{sec:Ext} where larger ranges will be investigated.

\par
For a given  JRM $\TT$, the \textit{exit rate out} of $w\in \Ek^2$ is defined by
\be
\To{w}=\sum_{w' \in \Ek^2}\T{w}{w'}.
\ee  
Consider a Markov chain with kernel $M$, and let $\rho$ be one of its invariant distribution.  The equation $\Lineq_n^{\rho,M,\TT}(x\cro{1,n})=0$ (as defined in \eref{eq:lineq2222}) rewrites 
\beq \label{eq:erhrehr2}
\begin{array}{l}
\dis\sum_{x_{-1},x_0,\atop x_{n+1},x_{n+2}}\sum_{j=0}^{n} 
 \sum_{u,v}\T{(u,v)}{(x_j,x_{j+1})} \rho_{x_{-1}}\Big(\!\!\!\!\prod_{-1\leq k \leq n+1\atop{k\not\in\{j-1,j,j+1\}}}M_{x_k,x_{k+1}}\Big)M_{x_{j-1},u}M_{u,v}M_{v,x_{j+2}}\\
 -\dis\sum_{x_{-1},x_0,\atop x_{n+1},x_{n+2}} \Big(\rho_{x_{-1}}\prod_{k=-1}^{n+1}M_{x_k,x_{k+1}}\Big) \sum_{j=0}^{n} \To{x_j,x_{j+1}}=0.
 \end{array}
\eq
From Lemma \ref{lem:inva}, a $(\rho,M)$ Markov law under its invariant distribution is  invariant by $\TT$ on the line when  $\Lineq^{\rho,M,\TT}_n\equiv 0$, for all $n\in \N$.

Since the range is $L=2$, the value of $x_0$ and $x_{n+1}$ ``just outside'' $x\cro{1,n}$ play a role (they are in the dependence set of $\cro{1,n}$, as defined in Def. \ref{def:DD}) we then need to sum on all the possible values of $(x_0,x_{n+1})$. But, because of the appearance of the pattern $M_{x_{i},u}M_{u,v}M_{v,x_{i+3}}\T{(u,v)}{(x_{i+1},x_{i+2})}$, it is a bit simpler to consider also additionally the extra values $(x_{-1},x_{n+2})$ in the sum even if they are not in the dependence set: these additional terms concern only the representation of the  Markov law, and also the fact that $\rho$ is the invariant distribution of $M$ (not the JRM). 

We now present the main theorems of the paper. The proofs that are not given in this section, are postponed to Section \ref{sec:proofs}.

\subsubsection{Invariant Markov laws with positive kernel}

In this section, $\Ek$ is finite, and the Markov kernel $M=\begin{bmatrix} M_{i,j}\end{bmatrix}_{i,j\in \Ek}$ has positive entries. The measure $\rho$ is the invariant law for a Markov chain with kernel $M$, and is characterized by $\rho=\rho M$.

Define  the normalized version of $\Lineq$ by:
\beq\label{eq:tjht}
\NLineq_n^{\rho,M,\TT}(x):= \frac{\Lineq_n^{\rho,M,\TT}(x)}{\prod_{j=1}^{n-1}M_{x_j,x_{j+1}}},
\eq
so that, for $n=1$, and any $x\in\Ek$, $\NLineq_1^{\rho,M,\TT}(x): = \Lineq_1^{\rho,M,\TT}(x)$ and for $n\geq 2$, and any $x\in \Ek^n$, 
\beq
\label{eq:rsgfqe12} \NLineq_n^{\rho,M,\TT}(x) = \sum_{x_{-1},x_0,\atop x_{n+1},x_{n+2}}  \l(\rho_{x_{-1}}\prod_{k\in\{ -1,0,n,n+1 \}}M_{x_k,x_{k+1}}\r) \sum_{j=0}^{n} \wZ^{M,\TT}_{x\cro{j-1,j+2}},
\eq
with
\ben\label{eq:rgdq}
\wZ_{a,b,c,d}^{ M,\TT}:=\l(\sum_{(u,v)\in \Ek^2} \T{(u,v)}{(b,c)} \frac{M_{a,u}M_{u,v}M_{v,d}}{M_{a,b}M_{b,c}M_{c,d}}\r)-\To{b,c}.\een
 We will drop the exponents $M,\TT$ and write $\wZ_{a,b,c,d}$ instead when they are clear from the context.
\begin{rem}[Key point] One can say that the leading idea of the paper is the following: when a Markov law $(\rho,M)$ is invariant by $\TT$ then $\wZ$ possesses a huge amount of nice algebraic additive properties: this will be seen in all the theorems of the paper. Some of the additive properties of $\wZ$ will allow to control $\NLineq^{\rho,M,\TT}$ and show its nullity. The more natural object $\Lineq^{\rho,M,\TT}$ from a probabilistic perspective (without normalisation) is not the right object to deal with these additive properties.
\end{rem}

Now, for $u\cro{1,\ell}$ a $\ell$-tuple of elements of $\Ek$,  denote by
\be
\Sub{u\cro{1,\ell}}{k}=\l\{ u\cro{m+1,m+k}, 0\leq m \leq \ell-k \r\}
\ee
the multiset\footnote{a multiset is a set in which elements may have multiplicities $\geq 1$} ``of $k$-subwords'' of $u\cro{1,\ell}$ so that, for example 
\[\Sub{a\cro{1,7}}{4}=\{~a\cro{1,4},~a\cro{2,5},~a\cro{3,6},~a\cro{4,7}~\}.\]
Define the map $\ME_7^{M,\TT}:\Ek^7 \to \R$ by 
\beq\label{eq:Meq}
\ME_7^{M,\TT}(a\cro{1,7})=\sum_{w \in \Sub{a\cro{1,7}}{4}} \wZ_w -\sum_{w \in \Sub{a\cro{1,7}^{\{4\}}}{4}} \wZ_w
\eq
where (following our notation, below the abstract) $a\cro{1,7}^{\{4\}}=(a_1,a_2,a_3,a_5,a_6,a_7)$. The map $\ME_7^{M,\TT}$ will play an important role in the sequel. 
Let us expand for once, this compressed notation: 
\be
\ME_7^{M,\TT}(a\cro{1,7})&=&Z_{a_1,a_2,a_3,a_4}+Z_{a_2,a_3,a_4,a_5}+Z_{a_3,a_4,a_5,a_6}+Z_{a_4,a_5,a_6,a_7}\\
&&-Z_{a_1,a_2,a_3,a_5}-Z_{a_2,a_3,a_5,a_6}-Z_{a_3,a_5,a_6,a_7}.\ee

Now, extend the notion of subwords to $\ZnZ$: for $a\cro{0,n-1} \in \Ek^{\ZnZ}$, write 
\be
\Subn{a\cro{0,n-1}}{k}{\ZnZ}=\l\{a\cro{m+1\mod n, m+k\mod n}, 0\leq m \leq n-1\r\}
\ee
for the multiset formed by the $n$ words made of $k$ successive letters of $a\cro{0,n-1}$ around $\Z/n\Z$. 
Define the map $\Cycle_n^{M,\TT}: \Ek^{\ZnZ}\to \R^+$ by   
\ben\label{eq:CyMT}
\Cycle^{M,\TT}_n(x):=\dis\sum_{w \in E_{\kappa}^{\ZnZ}} \l(\prod_{j=0}^{n-1} M_{w_i,w_{i+1\mod n}}\r)\T wx- \l(\prod_{j=0}^{n-1} M_{x_i,x_{i+1\mod n}}\r) \T xw.
\een
This formula coincides with $\Cycle_n(x,\mu_n)$ given in Defi. \ref{eq:cycy} for a Gibbs measure with kernel $M$:
\begin{defi}\label{defi:Gibbs} A process  $(X_k,k\in \ZnZ)$ indexed $\ZnZ$ for some $n\geq 1$ and taking its values in $\Ek^{\ZnZ}$ is said to have a Gibbs measure with kernel $\begin{bmatrix} M_{i,j} \end{bmatrix}_{i,j\in \Ek}$, a non negative Matrix, if
\be
\P(X\cro{0,n-1}=x\cro{0,n-1})&=& \frac{\prod_{j=0}^{n-1} M_{x_j,x_{j+1 \mod n}}}{\Trace(M^n)},\textrm{ for any } x\cro{0,n-1}\in \Ek^{\cro{0,n-1}}.
\ee
For short, we will say that $X$ follows the $M$-Gibbs measure on $\ZnZ$.
\end{defi}
Perron-Frobeni\"us theorem asserts that if a square matrix $A$ is non negative and irreducible, then $A$ has a real eigenvalue $\lambda$ larger (or equal, if $A$ is periodic) than the modulus of the other ones, and the corresponding right and left eigenvectors may be chosen with positive entries. We qualify by ``main'' in the sequel these eigenvectors and eigenvalue. Hence, if $M$ is irreducible, one may suppose w.l.o.g that $M$ is a classical Markov kernel, since $\begin{bmatrix} M'_{i,j} \end{bmatrix}_{i,j\in \Ek}=\begin{bmatrix}M_{i,j} q_i/(c q_j) \end{bmatrix}_{i,j\in \Ek}$, where $q$ is the main right eigenvector of $M$ and $c$ is the corresponding eigenvalue of $M$, is a Markov kernel which induces the same Gibbs measure as $M$.\par
When $\#\Ek<+\infty$, a $M$-Gibbs measure is invariant by $\TT$ on $\ZnZ$ iff $\Cycle^{M,\TT}_n\equiv 0$. Again, when $\#\Ek=+\infty$, independently of the good definition of the PS with JRM  with kernel $M$, we will say that cyclically algebraic invariant (or \CAI) by $\TT$ on $\ZnZ$ when  $\Cycle^{M,\TT}_n\equiv 0$.

It must be noticed at this point that $\Cycle^{M,\TT}_n(x)$ coincides with $\Cycle_n(x,\mu_n)$ as defined in \eref{eq:yc} where $\mu_n$ is the $M$-Gibbs measure with kernel $M$. 
Further for any $n\geq 1$, define the map $\NCycle_n^{M,\TT}:\Ek^{\ZnZ}\to \R$ by
\[
\NCycle_n^{M,\TT}(x)=\frac{\Cycle_n^{M,\TT}(x)}{\prod_{j=0}^{n-1}M_{x_i,x_{i+1 \mod n}}}.
\]

By inspection, it can be checked that, for $n\geq 3$, diving \eref{eq:CyMT} by $\prod_{j=0}^{n-1} M_{x_i,x_{i+1\mod n}}$ gives
\ben\label{eq:CyZ}
\NCycle_n^{M,\TT}(x)= \sum_{w \in \Subn{x}{4}{\ZnZ}} \wZ_w,~~\textrm{ for any }x\in \Ek^{\ZnZ}.
\een
This identity fails for $n=1$ and $n=2$.

Also define the map $\Rep_7^{M,\TT}:\Ek^7\times \Ek \to \R$ by
\[
\Rep_7^{M,\TT}(a\cro{1,7}; a_4')=\sum_{w \in \Sub{a\cro{1,7}}{4}} \wZ_w -\sum_{w \in \Sub{a_1a_2a_3a_4'a_5a_6a_7}{4}} \wZ_w.
\]
In fact
\[\Rep_7^{M,\TT}(a\cro{1,7}; a_4')=\NCycle_7^{M,\TT}(a\cro{1,7})-\NCycle_7^{M,\TT}(a_1a_2a_3a_4'a_5a_6a_7).\]It is then the balance in $\NCycle_7^{M,\TT}(a\cro{1,7})$ when the ``central letter'' $a_4$ of a word $a\cro{1,7}$
is replaced by $a'_4$. 

A key result of the paper is the following: the infinite system of equations $\{\Lineq^{\rho,M,\TT}_n\equiv 0, n\geq 1\}$, which by definition is the invariance of the Markov law by $\TT$ on the line is equivalent to many different finite systems of equations with bounded degree (in  $M$):
\begin{theo} \label{theo:t0} Let $\Ek$ be finite and $L=2$. 
If $M>0$ then the following statements are equivalent:
\bir
\itr $(\rho,M)$ is invariant by $\TT$ on the line.
\itr $\Rep_7^{M,\TT}(a,b,c,d,0,0,0;0) = 0$ for all $a,b,c,d \in \Ek$.
\itr $\Rep_7^{M,\TT} \equiv 0$.
\itr $\ME_7^{M,\TT}(a,b,c,d,0,0,0)=0$ for all $a,b,c,d \in \Ek$.
\itr $\ME_7^{M,\TT}\equiv0$.
\itr $\NCycle_n^{M,\TT}\equiv0$ for all $n\geq 3$.
\itr $\NCycle_7^{M,\TT}\equiv0$.
\itr $\NCycle_7^{M,\TT}(a,b,c,d,0,0,0)= 0$ for all $a,b,c,d \in \Ek$.
\itr There exists a function ${\sf W}:\Ek^3 \to \R$ such that $\wZ^{M,\TT}_{a,b,c,d} = \W{b,c,d}-\W{a,b,c}$.
\eir~
\end{theo}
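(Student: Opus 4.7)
My plan is to treat (ix) as the pivotal hub and show all eight other statements are equivalent to it. The algebraic engine is that the decomposition $\wZ_{a,b,c,d}=\W{b,c,d}-\W{a,b,c}$ makes any sum of consecutive $\wZ$-terms telescope to its endpoints, and hence vanish over any closed cycle. This yields the forward implications (ix)$\Rightarrow$(v),(vi),(vii),(viii) immediately: for instance, summing this identity around $\Z/n\Z$ makes $\NCycle_n^{M,\TT}$ vanish, and the combinatorial sums defining $\ME_7$ and $\Rep_7$ collapse for the same reason. Combined with the identity $\Rep_7^{M,\TT}(a\cro{1,7};a_4')=\NCycle_7^{M,\TT}(a\cro{1,7})-\NCycle_7^{M,\TT}(a_1a_2a_3a_4'a_5a_6a_7)$ noted just before the theorem, (ix) also yields (iii), hence (ii). The specializations (iii)$\Rightarrow$(ii), (v)$\Rightarrow$(iv), and (vi)$\Rightarrow$(vii)$\Rightarrow$(viii) are trivial.

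\textbf{The key equivalence (i)$\Leftrightarrow$(ix).} For (i)$\Rightarrow$(ix), I would split the sum defining $\NLineq_n(x\cro{1,n})$ according to the position $j$ of the $\wZ$-term. Using $\sum_{x_{-1},x_0}\rho_{x_{-1}}M_{x_{-1},x_0}M_{x_0,x_1}=\rho_{x_1}$ (from $\rho M=\rho$) and $\sum_{x_{n+1},x_{n+2}}M_{x_n,x_{n+1}}M_{x_{n+1},x_{n+2}}=1$, the boundary summations show that, for $n\geq 5$, the interior indices $2\leq j\leq n-2$ factor cleanly, yielding
\[
\NLineq_n(x\cro{1,n})=\rho_{x_1}\sum_{j=2}^{n-2}\wZ_{x_{j-1},x_j,x_{j+1},x_{j+2}}+\rho_{x_1}L(x_1,x_2,x_3)+\rho_{x_1}R(x_{n-2},x_{n-1},x_n)
\]
for certain explicit functions $L$ and $R$ collecting the four ``boundary'' contributions $j\in\{0,1,n-1,n\}$. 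Since $\NLineq_n\equiv 0$ and $\rho_{x_1}>0$ (by $M>0$), this gives $\sum_{j=2}^{n-2}\wZ=-L-R$; comparing the identity for $n+1$ versus $n$ isolates a single $\wZ$-term, producing $\wZ_{a,b,c,d}=\W{b,c,d}-\W{a,b,c}$ with $\W:=-R$, which is (ix). For (ix)$\Rightarrow$(i), the same telescoping reduces $\NLineq_n$ to $\rho_{x_1}\tilde{A}(x_n)-\tilde{B}(x_1)$, where $\tilde{A}(a):=\sum_{b,c}M_{ab}M_{bc}\W{a,b,c}$ and $\tilde{B}(c):=\sum_{a,b}\rho_aM_{ab}M_{bc}\W{a,b,c}$. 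Writing $\W{a,b,c}=\sum_dM_{cd}\W{b,c,d}-\sum_dM_{cd}\wZ_{a,b,c,d}$ from (ix) and expanding $\wZ$ via its definition, the ``exit rate'' contributions cancel against each other after renaming $(u,v)\leftrightarrow(b,c)$, leaving $\tilde{A}(a)=\sum_bM_{ab}\tilde{A}(b)$. Perron--Frobenius applied to the irreducible positive kernel $M$ forces $\tilde{A}\equiv c_0$, and a mirror computation using $\rho M=\rho$ (equivalently, via the time-reversed kernel $\hat{M}_{c,b}:=\rho_bM_{bc}/\rho_c$, for which $\rho$ is also invariant) then yields $\tilde{B}(c)=c_0\rho_c$, whence $\NLineq_n\equiv 0$.

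\textbf{Closing the loop and main obstacle.} It remains to prove one implication from the finite conditions back to (ix); I would choose (viii)$\Rightarrow$(ix), from which (vii),(vi),(v),(iv),(iii),(ii) will also imply (ix) by the preceding easy chain. Starting from $\NCycle_7^{M,\TT}(a,b,c,d,0,0,0)=0$ and its variants obtained by relabeling, one deduces in two moves that $\wZ_{a,b,c,d}-\wZ_{a,b,c,0}$ is independent of $a$, and symmetrically that $\wZ_{a,b,c,d}-\wZ_{0,b,c,d}$ is independent of $d$. Together these two ``conditional independence'' properties force the additive decomposition; an explicit admissible choice is $\W{b,c,d}:=\wZ_{0,b,c,d}+\wZ_{0,0,b,c}+\wZ_{0,0,0,b}$, whose correctness is then verified by direct substitution back into (viii). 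The main technical obstacle I expect is the $\tilde{B}$ step in (ix)$\Rightarrow$(i): while the right-eigenvector identity $\tilde{A}=M\tilde{A}$ closes cleanly via Perron--Frobenius, the analogous $\tilde{B}(c)=c_0\rho_c$ requires pairing the in-flux/out-flux cancellation in the defining formula for $\wZ$ with the stationarity $\rho M=\rho$, so that both the positivity hypothesis $M>0$ and the specific algebraic form of $\wZ$ are genuinely used.
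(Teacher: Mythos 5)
Your hub-around-$(ix)$ organization is genuinely different from the paper's two six-step cycles, and the parts you do carry out are sound: the decomposition $\NLineq_n(x)=\rho_{x_1}\bigl(\sum_{j=2}^{n-2}\wZ_{x\cro{j-1,j+2}}+L(x_1,x_2,x_3)+R(x_{n-2},x_{n-1},x_n)\bigr)$ and the comparison of consecutive $n$ give $(i)\imp(ix)$ cleanly, your $(ix)\imp(i)$ via $\tilde A=M\tilde A$ and $\tilde B=\tilde B M$ works (the cancellation you invoke is exactly the paper's Lemma \ref{lem:balanced}, $\sum_{b,c}\wZ_{a,b,c,d}M_{a,b}M_{b,c}M_{c,d}=0$; note you still need the one-line check $\sum_c\tilde B(c)=\sum_a\rho_a\tilde A(a)$ to see the two Perron--Frobenius constants agree), and your $(viii)\imp(ix)$ with $\W{b,c,d}=\wZ_{0,b,c,d}+\wZ_{0,0,b,c}+\wZ_{0,0,0,b}$ is the paper's own construction.

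The genuine gap is in ``closing the loop'': your established implications give $(ix)\Leftrightarrow(i)\Leftrightarrow(vi)\Leftrightarrow(vii)\Leftrightarrow(viii)$, but statements $(ii)$, $(iii)$, $(iv)$, $(v)$ only ever appear as \emph{targets} of arrows -- $(ix)\imp(v)\imp(iv)$ and $(ix)\imp(iii)\imp(ii)$ -- so nothing in your ``preceding easy chain'' lets any of them imply $(ix)$, contrary to your claim. This is not a bookkeeping issue: $(ii)$ and $(iv)$ are the restricted finite certificates (vanishing only on words $(a,b,c,d,0,0,0)$), and proving they suffice is a substantive part of the theorem. The paper's route needs two non-trivial ingredients you omit: the telescoping function $H_n(a\cro{1,n})=\sum_j\wZ_{a\cro{j,j+3}}+(\text{boundary terms})-(n-4)\wZ_{0,0,0,0}$, whose $n$-independence upgrades $(ii)$ to $(iii)$ and $(iv)$ to $(v)$; and the fact that $\wZ_{0,0,0,0}=0$, obtained by showing $\NCycle_4$ is constant and then summing $\NCycle_4(a,b,c,d)M_{a,b}M_{b,c}M_{c,d}M_{d,a}$ against the trace, where Lemma \ref{lem:balanced} and the positivity of $M$ kill the sum. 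Without some version of these two steps (plus a $(v)\imp(vi)$ argument, which in the paper uses cyclic invariance of $\NCycle_n$ together with $\NCycle_n(0^n)=n\wZ_{0,0,0,0}$), the equivalence of $(ii)$--$(v)$ with the rest remains unproved.
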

The implication $(vii)\imp (vi)$ and $(vii)\imp(i)$ gives the following Corollary:
 \begin{cor}\label{cor:t444}
 Let $\Ek$ be finite. If the Gibbs measure with a positive Markov kernel $M$ is invariant on $\Z/n\Z$ by $\TT$ for $n=7$, then it is also invariant on $\Z/n\Z$ by $\TT$ for every $n\geq 3$, and the $M$-Markov law  under its invariant distribution is invariant by $\TT$ on the line.
 \end{cor}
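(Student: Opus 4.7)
The plan is essentially to quote Theorem \ref{theo:t0}: the corollary is an immediate repackaging of two of its equivalences, combined with the observation that positivity of $M$ lets us freely divide by $\prod M_{x_i,x_{i+1}}$.

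First I would translate the hypothesis into the normalized language used in Theorem \ref{theo:t0}. The $M$-Gibbs measure on $\Z/7\Z$ being invariant by $\TT$ is, by the discussion surrounding \eqref{eq:CyMT}, precisely the statement $\Cycle_7^{M,\TT}(x)=0$ for all $x\in\Ek^{\Z/7\Z}$. Since $M>0$, the factor $\prod_{j=0}^{6}M_{x_j,x_{j+1\bmod 7}}$ is strictly positive for every $x$, so this vanishing is equivalent to $\NCycle_7^{M,\TT}\equiv 0$, which is statement $(vii)$ of Theorem \ref{theo:t0}.

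Next I would invoke the equivalence $(vii)\Leftrightarrow(vi)$ of Theorem \ref{theo:t0} to conclude $\NCycle_n^{M,\TT}\equiv 0$ for every $n\geq 3$. Multiplying back by the (positive) product $\prod_{j=0}^{n-1}M_{x_j,x_{j+1\bmod n}}$ shows $\Cycle_n^{M,\TT}\equiv 0$ on $\Ek^{\Z/n\Z}$ for every such $n$, which is exactly the invariance of the $M$-Gibbs measure on $\Z/n\Z$. For the second half of the conclusion, the equivalence $(vii)\Leftrightarrow(i)$ of Theorem \ref{theo:t0} yields directly that the $(\rho,M)$-Markov law, with $\rho$ the invariant distribution of $M$, is invariant under $\TT$ on the line.

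Since the whole argument consists in quoting Theorem \ref{theo:t0} and observing that positivity of $M$ allows one to pass between $\Cycle$ and $\NCycle$, there is no substantive obstacle; any difficulty is concentrated in Theorem \ref{theo:t0} itself, which is proved separately. I would only note that the positivity hypothesis on $M$ is needed at exactly one place, namely to identify $\Cycle_n^{M,\TT}\equiv 0$ with $\NCycle_n^{M,\TT}\equiv 0$ so that Theorem \ref{theo:t0} applies as stated.
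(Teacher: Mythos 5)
Your proposal is correct and coincides with the paper's own argument: the paper derives the corollary precisely from the implications $(vii)\Rightarrow(vi)$ and $(vii)\Rightarrow(i)$ of Theorem \ref{theo:t0}, after identifying invariance of the $M$-Gibbs measure on $\Z/7\Z$ with $\NCycle_7^{M,\TT}\equiv 0$. Your added remark that positivity of $M$ is what licenses passing between $\Cycle_n^{M,\TT}$ and $\NCycle_n^{M,\TT}$ is exactly the (implicit) justification the paper relies on.
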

\begin{rem}
 \noindent {\bs} The appearance of ``0'' everywhere in the Theorem is arbitrary. It may be replaced by any constant element of $\Ek$ in the previous statements.\\ 
\noindent {\bs} The positivity of $M$ is a strong condition whose relaxation entails many difficulties. It is discussed in Section \ref{sec:relax}.\\
\noindent {\bs} [link with reversibility]  The condition
  \begin{equation}\label{eq:etj}
    \T{(u,v)}{(b,c)}M_{a,u}M_{u,v}M_{v,d}= M_{a,b}M_{b,c}M_{c,d}\T{(b,c)}{(u,v)} \textrm{ for any }a,b,c,d,u,v\in \Ek \end{equation}
  is equivalent to the fact that PS with JRM $\TT$ is reversible with respect to the Gibbs measure with kernel $M$ on any cylinder with size $\geq 3$. As usual reversibility implies invariance.  However, invariance and reversibility are not equivalent even for Gibbs measures: Theorem \ref{theo:t0} gives the complete picture. In particular, \eref{eq:etj} implies $\wZ_{a,b,c,d}^{M,\TT}\equiv 0$, which implies Conditions $(ii)$ to $(ix)$ of Theorem \ref{theo:t0}. The converse does not hold.\\
\noindent {\bs} Further in the paper, we will state  Theorem \ref{eq:fqqds} which implies a result somehow stronger that Theorem \ref{theo:t0} in some conditions: $\NCycle_n^{M,\TT}\equiv 0$ for any $n\leq \kappa$ is necessary and sufficient for the Markov chain $(\rho,M)$ to be invariant by $\TT$. When the number of colours $\kappa<7$ this provides a criterion potentially simpler to check than those given in Theorem \ref{theo:t0}.
\end{rem}

We state here a theorem which is important in many applications.
Consider a PS with JRM $\TT$ defined on $\Z$, and its analogue on $\Ek^{\ZnZ}$. For $a$ and $b$ two elements of this last configuration set,  $b$ is said to be accessible from $a$, if $a=b$ or if it is possible to go from $a$ to $b$ using jumps with positive rates. 
A strict subset $S$ of $\Ek^{\ZnZ}$ is said to be absorbing, if
for any $b$ in $\Ek^{\ZnZ}$, an element of $S$ is accessible from $b$, and if  $\Ek^{\ZnZ}\setminus S$ is not accessible from $S$.
\begin{theo}\label{theo:fs} Consider a finite alphabet $\Ek$ with $|\kappa|\geq 2$. Consider $\TT$ a JRM with range $L$, such that $\TT$ is not identically 0. \\
Suppose that for infinitely many integers $n$ the PS with JRM $\TT$ possesses an absorbing subset $S_n$ of $\Ek^{\ZnZ}$, with $\varnothing \subsetneq S_n \subsetneq \Ek^{\ZnZ}$. Under these conditions, there does not exist any Markov law with memory $m$, for any $m$, with full support, invariant by $\TT$ on the line. 
\end{theo}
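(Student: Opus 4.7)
The plan is a proof by contradiction: assume there exists a Markov law $\mu$ with memory $m$ and full support on $\Z$, invariant by $\TT$. The first step is to transfer invariance from the line to cycles. For the case $L=2$, $m=1$, Corollary \ref{cor:t444} directly implies that the $M$-Gibbs measure $\mu_n$ on $\Ek^{\ZnZ}$ built from the kernel $M$ of $\mu$ is invariant by $\TT$ on $\ZnZ$ for every $n\geq 3$. For general range $L$ and memory $m$, the analogous statement (the extension of Theorem \ref{theo:t0} promised in Section \ref{sec:Ext}) provides invariance of $\mu_n$ on $\ZnZ$ for all $n$ beyond some threshold $N=N(L,m)$. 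Moreover, since $\mu$ has full support, all relevant entries of $M$ are strictly positive, so $\mu_n$ has full support on $\Ek^{\ZnZ}$.

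Now pick $n\geq N$ at which an absorbing subset $S_n$ with $\varnothing\subsetneq S_n\subsetneq \Ek^{\ZnZ}$ exists, possible because such $n$ occur infinitely often. Denote by $q(a,b)$ the jump rate from $a$ to $b$ on the cycle, as induced from $\TT$ via \eref{eq:sfe}. Since $\mu_n$ is an invariant probability measure for the finite-state continuous-time Markov chain on $\Ek^{\ZnZ}$, applying the standard flow balance to the subset $A:=\Ek^{\ZnZ}\setminus S_n$ yields
\[
\sum_{a\in A,\, b\in S_n}\mu_n(a)\, q(a,b)\;=\;\sum_{a\in A,\, b\in S_n}\mu_n(b)\, q(b,a).
\]
The right-hand side vanishes because $S_n$ is absorbing: by definition, $q(b,a)=0$ for every $b\in S_n$ and $a\notin S_n$. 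On the left-hand side, pick any $b_0\in A$ (non-empty since $S_n\neq \Ek^{\ZnZ}$); by the accessibility part of the absorbing definition, there exists a sequence $b_0\to b_1\to\cdots\to b_k\in S_n$ of positive-rate jumps, and taking $j$ to be the smallest index with $b_{j+1}\in S_n$ produces $a':=b_j\in A$ and $b':=b_{j+1}\in S_n$ with $q(a',b')>0$. Combined with $\mu_n(a')>0$ (full support), this forces the left-hand side to be strictly positive, contradicting the equality above.

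The main obstacle is the first step: transferring invariance from the line to cycles of sufficiently large length, for arbitrary range $L$ and memory $m$. For $(L,m)=(2,1)$ this is Corollary \ref{cor:t444}, obtained via the finite characterization of Theorem \ref{theo:t0}; the general case relies on the extensions developed in Section \ref{sec:Ext}, which follow the same telescoping scheme with additional bookkeeping. Once that step is granted, the remainder of the proof is elementary and uses only finiteness of $\Ek$ together with the existence of a single $n$ that simultaneously supports an absorbing subset and the cycle invariance of $\mu_n$; the hypothesis of infinitely many such $n$ is exactly what allows us to surpass the threshold $N(L,m)$.
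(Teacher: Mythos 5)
Your proposal follows the same route as the paper: transfer invariance from the line to the Gibbs measure on $\Z/n\Z$ via Theorem \ref{theo:t0} (resp.\ Theorem \ref{theo:t0prime} for general $m$ and $L$), then observe that a full-support invariant measure on the finite cycle is incompatible with a non-trivial absorbing subset. The paper states this last incompatibility in one sentence; your flow-balance argument across the boundary of $S_n$ is a correct elaboration of exactly that step.
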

In fact only the case $m=1$ is a Corollary of Theorem \ref{theo:t0}, the strongest form for  general memory $m\geq 1$ and range $L$ is a Corollary of Theorem \ref{theo:t0prime} which treats the invariance of Markov law with memory $m$.
\begin{rem}
  {\bs} Notice that if $\TT$ is identically 0, then all states are absorbing states, then all Markov law are invariant.\\
  {\bs} If the hypothesis of the theorem holds for some fixed $n$, then the conclusion holds if the memory size $m$ satisfies $m+L\leq n$.
  \end{rem}

We will use this theorem in Section \ref{appl} for some applications on the contact process and the voter model.

\begin{proof} By Theorem \ref{theo:t0} (for $m=1$) or Theorem \ref{theo:t0prime} (for $m\geq 1$), if there exists a Markov law with memory $m$ and full support invariant for $\TT$ on the line, then the same property holds on $\ZnZ$ for $n\geq m+L$ for the corresponding Gibbs measure. But the invariance of a full support measure is incompatible with the existence of a non trivial absorbing subset.
\end{proof}
\color{black}
\begin{rem}[Crucial]\label{rem:kyrje} 
\tb $\ME_7^{M,\TT}\equiv 0$ is equivalent to
\beq\label{eq:sfjghdt}
\sum_{w \in \Sub{a\cro{1,7}}{4}} \wZ_w =\sum_{w \in \Sub{a\cro{1,7}^{\{4\}}}{4}} \wZ_w,~~\textrm{ for any } a\cro{1,7}\in \Ek^7.
\eq
This relation allows to see that the LHS of \eref{eq:sfjghdt}, does not depend on $a_4$, and in many places it will allow us to remove ``one letter'' in linear combinations involving $\wZ$: for any words $a\cro{1,n}$ with at least $n\geq 7$ letters, for any $4\leq k \leq n-3$, 
\[
\sum_{w \in \Sub{a\cro{1,n}}{4}} \wZ_w =\sum_{w \in \Sub{a\cro{1,n}^{\{k\}}}{4}} \wZ_w.
\]
This property is reminiscent to other algebraic properties, as rewriting systems, dependence in a vector space or as relation in the presentation of a group by generators and relations. \par 
\tb The fact that $\Rep_7\equiv 0$ is a necessary condition for the Markov law $(\rho,M)$ to be invariant by $\TT$ on the line appears naturally since it is the comparison of the balance of the outgoing and incoming rate of two similar words. The sufficiency of this condition is not obvious (see Section \ref{sec:relax} for extension when $M$ is not supposed positive).
\end{rem}

There are many links between the systems $\NCycle_n\equiv 0$ for different values of $n$, here are some of them, which prove that the Markov law with Markov kernel $M$ is invariant by $\TT$ if $(M,\TT)$ solves a system of equations with degree 6 in $M$, linear in $\TT$ (the system being finite when $\kappa$ is finite):
\begin{theo} \label{theo:t2} The system  $\NCycle_7^{M,\TT}\equiv 0$ is equivalent to:
	\bir
	\itr $\{\NCycle_6^{M,\TT}\equiv 0, \NCycle_5^{M,\TT}\equiv 0\}$,
	\itr   $\{\NCycle_6^{M,\TT}\equiv 0, \NCycle_4^{M,\TT}\equiv 0\}$.
	\eir
\end{theo}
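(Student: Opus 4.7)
The forward implication in both equivalences follows from Theorem \ref{theo:t0}: the equivalence $(vi)\iff(vii)$ established there states that $\NCycle_7^{M,\TT}\equiv 0$ is equivalent to $\NCycle_n^{M,\TT}\equiv 0$ for every $n\geq 3$, and in particular for $n=4,5,6$.

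The substantive content is the two converses. For (i), assume $\NCycle_6^{M,\TT}\equiv 0$ and $\NCycle_5^{M,\TT}\equiv 0$. Subtracting the two hypotheses yields the ``four-for-three'' deletion identity
\[
\wZ_{c,d,e,f}+\wZ_{d,e,f,a}+\wZ_{e,f,a,b}+\wZ_{f,a,b,c}=\wZ_{c,d,e,a}+\wZ_{d,e,a,b}+\wZ_{e,a,b,c}\qquad(\star)
\]
valid for all $a,b,c,d,e,f\in\Ek$; its content is that the four $\wZ$-blocks containing the letter at position $f$ on the 6-cycle $abcdef$ can be traded for the three $\wZ$-blocks adjacent to the deletion slot on the 5-cycle $abcde$. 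To evaluate $\NCycle_7^{M,\TT}(x_1,\dots,x_7)$ I would apply $(\star)$ with $(a,b,c,d,e,f)=(x_1,x_2,x_4,x_5,x_6,x_7)$, which replaces the four terms $\wZ_{x_4,x_5,x_6,x_7}+\wZ_{x_5,x_6,x_7,x_1}+\wZ_{x_6,x_7,x_1,x_2}+\wZ_{x_7,x_1,x_2,x_4}$ by three terms involving only $x_1,\dots,x_6$. After this substitution, one recognises a complete copy of $\NCycle_6^{M,\TT}(x_1,\dots,x_6)\equiv 0$ inside the resulting expression; what remains is a residual of the form $\phi(x_4)-\phi(x_3)$ with $\phi(y):=\wZ_{x_6,x_1,x_2,y}-\wZ_{x_7,x_1,x_2,y}$.

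To show the residual vanishes, I would apply a second round of the same type of reasoning: subtracting $\NCycle_5^{M,\TT}(x_6,x_1,x_2,a,c)-\NCycle_5^{M,\TT}(x_7,x_1,x_2,a,c)$ shows that $\phi(a)$ can itself be expressed through a further deletion, and comparing two such expressions for $\phi(a)$ and $\phi(b)$ (choosing $c$ so as to force a cancellation using $(\star)$ once more) forces $\phi$ to be independent of its argument, so that $\phi(x_4)-\phi(x_3)=0$. Thus $\NCycle_7^{M,\TT}\equiv 0$.

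For the converse in (ii), the analogue of $(\star)$ is obtained from $\NCycle_6^{M,\TT}(a,b,c,d,e,f)-\NCycle_4^{M,\TT}(a,b,c,d)=0$, which this time relates the six blocks on a 6-cycle to the four blocks on a 4-cycle with two consecutive letters ``deleted''. Although the combinatorial shape is different — one now has a ``six-for-four'' identity — the same scheme applies: iterated substitution brings $\NCycle_7^{M,\TT}(x_1,\dots,x_7)$ into the form $\NCycle_6$ plus a residual, and further appeals to $\NCycle_6\equiv 0$ and $\NCycle_4\equiv 0$ collapse that residual to $0$. The central obstacle throughout is the bookkeeping: one must choose the substitutions in $(\star)$ (and in its $\NCycle_4$ analogue) so that, after extracting complete copies of the vanishing $\NCycle$'s, the remaining sum of $\wZ$-terms is a manifest telescoping difference. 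Since every identity used is linear in $\wZ$, verifying the final cancellations is a mechanical check of coincidences among 4-subwords of short cyclic words, but finding the right substitution sequence is the delicate step.
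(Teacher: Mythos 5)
Your forward direction and your first reduction are fine: the identity $(\star)$ does follow from subtracting $\NCycle_5^{M,\TT}(a,b,c,d,e)=0$ from $\NCycle_6^{M,\TT}(a,b,c,d,e,f)=0$, and your substitution indeed yields
\[
\NCycle_7^{M,\TT}(x_1,\dots,x_7)=\NCycle_6^{M,\TT}(x_1,\dots,x_6)+\phi(x_4)-\phi(x_3),\qquad \phi(y)=\wZ_{x_6,x_1,x_2,y}-\wZ_{x_7,x_1,x_2,y},
\]
which I have checked. The genuine gap is the claim that $\phi$ is constant in $y$. The recipe you sketch does not close: expanding $\NCycle_5^{M,\TT}(x_6,x_1,x_2,a,c)-\NCycle_5^{M,\TT}(x_7,x_1,x_2,a,c)=0$ gives $\phi(a)=-(\wZ_{x_2,a,c,x_6}-\wZ_{x_2,a,c,x_7})-(\wZ_{a,c,x_6,x_1}-\wZ_{a,c,x_7,x_1})+\mathrm{const}$, so two of the three correction terms still depend on $a$ (one is a ``position-4'' difference evaluated at position 2, the other a ``position-3'' difference evaluated at position 1). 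Comparing $\phi(a)$ with $\phi(b)$ therefore trades the unknown second difference $\phi(a)-\phi(b)$ for two \emph{new} families of mixed second differences of $\wZ$, and no choice of $c$ makes these cancel against an instance of $(\star)$; the same happens if one uses $\NCycle_6$ instead (one then only learns that $\phi(a)-\phi(b)$ equals minus the analogous difference with positions 1 and 4 exchanged, which is consistent with, but does not imply, vanishing). The statement you need is true — once one knows $\wZ_{a,b,c,d}=\W{b,c,d}-\W{a,b,c}$ it is immediate — but deriving it from $\{\NCycle_5\equiv0,\NCycle_6\equiv0\}$ is precisely the content of the theorem and cannot be waved through. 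Your part (ii) has the same problem in a more acute form, since no identity is exhibited at all.

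For comparison, the paper avoids this web of second differences entirely by producing three explicit linear identities, each verifiable mechanically by expanding in $\wZ$ and counting multiplicities of $4$-subwords: $\NCycle_7(abcdefg)$ as a signed sum of instances of $\NCycle_4,\NCycle_5,\NCycle_6$ (giving $\mathcal{E}_{4,5,6}\subseteq\mathcal{E}_7$, the converse coming from Theorem \ref{theo:t0} as in your proposal); $\NCycle_4(a,b,c,d)=\NCycle_5(a,b,c,d,a)+\NCycle_5(a,b,c,c,d)-\NCycle_6(a,b,c,c,d,a)$ (giving $\mathcal{E}_{5,6}\subseteq\mathcal{E}_4$); and an analogous expression of $\NCycle_5$ through instances of $\NCycle_4$ and $\NCycle_6$ (giving $\mathcal{E}_{4,6}\subseteq\mathcal{E}_5$). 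If you want to keep your deletion-identity strategy, the honest way to finish part (i) is to prove the lemma ``$\wZ_{u,p,q,y}-\wZ_{v,p,q,y}$ is independent of $y$'' by an explicit linear combination of instances of $\NCycle_5$ and $\NCycle_6$ — at which point you will essentially have rediscovered the paper's identities.
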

The proof is given in  Section \ref{sec:ann}

\begin{rem}A natural question is: are $ \NCycle_6^{M,\TT}\equiv 0$ and $\NCycle_7^{M,\TT}\equiv 0$ equivalent\,? We tested this with a computer for $\kappa=5$ (by the computation of some Gröbner basis), where the answer turns out to be negative. We will see in the sequel that $7$ is the ``critical'' length of the systems associated to the range $L=2$. In section \ref{sec:Ext} we will give the critical length associated with a general range $L$.
\end{rem}

In the sequel, $W_1\cdots W_k$ will stand for the word obtained by the concatenation of the words $W_1,\cdots,W_{k-1}$ and $W_k$.
\begin{rem} [Linearity principle]\label{rem:LP}
  From Theorem \ref{theo:t0}, if a Markov law with Markov kernel $M>0$ is invariant by $\TT$ on the line, then the $M$-Gibbs measure is invariant in $\ZnZ$ for any $n\geq 3$. This is something which can be guessed and proved as follows. Take three words: $p$, $w$, and $s$, the ``prefix'', the ``pattern'', and the ``suffix''. Consider the word $W_n= p w^n s$. If the $M$-Markov law is invariant by $\TT$ on the line, then $\NLineq_{|p|+n|w|+|s|}^{\rho,M,\TT}(W_n)=0$. But it is easy to see that $\NLineq_{|p|+n|w|+|s|}^{\rho,M,\TT}(W_n)=(n-1) \NCycle_{|w|}^{M,\TT}(w) + O(1)$, so that one infers that $\NCycle^{M,\TT}_{k}\equiv 0$ for every 
  $k\geq 3$. \par
  In fact, this remark is also valid for any range $L$, and even the converse holds (see Theorem \ref{eq:fqqds}). 
\end{rem}

\subsubsection{Invariant Product measures}

\begin{defi}
A process $(X_k,k \in I)$ indexed by a finite or countable set $I$ is said to have the \textit{product distribution} $\bp^{I}$ for a distribution $\bp$ on $\Ek$ if the random variables $X_k$'s are i.i.d. and have common distribution $\bp$. \end{defi}
Since product measures are special Markov laws, we can use what has been said so far to characterize invariant product measure by $\TT$ by replacing $M_{i,j}$ by $\rho_j$ in the previous considerations (and rewrite, for example Theorem \ref{theo:t0} restricted to this special case). But, the ``7'' appearing everywhere in this theorem is no more relevant for product measure... the crucial length here is ``3''! To see this,  observe that when $M_{i,j}=\rho_{j}$, the quantity $Z^{M,\TT}_{a,b,c,d}$ does not depend on $(a,d)$, so that we may set
\ben\label{eq:newZ}
\wZ^{\rho,\TT}_{b,c}:=\wZ^{M,\TT}_{a,b,c,d}=\sum_{(u,v)\in \Ek^2} \T{(u,v)}{(b,c)} \frac{\rho_{u}\rho_{v}}{\rho_b \rho_c}-\To{b,c}.\een
$\ME_7^{M,\TT}, \Rep_7^{M,\TT} $ and $\NCycle_n^{M,T}(a\cro{0,n-1})$ respectively ``simplify to''
\ben
\nonumber\ME_3^{\rho,\TT}(a_0,a_1,a_2)&:=&\wZ^{\rho,\TT}_{a_0,a_1}+\wZ^{\rho,\TT}_{a_1,a_2}-\wZ^{\rho,\TT}_{a_0,a_2},\\
\nonumber\Rep_3^{\rho,\TT}(a_0,a_1,a_2;a_1')&:=&\wZ^{\rho,\TT}_{a_0,a_1}+\wZ^{\rho,\TT}_{a_1,a_2}-\wZ^{\rho,\TT}_{a_0,a_1'}-\wZ^{\rho,\TT}_{a_1',a_2},\\
\NCycle_n^{\rho,\TT}(a\cro{0,n-1})&:=&\label{lasduhf} \sum_{j=0}^{n-1} \wZ^{\rho,\TT}_{a_j,a_{j+1 \mod n}} \textrm{ for }  n \geq 2.
\een
We have the following analogue of Theorem \ref{theo:t0}, which provides some finite certificate/criteria for the algebraic invariance of product measures.
\begin{theo} \label{theo:t3b}
If $\Ek$ is finite, $L=2$, and if $\rho\in {\cal M}(\Ek)$ with support $\Ek$, then the following statements are equivalent:
\bir 
\itr $\rho^\Z$ is invariant by $\TT$ on the line.
\itr $\Rep_3^{\rho,\TT}(a,b,0;0) = 0$ for all $a,b \in \Ek$.
\itr $\Rep_3^{\rho,\TT} \equiv 0$.
\itr $\ME_3^{\rho,\TT}(a,b,0)=0$ for all $a,b \in \Ek$.
\itr $\ME_3^{\rho,\TT}\equiv0$.
\itr $\NCycle_n^{\rho,\TT}\equiv0$ for all $n\geq 2$.
\itr $\NCycle_3^{\rho,\TT}\equiv0$.
\itr $\NCycle_3^{\rho,\TT}(a,b,0)= 0$ for all $a,b \in \Ek$.
\itr There exist a function ${\sf W}:\Ek \to \R$ such that $\wZ^{\rho,\TT}_{a,b} = \W{b}- \W{a}$ for all $a,b\in \Ek$.
\eir
\end{theo}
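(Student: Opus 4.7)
The plan is to exploit the fact that, in the product-measure regime, $\wZ^{\rho,\TT}_{b,c}$ depends on only two arguments instead of four, so the algebra of Theorem~\ref{theo:t0} collapses and the critical length drops from $7$ to $3$. The cocycle condition $(ix)$---namely $\wZ^{\rho,\TT}_{a,b} = {\sf W}_b - {\sf W}_a$ for some ${\sf W}:\Ek\to\R$---will serve as the central hub through which every equivalence passes.

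First I would treat the direction from $(ix)$ to the other algebraic conditions: substituting $\wZ^{\rho,\TT}_{a,b} = {\sf W}_b - {\sf W}_a$ into the definitions of $\ME_3^{\rho,\TT}$, $\Rep_3^{\rho,\TT}$ and $\NCycle_n^{\rho,\TT}$ yields zero by telescoping, and the restricted statements $(iv)$, $(ii)$, $(viii)$, $(vii)$ follow trivially.

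For the converse directions I would distinguish three sub-cases. The implication $(iv)\imp(ix)$ is cleanest: setting $b=a$ in $\ME_3^{\rho,\TT}(a,a,0)=\wZ^{\rho,\TT}_{a,a}$ forces $\wZ^{\rho,\TT}_{a,a}=0$, and the general relation $\wZ^{\rho,\TT}_{a,b}=\wZ^{\rho,\TT}_{a,0}-\wZ^{\rho,\TT}_{b,0}$ read off from $(iv)$ at $(a,b,0)$ is already of the cocycle form with ${\sf W}_a:=-\wZ^{\rho,\TT}_{a,0}$. The implication $(viii)\imp(ix)$ is analogous: $\NCycle_3^{\rho,\TT}(0,0,0)=3\wZ^{\rho,\TT}_{0,0}=0$ followed by $\NCycle_3^{\rho,\TT}(a,0,0)=\wZ^{\rho,\TT}_{a,0}+\wZ^{\rho,\TT}_{0,a}=0$ closes the argument. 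The case $(ii)\imp(ix)$ is subtler: $(ii)$ yields only $\wZ^{\rho,\TT}_{a,b}=g(a)-g(b)+g(0)$ with $g(a):=\wZ^{\rho,\TT}_{a,0}$, and nothing immediately forces the constant $g(0)=\wZ^{\rho,\TT}_{0,0}$ to vanish. Here I would invoke the conservation identity $\sum_{b,c\in\Ek}\rho_b\rho_c\,\wZ^{\rho,\TT}_{b,c}=0$, which follows directly from the definition of $\wZ^{\rho,\TT}$ (the double sum over $(b,c)$ and $(u,v)$ of $\T{(u,v)}{(b,c)}\rho_u\rho_v$ equals itself after relabelling, so the incoming and outgoing contributions to the $\rho\otimes\rho$-weighted sum cancel); substituting the parametrisation into this identity collapses the difference of $g$-averages and isolates $\wZ^{\rho,\TT}_{0,0}=0$.

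Finally I would tie everything to $(i)$. The direction $(i)\imp(vii)$ follows from Theorem~\ref{theo:t0} applied to the kernel $M_{i,j}=\rho_j$, under which the $(\rho,M)$-Markov law coincides with $\rho^{\Z}$ and $\wZ^{M,\TT}_{a,b,c,d}$ collapses to $\wZ^{\rho,\TT}_{b,c}$. For the reverse $(ix)\imp(i)$, I would specialise formula~\eref{eq:rsgfqe12} under $M_{i,j}=\rho_j$: after summing out $x_{-1}$ and $x_{n+2}$ (each contributing $\sum_a\rho_a=1$) and applying the telescoping $\sum_{j=0}^n\wZ^{\rho,\TT}_{x_j,x_{j+1}}={\sf W}_{x_{n+1}}-{\sf W}_{x_0}$, the remaining boundary average $\sum_{x_0,x_{n+1}}\rho_{x_0}\rho_{x_{n+1}}({\sf W}_{x_{n+1}}-{\sf W}_{x_0})$ vanishes by symmetry, so $\NLineq_n^{\rho,\TT}\equiv 0$ for every $n\geq 2$; the case $n=1$ reduces directly to the trivial identity $\wZ^{\rho,\TT}_{c,x_1}+\wZ^{\rho,\TT}_{x_1,c}=0$ under the cocycle form. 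The main obstacle is the implication $(ii)\imp(ix)$, which relies crucially on the conservation identity; the remaining equivalences are essentially linear-algebraic bookkeeping once the cocycle hub is identified, and the conceptual content is that the critical length shrinks from $7$ to $3$ precisely because the two-variable structure of $\wZ^{\rho,\TT}$ removes the combinatorial friction responsible for the value $7$ in the Markov-law case.
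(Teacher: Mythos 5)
Your proposal is correct, and it takes a genuinely different route from the paper's. The paper proves Theorem \ref{theo:t3b} by rerunning the two cyclic implication chains from the proof of Theorem \ref{theo:t0} ($(i)\imp(ii)\imp\cdots\imp(v)\imp(i)$ and $(v)\imp(vi)\imp\cdots\imp(ix)\imp(v)$), indicating only the two steps whose adaptation is not verbatim ($(v)\imp(vi)$ and $(viii)\imp(ix)$); in particular its passage from the algebraic conditions back to $(i)$ still goes through the analogues of Lemmas \ref{lem3}, \ref{lem2} and \ref{lem:4} (letter suppression followed by the eigenvector argument for $\NLineq_2$). You instead take the cocycle condition $(ix)$ as the hub: $(ix)$ yields all of $(ii)$--$(viii)$ by telescoping, the converses $(iv)\imp(ix)$ and $(viii)\imp(ix)$ are immediate (the latter coincides with the paper's own computation), and $(ix)\imp(i)$ is obtained by specializing \eref{eq:rsgfqe12} to $M_{i,j}=\rho_j$ and telescoping, which makes the boundary average vanish outright and bypasses the eigenvector machinery entirely. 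Your conservation identity $\sum_{b,c}\rho_b\rho_c\wZ^{\rho,\TT}_{b,c}=0$, used to kill the constant $\wZ^{\rho,\TT}_{0,0}$ in $(ii)\imp(ix)$, is exactly the product-measure instance of Lemma \ref{lem:balanced} (cf.\ Remark \ref{rem:rthgf}), which the paper deploys at the corresponding spot ($(iii)\imp(iv)$) in the Markov case; and your use of Theorem \ref{theo:t0} for $(i)\imp(vii)$ is legitimate since $M_{i,j}=\rho_j$ is a positive kernel with $\rho M=\rho$ and $\wZ^{M,\TT}_{a,b,c,d}=\wZ^{\rho,\TT}_{b,c}$. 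What your organization buys is a shorter, self-contained argument that makes transparent why the critical length drops to $3$ (the two-index $\wZ$ telescopes completely, so no residual boundary analysis is needed); what the paper's organization buys is uniformity with the memory-$m$, range-$L$ generalizations of Section \ref{sec:Ext}, where no one-variable potential is available and the lemma-based route is the one that survives. The only imprecision is cosmetic: for $n=1$ the quantity that must vanish is the average $\sum_c\rho_c\bigl(\wZ^{\rho,\TT}_{c,x_1}+\wZ^{\rho,\TT}_{x_1,c}\bigr)$ rather than the bare identity you quote, but each summand is indeed zero under $(ix)$, so the conclusion stands.
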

In fact Theorem \ref{theo:t3b} is not a corollary of Theorem \ref{theo:t0}, but its proof is almost the same. 

\begin{rem} [Comparison with detailed balance condition] Consider a probability distribution $\rho$ on $\Ek$ with full support. A natural/folklore sufficient condition for this measure to be invariant by $\TT$ on the line is the fact that it solves the following system:
  \ben\label{eq:fqdq}\rho_b\rho_c\T{(b,c)}{(u,v)}= \rho_u\rho_v\T{(u,v)}{(b,c)}\textrm{ for any}, b,c,u,v \in \Ek.\een
 Summing this over $(u,v)$, one sees that this condition implies $\wZ^{\rho,\TT}\equiv 0$.
Theorem \ref{theo:t3b} applies to these situations since when $\wZ^{\rho,\TT}\equiv 0$, $(ii)$ to $(iii)$ are clearly satisfied. \par
  The crucial point here is that $\wZ^{\rho,\TT}\equiv 0$ is \underbar{just} a sufficient condition, not a necessary one (as we will see by providing examples in Section \ref{appl}):Theorem \ref{theo:t3b} gives the complete necessary and sufficient conditions.
\end{rem}

\begin{rem} For the sake of simplicity,  in Section \ref{sec:intro} we restrict  ourselves to criteria/properties for invariant of product measures with full support. Nevertheless, contrary to the Markov case, the case of product measures with a smaller support can be also considered without any problem [see Section \ref{sec:pefos}].
\end{rem}

\paragraph{``Range 2'' on a more general class of graphs.}

Most of the previous discussions on $\AI$ Markov law rely on the geometry of $\Z$, but it turns out that for $\AI$ product measures, some of the previous properties still hold when one defines a PS on a more general graph -- in the case where it still relies on a JRM with range 2.

Formally, consider a continuous time Markov process $X=(X_v,v \in V)$ defined on a lattice like $G =\Z^d$ or $(\ZnZ)^d$. 
Assume that the pair of states $(\eta(x),\eta(y))$  of two vertices $x$ and $y$, jumps to the new pair of states $(a,b)$ with rates $p(x,y)\T{(\eta(x),\eta(y))}{(a,b)}$ for $p(\cdot,\cdot)$, a translation invariant non negative function (that is $p(u,v) = p(0,v-u)$). By $p$, the rates also depend on the positions.  We suppose that there exists $\mathcal{N}\in\N$ such that $p(x,y)=0$ if $\|x-y\|_1\geq \mathcal{N}$ for all $x,y\in G$.

In this case the equilibrium equations for $x(A) \in \Ek^A$ for $A\subset G$ finite is
	\be
		&&\Lineq^{\rho,\TT,p}(x(A)) \\
		&:=& \sum_{w \in E^{D(A)}\atop w(A)=x(A)}\sum_{(i,j)\in  D(A)^2\atop i\in A \textrm{ or }j \in A} p(i,j)\left(\sum_{(u,v)\in \Ek^2}\frac{\rho_u\rho_v}{\rho_{w_i}\rho_{w_j}}\T{(u,v)}{(w_i,w_j)}-\To{(w_i,w_j)}\right)\prod_{i\in D(A)}\rho_{w_i}\\
		&=&  \sum_{w \in E^{D(A)}\atop w(A)=x(A)}\sum_{(i,j)\in \in D(A)^2\atop i\in A \textrm{ or }j \in A} \wZ^{\rho,\TT}_{w_i,w_j}p(i,j)\prod_{i\in D(A)}\rho_{w_i}
	\ee
where $D(A)$ denotes as before the dependence set, which definition needs to be extended for this type of graphs to 
	$D(A)=\{v\in V:  \max\{ p_{u,v}+p_{v,u}, u \in A\} >0 \}  \}$.
	\begin{defi}
	We will say that $\rho^{G}$ is $\AI$ by $p\TT$ if $\rho^{G}$ satisfies $\Lineq_{A}^{\rho,\TT,p}\equiv 0$ for all finite $A\subset G$ (again when $\Ek$ is finite and $G$ locally finite, invariance and algebraic invariance are equivalent notions.
      \end{defi}
      	\begin{theo}\label{theo:ppppp}
		Let $\#\Ek<+\infty$, $L=2$ and $\rho\in{\cal M}(\Ek)$ with full support. Depending on $p$ we have the following equivalences
		\begin{enumerate}
			\item If $p$ is symmetric. $\rho^{G}$ is invariant by $p\TT$ iff $\NCycle_2^{\rho,\TT} \equiv 0$.
			\item If p is asymmetric. $\rho^{G}$ is invariant by $p\TT$ iff the product measure $\rho^\Z$ is invariant by $\TT$ on the line (see the characterizations in Theorem \ref{theo:t3b}).
		\end{enumerate}
	\end{theo}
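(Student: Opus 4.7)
The plan is to reduce $\Lineq^{\rho,\TT,p}\equiv 0$ to a compact ``master identity'' by integrating out the coordinates of $w$ outside $A$, and then to deduce the two required characterizations from the smallest choices $A=\{v\}$ and $A=\{v_1,v_2\}$. Introducing
$$\phi(a):=\sum_{b\in\Ek}\wZ^{\rho,\TT}_{a,b}\rho_b,\qquad\psi(a):=\sum_{b\in\Ek}\wZ^{\rho,\TT}_{b,a}\rho_b,$$
the product structure of $\rho^{G}$ together with $\sum_b\rho_b=1$ rewrites $\Lineq^{\rho,\TT,p}(x(A))$ as
$$\frac{\Lineq^{\rho,\TT,p}(x(A))}{\prod_{k\in A}\rho_{x_k}}=\sum_{i,j\in A}p(i,j)\,\wZ^{\rho,\TT}_{x_i,x_j}+\sum_{k\in A}\phi(x_k)\!\sum_{j\notin A}\! p(k,j)+\sum_{k\in A}\psi(x_k)\!\sum_{i\notin A}\! p(i,k).$$
Applied to $A=\{v\}$, the first sum is empty and, by translation invariance ($\sum_j p(v,j)=\sum_i p(i,v)=:S$), this reduces to $S[\phi(x_v)+\psi(x_v)]=0$, so invariance of $\rho^{G}$ forces $\phi+\psi\equiv 0$, i.e.\ $\sum_b\NCycle_2^{\rho,\TT}(a,b)\rho_b=0$ for every $a\in\Ek$. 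Substituting this and using $\sum_{j\notin A}p(k,j)=S-\sum_{j\in A}p(k,j)$, a short reindexing should yield the master identity
$$\frac{\Lineq^{\rho,\TT,p}(x(A))}{\prod_{k\in A}\rho_{x_k}}=\sum_{i,j\in A}p(i,j)\,\widetilde{\wZ}_{x_i,x_j},\qquad\widetilde{\wZ}_{a,b}:=\wZ^{\rho,\TT}_{a,b}+\phi(b)-\phi(a),$$
valid whenever $\phi+\psi\equiv 0$; note $\widetilde{\wZ}_{a,b}+\widetilde{\wZ}_{b,a}=\NCycle_2^{\rho,\TT}(a,b)$.

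For case 1 ($p$ symmetric), if $\NCycle_2^{\rho,\TT}\equiv 0$ then $\phi+\psi\equiv 0$ automatically and $\widetilde{\wZ}$ is skew-symmetric, so pairing $(i,j)$ with $(j,i)$ in the master identity gives $p(i,j)[\widetilde{\wZ}_{x_i,x_j}+\widetilde{\wZ}_{x_j,x_i}]=0$ and proves invariance. Conversely, applying the master identity to $A=\{v_1,v_2\}$ with $p(v_1,v_2)>0$, the symmetry of $p$ should collapse it to $p(v_1,v_2)[\widetilde{\wZ}_{a,b}+\widetilde{\wZ}_{b,a}]=p(v_1,v_2)\NCycle_2^{\rho,\TT}(a,b)=0$, forcing $\NCycle_2^{\rho,\TT}\equiv 0$.

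For case 2 ($p$ asymmetric), if $\wZ^{\rho,\TT}_{a,b}=\W{b}-\W{a}$ (which by Theorem~\ref{theo:t3b}$(ix)$ is equivalent to $\rho^{\Z}$ being invariant by $\TT$ on the line), a direct computation gives $\phi(a)=c-\W{a}$ with $c:=\sum_b\W{b}\rho_b$, hence $\widetilde{\wZ}\equiv 0$ and the master identity vanishes trivially. Conversely, since $p$ is asymmetric one can pick $(v_1,v_2)$ with $p^{a}(v_1,v_2):=(p(v_1,v_2)-p(v_2,v_1))/2\neq 0$; then necessarily $p^{s}(v_1,v_2):=(p(v_1,v_2)+p(v_2,v_1))/2>0$. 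Splitting $\widetilde{\wZ}=\widetilde{\wZ}^{s}+\widetilde{\wZ}^{a}$ into its symmetric and antisymmetric parts in $(a,b)$, the master identity on $A=\{v_1,v_2\}$ will reduce to
$$\widetilde{\wZ}^{s}_{a,b}\,p^{s}(v_1,v_2)+\widetilde{\wZ}^{a}_{a,b}\,p^{a}(v_1,v_2)=0;$$
swapping $a\leftrightarrow b$ negates $\widetilde{\wZ}^{a}$ and produces a companion equation whose sum and difference force $\widetilde{\wZ}^{s}\equiv 0$ and $\widetilde{\wZ}^{a}\equiv 0$. Therefore $\wZ^{\rho,\TT}_{a,b}=\phi(a)-\phi(b)=\W{b}-\W{a}$ with ${\sf W}:=-\phi$, and Theorem~\ref{theo:t3b}$(ix)$ concludes. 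The main technical obstacle will be the bookkeeping leading to the master identity --- integrating out the non-$A$ coordinates cleanly and using translation invariance to absorb the boundary terms $\sum_{j\notin A}p(k,j)$ against $\phi+\psi\equiv 0$ --- after which both cases amount to a short linear-algebra argument exploiting the ``swap $a\leftrightarrow b$'' trick.
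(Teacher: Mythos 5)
Your proof is correct, and it takes a genuinely different route from the paper's: the paper disposes of Theorem \ref{theo:ppppp} in three lines by deferring to steps 4 and 5 of the proof of Theorem 3.1 in Fajfrov\'a, Gobron and Saada \cite{FGS}, whereas you give a self-contained argument. Your bookkeeping does go through: splitting the pairs $(i,j)$ with $i\in A$ or $j\in A$ into (both in $A$), ($i\in A$, $j\notin A$), ($i\notin A$, $j\in A$) and integrating out the coordinates of $w$ outside $A$ yields exactly your first display; the singleton $A=\{v\}$ forces $S\,(\phi+\psi)\equiv 0$ with $S=\sum_j p(0,j)$ the total rate, i.e.\ $\sum_b \NCycle_2^{\rho,\TT}(a,b)\rho_b=0$; and substituting $\sum_{j\notin A}p(k,j)=S-\sum_{j\in A}p(k,j)$ collapses the boundary terms into the correction $\phi(x_j)-\phi(x_i)$ inside the double sum over $A$, giving the master identity $\sum_{i,j\in A}p(i,j)\widetilde{\wZ}_{x_i,x_j}$. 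From there both cases reduce, as you say, to the $2\times 2$ system obtained by swapping $a\leftrightarrow b$ on one interacting pair $(v_1,v_2)$; its determinant $\l(p(v_1,v_2)-p(v_2,v_1)\r)\l(p(v_1,v_2)+p(v_2,v_1)\r)$ is nonzero precisely in the asymmetric case (nonnegativity of the rates makes the second factor positive as soon as the first is nonzero), which is why $\widetilde{\wZ}\equiv 0$ --- hence condition $(ix)$ of Theorem \ref{theo:t3b} with ${\sf W}=-\phi$ --- comes out in case 2, while only the symmetrization $\NCycle_2^{\rho,\TT}$ is forced in case 1. What your approach buys is transparency and independence from \cite{FGS}. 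Two things are used silently and deserve one sentence each: you need $p(v,v)=0$ (implicit in the paper's ``pair of two vertices'') so that the diagonal terms of the master identity vanish, and you need $p\not\equiv 0$, hence $S>0$ and the existence of a pair with $p(v_1,v_2)>0$ --- for $p\equiv 0$ both equivalences in the theorem are false, so this exclusion is really part of the statement rather than a gap in your argument.
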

	Hence, if $p$ is asymmetric, the geometry does not matter since  $\wZ^{\rho,\TT}$ only depends on  the states (given by $\eta$), and not on the positions.

\subsection{A glimpse in 2D and beyond }
\label{sec:G2D}

We consider in this part PS indexed by $\Z^d$, and then whose configuration space is $\Ek^{\Z^d}$.
We suppose that the JRM instead of being defined (as done in \eref{eq:JRM}) by ``the jump rate of size $L$-subwords'' is defined by
\[\TT=\l( \T{w}{w'})_{u,v \in \Ek^{\HC[L,d]}}\r)\]
where
\[\HC[L,d]=\cro{0,L-1}^d\] is the hypercube with range $L$ in $\Z^d$ (see discussion in second point of Remark \ref{rem:generalshape} for other shapes).
For example for $d=2$, $\HC[2,2]$ is the square ${\sf \Sq}$ formed by the cells $(0,0) , (1,0),$ $(1,1)$ and $(0,1)$. An example of JRM is the following $\TT$ with all entries equal to 0, except
\ben\label{eq:egrda}
\tt11100001=1, \tt00011110=1,
\een
meaning that the $2\times 2$ square sub-configurations jumps with rate 1, if they are equal to {\small $\square1110$} or {$\square0001$}, in which case, the colours of the 4 vertices are flipped.\medskip

Formally, replace $J$ defined in \eref{eq:J}  by
\ben\label{eq:Jd}
J^{(d)}= \l\{(i,w,w'), i \in \Z^d, w \in \Ek^{\HC[L,d]},w'\in \Ek^{\HC[L,d]}\r\}=  \Z^d\times (\Ek^{\HC[L,d]})^2,
\een
and $m$ by $m^{(d)}$, which again is the family of endofunctions $m_{i,w,w'}^{(d)}$ on the set of configurations defined for any $(i,w,w')\in J^{(d)}$, generalizing naturally the $m_{i,w,w'}$'s defined in \eref{eq:miwwp}. The corresponding generator is
\ben
\l(G^{(d)}f\r)(\eta)=\sum_{(i,w,w')\in J^{(d)}} \T{w}{w'} \l[f(m_{i,w,w'}^{(d)}(\eta))-f(\eta)\r],
\een
acting on continuous functions $f$ sufficiently smooth (see discussion below \eref{eq:gen}). The dynamics of this PS is as follows:  starting from a (random or not) configuration $\eta^0=(\eta_{z}^0, z\in \Z^d)$, each sub-configuration  $(\eta_z^0, z\in h)= u$ indexed by a hypercube $h$ equal to $\HC[L,d]$ up to a translation, is replaced by the sub-configuration with same shape $v$ with rate $\T{u}{v}$. When $\Ek<+\infty$, this defines a Markov process (see discussion below \eref{eq:gen}).\par

Again a measure $\mu \in {\cal M}\l(\E_k^{\Z^d}\r)$ is said to be $\AI$ by $\TT$ in $\Z^d$ if its finite dimensional distributions are preserved by $\TT$. 
It is then possible to state the analogue of $\Lineq^{\Z}$ in these settings: let $C$ be a finite subset of $\Z^d$. Set 
\beq\label{eq:lineq2D}
\Lineq^{\Z^d}(x(C),\nu)=\dis\sum_{w,z \in E_{\kappa}^{D(C)}} \l(\nu_{D(C)}(w) \T wz- \nu_{D(C)}(z) \T zw\r)1_{z(C)=x(C)}
\eq
where $x(C)=(x_c,c \in C)$ is any element of $\Ek^C$, and where $D(C)$ is the dependence set of $C$: for any subset $F$ of $\Z^d$, the dependence set of $F$ is
\[D(F)=F-\HC[L,d].\]
Again, for any $w,z \in \Ek^{D(C)}$, the global transition rate from $w$ to $z$ is
\beq \label{eq:dec-T}
\T{w}z= \sum_{c \in \Z^2: (c+h) \cap C \neq \varnothing} \T{w(c+h))}{z(c+h))} \1_{w(x)=z(x), \forall x \in D(C)\setminus (c+h)},
\eq
where $h=\HC[L,d]$. 
Finally, the normalized version  $\Lineq^{\rho,\TT}$ is defined , for any finite domain $C$ by 
\ben\label{eq:nl}
\NLineq^{\rho,\TT}(x(C)):=\frac{\Lineq^{\rho,\TT}(x(C))}{ \prod_{c \in C} \rho_{x(c)}} \textrm { for any } x(C)\in \Ek^{C}.\een

The first theorem we want to state gives a necessary and sufficient condition for a product measure $\rho^{\Z^d}$ to be invariant by some PS with JRM $\TT$. Again, when $\Ek<+\infty$ it provides a criterion involving a system composed by  {\bf a  finite number of equations}. After that, we will explain how to obtain an equivalent system with a much smaller number of equations.

Let $C$ be a finite subset of $\Z^d$ and $D(C)$ its dependence set. The dependence set by definition is a union of hypercubes $h$ with sides $L$: depending on $C$, some of them may be included completely in $C$, some contains some points in $C$ and some points outside. The balance $\NLineq^{\rho,\TT}(x(C))$ can be decomposed as a sum on these hypercubes. Indeed, using the decomposition of $\TT$ along simple jump \eref{eq:nl}, one gets 
\ben\label{eq:nl2}
\NLineq^{\rho,\TT}(x(C)):=\sum_{h \subset C} {\bf Z}_{x(h)}+\sum_{h: h\cap C \neq \varnothing \atop{h\not\subset C}} {\bf Z}_{x(h\cap C)}^{h\cap C,h}
\een
depending on whether $h$ is totally included in $C$ or not. Here, the geometry of $\Z^d$ appears: when $h$ is not included in $C$, $h\cap C$ can be (depending on $C$) any subset of $h$, and we then need to mark this dependence with the pair  $(h\cap C,h)$  as an exponent of ${\bf Z}$. A simple analysis on the summation variables and the simplification of the quotient of weights of unchanged colours, give:
\ben\label{eq:zrgze}
{\bf Z}_{x(h)}= \l(\sum_{y\in \Ek^{h}} \frac{\prod_{c \in y} \rho_c}{\prod_{c \in x(h)} \rho_c} \T{y}{x(h)\,}\r)- \To{x(h)\,}
\een
and more generally, for $h$ such that $ h\cap C \neq \varnothing,h\not\subset C$, 
\ben\label{eq:Zh}
{\bf Z}_{x(h\cap C)}^{h\cap C,h}=\sum_{w(h)\in \Ek^h} {\bZ_{w(h)}}\1_{w(h\cap C)=x(h\cap C)} \prod_{j \in h\setminus C} \rho_{w_j}.
\een
We said, ``more generally'' because when $h\subset C$,
\ben\label{eq:Zh2}{\bf Z}_{x(h\cap C)}^{h\cap C,h} =  {\bf Z}_{x(h)}^{h,h}={\bf Z}_{x(h)}.
\een

When $|\Ek|<+\infty$, a product measure $\rho^{\Z^d}$ is \AI by $\TT$ if and only if all the maps $\NLineq^{\rho,\TT}\equiv 0$. For this, it is not needed that ${\bf Z}\equiv 0$ (but it is sufficient):
\begin{theo}\label{theo:qdq} When $|\Ek|<+\infty$, a product measure $\rho^{\Z^d}$ is invariant by $\TT$ if and only if the two following conditions hold:
  \bir
\itr $\sum_{h: 0\in h}{\bf Z}_{x(0)}^{0,h}=0$ where $0$ is the origin of $\Z^{(d)}$,
\itr For all subsets $C$ and $C'=C\cup\{c\}$ of $\HC[2L-1,d]$ (where $c$ is a single vertex), and any $x(C')\in \Ek^{C'}$,
\eir
\ben
\label{eq:DNL2}
\NLineq^{\rho,\TT}(x(C'))-\NLineq^{\rho,\TT}(x(C))\equiv 0.
\een
\end{theo}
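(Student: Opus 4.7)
The necessity direction is immediate. Since $|\Ek|<+\infty$, invariance of $\rho^{\Z^d}$ by $\TT$ is equivalent to $\NLineq^{\rho,\TT}(x(C))\equiv 0$ for every finite $C\subset\Z^d$ and every $x(C)\in\Ek^C$. Condition $(i)$ is the case $C=\{0\}$ (for $L\geq 2$ no hypercube of side $L$ fits in a singleton, so only the second sum in \eref{eq:nl2} contributes), and Condition $(ii)$ is the subtraction of two such identities for $C'$ and $C$ inside $\HC[2L-1,d]$.

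For sufficiency, the plan is to prove by induction on $|C|$ that $\NLineq^{\rho,\TT}(x(C))\equiv 0$. The base case $|C|=1$ follows from $(i)$ together with translation invariance of $\TT$ and $\rho^{\Z^d}$ (and hence of $\NLineq^{\rho,\TT}$). The inductive step amounts to showing that adding any point $c\notin C$ to $C$ does not change the value of $\NLineq^{\rho,\TT}$.

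The key observation is that when one compares the decomposition \eref{eq:nl2} of $\NLineq^{\rho,\TT}(x(C\cup\{c\}))$ with that of $\NLineq^{\rho,\TT}(x(C))$, the contribution of every hypercube $h$ with $c\notin h$ cancels: indeed, for such an $h$ the subset relations ``$h\subset C$'', ``$h\cap C\neq\varnothing$'', and the intersection $h\cap C$ itself, all coincide with their $C'$-counterparts. Consequently, the difference reduces to a sum over the (finitely many) hypercubes $h$ containing $c$, and all such hypercubes are contained in the neighborhood
\[
N(c):=c+\HC[-(L-1),L-1]^d,
\]
a translate of $\HC[2L-1,d]=\HC[0,2L-2]^d$. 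Hence the difference depends only on the trace $C\cap N(c)$ and on the values of $x$ on $N(c)\cap C'$. Translating by the vector that sends $c$ to $\tilde c:=(L-1,\ldots,L-1)$ (the ``center'' of $\HC[2L-1,d]$), $N(c)$ is mapped onto $\HC[2L-1,d]$. Let $\tilde C$ be the image of $C\cap N(c)$, $\tilde C':=\tilde C\cup\{\tilde c\}$, and $y$ the translated configuration. Redoing the same cancellation for the pair $(\tilde C,\tilde C')$ yields the same surviving sum (over hypercubes containing $\tilde c$), so by translation invariance of $\TT$ and $\rho$,
\[
\NLineq^{\rho,\TT}(x(C'))-\NLineq^{\rho,\TT}(x(C))=\NLineq^{\rho,\TT}(y(\tilde C'))-\NLineq^{\rho,\TT}(y(\tilde C)).
\]
The right-hand side is $0$ by hypothesis $(ii)$ since $\tilde C,\tilde C'\subset\HC[2L-1,d]$. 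Iterating the reduction, $\NLineq^{\rho,\TT}(x(C))=\NLineq^{\rho,\TT}(x(\{c_0\}))$ for any $c_0\in C$, which is $0$ by $(i)$ and translation invariance.

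The delicate point is the bookkeeping in the cancellation argument: one must carefully split the hypercubes $h$ containing $c$ according to whether $h\subset C'$, $h\cap C\neq\varnothing$ and $h\not\subset C'$, or $h\cap C=\varnothing$ (each giving a different type of term from \eref{eq:zrgze}--\eref{eq:Zh2}), and check that in every case the corresponding term involves only values of $x$ at positions inside $N(c)$. Once this is observed, the reduction to $\HC[2L-1,d]$ is just a translation, so that condition $(ii)$ can be invoked. No further algebraic input is needed.
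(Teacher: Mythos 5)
Your proposal is correct and follows essentially the same route as the paper: the paper also grows $C$ one vertex at a time, observes (its equation \eref{eq:CCp}) that the difference $\NLineq^{\rho,\TT}(x(C'))-\NLineq^{\rho,\TT}(x(C))$ reduces to the hypercubes containing the new vertex, notes that their union sits in a translate of $\HC[2L-1,d]$, and invokes translation invariance to land on condition $(ii)$, with condition $(i)$ serving as the base case $C_0=\{0\}$. Your explicit identification of the neighborhood $N(c)$ and the careful case split on how $h$ meets $C'$ only make more precise what the paper states as "a simple inspection of the balance."
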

as a trivial consequence we get the following condition, weaker than reversibility:
\begin{cor}\label{cor:dsdq} If $\Ek<+\infty$ and if ${\bf Z}\equiv 0 $ then the product measure $\rho^{\Z}$ is invariant by $\TT$ on $\Z^2$.
  \end{cor}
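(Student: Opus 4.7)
The plan is to derive the corollary directly from the decomposition \eref{eq:nl2} together with formulas \eref{eq:Zh} and \eref{eq:Zh2}, without needing to invoke Theorem \ref{theo:qdq} in its full strength. The assumption ${\bf Z}\equiv 0$ means that ${\bf Z}_{x(h)}=0$ for every hypercube $h$ of shape $\HC[L,d]$ (up to translation) and every colouring $x(h)\in \Ek^{h}$; this is the ``local'' statement, and I need to propagate it to the full balance $\NLineq^{\rho,\TT}(x(C))$ for every finite $C\subset \Z^d$.

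First I would fix a finite set $C\subset \Z^d$ and a colouring $x(C)\in \Ek^{C}$, and look at each term of the decomposition \eref{eq:nl2}. For hypercubes $h\subset C$, the contribution is ${\bf Z}_{x(h)}$, which is zero by hypothesis. For hypercubes $h$ that only partially intersect $C$, formula \eref{eq:Zh} expresses
\[
{\bf Z}_{x(h\cap C)}^{h\cap C,h}=\sum_{w(h)\in \Ek^h} {\bf Z}_{w(h)}\,\1_{w(h\cap C)=x(h\cap C)} \prod_{j \in h\setminus C} \rho_{w_j},
\]
which is a nonnegative linear combination of values ${\bf Z}_{w(h)}$ that all vanish. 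Hence every summand in \eref{eq:nl2} is zero, so $\NLineq^{\rho,\TT}(x(C))=0$.

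Finally, since $\rho$ has the product form, the definition \eref{eq:nl} gives $\Lineq^{\Z^d}(x(C),\rho^{\Z^d})=\NLineq^{\rho,\TT}(x(C))\prod_{c\in C}\rho_{x(c)}=0$ for every finite $C$ and every $x(C)\in \Ek^{C}$. By Lemma \ref{lem:inva} (in its $\Z^d$ analogue used to define algebraic invariance on $\Z^d$), this is precisely the algebraic invariance of $\rho^{\Z^d}$ under $\TT$, and since $\#\Ek<+\infty$ algebraic invariance coincides with genuine invariance. There is no real obstacle here: the only thing to check carefully is that formula \eref{eq:Zh} indeed expresses the partial-hypercube contribution as a weighted sum of pure ${\bf Z}$ values, so that ${\bf Z}\equiv 0$ transfers to all boundary terms; once this observation is made, the corollary follows by a single line of algebra.
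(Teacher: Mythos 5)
Your proof is correct, and it takes a slightly different and more direct route than the paper. The paper treats the corollary as a consequence of Theorem \ref{theo:qdq}: one checks its two conditions (the single--site balance $(i)$ and the incremental differences \eref{eq:DNL2}), each of which is a finite linear combination of values of ${\bf Z}$ and hence vanishes when ${\bf Z}\equiv 0$; the accompanying proof block is really devoted to setting up the telescoping scheme $(C_i)$ and the difference formula \eref{eq:CCp} that underlie that theorem. You instead bypass the telescoping entirely: you observe that for \emph{every} finite $C$ the decomposition \eref{eq:nl2} writes $\NLineq^{\rho,\TT}(x(C))$ as a finite sum of terms ${\bf Z}_{x(h)}$ and ${\bf Z}_{x(h\cap C)}^{h\cap C,h}$, and that by \eref{eq:Zh} and \eref{eq:Zh2} each boundary term is itself a (finite, since $\#\Ek<+\infty$) weighted sum of pure ${\bf Z}$ values, so everything vanishes term by term and $\NLineq^{\rho,\TT}\equiv 0$ on all finite sets; multiplying back by $\prod_{c\in C}\rho_{x(c)}$ gives $\Lineq\equiv 0$, i.e.\ algebraic invariance, which coincides with invariance for finite $\Ek$. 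Both arguments rest on the same decomposition of the balance over hypercubes; what your version buys is a one-step verification that does not need the reduction to increments inside $\HC[2L-1,d]$, while the paper's route emphasizes that the corollary is the degenerate case of the general finite criterion of Theorem \ref{theo:qdq}. The only point worth making explicit (as the paper also leaves implicit) is that $\rho$ must have full support for ${\bf Z}$ and $\NLineq^{\rho,\TT}$ to be well defined, since both involve division by products of the $\rho_c$'s.
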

  \begin{proof} The product measure $\rho^{\Z^d}$ is invariant by $\TT$ if and only if \underbar{for at least one} sequence $(C_i,i\geq 0)$ of finite subsets of $\Z^{d}$, such that:\\
    {\tiny \bs} $C_{i+1}=C_i \cup \{c_{i+1}\}$ (a simple vertex),\\
    {\tiny \bs} $(C_i, i\geq 0)$ eventually contains an arbitrarily large hypercube,\\
  the property  $\NLineq^{\rho,\TT}(x(C_0))\equiv 0$, and for all $i\geq 0$, $\NLineq^{\rho,\TT}(x(C_{i+1}))-\NLineq^{\rho,\TT}(x(C_{i}))=0$ for any $x\in \Ek^{C_{i+1}}$ hold.\par
Due to \eref{eq:zrgze},  \eref{eq:Zh} and \eref{eq:Zh2}, if $C'=C\cup\{c\}$ for a vertex $c$ not in $C$, the difference $\NLineq^{\rho,\TT}(x(C'))-\NLineq^{\rho,\TT}(x(C))$ can be written as a sum of the contributions of the  hypercubes $h$ such that $(h\cap C) \neq (h\cap C')$. A simple inspection of the balance in the corresponding sums  as expressed in \eref{eq:nl2}, gives
\ben\label{eq:CCp}
\NLineq^{\rho,\TT}(x(C'))-\NLineq^{\rho,\TT}(x(C))=\sum_{h:h\cap C' \neq h \cap C} {\bf Z}_{x(h\cap C')}^{h\cap C',h} -  {\bf Z}_{x(h\cap C)}^{h\cap C,h}.\een
The theorem states something stronger than the fact that this property holds for all $C'=C_{i+1}, C=C_i$: it suffices that this property holds for those included in $\HC[2L-1,d]$. It remains to say that this last condition comes from \eref{eq:CCp}: the difference between the two $\NLineq$ concerns only the hypercubes $h$ that intersect the new vertex $c$, and then the union of these hypercubes is included in $\HC[2L-1,d]$. A given union of hypercubes appearing in such a difference can be realised by taking two sets $C_{i+1},C_i$ included in $\HC[2L-1,d]$. 
    \end{proof}

    \begin{rem}
\label{rem:generalshape}
$(i)$ It is possible to reduce the number of necessary and sufficient conditions in Theorem \ref{theo:qdq} by designing a particular growing sequence $(C_i)$ in such a way that the family $(h,C_i\cap h, C_{i+1}\cap h) $ (up to translation) involved in the right hand side of \eref{eq:CCp} for some $i$, take only a very small number of values: in $\Z^2$ for $2\times 2$ squares, we can manage to get only 2 (kind of) differences, starting from $C_0=\{(0,0),(0,1),(1,0)\}$. This is exemplified in Theorem \ref{theo:2D} and in its proof. \\
$(ii)$ What has been said so far concerns JRM indexed by hypercubes. If the PS of interests is given using some JRM indexed by some other ``shape $F$'', it is still possible to represent such a PS using a JRM indexed by hypercube (by taking a hypercube $h$ large enough to contain $F$, and by letting the colours in $h\setminus F$ unchanged). However, in $\Z^d$ the number of equations grows rapidly if one uses this kind of expedient. The best thing to do, is to adapt what has been said above to this special shape.
       \end{rem}  
    
\subsection{JRM indexed by $2\times 2$ squares in $2D$}

Following Remark \ref{rem:generalshape}, we design a set of necessary and sufficient conditions for invariance of a product measure $\rho^{\Z}$ ``less abundant'' than those given in Theorem \ref{theo:qdq}. We examine this in the 2D case, for a PS with JRM indexed by $2\times 2$ squares, denoted further $\Sq$ (as the one given in \eref{eq:egrda}).

Consider the three following sets:
\[
\Gamma_0=\{(0,0),(0,1),(1,0)\},~~ 
\Gamma_1=\Gamma_0 \cup \{(2,0)\}, ~~
\Gamma_2=\Gamma_1 \cup \{(1,1)\}.
\]
\begin{theo}\label{theo:2D} Let $\kappa<+\infty$. Consider $\rho$ a probability distribution with full support on $\Ek$ and  $\TT=\begin{bmatrix} \T{u}{v}\end{bmatrix}_{u,v \in \Ek^{\Sq}}$ a JRM indexed by $\Sq$. 
The measure $\rho^{\Z^2}$ is invariant by $\TT$ on $\Z^2$ iff the two following conditions hold simultaneously:
\bir
\itr $\NLineq^{\rho,\TT}\equiv 0$ on $\Ek^{\Gamma_0}$,
\itr for any $x\in \Ek^{\Gamma_2}$,
\ben\label{eq:yjil}
\NLineq^{\rho,\TT}(x)-\NLineq^{\rho,\TT}(x(\Gamma_1))=0.\een
\eir
\end{theo}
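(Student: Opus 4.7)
Necessity is immediate: if $\rho^{\Z^2}$ is invariant by $\TT$, then $\NLineq^{\rho,\TT}(x(C)) \equiv 0$ for every finite $C \subset \Z^2$, so condition (i) and the difference in (ii) are vanishing as particular instances.

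For sufficiency, I will apply Theorem \ref{theo:qdq} via the reformulation used in the proof of Corollary \ref{cor:dsdq}: it suffices to exhibit a single increasing sequence $(C_i)_{i \geq 0}$ of finite subsets of $\Z^2$ with $C_{i+1} = C_i \cup \{c_{i+1}\}$ a single added vertex, that eventually contains arbitrarily large hypercubes, and such that $\NLineq^{\rho,\TT}(x(C_0)) \equiv 0$ together with $\NLineq^{\rho,\TT}(x(C_{i+1})) - \NLineq^{\rho,\TT}(x(C_i)) \equiv 0$ for every $i\geq 0$. I take $C_0 = \Gamma_0$ so that the base case is exactly condition (i). For the induction, the key tool is formula \eref{eq:CCp}:
\[
\NLineq(x(C_{i+1})) - \NLineq(x(C_i)) = \sum_{h \ni c_{i+1}} \Bigl({\bf Z}^{h \cap C_{i+1}, h}_{x(h \cap C_{i+1})} - {\bf Z}^{h \cap C_i, h}_{x(h \cap C_i)}\Bigr),
\]
which depends only on the four $2\times 2$ squares $h$ containing $c_{i+1}$ and on the intersections $h \cap C_i$. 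By translation invariance of $\TT$ and of the i.i.d. law $\rho^{\Z^2}$, this difference depends on $(C_i, c_{i+1})$ only through the shape of $C_i$ in the $3 \times 3$ window centered at $c_{i+1}$. Condition (ii) expresses precisely that it vanishes for the local configuration corresponding to $(\Gamma_1, (1,1))$: the four cells ``below-left'' of $c_{i+1}$ (including diagonals) lie in $C_i$ and the four cells ``above-right'' do not.

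\textbf{Construction of the sequence.} I plan to enumerate the vertices of $\Z^2$ in a staircase fashion, so that each new vertex $c_{i+1}$ sits at the ``north-east corner'' of the region already built, with the prescribed $(3,2,1,0)$-intersection pattern. For vertices strictly inside the growing region, this automatically produces the $(\Gamma_1, (1,1))$ local configuration, and the induction step follows from (ii) directly. Letting the staircase grow indefinitely supplies the required coverage of arbitrarily large hypercubes.

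\textbf{Main obstacle.} The delicate point is the boundary steps, and in particular the very first insertions after $C_0 = \Gamma_0$: a direct inspection shows that no single vertex $c$ can be added to $\Gamma_0$ so as to reproduce exactly the $(\Gamma_1, (1,1))$ local pattern, so (ii) cannot be invoked as is at the start. To overcome this, I will use condition (i) in its full strength: by translation invariance, (i) yields $\NLineq \equiv 0$ on every translate of $\Gamma_0$. The strategy for each boundary transition is then to rewrite the corresponding difference $\NLineq(x(C_{i+1})) - \NLineq(x(C_i))$ as a linear combination of (a) translates of the $\Gamma_1 \to \Gamma_2$ difference from (ii) and (b) translates of $\NLineq$ on $\Gamma_0$ from (i), all of which vanish. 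The technical heart of the argument is thus the combinatorial matching, for each boundary step, between the contributing $2 \times 2$ squares on the two sides of that identity. Once the sequence is constructed, Theorem \ref{theo:qdq} yields the invariance of $\rho^{\Z^2}$ by $\TT$.
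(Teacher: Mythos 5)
Your proposal reconstructs the paper's strategy: grow a sequence $(C_i)$ one vertex at a time from $C_0=\Gamma_0$, observe via \eref{eq:CCp} that each increment $\NLineq^{\rho,\TT}(x(C_{i+1}))-\NLineq^{\rho,\TT}(x(C_i))$ only involves the four $2\times 2$ squares through $c_{i+1}$ and hence only the pattern of $C_i$ in the $3\times3$ window around $c_{i+1}$, and invoke (ii) whenever that pattern is the $\Gamma_1\to\Gamma_2$ one. You have also correctly located the obstacle: the insertions whose pattern is not that one, beginning with the very first ones after $\Gamma_0$. But the proof stops exactly where the work begins. The assertion that every such boundary increment ``can be rewritten as a linear combination of translates of the difference in (ii) and of instances of (i)'' is precisely the content of the sufficiency proof, and you neither exhibit these decompositions nor give any argument that they must exist. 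In the paper this is the whole of steps $(a)$--$(d)$ and Figures \ref{fig:balance2}--\ref{fig:balance4}, and it uses ingredients your plan does not mention: nullity of $\Lineq^{\rho,\TT}$ on dominoes and on single vertices, obtained by \emph{summing} $\Lineq^{\rho,\TT}(x(\Gamma_0))=0$ over the variables in $\Gamma_0\setminus G$ (a marginalization of (i), not a translate of it); an induction giving nullity on the whole row $L_n$; and a dedicated identity for inserting the first vertex of each new row above one end of the row below, where the west neighbour of the inserted vertex is absent so that (ii) cannot apply. Without these verifications the induction never gets off the ground, so this is a genuine gap rather than a routine omission.

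A secondary point to fix when completing the argument: a row-by-row ``staircase'' cannot fill a square, because the $\Gamma_1\to\Gamma_2$ pattern requires the cell south-east of the inserted vertex to be already present, so each new row must stop one cell short of the row below. The region actually produced is the triangle $\Delta_n=\{(i,j):\, 0\leq i,\ 0\leq j,\ i+j\leq n\}$ of the paper's proof, which still contains arbitrarily large squares and therefore suffices for the criterion of Theorem \ref{theo:qdq}; but since the choice of region and insertion order determines exactly which local patterns occur, i.e.\ which identities you are obliged to verify, it cannot be left unspecified.
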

As a simple corollary: if $\NLineq^{\rho,\TT}\equiv 0$ on $\Gamma_2$ for a $\rho$ with full support then $\rho^{\Z}$ is invariant by $\TT$ on $\mathbb{Z}$.
\begin{proof} We give a picture based proof, using some representation of computations by pictures.\par
  We insist on the fact that $\rho^{\Z}$ is invariant by $\TT$ iff all the $\NLineq^{\rho,\TT}(x(C))=0$ for any sub-configuration $x(C)\in\Ek^{C}$, for any subset $C$ of $\Z^2$. As noticed in Theorem \ref{theo:qdq}, we just need to prove that for any $s\geq 0$, any square $C=\cro{0,s}^2$ is included in a finite domain $C'$ for which $\NLineq^{\rho,\TT}(x(C'))=0$ for all $x(C')\in \Ek^{C'}$.
  We will construct a well designed sequence $(C_i)$ satisfying the hypothesis of Theorem \ref{theo:qdq} and containing eventually $\cro{0,m}^2$.\par

Recall formula \eref{eq:nl2}, which expresses $\NLineq(x(C))$ as a sum of some ``Z'' indexed by the hypercube included in the dependence domain $D(C)$. 
In view of Figure \ref{fig:2D1}  \begin{figure}[h]
    \centerline{\includegraphics{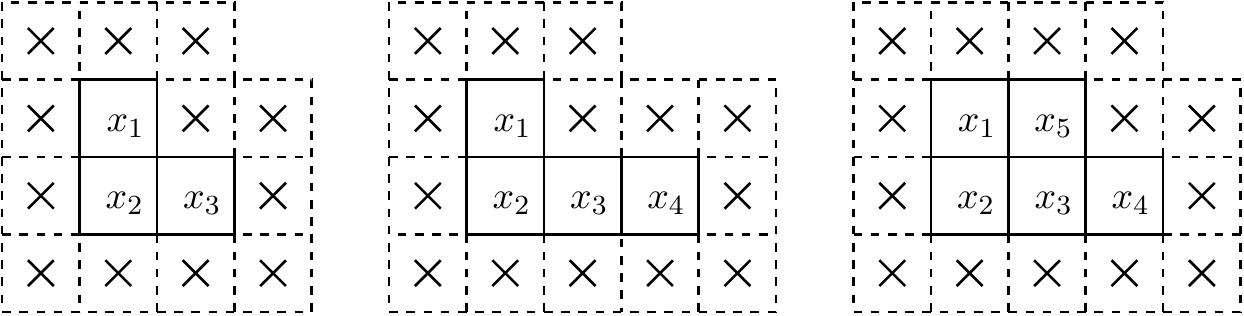}}
    \captionn{\label{fig:2D1} Shapes $\Gamma_0, \Gamma_1$ and $\Gamma_2$ appearing in Theorem \ref{theo:2D}.}
  \end{figure}
  the first hypothesis of Theorem \ref{theo:2D} says that the sums of these $Z$ over the eight $2\times2$ squares contained in the first picture  of Fig. \ref{fig:2D1} is 0. Let us express this by
    \[\Lineq^{\rho,\TT}(x(\Gamma_0))=Z_{\tiny \compact\ssquare{{\sf x}}{x_1}{{\sf x}}{{\sf x}}}+Z_{\tiny \compact\ssquare{{\sf x}}{x_2}{x_1}{{\sf x}}}+Z_{\tiny \compact\ssquare{{\sf x}}{{\sf x}}{x_2}{{\sf x}}}+Z_{\tiny \compact\ssquare{x_1}{{\sf x}}{{\sf x}}{{\sf x}}}+Z_{\tiny \compact\ssquare{x_2}{x_3}{{\sf x}}{x_1}}+Z_{\tiny \compact\ssquare{{\sf x}}{{\sf x}}{x_3}{x_2}}+Z_{\tiny \compact\ssquare{x_3}{{\sf x}}{{\sf x}}{{\sf x}}}+Z_{\tiny \compact\ssquare{{\sf x}}{{\sf x}}{{\sf x}}{x_3}}.\]
In $Z_{\tiny \compact\square{y_1}{y_2}{y_3}{y_4}}$,  the variables $x_1$, $x_2$, $x_3$ refers to some fixed specified values  and the ``${\sf x}$'' refers to free variables on which a sum is taken (as in the definition of ${\bf Z}_{x(h\cap C)}^{h\cap C,h}$, the ``variables in $h\setminus C$" are free variables on which a sum is taken).
    \end{proof}
    Further $\Lineq^{\rho,\TT}(x(\Gamma_1))$ and $\Lineq^{\rho,\TT}(x(\Gamma_2))$ are respectively sums of 10 and 11 such $Z$: each of these $Z$ must be seen at this stage as indexed by a $2\times 2$ square included in the second and third picture in Fig. \ref{fig:2D1} where $\Gamma_1$ or $\Gamma_2$ are drawn. Many of these $Z$ are common between these structures.
    It appears then that
    \be
    \Lineq^{\rho,\TT}(x(\Gamma_2))-\Lineq^{\rho,\TT}(x(\Gamma_1))&=&
\l(Z_{\tiny \compact\ssquare{x_2}{x_3}{{\sf x}}{x1}}-Z_{\tiny \compact\ssquare{x_2}{x_3}{x_5}{x_1}}\r)
+\l(Z_{\tiny \compact\ssquare{x_3}{x_4}{{\sf x}}{ {\sf x}}}-Z_{\tiny \compact\ssquare{x_3}{x_4}{{\sf x}}{x_5}}\r)\\
&+&
\l(Z_{\tiny \compact\ssquare{x_1}{{\sf x}}{{\sf x}}{{\sf x}}}-Z_{\tiny \compact\ssquare{x_1}{x_5}{{\sf x}}{{\sf x}}}\r)-  Z_{\tiny \compact\ssquare{x_5}{{\sf x}}{\sf x}{\sf x}}
.\ee
The terms have been assembled to make clear what changes the ``appearance'' of $x_5$ in  $x(\Gamma_2)$ compared to $x(\Gamma_1)$. Graphically, we use the  shortcut given in Figure \eref{fig:2D2}. 
\begin{figure}[h]
    \centerline{\includegraphics{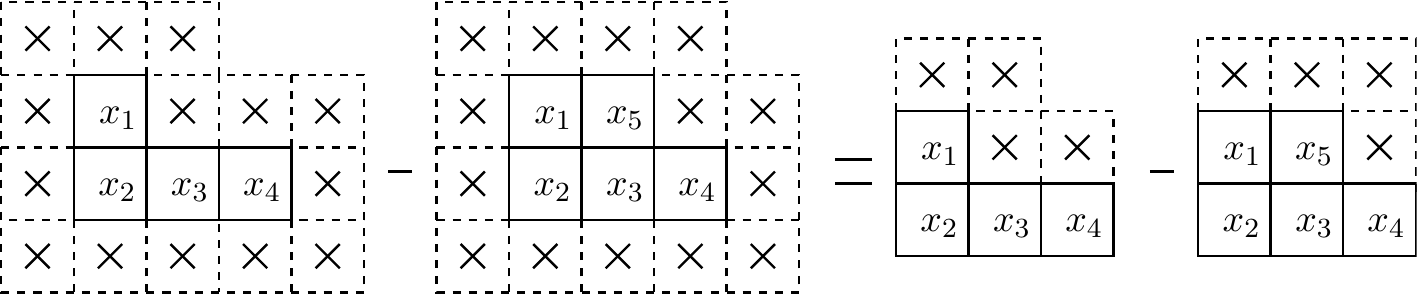}}
    \captionn{\label{fig:2D2} Expression of the difference between $\Lineq^{\rho,\TT}(x(\Gamma_2))$ and $\Lineq^{\rho,\TT}(x(\Gamma_1))$. }
  \end{figure}
  This Picture has to be understood as when one expressed the difference $\Lineq^{\rho,\TT}(x(\Gamma_2))$ and $\Lineq^{\rho,\TT}(x(\Gamma_1))$ by summing on the $Z$ indexed by the squares included in $\Gamma_2$ and those included in $\Gamma_1$, one gets the same results as if we do the same computation in the small figures in the right hand side in Fig. \ref{fig:2D2}. \par
  Consider some $n\geq 5$ (to avoid border effects due to the size of $\Gamma_2$), and consider the triangle
  \[\Delta_n=\{(i,j), 0\leq i \leq n, 0\leq j \leq n, 0\leq i+j \leq n\}.\] We will show that under the hypothesis of the theorem, for any $x(\Delta_n)\in \Ek^{\Delta_n}$, $\Lineq^{\rho,\TT}(x(\Delta_n))=0$. For this, we will need the four following steps:\\
$(a)$ if $\Lineq^{\rho,\TT}(x(\Gamma_0))=0$ for any $x(\Gamma_0)\in \Ek^{\Gamma_0}$, then $\Lineq^{\rho,\TT}(x(G))=0$ if $G$ is the $2\times 1$ or $1\times 2$ domino, or if $G$ is a single vertex ($1\times 1$). Indeed, these structures are included in $\Gamma_0$, and, for any $G\subset \Gamma_0$,  $\Lineq^{\rho,\TT}(x(G))=0$ can be obtained by summing $\Lineq^{\rho,\TT}(x(\Gamma_0))$ on the variables which are in $\Gamma_0\setminus G$.\\
$(b)$ From $(a)$, we deduce that if $L_n$ is the $n\times 1$ line, then $\Lineq^{\rho,\TT}(x(L_n))=0$ for any $x(L_n)\in\Ek^{L_n}$. The graphical proof of this property is drawn on Fig. \ref{fig:balance2}. A single argument is needed: the set of $2\times 2$ square contributions that do not vanish is the same in the right and left hand side.\\   
\begin{figure}[htbp]
\centerline{\includegraphics{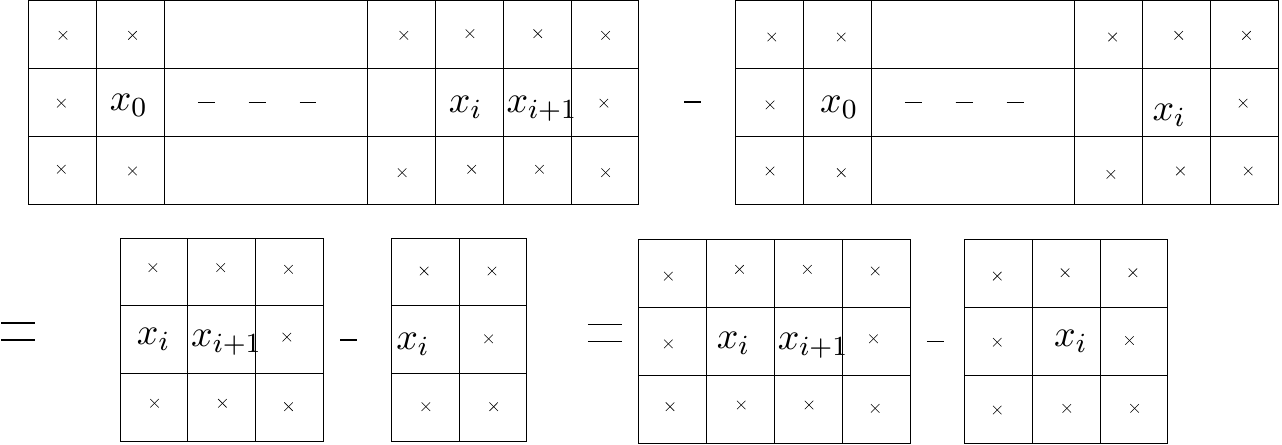}}
\caption{\label{fig:balance2} Representation of the geometry of the computation of $\NLineq^{\rho,\TT}(x(L_{i+1}))-\NLineq^{\rho,\TT}(\eta(L_{i}))$: each sum has to be taken on the set of $2\times 2$ squares included in the drawn rectangles. All squares appearing in both  pictures simplify and then the geometry of the summation reduces to that of the second line. In the third line, some squares are added, but since they correspond to the same contributions, this is allowed.}
\end{figure}  
$(c)$ We now, extend the construction of this row $L_n$ by adding a single vertex  $y$ just above the right-most element, getting a new shape $L_n'$ as represented in the top-left picture in Fig. \ref{fig:balance3}. The graphical proof provided in Fig. \ref{fig:balance3} allows to prove that $\Lineq^{\rho,\TT}(x(L_n'))=0$ using the nullity of $\Lineq$ on $\Ek^{L_n}$, $\Ek^{\Gamma_0}$ and on dominoes. \\
\begin{figure}[h]
\centerline{\includegraphics{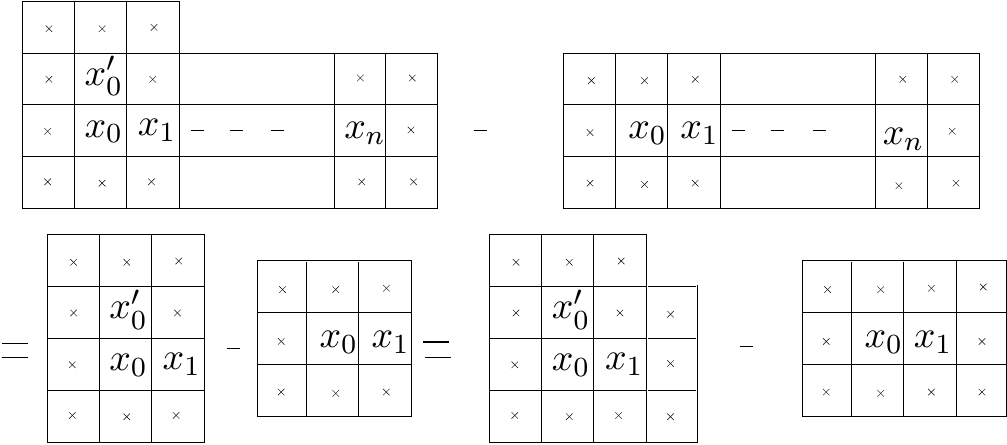}}
\caption{\label{fig:balance3} The equation resulting of the addition of a vertex above the leftest corner of the upest row.}
\end{figure}
$(d)$ The argument given in $(c)$ is independent from the fact that $L_n$ was the first row. Since the difference $\Lineq^{\rho,\TT}(x(L_n'))-\Lineq^{\rho,\TT}(x(L_n))$ does not involve the square below row at level 1 (say), if we ``complete'' both $L_n$ and $L_n'$ by the same fixed row at level 0, the difference $\Lineq^{\rho,\TT}(x(L_n'))-\Lineq^{\rho,\TT}(x(L_n))$ would be unchanged. Hence, if two structures $S$ and $S'$ are equal up to a given row at level $h$, and differs only because $S'$ possesses an additional point just above the leftest position of this row, then we still have $\Lineq^{\rho,\TT}(x(S'))-\Lineq^{\rho,\TT}(x(S))=0$. \par
Adding a single vertex above the left-most point of the top-most row is a construction which does not allows to pass from $L_n$ to $\Delta_n$. We still need an elementary growing trick to allow to put some new vertices at the right of the top-most vertex in $L_n'$ to complete the second row (in fact, we will construct a new row with one vertex less than $L_n$, leading iteratively to $\Delta_n$): a slight generalization of Figure \ref{fig:2D2} that do the job, and the graphical computation is represented in Fig. \ref{fig:balance4}:
\begin{figure}[h]
\centerline{\includegraphics{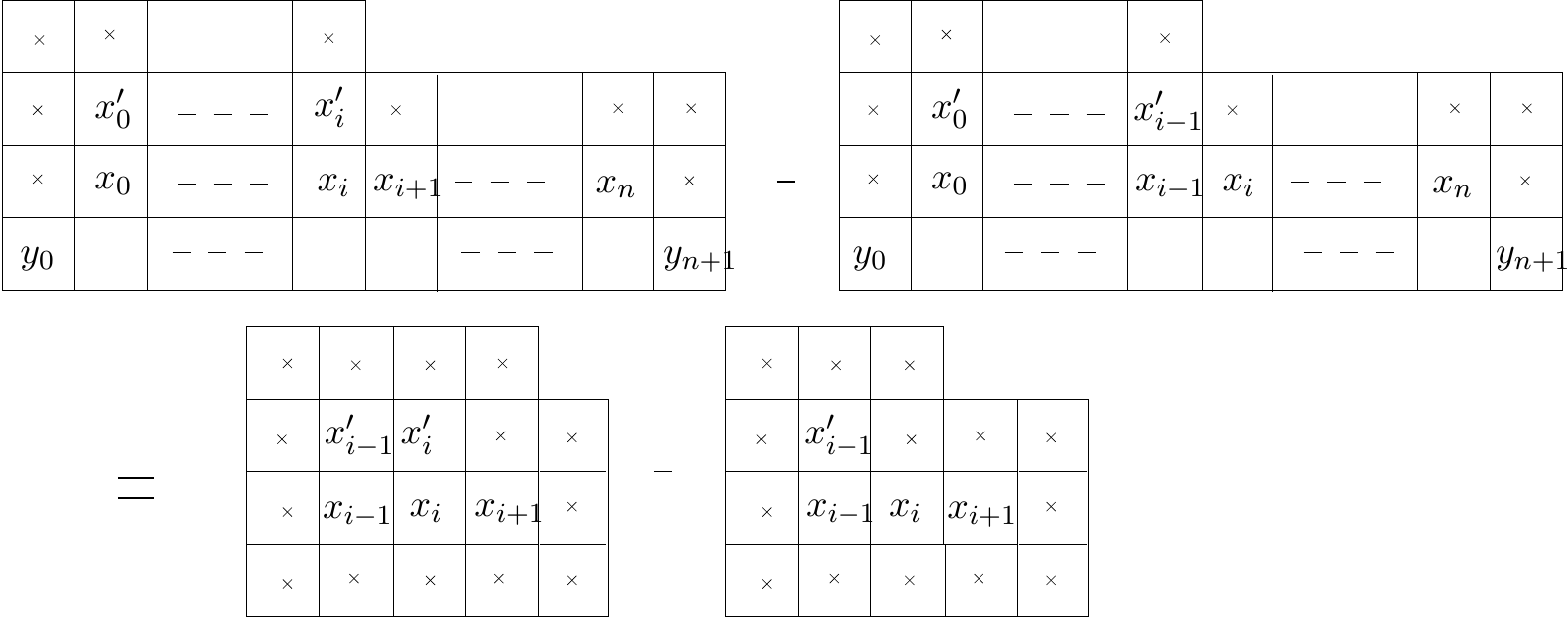}}
\caption{\label{fig:balance4} For this formula, observe that in the left hand side, the result is unchanged if, instead of taking a first specified row $y\cro{0,n}$ one takes unspecified values ${\sf x}$, since the squares involving any values of the first row vanishes. The right hand side is $0$ because of the second hypothesis of the theorem}
\end{figure}

\subsection{How to explicitly find invariant Markov law or invariant product measures on the line?}

In real applications, often $\TT$ is given, and the need is to find a Markov kernel $M$  so that the $M$-Markov law is $\AI$ by $\TT$. Let us call
\[S_j(\TT)=\{ \textrm{Markov kernel }M ~: M>0, \NCycle^{M,\TT}_j\equiv 0\}.\]
When $|\Ek|<+\infty$, by Theorem \ref{theo:t0}, to find such $M$ amounts to finding $S_7(\TT)$  (which can be empty).
The algebraic system  $\NCycle^{M,\TT}_7\equiv 0$ is huge even when $\kappa$ is small, and then quite difficult to solve: many equations of degree 6 in $M$ (by Theorem \ref{theo:t2}) and linear in $\TT$.   
From Theorem \ref{theo:t0}, we know that if the $M$-Markov law is invariant by $\TT$, 
$S_7(\TT)\subset S_3(\TT)$.
It turns out that computing $S_3(\TT)$ can be done (see Theorem \ref{theo:cand3}), and then, in practice, these solutions can be tested in $\NCycle_7^{M,\TT}$ afterwards. \par

To find $\TT$ when $M$ is given so that $\TT$ preserves the $M$-Markov law is a linear algebra problem since e.g. $\Cycle^{M,\TT}_7(a, b , c, d, 0, 0, 0)=0$ is a linear system in $\TT$; the set of solutions is a convex set. Notice that if for a fixed $M$ some tools of linear algebra are used to find the $\TT$'s solution of e.g.  $\Cycle^{M,\TT}_7\equiv 0$, then an additional work of identification of non negative solutions is needed.

\subsubsection{Computation of $S_3(\TT)$}
\label{sec:trkyh}
Assume $\TT$ is given, and let us determine $S_3(\TT)$. 
Setting
  \ben\label{eq:gfghs}
  \nu_{a,b,c}:= \frac{M_{a,b}M_{b,c}M_{c,a}}{\Tr(M^3)}, \textrm{ for every } a,b,c\in \Ek,
  \een
  the equation $\Cycle_3^{M,\TT}(a,b,c)= 0$ is equivalent to
\beq\label{eq:ukfkd}
\left\{ 
\begin{array}{r l}
	&  \sum_{(u,v)} \left(\nu_{c,u,v}\T{(u,v)}{(a,b)}+\nu_{a,u,v}\T{(u,v)}{(b,c)}+\nu_{b,u,v}\T{(u,v)}{(c,a)}\right)\\
	&=\nu_{a,b,c} \l(\To{a,b}+\To{b,c}+\To{c,a}\r)
\end{array}\right.
\eq
This is a linear system in $\nu$, therefore it can be solved by means of linear algebra. If no positive solution $\nu$ exists, then $S_3(\TT)=\varnothing$.
Assume that a positive solution $\nu$ exists.
Define for any $a,b\in\Ek$ the row matrices $L_{a,b}$, the square matrices $N_a$, and the vector $R$:
\ben\label{eq:qff}
N_{a}&=& \begin{bmatrix} \dis \frac{\nu_{a,x,y}\,\nu_{a,a,a}^{1/3}}{\nu_{a,x,a}} \end{bmatrix}_{x,y\in \Ek}, \\
L_{a,b} &=&   \begin{bmatrix} \nu_{a,b,x}, x \in \Ek \end{bmatrix},\\
R&=& {}^{t}\begin{bmatrix} 1, x \in \Ek \end{bmatrix}.
\een
For each $a$, take the pair of left and right eigenvector $(\ell=\ell_{a},r=r_{a})$ with positive entries of $N_{a}$ corresponding to the main eigenvalue (notion defined below Def. \ref{defi:Gibbs}), normalized so that $\|\ell_a\|_1=\ell_aR=1$, and $r_{a}\ell_{a}=1$.
Recall the considerations just above \eref{eq:ukfkd}. 
\begin{theo}\label{theo:cand3}
Let $\#\Ek< +\infty$, $L=2$ and $\nu$ be a given probability measure on $\Ek^3$, invariant under rotation, and solving \eref{eq:ukfkd}. 
If there exists a positive recurrent  $M$-Markov law such that \eref{eq:gfghs} holds
then all the matrices $(N_{x},x\in \Ek)$ possess the same main eigenvalue $\lambda$. \\
In case of existence of  a positive recurrent Markov kernel $M$ solving  \eref{eq:gfghs}, $M$ is unique and is characterized together with its invariant distribution $\rho$ by
\ben\label{eq:grsfq}
\rho_{a}M_{a,b}=  \frac{L_{a,b}r_a}{\lambda^3}.
\een
\end{theo}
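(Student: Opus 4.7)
The plan is to take the existence of $M$ as given and reverse-engineer it from the Perron spectral data of the matrices $N_a$. The first key step is to plug the explicit Gibbs form $\nu_{a,b,c}=M_{a,b}M_{b,c}M_{c,a}/\Tr(M^3)$ into the definition $(N_a)_{x,y}=\nu_{a,x,y}\nu_{a,a,a}^{1/3}/\nu_{a,x,a}$. When this is carried out, the factors $M_{a,x}$, $M_{a,a}$ and all powers of $\Tr(M^3)$ cancel, leaving
\[
(N_a)_{x,y}=\frac{M_{x,y}\,M_{y,a}}{\Tr(M^3)^{1/3}\,M_{x,a}}.
\]
This is exactly $\Tr(M^3)^{-1/3}\,D_a^{-1}MD_a$, where $D_a$ denotes the positive diagonal matrix with entries $(M_{x,a})_{x\in\Ek}$. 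Positivity of $\nu$ forces $M>0$, so $D_a$ is invertible and $N_a$ is similar to $\Tr(M^3)^{-1/3}M$. This immediately yields the first assertion: all $N_a$ share the spectrum of $M$ rescaled by $\Tr(M^3)^{-1/3}$, and the common main eigenvalue is $\lambda:=\Tr(M^3)^{-1/3}$ because the Perron eigenvalue of the Markov kernel $M$ equals $1$.

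Next I would transport the Perron eigenvectors of $M$ through the similarity: the right eigenvector of $M$, namely the all-ones column $R$, pulls back to $D_a^{-1}R=(1/M_{x,a})_x$ for $N_a$, while the left eigenvector $\rho$ pulls back to $\rho D_a=(\rho_x M_{x,a})_x$. The identity $\sum_x\rho_x M_{x,a}=(\rho M)_a=\rho_a$ fixes the overall scaling, and imposing simultaneously $\|\ell_a\|_1=1$ and $\ell_a r_a=1$ forces
\[
\ell_a(x)=\frac{\rho_x M_{x,a}}{\rho_a},\qquad r_a(x)=\frac{\rho_a}{M_{x,a}}.
\]
A direct computation of $L_{a,b}\,r_a$ then substitutes the Gibbs expression for $\nu_{a,b,x}$, cancels $M_{x,a}$, and uses $\sum_x M_{b,x}=1$ to collapse everything to $\rho_a M_{a,b}/\Tr(M^3)=\lambda^3\,\rho_a M_{a,b}$, which is precisely formula \eref{eq:grsfq}.

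Uniqueness is then automatic: each $N_a$ is a positive matrix built explicitly from $\nu$, so Perron--Frobenius delivers a unique $\lambda$ and unique normalized pair $(\ell_a,r_a)$. Formula \eref{eq:grsfq} therefore determines every product $\rho_a M_{a,b}$ from $\nu$ alone; summing over $b$ and using $\sum_b M_{a,b}=1$ recovers $\rho_a=\lambda^{-3}\sum_b L_{a,b}r_a$, and dividing gives $M_{a,b}$.

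The main obstacle is purely bookkeeping. The somewhat mysterious $1/3$-power attached to $\nu_{a,a,a}$ in the definition of $N_a$ is engineered precisely to kill the otherwise-surviving $\Tr(M^3)$ factor and make the main eigenvalue $\lambda$ independent of $a$; recognizing this design choice, and carrying the two normalizations of $\ell_a$ and $r_a$ consistently through the computation, is the only delicate step. Once the similarity $N_a\sim \Tr(M^3)^{-1/3}M$ is in hand, everything else is mechanical.
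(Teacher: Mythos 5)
Your proof is correct. The first half — substituting the Gibbs form into $N_a$, recognizing $N_a=\Tr(M^3)^{-1/3}D_a^{-1}MD_a$ for $D_a=\mathrm{diag}(M_{x,a})$, and transporting the Perron pair of $M$ through the similarity — is essentially the paper's own first step, though your similarity formulation makes the equality of the main eigenvalues immediate rather than requiring the eigenvectors to be exhibited and checked by hand. Where you genuinely diverge is the uniqueness part. The paper re-derives \eref{eq:grsfq} by a longer route: it shows that the weight $W(\mathcal{C}_n)$ of a cycle of every length is a function of $\nu$ alone (via a recursive edge-insertion identity), writes $\sum_{a_3,\dots,a_n}W(\mathcal{C}_n)=L_{a_1,a_2}N_{a_1}^{n-3}\mathbf{1}$, and then lets $n\to\infty$, using Perron--Frobenius on one side and positive recurrence ($M^{n-1}_{a_2,a_1}\to\rho_{a_1}$) on the other to conclude $\rho_{a_1}M_{a_1,a_2}=L_{a_1,a_2}r_{a_1}/\lambda^3$. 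You instead observe that the direct computation already yields \eref{eq:grsfq} for any positive recurrent solution $M$, that the right-hand side is built from the normalized Perron data of $N_a$ and hence from $\nu$ alone, and that summing over $b$ then recovers $\rho$ and finally $M$. This is shorter and avoids the limiting argument entirely; what the paper's route buys is the structural statement that $\nu$ determines all cycle marginals, which motivates the definitions of $L$, $N$, $R$ and the algorithm that follows, but it is not needed for the theorem as stated. The one point to keep explicit is the simplicity of the Perron root of the positive matrix $N_a$, which makes the pair $(\ell_a,r_a)$ subject to $\|\ell_a\|_1=1$ and $\ell_a r_a=1$ unique and hence makes the right-hand side of \eref{eq:grsfq} independent of which solution $M$ was used to compute it — you do state this, and it is the crux of your uniqueness argument.
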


\begin{rem}We don't know if the fact that the matrices $(N_{x},x\in \Ek)$ possess the same main eigenvalue $\lambda$ implies that there exists a Markov kernel $M$ such that \eref{eq:gfghs} holds.
\end{rem}

\paragraph{An algorithm to  compute $S_3(\TT)$:}~\\
-- search the set of probability measures $\nu$ solving \eref{eq:ukfkd},\\
-- for each element of this set (which is moreover invariant by rotation), check if the corresponding $N_x$'s possess the same main eigenvalues $\lambda$,\\
-- if yes, compute $M$ using \eref{eq:grsfq},\\
-- if this $M$ satisfies \eref{eq:gfghs}, then add it to the set $S_3(\TT)$.

\paragraph{Another point of view on the uniqueness of $M$:} The system of equations $\ME_7^{M,\TT}$,  $\NCycle_n^{M,\TT}$ are linear in the $\T{(a,b)}{(c,d)}$'s, and linear in the rational fractions of the family 
\beq\label{eq:ExtZeq}
\bF:=\l(F_{(a,u,v,d)\atop (b,c)} := \frac{M_{a,u}M_{u,v}M_{v,d}}{M_{a,b}M_{b,c}M_{c,d}}\r)_{a,b,c,d,u,v \in  \Ek}, 
\eq
since $\wZ^{M,\TT}_{a,b,c,d}$ has this property.
Finding $M$ satisfying $\ME_7^{M,\TT}\equiv 0$ for a given $\TT$ can be done in two steps: first, solve the system of linear equations $\ME_7^{M,\TT}\equiv 0$ with the vector $\bF$ as unknown variable, and then when ${\bF}$ is found, search if there exists a Markov kernel $M$ which satisfies \eref{eq:ExtZeq}. The second step is algebraically the most difficult since \eref{eq:ExtZeq} is a cubic system in $M$ for a given $\bF$, nevertheless, we have:
\begin{theo}\label{theo:prop1} Given $\bF$, there exists at most one positive recurrent Markov kernel $M$  solving \eref{eq:ExtZeq}.
\end{theo}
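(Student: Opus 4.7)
The plan is to suppose $M$ and $M'$ are two positive recurrent Markov kernels both solving \eref{eq:ExtZeq} for the same $\bF$, and to show $M=M'$. Since the entries of $M$ and $M'$ appear as denominators in \eref{eq:ExtZeq}, both kernels must have positive entries, so the pointwise ratio $r_{a,u}:=M_{a,u}/M'_{a,u}$ is a well-defined positive function on $\Ek^2$.

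Equating the two expressions for $F_{(a,u,v,d)\atop (b,c)}$ and cancelling denominators yields
\[
r_{a,u}\,r_{u,v}\,r_{v,d}=r_{a,b}\,r_{b,c}\,r_{c,d}=:\phi(a,d)\qquad\forall\,a,b,c,d,u,v\in\Ek,
\]
so the product $r_{a,u}r_{u,v}r_{v,d}$ depends only on the endpoints $(a,d)$. Fixing an arbitrary reference point $u_0\in\Ek$ and comparing the specializations $(u,v)=(u_0,u_0)$ and $(u,v)=(u,u_0)$, I would extract the factorisation $r_{a,u}=g(a)h(u)$ with $g(a):=r_{a,u_0}$ and $h(u):=r_{u_0,u_0}/r_{u,u_0}$. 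Substituting this rank-one form back into $r_{a,u}r_{u,v}r_{v,d}=g(a)h(d)\,[g(u)h(u)][g(v)h(v)]$ shows that independence from $(u,v)$ is equivalent to $g(\cdot)h(\cdot)\equiv k$ for some positive constant $k$, i.e.\ $r_{a,u}=k\,h(u)/h(a)$.

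The next step is to use the row-stochasticity of both matrices: summing $M_{a,u}=r_{a,u}\,M'_{a,u}$ over $u$ gives $(M'h)(a)=h(a)/k$, so the strictly positive function $h$ is a right eigenvector of $M'$ for eigenvalue $1/k$. The main obstacle is to conclude from here that $h$ is constant and $k=1$; this is exactly where the positive recurrence hypothesis is used. For an irreducible stochastic matrix, the Perron--Frobenius theorem (when $\Ek$ is finite) or its analogue for positive recurrent chains (when $\Ek$ is infinite) forces any strictly positive right eigenvector to be the constant vector and the corresponding eigenvalue to be $1$. Hence $1/k=1$, $h$ is constant, and $r\equiv 1$, so $M=M'$. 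Without positive recurrence there would in fact be a whole family of spurious solutions: for every positive $h$, the matrix $\bigl[M'_{a,u}h(u)\big/\sum_v h(v)M'_{a,v}\bigr]_{a,u}$ is a stochastic matrix producing the same $\bF$ as $M'$, so positive recurrence is precisely what breaks this symmetry.
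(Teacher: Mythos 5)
Your derivation of the structural identity $r_{a,u}=k\,h(u)/h(a)$ is correct, and it is essentially condition $(3)$ of the paper's Lemma \ref{lem:l0} (with $\alpha=1/k$), reached by a clean factorisation argument; the reduction to row-stochasticity, giving $M'h=h/k$, is also right. In the finite-$\kappa$ case your conclusion follows: for a finite positive stochastic matrix, Perron--Frobenius does force a strictly positive right eigenvector to be constant with eigenvalue $1$, so there the proof is complete.

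The gap is in the infinite case --- which is exactly the case the positive-recurrence hypothesis is there for (when $\Ek$ is finite and $M>0$, positive recurrence is automatic). The ``analogue of Perron--Frobenius'' you invoke is false: a positive recurrent, irreducible, aperiodic chain on $\N$ can have a strictly positive, non-constant right eigenvector with eigenvalue $\lambda>1$. Take the reflected walk $p_{i,i+1}=1/3$, $p_{i,i-1}=2/3$ for $i\geq 1$, $p_{0,1}=1/3$, $p_{0,0}=2/3$, and $\lambda=11/10$: the function $h(i)=\frac{5}{17}\l(\frac52\r)^i+\frac{12}{17}\l(\frac45\r)^i$ is strictly positive and satisfies $Ph=\lambda h$. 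So knowing only that $h>0$ and $M'h=h/k$ does not pin down $h$ or $k$. The repair uses a symmetry you already have: exchanging the roles of $M$ and $M'$ gives $M(1/h)=k\,(1/h)$ alongside $M'h=(1/k)h$; whichever of $k$, $1/k$ is $\leq 1$ makes $h$ (resp.\ $1/h$) a positive superharmonic function of a recurrent irreducible chain, hence constant, whence $k=1$ and $M=M'$. The paper avoids eigenvector theory altogether: it iterates its relation $(3)$ to get $(M')^{n+1}_{a,d}/M^{n+1}_{a,d}=\alpha^{n}\,M'_{a,d}/M_{a,d}$ and lets $n\to\infty$, using $(M')^n_{a,d}\to\rho'_d>0$ and $M^n_{a,d}\to\rho_d>0$ to force $\alpha=1$. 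A last, minor point: your closing remark is inaccurate as stated --- the transform $M'_{a,u}h(u)/\sum_v h(v)M'_{a,v}$ reproduces the same $\bF$ only when $h$ is a positive eigenvector of $M'$, not for arbitrary positive $h$; and for $\lambda>1$ the transformed kernel is no longer positive recurrent, which is consistent with the theorem rather than an illustration of its failure without the hypothesis.
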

The proof is provided in Section \ref{sec:ann}.

\subsubsection{Finding the set of invariant product measures}
Let $\TT$ be given and $|\Ek|<+\infty$. Now we explore some necessary and/or sufficient conditions for the existence of product measures invariant  by $\TT$ on the line. Define the symmetric version of $\TT$ by
\be
\S{(a,b)}{(c,d)} = \T{(a,b)}{(c,d)}+\T{(b,a)}{(d,c)}.
\ee 
\begin{theo}\label{theo:t5}  Let $\#\Ek<+\infty$, $L=2$ and $\rho\in {\cal M}(\Ek)$ with full support.\\
  $(i)$ If the product measure $\rho^{\Z}$ is invariant by $\TT$, then $\rho^\Z$ is also invariant by $\SS$. \\
 $(ii)$ The product measure $\rho^\Z$ is invariant by $\SS$ (or any symmetric JRM $\SS$) on the line iff $\wZ^{\rho,\SS}\equiv 0$.
\end{theo}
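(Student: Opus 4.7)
The proof plan rests on the characterization provided by Theorem \ref{theo:t3b} together with one algebraic identity linking $\wZ^{\rho,\SS}$ and $\wZ^{\rho,\TT}$.

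First I would establish the key identity
\[
\wZ^{\rho,\SS}_{b,c} = \wZ^{\rho,\TT}_{b,c} + \wZ^{\rho,\TT}_{c,b}, \qquad b,c\in\Ek.
\]
Expanding $\wZ^{\rho,\SS}_{b,c}$ from \eref{eq:newZ} and substituting $\S{(u,v)}{(b,c)}=\T{(u,v)}{(b,c)}+\T{(v,u)}{(c,b)}$ gives two halves. In the second half I would apply the change of variables $(u,v)\mapsto(v,u)$, noting that $\rho_u\rho_v=\rho_v\rho_u$ and $\rho_b\rho_c=\rho_c\rho_b$, so the sum becomes $\sum_{u,v}\T{(u,v)}{(c,b)}\rho_u\rho_v/(\rho_c\rho_b)$. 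The exit term splits analogously: $\SS^{\sf out}_{b,c}=\To{b,c}+\To{c,b}$. Regrouping produces the identity above.

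For part $(i)$, invariance of $\rho^\Z$ under $\TT$ yields, via condition $(ix)$ of Theorem \ref{theo:t3b}, a function $\W{}:\Ek\to\R$ with $\wZ^{\rho,\TT}_{a,b}=\W{b}-\W{a}$. Plugging this into the identity gives
\[
\wZ^{\rho,\SS}_{b,c}=(\W{c}-\W{b})+(\W{b}-\W{c})=0,
\]
so $\wZ^{\rho,\SS}\equiv 0$; then condition $(ix)$ for $\SS$ (with the zero potential) gives invariance of $\rho^\Z$ under $\SS$.

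For part $(ii)$, the sufficiency is immediate from Theorem \ref{theo:t3b}$(ix)$ with $\W{}\equiv 0$. For the converse, I would first observe that any symmetric JRM $\SS$ (satisfying $\S{(u,v)}{(b,c)}=\S{(v,u)}{(c,b)}$) produces a symmetric $\wZ^{\rho,\SS}$, i.e.\ $\wZ^{\rho,\SS}_{b,c}=\wZ^{\rho,\SS}_{c,b}$: this follows from exactly the same $(u,v)\mapsto(v,u)$ change of variables applied directly to the definition of $\wZ^{\rho,\SS}_{b,c}$ (no appeal to the identity of Step 1 is needed here, but it is the same computation with $\TT$ replaced by $\SS$). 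If $\rho^\Z$ is invariant under $\SS$, Theorem \ref{theo:t3b}$(ix)$ provides $\W{}$ with $\wZ^{\rho,\SS}_{a,b}=\W{b}-\W{a}$, and the symmetry forces $\W{b}-\W{a}=\W{a}-\W{b}$, hence $\W{}$ is constant and $\wZ^{\rho,\SS}\equiv 0$.

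The only real obstacle is the index bookkeeping in the change of variables used to derive the identity of Step 1 (and the symmetry of $\wZ^{\rho,\SS}$); once those are checked, both parts reduce to one-line manipulations of the potential $\W{}$ furnished by Theorem \ref{theo:t3b}.
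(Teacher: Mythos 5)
Your proposal is correct; both parts go through, and the key identity $\wZ^{\rho,\SS}_{b,c}=\wZ^{\rho,\TT}_{b,c}+\wZ^{\rho,\TT}_{c,b}$ as well as the symmetry $\wZ^{\rho,\SS}_{b,c}=\wZ^{\rho,\SS}_{c,b}$ are exactly what the indicated change of variables $(u,v)\mapsto(v,u)$ yields. The route differs from the paper's, mainly in part $(i)$: the paper argues softly that the set of JRMs preserving $\rho^{\Z}$ is a cone and that $\rho^{\Z}$, being preserved by $\TT$, is also preserved by the space-reversed matrix $\TTp$ with $\Tp{(a,b)}{(c,d)}=\T{(b,a)}{(d,c)}$, hence by $\SS=\TT+\TTp$; no computation with $\wZ$ appears, but the reversal step is asserted rather than derived. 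Your version routes everything through Theorem \ref{theo:t3b}$(ix)$: the potential representation $\wZ^{\rho,\TT}_{a,b}=\W{b}-\W{a}$ combined with your identity gives $\wZ^{\rho,\SS}\equiv 0$ directly, which is slightly stronger output than the paper's part $(i)$ (you land on the conclusion of part $(ii)$ for free) at the cost of invoking the heavier characterization theorem. For part $(ii)$ the two proofs are essentially the same computation packaged differently: the paper evaluates $\NCycle_4^{\rho,\SS}(a,b,a,b)=2\l(\wZ^{\rho,\SS}_{a,b}+\wZ^{\rho,\SS}_{b,a}\r)=4\wZ^{\rho,\SS}_{a,b}$ via criterion $(vi)$, while you extract the same cancellation from the potential of $(ix)$ plus symmetry; both hinge on the symmetry of $\wZ^{\rho,\SS}$ for symmetric $\SS$.
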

\begin{proof}$(i)$ The set of JRM that preserve a given invariant distribution is a cone. Now, the product measure $\rho^{\Z}$ is preserved by ``space'' reversibility: if $\rho^{\Z}$ is invariant by the JRM $\TT$ then it is also invariant by $\TTp$ defined by $\Tp{(a,b)}{(c,d)}=\T{(b,a)}{(d,c)}$.\\
  $(ii)$ When the product measure $\rho^\Z$ is invariant by $\SS$, then $\NCycle_4^{\rho,\SS}\equiv 0$ by  Theorem \ref{theo:t3b} which implies   $\NCycle_4^{\rho,\SS}(a,b,a,b)=2\l(Z^{\rho,\SS}_{a,b}+Z^{\rho,\SS}_{b,a}\r)=4Z^{\rho,\SS}_{a,b}=0$ for any $a,b\in \Ek$, and then $\Z^{\rho,\SS}\equiv 0$. Conversely, if $Z^{\rho,\TT}\equiv 0$, by all the criteria of Theorem \ref{theo:t3b}, the product measure $\rho^\Z$ is invariant by $\SS$ on the line.
\end{proof}
Hence, {\bf to know if there exist some product measures invariant by some  given $\TT$}, one can proceed as follows:\\
(a) compute $\SS$,\\
(b) solve the equation $\Z^{\rho,\SS}\equiv 0$ with unknown $\SS$ (a pretreatment, can consist to replace in $Z^{\rho,\SS}$, each occurrence of $\rho_{x}\rho_y$ by $\rho_{x,y}$ in order to get a linear equation in the vector $(\rho_{u,v}, (u,v)\in \Ek^2))$. After that, it remains to check if indeed $\rho_{u,v}$ can be written under the the form $\rho_u \rho_v$ (notice that in this case $\rho_u=\sqrt{\rho_{u,u}^2}$).\\
(c) If $(b)$ provides no solution, then no product measure are invariant under $\TT$. If $(b)$ provides some solutions, they are candidate to be invariant by $\TT$, and it remains to check if whether  $\Cycle_3^{\rho,\TT}\equiv 0$ or not.

\color{black}
\subsection{Models in the segment with boundary conditions}
\label{sec:SB}
In general when one defines a PS on $\Z$ or the segment $\cro{1,n}$, where a special behaviour at the boundary of the domain is forced.
\begin{defi}
	A probability measure $\gamma_n\in {\cal M}\l(\Ek^{\cro{1,n}}\r)$ is said to be \AI by $\TT^{\cro{1,n}}$ on the segment $\cro{1,n}$ if it solves the following system:
	\[
	\Sys{\cro{1,n},\gamma_n,\TT^{\cro{1,n}}}~:=\left\{ \Lineq^{\cro{1,n},B}(x)=0, \textrm{ for any }x\in \Ek^{\cro{1,n}},
	\right.
	\]
	where for an extra $B$ in the notation denote the presence of a boundary
	\[
	\Lineq^{\cro{1,n},B}(x)=\sum_{w \in E_{\kappa}^{\cro{1,n}}} \gamma_{n}(w) \T wx^{\cro{1,n}}- \gamma_{n}(x) \T xw^{\cro{1,n}}, 
	\]
 Recalling that $\T{w}{z}$ is the induced jump rate on an interval defined in \eref{eq:Tind}, we define $\TT^{\cro{1,n}}$ as the sum of this induced jump rate to which  we add some boundary effects at the left and at the right of the segment given by some jump rate matrices $\beta^{\ell}$ and $\beta^{r}$  with range $L-1$:
	\be
	\T wz^{\cro{1,n}}&=& \T wz 	\\
        &+&         \beta^{\ell}\l[w\cro{1,L-1},z\cro{1,L-1}\r] \,\1_{ w_j=z_j, \forall j \in \cro{L,n}}\\
	&+&         \beta^{r}\l[w\cro{n-(L-2),n},x\cro{n-(L-2),n}\r]\, \1_{ w_j=z_j, \forall j \in \cro{1,n-(L-1)}}.
	\ee
\end{defi}	

We go on focusing on $\AI$ Markov law here. Take again $M$ a positive Markov kernel, and $\rho$ its invariant distribution.
Define
\[\NLineq^{M,B}(x\cro{1,n}):=\Lineq^{M,B}(x\cro{1,n})/ (\rho_{x_1}\prod_{i=1}^{n-1} M_{x_i,x_{i+1}})\] where $\rho$ is the unique element of ${\cal M}(\Ek)$ such that $\rho M=\rho$.
For $n\geq 3$, a simple computation shows (see if needed the forthcoming Section \ref{sec:PF_theo:t0}), that $\NLineq_n^{M,B}(x\cro{1,n})=$
\be
&& \sum_{j=2}^{n-2}\wZ_{x\cro{j-1,j+2}}\\
&& -\beta^{\Out,\ell}_{x_1}-\To{x_1,x_2}+  \sum_{u_1,u_2} \frac{\rho_{u_1}M_{u_1,u_2}M_{u_2,x_3}}{\rho_{x_1}M_{x_1,x_2}M_{x_2,x_3}}
\l(\T{(u_1,u_2)}{(x_1,x_2)}+\beta^{\ell}_{u_1,x_1}\1_{u_2=x_2}\r)\\
&&-\beta^{\Out,r}_{x_n}-\To{x_{n-1},x_{n}} + \sum_{u_{n-1},u_n} \frac{M_{x_{n-2},u_{n-1}}M_{u_{n-1},u_n}}{M_{x_{n-2},x_{n-1}}M_{x_{n-1},x_n}}(\T{(u_{n-1},u_n)}{(x_{n-1},x_n)}+1_{u_{n-1}=x_{n-1}}\beta^{r}_{u_n,x_n}).
\ee
\begin{theo}\label{theo:segm} Let  $\Ek$ be finite, $L=2$ and $M$ be a positive Markov kernel on $\Ek$.
If for some $n_0\ge 7$ the $(\rho,M)$-Markov law is invariant by $(\beta^r,\beta^\ell,\TT)$ on $\cro{1,n}$ for $n=n_0$ and for $n=n_0+1$, then:
\begin{itemize}
\item[\bs]  the $(\rho, M)$-Markov law  is invariant by $\TT$ on the line,
\item[\bs]  the $(\rho,M)$-Markov law  is invariant by $(\beta^r,\beta^\ell,\TT)$ on $\cro{1,n}$ for any $n\geq n_0$.
\end{itemize} 
\end{theo}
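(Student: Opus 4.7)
My plan is to first deduce invariance on the line from the two segment hypotheses via a single-letter insertion trick, and then to extend to every $n\geq n_0$ using the telescoping representation of $\wZ^{M,\TT}$ provided by Theorem~\ref{theo:t0}. Everything rests on the decomposition recalled just before the statement,
\[
\NLineq_n^{M,B}(x\cro{1,n}) = \sum_{j=2}^{n-2}\wZ^{M,\TT}_{x\cro{j-1,j+2}} + B^\ell(x_1,x_2,x_3) + B^r(x_{n-2},x_{n-1},x_n),
\]
where $B^\ell$ and $B^r$ gather the two boundary lines of that formula. The range $L=2$ is essential here: it ensures each boundary contribution depends on only three letters at the corresponding end of the segment.

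\textbf{Step 1 (reduction to $\ME_7^{M,\TT}\equiv 0$).} Given any $x\cro{1,n_0}\in \Ek^{n_0}$ and any $\alpha\in\Ek$, build $y\cro{1,n_0+1}$ by inserting $\alpha$ at position $4$: $y_i=x_i$ for $i\leq 3$, $y_4=\alpha$, $y_i=x_{i-1}$ for $i\geq 5$. Because $n_0\geq 7$, the left triple $(y_1,y_2,y_3)=(x_1,x_2,x_3)$ and the right triple $(y_{n_0-1},y_{n_0},y_{n_0+1})=(x_{n_0-2},x_{n_0-1},x_{n_0})$ match those of $x$, so $B^\ell$ and $B^r$ cancel in $\NLineq_{n_0+1}^{M,B}(y)-\NLineq_{n_0}^{M,B}(x)$. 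In the bulk, the $\wZ$-terms of $y$ with index $j\geq 6$ match, through the one-position shift, the $\wZ$-terms of $x$ with index $j'=j-1\geq 5$, and cancel pairwise. The surviving four $\wZ$-terms of $y$ (those involving $\alpha$) and three $\wZ$-terms of $x$ reassemble exactly as
\[
\NLineq_{n_0+1}^{M,B}(y)-\NLineq_{n_0}^{M,B}(x) = \ME_7^{M,\TT}(x_1,x_2,x_3,\alpha,x_4,x_5,x_6).
\]
By hypothesis the left-hand side vanishes; since the seven arguments can be taken arbitrarily in $\Ek^7$ (the letters $x_7,\dots,x_{n_0}$ are free witnesses), we conclude $\ME_7^{M,\TT}\equiv 0$.

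\textbf{Step 2 (telescoping and extension to all $n\geq n_0$).} By Theorem~\ref{theo:t0}, the equivalences $(v)\Leftrightarrow (i)\Leftrightarrow (ix)$ imply that the $(\rho,M)$-Markov law is invariant by $\TT$ on the line, and that there exists $W:\Ek^3\to\R$ with $\wZ^{M,\TT}_{a,b,c,d}=\W{b,c,d}-\W{a,b,c}$. The bulk sum therefore telescopes to $\W{x_{n-2},x_{n-1},x_n}-\W{x_1,x_2,x_3}$ for every $n\geq 3$, so
\[
\NLineq_n^{M,B}(x\cro{1,n}) = \widetilde B^\ell(x_1,x_2,x_3) + \widetilde B^r(x_{n-2},x_{n-1},x_n),
\]
with $\widetilde B^\ell:=B^\ell-W$ and $\widetilde B^r:=B^r+W$, both independent of $n$. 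Because $n_0\geq 7\geq 6$, the positions $\{1,2,3\}$ and $\{n_0-2,n_0-1,n_0\}$ are disjoint, so the two triples vary independently in $\Ek^3$; the hypothesis $\NLineq_{n_0}^{M,B}\equiv 0$ then forces $\widetilde B^\ell\equiv c$ and $\widetilde B^r\equiv -c$ for some constant $c$. Consequently $\NLineq_n^{M,B}\equiv 0$ for every $n\geq 6$, and in particular for every $n\geq n_0$.

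\textbf{Main obstacle.} The only non-routine point is the algebraic bookkeeping in Step~1: after matching and cancelling bulk terms under the one-position shift, one must recognise the surviving seven-term combination as \emph{exactly} the map $\ME_7^{M,\TT}(x_1,x_2,x_3,\alpha,x_4,x_5,x_6)$ defined in \eqref{eq:Meq}, rather than an unrelated linear combination of $\wZ$'s. Once that identification is made, Theorem~\ref{theo:t0} does all the structural work---providing simultaneously the invariance on the line and the potential $W$---and the second conclusion is essentially a one-line consequence of the telescoping identity. Note also that both hypotheses (on $n_0$ and $n_0+1$) are used only to produce the comparison in Step~1; Step~2 relies solely on invariance for one of the two sizes.
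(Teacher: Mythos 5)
Your proposal is correct, and its core coincides with the paper's argument: both reduce the first bullet to $\ME_7^{M,\TT}\equiv 0$ by comparing the segment balance of a word of length $n_0+1$ with that of the word obtained by suppressing (equivalently, inserting) the fourth letter, observing that the boundary contributions cancel and that the surviving $\wZ$-terms reassemble exactly into $\ME_7^{M,\TT}$, and then invoke Theorem \ref{theo:t0}. Your bookkeeping in Step 1 is the same identity as the paper's equation \eref{eq:rgethe}, just written as an insertion rather than a deletion.

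Where you genuinely diverge is the second bullet. The paper simply reuses the same identity with $n_0$ replaced by any larger integer: since $\ME_7^{M,\TT}\equiv 0$, one gets $\NLineq_{n+1}^{M,B}(x)=\NLineq_{n}^{M,B}(x^{\{4\}})$ for all $n\geq n_0$, and the nullity propagates upward by induction. You instead pull the potential representation $(ix)$ of Theorem \ref{theo:t0} out of the equivalence, telescope the bulk sum to ${\sf W}(x_{n-2},x_{n-1},x_n)-{\sf W}(x_1,x_2,x_3)$, and conclude that $\NLineq_n^{M,B}$ splits into two $n$-independent boundary functions of disjoint triples, which the single hypothesis at $n=n_0\geq 7$ forces to be opposite constants. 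Both routes are valid. The paper's induction is more elementary (it needs nothing beyond the already-established identity), while your decomposition makes visible an extra piece of structure: once invariance on the line is known, invariance on the segment for a single length $n\geq 6$ already propagates to all lengths where the decomposition applies, so the hypothesis at $n_0+1$ is used only to establish $\ME_7^{M,\TT}\equiv 0$ and not for the propagation. This is the same mechanism that underlies Theorem \ref{theo:lkajsfdgv}, so your variant ties the two segment results together more tightly than the paper's own proof does.
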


\begin{proof} To prove the first point: By Theorem \ref{theo:t0}, it suffices to prove that  $\ME_7^{M,\TT}\equiv 0$.	So assume that $\NLineq_{n_0+1}^{M,B}\equiv0$ and $\NLineq_{n_0}^{M,B}\equiv 0$, and observe that for any $x\in \Ek^{n_0+1}$,
\beq\label{eq:rgethe}
\NLineq_{n_0+1}^{M,B}(x)-\NLineq_{n_0}^{M,B}(x^{\{4\}})
= \ME_7^{M,\TT}(x(\cro{1,7})),\eq
(the boundary terms cancel out) 
Now, to prove the second point, it suffices to observe that \eref{eq:rgethe} still holds if one replaces $n_0$ by a larger integer, so that one can infer from the nullity of $\NLineq_{n_0+1}^{M,B}$ and $\NLineq_{n_0}^{M,B}$ the nullity of $\ME_7^{M,\TT}$, and after that of all $\NLineq_{n}^{M,B}$ for $n\geq n_0$.
\end{proof}
\begin{theo}\label{theo:lkajsfdgv} Let  $\Ek$ be finite, $L=2$ and $M$ be a positive Markov kernel on $\Ek$.
	If  the $(\rho,M)$-Markov law  is invariant by $\TT$ on the line, then there exist two vectors $\beta^r$ and $\beta^\ell$ such that  the $(\rho,M)$-Markov law is invariant by $(\beta^r,\beta^\ell,\TT)$ on $\cro{1,N}$, for any $N\geq 1$. 
\end{theo}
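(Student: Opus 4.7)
The plan is to construct $\beta^\ell$ and $\beta^r$ as \emph{effective} boundary rates obtained by averaging the line dynamics against the Markov conditional law at the ``external'' sites $0$ and $N+1$. Concretely, for $u\neq v$ in $\Ek$, set
\[
\beta^\ell_{u,v} := \frac{1}{\rho_u}\sum_{w,a\in\Ek}\rho_w\, M_{w,u}\,\T{(w,u)}{(a,v)}, \qquad \beta^r_{u,v} := \sum_{w,b\in\Ek}M_{u,w}\,\T{(u,w)}{(v,b)},
\]
and $\beta^\ell_{u,u}=\beta^r_{u,u}=0$. These entries are non-negative since $\rho$, $M$ and $\TT$ are, hence they define legitimate range-$1$ boundary jump rate matrices.

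Let $\mu$ be the $(\rho,M)$-Markov law on $\Z$ and let $G$ denote the line generator of the PS with JRM $\TT$. The key step is to establish, for every bounded cylinder function $f:\Ek^{\cro{1,N}}\to\R$ (viewed as a function on $\Ek^\Z$), the identity
\[
\E_\mu\l[Gf(\eta)\,\big|\,\eta\cro{1,N}=x\r] \,=\, G^{\text{seg}}f(x),
\]
where $G^{\text{seg}}$ denotes the segment generator on $\cro{1,N}$ built from $(\beta^r,\beta^\ell,\TT)$. To prove it, note that the only jumps appearing in $Gf(\eta)$ that actually change $f$ are those at pairs $\{i,i+1\}$ with $i\in\cro{0,N}$. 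The terms with $i\in\cro{1,N-1}$ are bulk jumps that coincide term-by-term with the bulk part of $G^{\text{seg}}$. For $i=0$, the jump $(w,u)\to(a,v)$ at $\{0,1\}$ contributes $f(v,x_2,\dots,x_N)-f(x)$ to $Gf$; conditioning on $\eta\cro{1,N}=x$ and using the Markov identity $\P_\mu(\eta_0=w\mid\eta_1=u)=\rho_w M_{w,u}/\rho_u$ to integrate out $\eta_0$ produces exactly $\sum_v \beta^\ell_{x_1,v}\,[f(v,x_2,\dots,x_N)-f(x)]$. The mirror computation at $i=N$, using $\P_\mu(\eta_{N+1}=w\mid\eta_N=u)=M_{u,w}$, yields the $\beta^r$ contribution at position $N$.

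Since $\mu$ is invariant on the line, $\E_\mu[Gf]=0$; taking the expectation of the identity above gives $\sum_x\mu_{\cro{1,N}}(x)\,G^{\text{seg}}f(x)=0$ for every cylinder $f$. The restriction $\mu_{\cro{1,N}}$ is precisely the $(\rho,M)$-Markov law on $\cro{1,N}$ by the Markov property, so this Markov law solves $\Sys{\cro{1,N},\mu,\TT^{\cro{1,N}}}$ with the chosen $(\beta^\ell,\beta^r)$. The same $\beta^\ell,\beta^r$ work simultaneously for every $N\geq 1$.

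The only delicate point is the conditional-expectation computation at the boundary: one must verify that the Markov conditional laws of $\eta_0$ and $\eta_{N+1}$ given the interior depend only on $\eta_1$ and $\eta_N$ respectively, and that the normalising factors $\rho_w M_{w,u}/\rho_u$ and $M_{u,w}$ are exactly those baked into $\beta^\ell$ and $\beta^r$. The small-$N$ cases ($N\in\{1,2,3\}$, where the bulk index set $\cro{1,N-1}$ is empty or where both boundaries act at the same site) need no special treatment, because the decomposition of $Gf$ according to the jump position $i\in\cro{0,N}$ is intrinsic and $\beta^\ell,\beta^r$ are defined independently of $N$.
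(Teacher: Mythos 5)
Your construction is exactly the paper's: the same effective boundary rates $\beta^{\ell},\beta^{r}$ obtained by integrating out $\eta_0$ and $\eta_{N+1}$ against the conditional Markov law, combined with the observation that the restriction of the $(\rho,M)$-Markov law to $\cro{1,N}$ is again the $(\rho,M)$-Markov law; your conditional-generator identity merely makes explicit the verification the paper leaves implicit. (Your $\beta^r_{u,v}=\sum_{w,b}M_{u,w}\T{(u,w)}{(v,b)}$, which weights the \emph{old} exterior state, is the correct choice; the paper's printed weight $M_{x_n,b}$ appears to be a typo for $M_{x_n,v}$.)
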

\begin{proof}
Suppose that a $(\rho,M)$-Markov law is invariant by $\TT$ on the line. The key point in the proof if that, if $(X_{0},\cdots,X_{n+1})$ has the $(\rho,M)$-Markov law under its invariant distribution, then $(X_1,\cdots,X_n)$ has also the $(\rho,M)$-Markov law under its invariant distribution. Hence, it is possible to build explicitly $\beta^{\ell}$ and $\beta^r$ in such a way they emulate the exterior effects of the segment $\cro{1,N}$.
It suffices then to take simply
\[
\bpar{ccl}
\beta^{\ell}_{z,a}&=&\l(\sum_{u,v} \rho_uM_{u,z}\T{(u,z)}{(v,a)}\r)/\rho_z\\
\beta^{r}_{x_{n},a}&=&\sum_{v,b} \T{(x_n,v)}{(a,b)} M_{x_n,b}.
\epar
\]
 \end{proof}

 \begin{rem} What is done in this section is a bit related to the matrix ansatz
 used by Derrida \& al. \cite{DEHP} in order to find and describe the invariant distribution $\mu_n$ of the TASEP on a segment $\cro{1,n}$, in the sense that it relies on a telescopic scheme.
   \end{rem}

\section{Extension to larger range, memory, dimension, etc.}\label{sec:Ext}
\subsection{ Extension of Theorem \ref{theo:t0} to larger range and memory}

{The case $L>2$ can be treated as the case $L=2$ has been treated, with some adjustments. Also the case of \AI Markov law with memory $m>1$ can be managed. We discuss both extensions simultaneously here. A first change concerns the ``7'' which played a special role in Theorem \ref{theo:t0} which will be replaced by
\beq
{\sf h}=4m+2L-1.
\eq
As usual, a Markov chain with Markov kernel $M$ and memory $m\geq 0$, is a process $(X_k,k\geq k_0)$ (for some $k_0$) whose distribution is characterized by 
\[`P\l( X_j =x_j ~|~(X_{j-i}=x_{j-i},i\geq 1)\r)=M_{x\cro{j-m,j}},\]
for $j-m\geq k_0$, and an initial distribution $\mu\in{\cal M}(\Ek^m)$, the distribution $\mu$ of $(X_{k_0},\cdots,X_{k_0+m-1})$.
The Markov kernel $M$ is a matrix with size $\kappa^m \times \kappa$ with non negative entries, such that, for any $x\in \Ek^{m}$, $\sum_{y \in \Ek}M_{xy}=1$. We call such a Markov kernel, a Markov kernel with memory $m$. \par 
We let $\Lineq_n^{\rho,M,\TT}(x\cro{1,n})$ be the equation $\Lineq^\Z(x\cro{1,n},\nu)$ where $\nu$ is the $M$-Markov law with memory $m$ and JRM $\TT$ (we may use the same notation as before, since in the case $(L,m)=(2,1)$ we recover the same definition as before). The equation $\Lineq_n^{\rho,M,\TT}(x\cro{1,n})=0$ rewrites:
\be
0&=& \sum_{ w \in \Ek^{\cro{-(L-1),n+L-1}}} \sum_{j=-L+2}^{n} 1_{w_k=x_k, k \in \cro{1,n}\setminus{\cro{j,j+L-1}}} \sum_{u\in \Ek^L} \mu_{G(w,u,j)} \T{u}{x\cro{j,j+L-1}}\\
&&-\sum_{w \in \Ek^{\cro{-(L-1),n+L-1}}}   \sum_{j=-L+2}^n\To{w\cro{j,j+L-1}} \mu(w) \1_{w\cro{1,n}=x\cro{1,n}},
\ee
for $G(w,u,j)$ being the word $w$ in which $w\cro{j,j+L-1}$ has been replaced by $u$:
\be
G[w,u,j]&=&w\cro{-(L-1),j-1}\,u\, w\cro{j+L,n+L-1},\\
\mu(w\cro{1,N})&=&\rho_{w\cro{1,m}}\prod_{j=1}^{N-m} M_{w\cro{j,j+m}}
\ee
and where $\rho$ is the invariant distribution of the Markov kernel $M$.
\begin{rem} Mimicking what has been done in \eref{eq:erhrehr2}, and explained below  we may write a variant of this formula, by summing on $w$ with index set enlarged, by taking $w\in\Ek^{\cro{-q,n+L-1}}$ with $q= L-1+m$ (and summing on these words), keeping unchanged the sum on $j$, in such a way that in the representation of $\mu(w)$ there are no intersection of indices between those involved in $\rho$ and in $\TT$.
  \end{rem}
We also extend the definition of $\NLineq^{\rho,M,\TT}$ to the present case,:
\beq\label{eq:nli}
\NLineq^{\rho,M,\TT}_n(x\cro{1,n})=: \frac{\Lineq^{\rho,M,\TT}(x\cro{1,n})}{\prod_{j=1}^{n-m}M_{x\cro{j,m+j}}}.
\eq
The quantity which plays the role of $\wZ$ in these settings is:
\beq\label{eq:rgerh}
\wZ_{a\cro{1,m},b\cro{1,L},c\cro{1,m}}=\sum_{u\cro{1,L}\in \Ek^L} \T{u\cro{1,L}}{b\cro{1,L}} \prod_{j=1}^{m+L} \frac{M_{w'\cro{j,j+m}}}{M_{w\cro{j,j+m}}} - \To{b\cro{1,L}}, 
\eq 
where $\To{u\cro{1,L}}=\sum_{v\cro{1,L} \in \Ek^L} \T{u\cro{1,L}}{v\cro{1,L}}$ and
\[
\bpar{ccl}
w & = & a\cro{1,m}\,b\cro{1,L}\,c\cro{1,m},  \\
w'& = & a\cro{1,m}\,u\cro{1,L}\,c\cro{1,m}.
\epar\]
}

The quantity which will play the role of ``4'' as in \eref{eq:CyZ} is 
\[{\sf s}= 2m +L = ({\sf h}+1)/2.\]
We extend the definition of $\NCycle_n$ for $n\geq m+1$:
\[\NCycle^{M,\TT}_n(x\cro{1,n})=\sum_{w \in \Subn{x\cro{1,n}}{{\sf s}}{\ZnZ}} \wZ_w;\]
for any $x\in \Ek^{{\sf h}}$ and $y\in \Ek$, extend $\ME^{M,\TT}$ and $\Rep^{M,\TT}$ by:
\be
\ME^{M,\TT}_{{\sf h}}(x)&=&\sum_{w \in \Sub{x}{{\sf s}}} \wZ_w -\sum_{w \in \Sub{x^{\{{\sf s}\}}}{{\sf s}}} \wZ_w,\\
\Rep^{M,\TT}_{{\sf h}}(x;y)&=&\sum_{w \in \Sub{x}{{\sf s}}} \wZ_w -\sum_{w \in \Sub{x\cro{1,{\sf s}-1}\,y\,x\cro{{\sf s}+1,{\sf h}}}{{\sf s}}} \wZ_w.
\ee
In words, the second sum ranges on the subwords of size ${\sf s}$ of $w$ with the central letter removed in the case of $\ME^{M,\TT}$ and changed in the case of $\Rep^{M,\TT}$. For $(L,m)=(2,1)$ we recover the standard definition of $\ME_7^{M,\TT}$ and $\Rep_7^{M,\TT}$.
Here is the main result of this section:
\begin{theo} \label{theo:t0prime} Theorem \ref{theo:t0} holds, for $M$ a Markov kernel with memory $m$, and $\TT$ with range $L$ 
by replacing $(a,b,c,d,0,0,0)$ by $(a\cro{1,{\sf s}}\, 0^{{\sf s}-1})$, $n\geq m+L$ in $(vi)$, ${\sf W}$ by ${\sf W}^{m,L} :\Ek^{{\sf s}-1}\to \R$ and $\Rep_7^{M,\TT}, \ME_7^{M,\TT}$, $\NCycle_7^{M,\TT}$ by the variants defined above (with ${\sf h}$ instead of 7).
\end{theo}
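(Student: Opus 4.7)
The plan is to follow the architecture of the proof of Theorem~\ref{theo:t0} verbatim, substituting the constants $(L,m)=(2,1)$ by general values and checking that each combinatorial identity remains valid after replacing $4$ by ${\sf s} = 2m+L$ and $7$ by ${\sf h} = 4m+2L-1 = 2{\sf s}-1$. The first step is to write $\NLineq^{\rho,M,\TT}_n(x\cro{1,n})$ as a sum of $\wZ_w$ over the length-${\sf s}$ windows $w$ of an extended word $x\cro{-(m+L-1),\,n+m+L-1}$, weighted by Markov contributions (the law $\rho$ on a size-$m$ block and transition kernels $M$ for the remaining letters) and summed over the extra $m+L-1$ letters at each end. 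This decomposition is the direct analogue of \eref{eq:rsgfqe12} and amounts to a routine unfolding of the definitions of $\mu$, $\wZ$ and the $\To{}$ exit rates.

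Next I would establish the purely algebraic equivalences $(ii)\Leftrightarrow(iii)\Leftrightarrow(iv)\Leftrightarrow(v)$ and $(vi)\Leftrightarrow(vii)\Leftrightarrow(viii)$ via the ``removal lemma'' of Remark~\ref{rem:kyrje} adapted to the general ${\sf s}$: the identity $\ME^{M,\TT}_{\sf h}\equiv 0$ is equivalent to saying that $\sum_{w\in \Sub{x}{{\sf s}}}\wZ_w$ is unchanged when one deletes any single letter of $x$ whose position lies in $\cro{{\sf s},\,|x|-{\sf s}+1}$. Iterating this, sums of $\wZ$ over windows of a word of length $\geq {\sf h}$ are determined by the length-$({\sf s}-1)$ prefix and suffix, which bridges the evaluation conditions $(iv)$ and $(ii)$ with their universally quantified versions $(v)$ and $(iii)$, and lets the cyclic sums of $(vi)$--$(viii)$ be matched with the linear sums of $(iv)$--$(v)$.

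The heart of the theorem is the implication $(v)\Rightarrow(ix)$, which produces the function $\mathsf{W}^{m,L}\colon\Ek^{{\sf s}-1}\to\R$. Fixing a reference letter $0\in\Ek$, I would define for $y\in\Ek^{{\sf s}-1}$ the value $\mathsf{W}^{m,L}(y):=\sum_{w\in \Sub{y\cdot 0^{{\sf s}-1}}{{\sf s}}}\wZ_w$, normalized so that $\mathsf{W}^{m,L}(0^{{\sf s}-1})=0$. Independence from the chosen right-extension follows from the removal lemma, and a direct comparison of $\sum_{w\in\Sub{u\cdot 0^{{\sf s}-1}}{{\sf s}}}\wZ_w$ with $\sum_{w\in\Sub{u\cro{2,{\sf s}}\cdot 0^{{\sf s}-1}}{{\sf s}}}\wZ_w$ for $u\in\Ek^{{\sf s}}$ yields $\wZ_u=\mathsf{W}^{m,L}(u\cro{2,{\sf s}})-\mathsf{W}^{m,L}(u\cro{1,{\sf s}-1})$. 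The converse $(ix)\Rightarrow(v),(vi),(vii),(viii)$ is immediate by telescoping.

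Finally, the equivalence with $(i)$ proceeds as in the $(L,m)=(2,1)$ case. The forward implication is Remark~\ref{rem:LP}, whose proof only uses long periodic test words and is insensitive to $L$ and $m$, giving $\NCycle_n^{M,\TT}\equiv 0$ for all $n\geq m+L$. For the converse, plugging the telescoping form of $(ix)$ into the decomposition of $\NLineq^{\rho,M,\TT}_n(x\cro{1,n})$ collapses the bulk $\wZ$-sum to a boundary difference involving $\mathsf{W}^{m,L}$; summing this boundary difference against the Markov weights over the $m+L-1$ extra letters on each side and using $\rho M=\rho$ together with $\sum_y M_{w,y}=1$ iteratively makes the two boundary contributions equal, so the whole expression vanishes. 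The main obstacle I anticipate is the clerical bookkeeping in constructing $\mathsf{W}^{m,L}$ and verifying its well-definedness over overlapping length-${\sf s}$ windows when ${\sf s}$ can be arbitrarily large; in the $(m,L)=(1,2)$ case this is a short identity among seven terms, but in general the removal lemma must be iterated several times in a fixed consistent order. Everything else is a transcription of the original proof.
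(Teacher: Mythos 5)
Your outline reproduces the paper's architecture for the equivalences among $(ii)$--$(ix)$ (the removal lemma of Remark \ref{rem:kyrje} adapted to window length ${\sf s}$, and the telescoping construction of ${\sf W}^{m,L}$ by padding with $0^{{\sf s}-1}$), and the forward direction $(i)\imp(vi)$ via long test words is fine. The gap is in your converse, i.e.\ the passage from the finite system back to $(i)$. After telescoping, $\NLineq^{\rho,M,\TT}_n(x\cro{1,n})$ reduces to a difference of two boundary terms; in the case $(L,m)=(2,1)$ these are
\[
\rho_{x_1}\sum_{b,c}M_{x_n,b}M_{b,c}\,{\sf W}(x_n,b,c)
\qquad\text{and}\qquad
\sum_{a,b}\rho_{a}M_{a,b}M_{b,x_1}\,{\sf W}(a,b,x_1),
\]
that is $\rho_{x_1}h(x_n)-g(x_1)$ with $h(e)=\sum_{b,c}M_{e,b}M_{b,c}{\sf W}(e,b,c)$ and $g(e)=\sum_{a,b}\rho_aM_{a,b}M_{b,e}{\sf W}(a,b,e)$. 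Since the first term depends on $x_n$ while the second does not, the cancellation you assert requires $h$ to be \emph{constant} and $g$ to be proportional to $\rho$. Neither follows from $\rho M=\rho$ and $\sum_y M_{w,y}=1$ alone, however these identities are iterated: what Lemma \ref{lem:balanced} applied to $\wZ_{a,b,c,d}={\sf W}(b,c,d)-{\sf W}(a,b,c)$ actually yields is $Mh=h$ and $gM=g$, and concluding $h\equiv \textrm{const}$ and $g=c\rho$ from there is a Perron--Frobenius uniqueness statement which uses the positivity of $M$ in an essential way (for a block-diagonal stochastic $M$ with an invariant $\rho$, harmonic functions need not be constant). This is exactly the step the paper isolates: in Lemma \ref{lem:asluf} the vector $\l(\NLineq_N^{\rho,M,\TT}(A\cro{1,N-m}B)\r)_{B\in\Ek^m}$ is shown to be a left eigenvector of the transposed $m$-step block kernel $\Gamma^t$, positivity of $\Gamma$ is invoked to conclude it is constant, a second eigenvector argument identifies the constant as $\alpha\rho$, and Lemma \ref{lem:balance2} gives $\alpha=0$.

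Once that Perron--Frobenius step is inserted, your route does close, and it is a legitimate reorganization of the paper's proof (the paper proves $(v)\imp(i)$ directly on $\NLineq_N$ without first constructing ${\sf W}^{m,L}$, and only derives $(ix)$ in the second cycle of implications). But as written, ``using $\rho M=\rho$ together with $\sum_y M_{w,y}=1$ iteratively'' does not justify the vanishing of the boundary difference, and the hypothesis $M>0$ --- which is essential to the theorem, see Section \ref{sec:relax} --- is never used in your argument for this implication; that is the symptom of the missing step.
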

\begin{rem}
  The constraint $n\geq m+L$ in $(vi)$ comes from the fact that, if $n<m+L$, the cyclic structure imposes the repetition of some letters in the product of $M$'s inside $\Cycle$. This fact is reflected in Theorem \ref{theo:t0} $(vi)$ and Theorem \ref{theo:t3b} $(vi)$, where a product measure is seen as a Markov law with memory $m=0$.
\end{rem}
\begin{rem}
  \bia
  \ita A given PS may be represented in several different ways using different JRM: the PS with jump rate $\T{(0)}{(1)}=\T{(1)}{(0)}=1$ with range 1, can be represented using as JRM  $\T{(a,b)}{(1-a,b)}=\T{(a,b)}{(a,1-b)}=1/2$ for any $a\neq b, a,b \in \{0,1\}$ instead, on the line and on any $\Z/n\Z$ for $n\geq 2$. In Theorem \ref{theo:t0prime}, we do not assume that the smallest possible range has been used, but there is a price to pay to use a representation with JRM with a non minimal range since the equations provided by Theorem \ref{theo:t0prime} are more numerous, and have a larger degree in $M$. 
\ita The previous point may lead to think that it could be a good idea to represent any PS with a JRM with range 2, which is always possible, by changing the alphabet: if $\TT$ has range $L> 2$, then by taking the map which sends the set of configurations $\E_\kappa^\Z$ onto $A^{\mathbb{Z}}$ where the alphabet $A= \E_\kappa^{L-1}$ by sending $\eta\in \Ek^Z$ on to $(\eta'_j, j \in \Z)$ where
\[\eta'_j=  [\eta_{j+x}, 1\leq x \leq L-1],\]
that is by rewriting $\eta$ as a sequence of overlapping subwords on size $L-1$, then one can express on this new space the JRM thanks to a jump rate of range 2. However, since our theorem allows to characterize the invariant Markov law with some fixed memory $m$, \textbf{with full support} they are not suitable to characterize invariant Markov law for $\eta'$ (since consecutive states $\eta_j'$ and $\eta_{j+1}'$ must be consistent, that is the suffix of $\eta'_j$ must coincide with the prefix of $\eta'_{j+1}$).
\eia
\end{rem}
The proof of Theorem \ref{theo:t0prime} is a bit more complex that that of Theorem \ref{theo:t0}  (Section \ref{sec:ann}).

\paragraph{Equivalence between  $\NCycle_n^{M,\TT}\equiv 0$ for small $n$'s and invariant of a $M$-Markov law on the line.}
Theorem \ref{theo:t0} which states that  $\Cycle_n^{M,\TT}\equiv 0$ for any $n$ is equivalent to $\ME_7^{M,\TT}\equiv 0$ is valid only when $L=2$ and $m=1$, but the fact that $\NCycle_n^{M,\TT}\equiv 0$ for every ``small $n$'' is equivalent to the invariance of the $M$-Markov law on the line, is true in all generality, and can be proved using arguments that are interesting from their own sake:
\begin{theo}\label{eq:fqqds} Let $\kappa<+\infty$, $L<+\infty$. For a Markov kernel $M$ with memory $m$ and positive entries to be invariant by $\TT$, it is necessary and sufficient that  $\NCycle_n^{M,\TT}\equiv 0$ for any $n\leq \kappa^m$.
\end{theo}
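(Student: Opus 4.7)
I would prove both directions using the machinery already developed in the paper, with the sufficiency constituting the substantive work.

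\emph{Necessity.} The Linearity Principle of Remark~\ref{rem:LP} extends verbatim to arbitrary $m$ and $L$: for any pattern $w \in \Ek^k$ and any prefix $p$, suffix $s$, direct inspection of the definition of $\NLineq$ gives
\[
\NLineq^{\rho,M,\TT}_{|p| + nk + |s|}(p\,w^n s) = (n-1)\,\NCycle^{M,\TT}_k(w) + O(1) \quad \text{as } n \to \infty.
\]
The ``bulk'' of the configuration $pw^ns$ consists of $n-1$ full copies of $w$, each contributing $\NCycle^{M,\TT}_k(w)$ via the cyclic sum of $\wZ$-values, while boundary effects at the junctions with $p$ and $s$ are absorbed into the $O(1)$ term. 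If the $(\rho,M)$-Markov law is invariant by $\TT$ on the line, $\NLineq$ vanishes identically, forcing the coefficient of $n$ to be zero, hence $\NCycle^{M,\TT}_k(w)=0$ for every $k$ and $w$, in particular for every $k \leq \kappa^m$.

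\emph{Sufficiency.} Assume $\NCycle^{M,\TT}_n \equiv 0$ for every admissible $n \leq \kappa^m$. By the equivalence (i)\,$\Leftrightarrow$\,(vi) of Theorem~\ref{theo:t0prime}, the invariance of the $(\rho,M)$-Markov law on the line is equivalent to $\NCycle^{M,\TT}_n \equiv 0$ for every admissible $n$; I propose to prove this by strong induction on $n$. The base cases $n \leq \kappa^m$ are the hypothesis. For the inductive step let $n > \kappa^m$ and $x \in \Ek^n$. The $n$ cyclic $m$-windows $(x\cro{k,k+m-1})_{k \in \Z/n\Z}$ lie in $\Ek^m$, a set of cardinality $\kappa^m < n$; pigeonhole yields indices $0 \leq i < j \leq n-1$ with $x\cro{i,i+m-1} = x\cro{j,j+m-1}$ cyclically. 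Split $x$ into two cyclic subwords $y$ of length $n_1 = j - i$ and $z$ of length $n_2 = n - n_1$; both have length strictly less than $n$. The plan is then to establish an identity of the shape
\[
\NCycle^{M,\TT}_n(x) = \NCycle^{M,\TT}_{n_1}(y) + \NCycle^{M,\TT}_{n_2}(z) + R(x;i,j),
\]
where $R(x;i,j)$ collects the contributions of the ${\sf s}$-windows of $x$ that straddle one of the two cut points and therefore disagree with the cyclic windows of $y$ or $z$ past the $m$-overlap. Once $R$ is rewritten as a finite linear combination of $\NCycle^{M,\TT}_{n'}(x')$ with $n' < n$, applied to suitable auxiliary cyclic words, the induction hypothesis annihilates every term on the right and closes the induction.

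\emph{Main obstacle.} The genuine difficulty is the controlled expression of $R$ as a combination of $\NCycle^{M,\TT}_{n'}$ with $n' < n$. Pigeonhole only forces an overlap of length $m$, whereas each ${\sf s}$-window reads ${\sf s} = 2m + L > m$ letters, so the boundary-crossing windows of $x$ disagree with their analogues in $y$ or $z$ over the last $m + L$ positions. The strategy I would employ is to reassemble the mismatched ${\sf s}$-windows into auxiliary cyclic words of length at most $\max(n_1,n_2) + m + L - 1$, which is strictly less than $n$ whenever the pigeonhole cut is non-degenerate; when the split is too unbalanced one iterates pigeonhole inside the longer arc (which still exceeds $\kappa^m$ or falls into a base case). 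The identification exploits the linearity of $\wZ^{M,\TT}$ in $\TT$ and the multiplicative structure of the $M$-ratios in its definition, together with algebraic relations in the spirit of Remark~\ref{rem:kyrje}; packaging these manipulations uniformly for every $x$ and every pigeonhole cut is the substantive part of the proof and is where the bound $\kappa^m$, rather than the more naive $\kappa^{{\sf s}-1}$ coming from the full de~Bruijn graph on $\Ek^{{\sf s}-1}$, enters crucially.
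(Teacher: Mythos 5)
Your necessity argument is the paper's (the linearity principle of Remark~\ref{rem:LP} applied to general $m$ and $L$), and the reduction of sufficiency to ``$\NCycle_n^{M,\TT}\equiv 0$ for all admissible $n$'' via $(vi)\imp(i)$ of Theorem~\ref{theo:t0prime} is legitimate. But the heart of your sufficiency proof --- the inductive identity $\NCycle_n^{M,\TT}(x)=\NCycle_{n_1}^{M,\TT}(y)+\NCycle_{n_2}^{M,\TT}(z)+R(x;i,j)$ with $R$ re-expressed as a combination of cycles of length $<n$ --- is precisely the step you do not carry out, and it is where the whole difficulty sits. The pigeonhole matches the two cut points over only $m$ letters while each $\wZ$-window reads ${\sf s}=2m+L$ letters, so at each cut there are $m+L-1$ windows of $x$ that are not cyclic windows of $y$ or $z$ (and conversely); nothing in the hypotheses lets you repackage these mismatched $\wZ$-terms into complete cyclic sums of smaller length. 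The identities that would do this job ($\ME_{{\sf h}}^{M,\TT}\equiv 0$, or the potential representation of $\wZ$ in condition $(ix)$) are equivalent to the conclusion and so cannot be invoked; and in the only regime where the theorem adds anything to Theorem~\ref{theo:t0prime}, namely $\kappa^m<{\sf h}$, the base cases do not include $\NCycle_{{\sf h}}^{M,\TT}\equiv 0$, so you cannot bootstrap through condition $(vii)$ either. Your fallback of iterating the pigeonhole inside the longer arc does not repair this: the length bound $\max(n_1,n_2)+m+L-1<n$ fails whenever the two occurrences are within $m+L-1$ letters of each other, and the boundary mismatches accumulate under iteration. As written, the induction does not close.

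The paper sidesteps this obstacle by leaving the algebraic framework altogether. It uses the pigeonhole cancellation only to obtain a bound uniform in $n$: after cancelling the cycles guaranteed by the hypothesis, $\Lineq_n^{\rho,M,\TT}$ is controlled by words of length at most $\kappa^m+m$, whence $\|\partial_t\mu_n^t\|_\infty\le C$ and $d_{TV}(\mu_n^t,\mu_n^0)\le Ct$ with $C$ independent of $n$. It then runs a probabilistic bootstrap: two blocks of length $r$ placed far apart are asymptotically independent both at time $0$ (mixing of the positive kernel $M$) and at time $t$ (finite speed of propagation, via a dependence-graph argument), so a positive $d=d_{TV}(\mu_r^t,\mu_r^0)$ would force the pair to be at total variation distance $2d-d^2>(3/2)d$, contradicting the uniform linear-in-$t$ bound after one self-improvement step; hence $d=0$ and the law is invariant. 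To complete your proposal you would need either to prove the missing combinatorial identity for $R$ --- which the paper does not attempt --- or to adopt this coupling argument for the sufficiency direction.
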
 
The proof is given in Section \ref{sec:ann}.

\subsection{The case $\Ek=\N$ (that is $\kappa = \infty$)}

Here we will consider Markov kernels with positive entries (the case with possibly zero entries is discussed in Section \ref{sec:relax}). The main problem in the case $\kappa = +\infty$ is that the sums defining $\Lineq$ are now infinite series and therefore some conditions need to be satisfied in order to rearrange terms as done in the proofs, for example, to write $\wZ$. The first problems come from the infinitesimal generator (see \eref{eq:gen}) which may fail to have an interesting domain, in other words, in general, it does not define a Markov process. But even if we jump directly to the $\AI$ considerations a second problem arising is that it is no more clear that $\Lineq_n^{\rho,M,\TT}\equiv 0$ and $ \NLineq_n^{\rho,M,\TT}\equiv 0$ are equivalent. The series appearing in both members of \eref{eq:erhrehr2} are composed with positive terms.
It is necessary and sufficient that each of them converges for $\Lineq$ to be well defined. 
If each of them converges, Fubini's theorem ensures that we can rearrange globally their terms as wished. Hence, we have under this condition 
	\beq\label{eq:dfvasbv}
	\l(\Lineq^{\rho,M,\TT}\equiv 0\r) \imp \l(\NLineq^{\rho,M,\TT}\equiv 0\r).
	\eq 
	The problem is that it is often the converse which is needed, since all criteria we gave rely on $\ME$, $\NLineq$, $\NCycle$.        When $\#\Ek<+\infty$, 
	\beq\label{eq:fqd}
	\l(\NLineq^{\rho,M,\TT}\equiv 0\r) \imp \l(\Lineq^{\rho,M,\TT}\equiv 0\r),
	\eq
        but, when $\kappa$ is infinite, when a pair $(M,T)$ solving $\NLineq^{\rho,M,\TT}\equiv 0$ is found, \eref{eq:fqd} must be checked.

        The following proposition gives a sufficient condition for the validity of both \eref{eq:fqd} and \eref{eq:dfvasbv}.

\label{sec:kappainfinite}
{
\begin{pro} Assume that $\kappa\in \N\cup\{+\infty\}$, and $M$ is a positive Markov kernel. 	If 
	\[
        \bpar{ccl}
	C_1&:=&\sup_{a,b,c,d \in \Ek}\sum_{u,v\in \Ek} \frac{M_{a,u}M_{u,v}M_{v,d}}{M_{a,b}M_{b,c}M_{c,d}}\T{(u,v)}{(b,c)} <\infty\\
	C_2&:=& \sup_{b,c \in \Ek}\To{(b,c)}<\infty,
	\epar
        \]
	then $\NLineq_n$ and $\Lineq_n$ as defined in \eref{eq:lineq} and \eref{eq:rsgfqe12} are well defined, and satisfy \eref{eq:tjht}, and then \eref{eq:dfvasbv} and \eref{eq:fqd} are satisfied.
\end{pro}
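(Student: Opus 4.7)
My plan is to establish absolute convergence of the (formal) series defining $\Lineq_n$ using the hypotheses $C_1,C_2<\infty$, and once this is secured the rest follows by Fubini and the positivity of $M$.

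The first step is a term-by-term bound. Fix $n$ and $x\cro{1,n}$. Looking at the expression \eref{eq:erhrehr2}, the positive part (the creation term) is a series indexed by $(x_{-1},x_0,x_{n+1},x_{n+2},j,u,v)$ with strictly nonnegative summands. For each fixed $j$, the inner sum over $u,v$ is $\sum_{u,v}\T{(u,v)}{(x_j,x_{j+1})}M_{x_{j-1},u}M_{u,v}M_{v,x_{j+2}}$, which by the definition of $C_1$ is bounded above by $C_1\, M_{x_{j-1},x_j}M_{x_j,x_{j+1}}M_{x_{j+1},x_{j+2}}$. Plugging this in and summing the remaining $M$-factors together with $\rho_{x_{-1}}$ over $x_{-1},x_0,x_{n+1},x_{n+2}$ produces at most $C_1$ (since $\rho$ is a probability measure and the $M$'s are a Markov kernel), and there are $n+1$ choices of $j$. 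Hence the positive part is bounded by $(n+1)C_1$. The destruction part is bounded analogously by $(n+1)C_2$, directly from the definition of $C_2$.

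Since both the positive and negative (creation/destruction) series have nonnegative summands with finite totals, they are absolutely convergent. The same bounds applied to $\NLineq_n$ via \eref{eq:rsgfqe12} (the sum over $x_{-1},x_0,x_{n+1},x_{n+2}$ is now weighted only by four $M$'s and a $\rho$, with each $\wZ_{a,b,c,d}$ pointwise bounded by $C_1+C_2$) show that $\NLineq_n$ is also an absolutely convergent series. Fubini's theorem then legitimizes the rearrangement needed to pass from \eref{eq:erhrehr2} to \eref{eq:rsgfqe12} after dividing by $\prod_{j=1}^{n-1} M_{x_j,x_{j+1}}$: in other words, $\NLineq_n = \Lineq_n/\prod_{j=1}^{n-1} M_{x_j,x_{j+1}}$ holds as an equality of well-defined real numbers.

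Finally, the equivalence \eref{eq:dfvasbv}, \eref{eq:fqd} is now immediate. Since $M$ has strictly positive entries, the factor $\prod_{j=1}^{n-1}M_{x_j,x_{j+1}}$ is a nonzero real number, so $\Lineq_n(x\cro{1,n})=0$ if and only if $\NLineq_n(x\cro{1,n})=0$; letting $n$ and $x\cro{1,n}$ vary gives both implications simultaneously. The main (and essentially only) delicate point is the first step: one must bound the creation and destruction series separately because they are each series of nonnegative terms, whereas the formal object $\Lineq_n$ is a difference; absolute convergence of the signed series is what permits the rearrangement to obtain $\wZ$ and to factor out $\prod M$.
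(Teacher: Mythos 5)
Your proof is correct and follows essentially the same route as the paper: bound the creation series via $C_1$ (using that $\sum_{u,v}\T{(u,v)}{(x_j,x_{j+1})}M_{x_{j-1},u}M_{u,v}M_{v,x_{j+2}}\leq C_1 M_{x_{j-1},x_j}M_{x_j,x_{j+1}}M_{x_{j+1},x_{j+2}}$) and the destruction series via $C_2$, conclude absolute convergence of both nonnegative series with the bound $(n+1)(C_1+C_2)\rho_{x_1}\prod_{k=1}^{n-1}M_{x_k,x_{k+1}}$, and then invoke Fubini to rearrange and the positivity of $M$ to get the equivalence of $\Lineq_n\equiv 0$ and $\NLineq_n\equiv 0$. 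The paper's argument is exactly this, stated slightly more compactly.
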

\begin{proof}
  Following the discussion above, we verify that under the hypothesis above, the series arising in each term of \eref{eq:erhrehr2} are absolutely convergent.
  For this notice that it suffices to replace the sign ``minus'' by "plus" in   \eref{eq:erhrehr2} and to bound  it by 
\[	\leq (n+1)(C_1+C_2)\rho_{x_{1}}\prod_{k=1}^{n-1}M_{x_k,x_{k+1}}.
	\]
Hence, if $C_1$ and $C_2$ are finite, the sums in $\Lineq^{\rho,M,\TT}$ are well defined and can be rearranged. \par
In the same way, the positive and negative contributions in \eref{eq:rsgfqe12} can be separated and each of them converge absolutely. The conclusion follows.	
\end{proof}

\begin{theo}\label{theo:tyuttr}
  Assume that  $\kappa\in \N\cup\{+\infty\}$. If the three following conditions holds:\\
{\bs} \eref{eq:dfvasbv} and \eref{eq:fqd} hold,\\
{\bs} $M$ has positive entries,\\
{\bs} $M$ is positive recurrent,\\
 then the conclusion of Theorem \ref{theo:t0} holds.
\end{theo}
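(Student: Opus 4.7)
The plan is to revisit the proof of Theorem \ref{theo:t0} and show that each step carries over once the three hypotheses are in force. First, positive recurrence of $M$ delivers a unique invariant distribution $\rho\in{\cal M}(\Ek)$ with $\rho M=\rho$, so the $(\rho,M)$-Markov law is well defined; positivity of $M$ ensures that the denominator $\prod_{j=1}^{n-1}M_{x_j,x_{j+1}}$ in \eref{eq:tjht} is never zero, so $\NLineq_n^{\rho,M,\TT}$ is well defined pointwise; and the joint assumption \eref{eq:dfvasbv} with \eref{eq:fqd} turns the invariance statement (i) into the system $\{\NLineq_n^{\rho,M,\TT}\equiv 0, n\geq 1\}$. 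From that point on, ``invariance'' can be tested on the normalized balance exactly as in the finite case.

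The equivalences $(ii)\Leftrightarrow(iii)\Leftrightarrow\cdots\Leftrightarrow(ix)$ are purely algebraic statements on the family $(\wZ_{a,b,c,d}^{M,\TT})_{a,b,c,d\in\Ek}$: each condition asserts the vanishing of a finite linear combination of $\wZ$-values, or the existence of a function ${\sf W}:\Ek^3\to\R$ with $\wZ_{a,b,c,d}={\sf W}_{b,c,d}-{\sf W}_{a,b,c}$. The chain of implications in the finite-$\kappa$ proof is established by pointwise rewriting and by the telescoping construction of ${\sf W}$ from $\NCycle_7^{M,\TT}\equiv 0$; none of these steps use $\#\Ek<+\infty$, so they carry over verbatim. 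The ``letter-removal'' principle recorded in Remark \ref{rem:kyrje} also remains valid here.

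It therefore remains to couple (i) with the algebraic conditions. For $(i)\imp(vii)$ we invoke Remark \ref{rem:LP} (linearity principle): applied to words of the form $W_n=pw^ns$ with $|w|=7$, the identity $\NLineq_{|p|+n|w|+|s|}^{\rho,M,\TT}(W_n)=(n-1)\NCycle_7^{M,\TT}(w)+O(1)$ valid under hypothesis (a) forces $\NCycle_7^{M,\TT}\equiv 0$, i.e.\ (vii). Conversely, starting from any of (ii)-(ix) we obtain $\ME_7^{M,\TT}\equiv 0$. Under hypothesis (a) the series in \eref{eq:erhrehr2} converge absolutely, so Fubini's theorem justifies the rearrangement of the finite-case proof and yields, for any $n\geq 2$ and any $x\in\Ek^n$,
\[
\NLineq_n^{\rho,M,\TT}(x) = \sum_{x_{-1},x_0,x_{n+1},x_{n+2}}\Big(\rho_{x_{-1}}\!\!\prod_{k\in\{-1,0,n,n+1\}}\!\! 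M_{x_k,x_{k+1}}\Big)\sum_{j=0}^{n}\wZ_{x\cro{j-1,j+2}}^{M,\TT}.
\]
Iterated application of the letter-removal principle (Remark \ref{rem:kyrje}) collapses the inner sum to a telescoping expression that vanishes after integration against the boundary $M$-weights. Thus $\NLineq_n^{\rho,M,\TT}\equiv 0$ for every $n$, and hypothesis (a) converts this back into $\Lineq_n^{\rho,M,\TT}\equiv 0$, i.e.\ invariance.

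The main obstacle throughout is the absolute convergence needed to split the positive and negative contributions of $\Lineq_n^{\rho,M,\TT}$ and to apply Fubini when re-expressing $\NLineq_n$ as a finite sum of $\wZ$-values weighted by outer $M$-factors. This is exactly what hypothesis (a) rules out as a difficulty; everything else is local in the indices $(a,b,c,d)\in\Ek^4$ and therefore insensitive to the cardinality of $\Ek$.
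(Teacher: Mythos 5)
Your proposal is correct and follows essentially the same route as the paper: the paper's own proof simply notes that positive recurrence supplies the invariant distribution $\rho$ (with full support, since $M>0$), after which the proof of Theorem \ref{theo:t0} goes through as in the finite case, with hypotheses \eref{eq:dfvasbv} and \eref{eq:fqd} supplying exactly the passage between $\Lineq\equiv 0$ and $\NLineq\equiv 0$ that finiteness of $\Ek$ gave for free. Your write-up is a faithful, more detailed expansion of that one-line argument.
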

\begin{proof} The positive recurrence ensures that the Markov kernel $M$ admit  an invariant distribution $\rho$ with full support, from what the proof of Theorem \ref{theo:t0} can be proved as in the finite case.
\end{proof}

\begin{rem}The additional assumption of positive recurrence is a natural condition for several reasons.
  The first one is, in the definition of $\Lineq^{\rho,M,\TT}$, the need of an initial distribution for the Markov chain. When several invariant distributions for $M$ exist (or if none exists), everything is more complex, as discussed in Section \ref{sec:relax}.\par
  One way to see the appearance of multiple \AI Markov laws is to consider two continuous time Markov processes  $X^t$ and $Y^t$ respectively on $A^\Z$, and  $B^\Z$ with $A\cap B=\varnothing$. With them, one may construct a continuous time Markov process $Z^t$ which coincides with $X^t$ and $Y^t$ if the starting configurations are in $A^\Z$,  and $B^\Z$, respectively by defining:\\
$\bullet$ for $u\in A^2$, $\TTT{Z}{u}{v}=\TTT{X}{u}{v}$ for $v\in A^2$, and 0 if $v\notin A^2$\\
$\bullet$ for $u\in B^2$, $\TTT{Z}{u}{v}=\TTT{Y}{u}{v}$ for $v\in B^2$, and 0 if $v\notin B^2$\\ 
$\bullet$ and if $u$ is not in $A^2\cup B^2$, choose any value for $\TTT{Z}{(u_1,u_2)}{(v_1,v_2)}$. In this case, the set of configurations $A^\Z$ and $B^\Z$ do not communicate; if both $X^t$ and $Y^t$ possess a $\AI$ Markov law, then $Z^t$ possess several invariant Markov laws, including those that are mixture of these.  Theorem \ref{theo:t0} and all its criteria do not allow to characterize this kind of invariant measures.  
\end{rem}

\subsection{Invariant product measures with a partial support in $\Ek$.}
\label{sec:pefos}
We discuss here an iff criterion to show the invariance distribution of a product measures $\nu^{\mathbb{Z}}$ with support $S$ strictly included in $\Ek$. The idea to get some criteria is just to discard the set  $\Ek\setminus S$ which should not be reachable from $S$ if an invariant distribution with support $S$ exists:

Consider $\TT$ a JRM on $\Ek$, and let $S$ be a strict (non empty) subset of $\Ek$ and $\nu$ a measure with support $S$. 
Assume that for any $u,v,a,b \in S$
  \beq\label{eq:LPCC}
  \l(\nu_{u}\nu_v>0, \T{(u,v)}{(a,b)}>0\r)\imp \nu_a \nu_b>0
  \eq
and interpret this condition as: if the word $w'$ is obtained from the word $w \in S^{\Z}$ by a jump with positive rate,  then $w'$ must be in the support of $\nu^{\Z}$. This implies that the restriction $\TTp$ of $\TT$ to $S$ defined by
  \[\Tp{(a,b)}{(c,d)}=\T{(a,b)}{(c,d)} \textrm{ for }a,b,c,d\in S\] has the following property: the PS on $\Ek^\Z$ (resp. $S^\Z$) with JRM $\TT$  (resp. $\TT'$) coincide if starting from a measure $\nu$ with support in $S$. The following theorem is a direct consequence of this fact:
  \begin{theo}\label{theo:partial_support}  Let $|\Ek|<+\infty$. A product measure  $\nu^\Z$
     with support  $S=\Sup(\nu)\subset \Ek$ is invariant by $\TT$ on the line, for $\TT$ a JRM on $\Ek$ iff \eref{eq:LPCC} holds as well as any of the equivalent conditions listed in Theorem \ref{theo:t3b} holds within $S$.
  \end{theo}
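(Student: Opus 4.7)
The plan is to view $\TT$ as almost being a JRM on the smaller alphabet $S$: condition \eref{eq:LPCC} says exactly that the restriction $\TT'$ of $\TT$ to $S^2$ yields a closed dynamics on $S^\Z$, never producing letters outside $S$. Once this is established, the problem reduces to the invariance of $\nu^\Z$, now a product measure with full support on the alphabet $S$, under $\TT'$ on the line, to which Theorem \ref{theo:t3b} applies directly.

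For the sufficiency direction I will assume \eref{eq:LPCC} together with the conditions of Theorem \ref{theo:t3b} within $S$, and check $\Lineq^\Z(x\cro{n_1,n_2},\nu^\Z)=0$ for an arbitrary cylinder $x\cro{n_1,n_2}\in\Ek^{\cro{n_1,n_2}}$. Two cases will arise: if $x\cro{n_1,n_2}\in S^{\cro{n_1,n_2}}$, then only configurations $w,z\in S^{D\cro{n_1,n_2}}$ contribute to \eref{eq:lineq}, and on these \eref{eq:LPCC} forces the single-jump decomposition of $\T{w}{z}$ to coincide with its $\TT'$-analogue, so the balance is exactly the one for $\TT'$ on $S^\Z$ and vanishes by Theorem \ref{theo:t3b}. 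If on the contrary some $x_i\notin S$, then the out-flow vanishes because the cylinder has $\nu^\Z$-mass zero, and the in-flow vanishes too since \eref{eq:LPCC} forbids any single jump starting from $w\in S^{D\cro{n_1,n_2}}$ from producing a letter outside $S$ at position $i$.

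For the necessity direction I will first derive \eref{eq:LPCC} from invariance by contradiction, then run the sufficiency argument in reverse. Suppose $u,v\in S$ satisfy $\nu_u\nu_v>0$ and $\T{(u,v)}{(a,b)}>0$ for some $(a,b)$ with $a\notin S$ (the case $b\notin S$ is symmetric). I will evaluate $\Lineq^\Z(\cdot,\nu^\Z)$ at the cylinder $x\cro{0,1}=(a,b)$: the out-flow vanishes since the cylinder has $\nu^\Z$-mass zero, while every single-jump contribution to the in-flow is non-negative. Isolating the jump at positions $(0,1)$, the product form of $\nu^\Z$ lets me factorize over the free boundary coordinates $w_{-1},w_2$ (each contributing $\sum_{s\in\Ek}\nu_s=1$), reducing this contribution to $\sum_{u',v'\in\Ek}\nu_{u'}\nu_{v'}\T{(u',v')}{(a,b)}\ge \nu_u\nu_v\T{(u,v)}{(a,b)}>0$, which contradicts $\Lineq^\Z=0$. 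Once \eref{eq:LPCC} is secured, the sufficiency argument runs in reverse to give invariance of $\nu^\Z$ by $\TT'$ on $S^\Z$, and Theorem \ref{theo:t3b} applied on the alphabet $S$ (where $\nu$ has full support) delivers any of the listed equivalent conditions.

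The only (minor) obstacle is the isolation of a strictly positive in-flow contribution in the necessity step; the product form of $\nu^\Z$ makes this straightforward, since it turns the in-flow into an explicit non-negative sum over pairs $(u',v')$ from which the summand at $(u,v)$ cannot be cancelled. For a non-product measure the same step would require additional care.
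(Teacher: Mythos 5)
Your proof is correct and follows essentially the same route as the paper: condition \eref{eq:LPCC} makes $S^\Z$ closed under the dynamics, so the restriction $\TT'$ of $\TT$ to $S$ governs the evolution of any measure supported in $S^\Z$, and Theorem \ref{theo:t3b} applied on the alphabet $S$ (where $\nu$ has full support) finishes the job. The paper dispatches this as a ``direct consequence'' of the closedness observation; you usefully spell out the two omitted verifications, namely the case analysis on cylinders meeting $\Ek\setminus S$ and the in-flow/out-flow argument showing that invariance forces \eref{eq:LPCC}, both of which are sound.
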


\subsection{Invariant for Markov laws  for kernel with some 0 entries}
\label{sec:relax}

In Theorem \ref{theo:partial_support} is discussed the invariance of a product measure which has a partial support in $\Ek$, and in fact, our criteria apply to this situation up to a simple restriction of the state space. \par
The same kind of conditions can be imagined for a $M$-Markov law satisfying $M_{i,j}>0$ for $i,j\in S$, and such that for any $i \in S$, $\sum_{j\in S} M_{i,j}=1$, meaning that the states in $S$ just communicate with other states in $S$. If 
\[\forall a,b \in S, \T{(a,b)}{(u,v)}>0 \imp u,v \in S,\]
then, the JRM $\TT$ can be restricted to $S$. Denoting by $\TT'$ this restriction, the criteria we have (Theorem \ref{theo:t0}) allows to decide if the $M$-Markov law is invariant by $\TT'$ on $S^{\Z}$. 
Under these conditions, everything is then somehow trivial, since $S^\Z$ is close under the action of the jumps with positive rate. \\
The general case is much more complicated!\\
Consider a general Markov kernel $M=(M_{i,j})_{i,j \in E_\kappa}$. Consider the directed graph $G=(\Ek,E)$ whose vertex set is the alphabet $\Ek$ and the edge set is 
$
E=\l\{(i,j): M_{i,j}>0\r\}.
$
Consider the strongly connected components $({\cal C}_j, j \in J)$ of this graph, where $J$ is a set of indices. Starting from any point $v\in E_\kappa$, the Markov chain $(X_n,n\geq 0)$ with kernel $M$ will eventually reach one of these strongly connected components ${\cal C}_j$ and will stay inside a.s., forever. The invariant distributions of $M$ naturally decomposes as a mixture of the invariant distributions $\rho^{(j)}$, where $\rho^{(k)}$ is the invariant distribution of $M$ on ${\cal C}_k$. \par
The strongly connected components do not communicate, then, one may partition the vertex sets $E_\kappa$ along these connected components. The Markov chain on each of this connected component is irreducible and can be treated separately: the fact that one of them is invariant by $\TT$ does not interfere with the fact that the ``other sub-Markov chains'' have the same property or not. \par

 The property of being irreducible does not mean that $E$ is the complete graph and some $M_{i,j}$'s can still be 0 in this case. It may also happen that $M$ is periodic, meaning that again, it may exist several invariant distributions with the Markov kernel $M$ (for example, equal up to a translation, alternating between even and odd states).  \par
Again, the range considered here is $L=2$, and some adjustments need to be made in the next considerations if $L>2$. 
Consider an irreducible Markov chain $(X_n,n\geq 0)$ with kernel $M$. Its invariant distribution has full support on $E_\kappa$. 
Let 
\[\Sup_n=\l\{x\cro{1,n} \in E_\kappa^n:  \rho_{x_1}\prod_{j=1}^{n-1} M_{x_j,x_{j+1}} >0\r\}.\]
be the support of the distribution of $n$ consecutive positions of this Markov chain.
A necessary condition for the $(\rho,M)$-Markov law to be invariant by $\TT$ on the line is the following \it local preservation condition {\bf (LPC)}: \rm 
\[
\textrm{if }(a,b,c,d)\in \Sup_4 \textrm{ and  if }\T{(b,c)}{(u,v)}>0 \textrm{ then }(a,u,v,d)\in \Sup_4.
\]
If $x\cro{1,n}$ belongs to $\Sup_n$ then all its subwords $x\cro{m,m+3}$ with 4 letters are in $\Sup_4$. Assume that $\TT$ possesses the LPC, then the $(\rho,M)$-Markov law is AI by $\TT$ if for any  $x\cro{1,n}\in\Sup_n^M$,  $\Lineq^{\rho,M,\TT}(x\cro{1,n})=0$. 
Under the LPC, we may still pass from $\Lineq^{\rho,M,\TT}(x\cro{1,n})$ to $\NLineq^{\rho,M,\TT}(x\cro{1,n})$ by dividing by $\prod_{i=1}^{n-1} M_{x_i,x_{i+1}}$ as far as $x\cro{1,n}\in \Sup_n^M$. Besides,
$\wZ_{a,b,c,d}^{ M,\TT}$ is still well defined for $(a,b,c,d)\in \Sup_4$.\par
Now, solving $\NLineq_n^{\rho,M,\TT}\equiv 0$ \textbf{ cannot at all be done  according to the same lines as before}, since one cannot compare simply $\NLineq_n^{\rho,M,\TT}(x[n])$ with $\NLineq_n^{\rho,M,\TT}(x[n]^{\{k\}})$ (with a suppressed letter) simply, since $x[n]\in \Sup_n \not\imp x[n]^{\{k\}}\in \Sup_{n-1}$.\par

It turns out that for a general JRM $\TT$, the support of an (algebraic) invariant distribution possesses its own combinatorial structure, which may be really complex.
\\
Indeed $\TT$ may have some combinatorial properties with a flavour reminiscent to group theory: it is possible to design some JRM $\TT$ which preserves several non communicating subsets of $\Ek^{\mathbb{Z}}$, for example the subset of words  $w=(w_i,i\geq \Z)$ satisfying $w_{i}+w_{i+1} \in  17\Z \cup \19\Z$ for all $i$'s (such a property holds for any JRM $T$ satisfying : for any $(x,y)$ such that $x+y  \in 17\Z \cup \19\Z$, $\T{(x,y)}{(x',y')}>0\imp  x'+y' \in 17\Z \cup \19\Z$. It is also possible to imagine and design invariant Markov laws with a much more complex support. In any case, the characterization of the set of pairs $(M,\TT)$ such that a $M$-Markov law is invariant is quite complex, and all the tools we used to prove Theorem \ref{theo:t0} fail. In few words, this happens because it is no longer possible to compare the balance for two close words:  for instance, the word obtained from the removal of a letter of a word in the support may not belong to it -- and the number of letters to remove in order to go back to the support is a parameter of the system, and of the initial word; it is in general not constant, and unbounded.
  
\section{Applications}
\label{appl}
\color{black}

\subsection{Explicit computation :  Gr\"obner basis}
\label{sec:Grobn}
In this subsection we generalize and revisit some well known models using our theorems. Before that, we would like to discuss a bit the ``explicit'' resolution of systems of algebraic equations. \par
First, the simplest systems of equations are linear systems: they are systems of polynomial equations of degree $1$ in some unknown variables $(x_1,...,x_n)$, with some coefficients in $\R$ or, possibly, with coefficients being some functions of some parameters $(y_1,\cdots,y_n)$. Such systems can be solved using linear algebra.
  If some parameters $(y_i)$ are present, then the study is in general much more complicated: typically, even the dimension of the set of solutions can vary when the parameters change.\par
  To solve these systems a computer algebra system can be used:  only simple operations as multiplications, additions are needed: if the coefficients are integers, or for examples, polynomials in the $y_i$'s with integer coefficients, the results obtained are exact.\par
  For polynomial system with only one unknown $x$, of the form ${\sf Sys}=\{P_i(x)=0,  1\leq i\leq k\}$, the first step is the computation of the $\gcd$ $G$ of these polynomials (using Euclidean algorithm): $x$ is solution to the system  ${\sf Sys}$ if and only if $G(x)=0$. Assuming that the $P_i$ are not all 0 (in which case the question is trivial, but what follows does not work) if $G$ is a constant, then ${\sf Sys}$ has no solution, and if $G$ is a polynomial, then the solutions of ${\sf Sys}$ is the set of roots of $G$, which exists in $\mathbb{C}$ by d'Alembert-Gauss theorem. Finding explicit solutions can be done by numerical approximations, and in some cases, explicit exact solutions can be found; in any case, the set of solutions of ${\sf Sys}$ is implicitly known by $G(x)=0$.\par

 Here the situation we face is more complex: Take for example $\ME_7^{M,\TT}\equiv 0$ in the case where $\Ek$ is finite. This system is linear in $\TT$ and involves quotient of cubic monomials in the $M_{i,j}'s$. We can transform this system into a polynomial systems in several variables as follows:
 A pair $(M,\TT)$ solves the system $S=\{\Cycle_7^{M,\TT}\equiv 0, M>0\}$ iff it solved the following  system of {\bf polynomial} equations 
\be
S':=\left\{ 
\begin{array}{r c l l}
        \Cycle_7^{M,\TT}(x\cro{1,7})&=&0&, \forall x\cro{1,7}\in \Ek^7,\\
	M_{a,b}y_{a,b}-1&=&0&, \forall (a,b)\in \Ek^2\\
	-1+\sum_{b\in\Ek}M_{a,b}&=&0& ,\forall a\in E_k
\end{array}\qquad,
\right. 
\ee
where the $y_{a,b}$ are additional variables which prevent the $M_{a,b}$'s to be 0. \par
Equivalence of systems means here that $(M,\TT)$ is solves  $S$ iff there exists $y$ such that $(M,\TT,y)$ solves $S'$, and $M>0$. Any $M$ such that $(M,\TT,y)$ solves $S'$ has non zero entries, but could have some negative ones, or even complex ones: it depends how/where the system is solved.\par 
We then need to solve polynomial systems in several variables. In this case again, we cannot expect a better situation than for polynomial systems of a single variable: in general no close formulas exist for solutions, but again, it is possible to know if solutions exist, and in this case, find some minimal representations of the solution set (if $\TT$ is given, the problem is almost the same).

A common way to solve this kind of problems amounts to computing a Gröbner basis: given a finite set of polynomials $S=\{P_i,i \in I\}$ where the $P_i$'s belong to $\R[x_1,...,x_n]$, a Gröbner basis of $S$ is a basis of the ideal generated by $S$ which have some additional properties. It depends on a good monomial order (preserved by multiplications, if $x^{(\alpha)}< x^{(\beta)}$ then $x^{(\alpha)}x^{(\gamma)}< x^{(\beta)}x^{(\gamma)}$ where $x^{(\alpha)}=\prod_{i} x_i^{\alpha_i}$ for $\alpha=(\alpha_1,\dots,\alpha_n)$). We cannot go too far in the description of the Gröbner basis properties, or to explain how they are computed: we refer the interested reader to Adams and Loustaunau \cite{AL} to get an overview and to Jean-Charles Faugère webpage \cite{JCF} for many resources on this topic, including fast algorithms.

In order to be understandable to the reader unaware of these methods, we will just stress on the following fact:\\
{\tb} Computation of Gröbner basis for polynomials with integer coefficients relies on simple elementary operations as Euclidean division of polynomials, sorting of polynomials according to their coefficients/and or degrees and then can be performed by a computer algebra system working on integers (and then it is decidable). \\
{\tb} When the basis $B$ has been computed, the basis is a finite sequence of polynomials, equivalent to the initial system $S$.\\
\indent{--} if the Gr\"obner basis is $G=[1]$ then there are no solution to the initial system (whatever is the order used),\\
\indent{--} if it is not $G=[1]$ then there are some solutions to the initial system in $\mathbb{C}$: some extra work could be needed to see if there are some solutions in $\R$, $\R^+$ or $[0,1]^n$ if these are some additional requirements,\\
\indent{--} since $B$ is a basis of the ideal generated by $S$, each polynomial $p$ in $B$ is a necessary condition on the solution set. Hence  if a Gr\"obner basis contains a polynomial, for example $(2x_1+x_7-9)(3x_7-8x_9^{17}+1)$ for a system $\{P_i,1\leq i \leq k\}$ in the variables $x_1,\cdots,x_{100}$: then they are some solutions to the system in $\mathbb{C}^{100}$, and each solution $(x_1,\cdots,x_{100})$  satisfies either $2x_1+x_7-9=0$ or $3x_7-8x_9^{17}+1=0$ (inclusive ``or'' of course).\\
{\tb} Computing a Gröbner basis is time and memory consuming, so computing a Gröbner basis is sometimes impossible in practice by hand, and even by computer. \par
{\tb} There are different notions of Gröbner basis as said above, since they rely on a (good) order on monomials; this order is also needed to define the Euclidean division in the set of polynomials in several variables. Each order leads to a specific representation of the ideal. 
For example, if $P_1= x^2+y^2-z^2-3,P_2=x^2+2y^2-4,P_3=y^2+3z^2-x-2$, the computation of the Gr\"obner basis relative to the graded reverse lexicographical order gives as a basis $G=[2z^2-x-1, 2y^2+x-1,x^2-x-3]$. If alternatively, the lexicographical order (plex) is chosen, the basis is $G=[4z^4-6z^2-1, y^2+z^2-1, -2z^2+x+1]$. Both results ensure the existence of solutions in $\mathbb{C}^3$. It is somehow trivial in this case that solution exists if we take as granted the equivalence to solve the initial system $\{P_1=0,P_2=0,P_3=0\}$ and (one of) the system(s) $G$. If we add the polynomial $P_4=xz-y^2+2$, then this time (any) Gr\"obner basis is $G=[1]$: there are no solutions. What happens here is different from the ``linear algebra settings'': the number of polynomials in the Gr\"obner basis depends on the order chosen, and often, the basis is huge, containing many more polynomials than the initial system.\medskip

Till here, our main theorems assert that ``a $M$-Markov law'' is invariant by a PS with JRM $\TT$ can be reduced to checking if a polynomial system in $(M,\TT)$ has some solutions. 
The paragraph above on the Gr\"obner basis is here to say that checking the existence of a $M$ that solves for example $\Cycle_5^{M,\TT}=0$ when $M$ is fixed, is possible at the  price of computing a Gr\"obner basis. If the Gröbner basis in $G=[1]$ there are no solution. If $G$ is not 1, it will be a list of polynomials in the variables $M$ and $\TT$ simpler than the initial problem (the order plex allows to obtain a kind of triangular system in which a well chosen order of the variables make apparent the conditions on $\TT$, for example). Again, a work still remains to be done to check that real solutions exist.\par
When a solution $(M,\TT)$ has been found by this mean, an independent proof of the invariance of the Markov chain with kernel $M$ by $\TT$ can be done by checking directly -- without using a Gr\"obner basis computation -- that $\Cycle_7^{M,\TT}\equiv 0$. 
\begin{rem} There are some good reasons to be confident on the Gr\"obner basis computations with computer algebra systems which rely on simple computations on integers and which are used by many users, for many reasons including cryptography motivations, but for the reader which prefers to stay away from this kind of automatic tools, we insist on the fact that these computations can be done by hands (and patience). 
\end{rem}

To follow in details the following examples, the reader can download in \cite{LF-JFM} a maple-file or a pdf file, where all the computations are done.

\subsubsection{Stochastic Ising models}
\label{sec:MTSMIsing}

The stochastic Ising model (given below Def. \ref{defi:JRM}) possesses a unique Markovian invariant measure on the line with  kernel $M$ characterized by
\beq\label{eq:rghfzq}
M_{0,1} = \frac{1}{1+e^{2\beta}}\quad\text{ and }\quad M_{1,1} = \frac{1}{1+e^{-2\beta}}=M_{0,0}.
\eq
When $\beta=0$, this is the Bernoulli(1/2) product measure (Liggett \cite[Introduction]{LTIPS}).

Let us see how to recover this with our approach. Since $M$ and $\TT$ are given. 
By  Theorem \ref{theo:t0prime} , since $m=1$, $\kappa=2$, and the range is $L=3$, it suffices to check that $\Cycle_9^{M,T}\equiv 0$.  Plug the values of $M$ and of $\TT$  (given in \eref{eq:rghfzq} and in \eref{eq:rsgsf}) in the corresponding $Z$ (which is found in \eref{eq:rgerh}). Here $Z$ owns 5 indices, and then 32 values $Z_{a,b,c,d,e}$ need to be computed: one finds that these 32 values are all zeroes! As a consequence $\Cycle_9^{M,T}\equiv 0$.
 \par

Assume now, that the existence of an invariant Markov law is unknown for this PS. Let us see how to recover this property.
Again, since the range is $L=3$, we need to find a $M$, for which $\Cycle_9^{M,T}(a,b,c,d,e,0,0,0,0)= 0$ for all $a,b,c,d,e \in E_2$ as specified by Theorem \ref{theo:t0prime}. First, we make rid of the ``exponential function'' in $\TT$ by a change of variables and a deterministic linear change of time (the computation of a Gröbner basis must be done in a polynomial ring). To do this, we set $x=e^{-\beta}$ and use $\bar{\TT}=x^2\TT$ instead of $\TT$, since this does not alter the set of invariant distributions. We obtain
 \[\bar{\TT}_{[a,b,c \,|\,a,1-b,c]} = x^2 \T{[a,b,c]}{[a,1-b,c]}=x^{2+(2b-1)(2a+2c-2)}.\]
 We also add the polynomials $M_{a,b}g_{a,b}-1$ in the basis computation (this prevents each $M_{a,b}$ to be 0) and for simplicity we imposed $M_{i,0}=1-M_{i,1}$ for all $i\in E_2$. Then with a computer algebra system, compute the Gröbner basis: this is immediate, and two solutions appear, one of them being negative. The unique positive solution (after inverting the change of variable) is given in \eref{eq:rghfzq}.

\subsubsection{The voter model and some variants}
\label{sec:MTSM2}

Consider the JRM $\TT$ of the voter model: $\TT$ is not identically 0 and besides, the voter model possesses $0^n$ and $1^n$ as absorbing states on $\ZnZ$ (and this can be generalized if more ``opinions'' are represented). The following corollary is an immediate consequence of  Theorem \ref{theo:fs}. 
\begin{cor}\label{cor:contvoter}
	If $\mu$ is an invariant distribution for the voter model (or in the generalized model with $\kappa$ opinions) on the line different from a mixture on the Dirac measures on an opinion $\delta_{j^{\mathbb{Z}}}$ for $j\in\Ek$, then $\mu$ is not a Markov law with memory $m$, for any $m$.    
\end{cor}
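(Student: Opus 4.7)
The plan is to apply Theorem~\ref{theo:fs} to the voter model after checking its hypotheses, and then to upgrade the resulting ``no full-support invariant Markov law'' statement into the full corollary by decomposing an arbitrary invariant Markov law along the communication classes of its kernel.

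Verifying the hypotheses of Theorem~\ref{theo:fs} is direct: the voter-model JRM is manifestly not identically $0$ (whenever a site disagrees with a neighbor, a copying transition has positive rate), and for every $n\geq 2$ the set $S_n:=\{j^n : j\in\Ek\}$ of monochromatic configurations is a nontrivial absorbing subset of $\Ek^{\ZnZ}$: from any configuration one can reach a monochromatic one by iteratively propagating a color already present, and from a monochromatic configuration no transition is possible because all voter rates vanish when every neighbor agrees. The same remarks apply in the $\kappa$-opinion generalization. Theorem~\ref{theo:fs} therefore yields at once that no full-support Markov law of any memory $m$ is invariant by the voter model on the line.

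To promote this to the corollary, I would let $\mu$ be an invariant Markov law of memory $m$ with kernel $M$, decompose the induced memory-$1$ state space $\Ek^m$ into its recurrent communication classes $C_1,\dots,C_r$, and write $\mu=\sum_k \rho(C_k)\,\mu_k$, where each $\mu_k$ is a stationary Markov law whose configurations only use letters appearing in tuples from $C_k$. Because voter dynamics never create a new color, each $\mu_k$ is itself invariant by the voter model restricted to the (smaller) alphabet appearing in $C_k$, which is again a voter model. If this alphabet is a single color $j$, then necessarily $\mu_k=\delta_{j^\Z}$, contributing a Dirac on a constant. Otherwise, by irreducibility of $M|_{C_k}$, the cyclic counterpart (Theorem~\ref{theo:t0prime} and Corollary~\ref{cor:t444}) would make the associated Gibbs measure on $\ZnZ$ invariant by the voter model on $\ZnZ$ for all sufficiently large $n$; yet irreducibility forces this Gibbs measure to place positive mass on non-monochromatic cyclic configurations (closed walks visiting at least two distinct states), while every invariant measure of the voter model on $\ZnZ$ is supported on the absorbing set $S_n$. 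Contradiction; hence every $C_k$ reduces to a single letter and $\mu$ is a mixture of Diracs on constants.

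The main obstacle is exactly this last reduction: Theorems~\ref{theo:fs} and~\ref{theo:t0prime} are formally stated for strictly positive kernels $M>0$, whereas the kernel of a general invariant Markov law, and in particular its irreducible restriction $M|_{C_k}$, may well have zero entries; as stressed in Section~\ref{sec:relax}, relaxing positivity is delicate. The cleanest way around this in my plan is to argue directly at the level of the dynamics on $\ZnZ$: regardless of the form of $M$, the only invariant measures of the voter model on $\ZnZ$ are supported on $S_n$, and the $M$-Gibbs measure on $\ZnZ$ (with kernel possibly non-positive but restricted to an irreducible class) is concentrated on $S_n$ if and only if every recurrent class of $M$ is a singleton. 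Once this technical point is handled, the decomposition above forces $\mu$ to be a mixture of Dirac measures on constant configurations, and the corollary follows.
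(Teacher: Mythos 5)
Your first paragraph is, in substance, exactly the paper's proof: the authors observe that the voter-model JRM is not identically $0$ and that the monochromatic configurations form a nontrivial absorbing subset of $\Ek^{\ZnZ}$ for every $n$, and they then declare the corollary an immediate consequence of Theorem~\ref{theo:fs}. Your verification of the hypotheses of that theorem is correct and complete, so up to that point you have reproduced the paper's argument, which yields the non-existence of \emph{full-support} invariant Markov laws of any memory.

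The rest of your proposal tries to bridge the gap between that conclusion and the corollary as stated, which must also exclude non-full-support Markov laws other than mixtures of the $\delta_{j^{\Z}}$ (e.g.\ the stationary law of a periodic two-state kernel, supported on alternating configurations). You are right that this gap exists --- the paper only acknowledges the issue in passing, for the contact process, in Section~\ref{sec:MTSMcontact} --- but your proposed bridge does not hold. The decisive step, ``by irreducibility of $M|_{C_k}$, Theorem~\ref{theo:t0prime} and Corollary~\ref{cor:t444} make the associated Gibbs measure invariant on $\ZnZ$,'' is not available: those results require $M>0$, not mere irreducibility of a restriction, and Section~\ref{sec:relax} explains why the line-to-circle transfer genuinely fails for kernels with zero entries (removing or substituting a letter can exit the support, so the telescoping comparisons behind Theorem~\ref{theo:t0prime} break down). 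Your fallback of ``arguing directly on $\ZnZ$'' is circular: it presupposes that the $M$-Gibbs measure on $\ZnZ$ is invariant for the circle dynamics, which is precisely the conclusion you cannot reach without the transfer theorem; knowing which measures on $\ZnZ$ are invariant, and which Gibbs measures charge $S_n$, gives nothing until invariance of the Gibbs measure on the circle is established. So the reduction from a general invariant Markov law to the full-support case remains unproved in your write-up; as far as the paper's own (full-support) argument goes you have matched it, but the extra strength you correctly identify as needed is not obtained by the sketch you give.
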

Let us now consider some variants of the voter model and the existence or not of invariant Markov law for them.  Consider a JRM $\TT$  in which  all the entries of $\TT$ are taken equal to 0 except for $\T{[a,b,c]}{[a,b',c]}$ for $b'\neq b$, $b'\in\{a,c\}$: In words, this is the voter model in which the rate at which an individual change his minds is a function of its own opinion and of those of its neighbours.
A Gröbner basis for the sequence of polynomials $\{M_{a,b}g_{a,b}-1,a,b \in \{0,1\}, \Cycle_9^{M,\TT}(a,b,c,d,e,0,0,0,0)\equiv 0\}$ gives
\be
\T{[1,0,1]}{[1,1,1]},\,\T{[0
,1,0]}{[0,0,0]},\,
g_{{1,0}}g_{{1,1}}-g_{{1,0}}-g_{{1,1}},
g_{{0,0}}g_{{0,1}}-g_{{0,0}}-g_{{0,1}},\\
M_{{1,1}}g_{{1,1}}-1,\,g_{{1,0}}M_{{1,1}}-g_{{1,0}}+1,M_{{0,1}}
g_{{0,1}}-1,\,M_{{0,1}}g_{{0,0}}-g_{{0,0}}+1,\\
-M_{{1,1}}\T{[0,1,1]}{[0,0,1]}g_{{0,0}}-M_{{1,1}}\T{[1,1,0]}{[1,0,0]}g_{{0,0}}+\T{[0,0,1]}{[0,1,1]}+\T{[1,0,0]}{[1,1,0]}
\ee
Hence if a $M$-Markov law with positive Markov kernel $M$ is invariant then
\ben\label{eq:TTTTT}
\T{[0,1,0]}{[0,0,0]}=\T{[1,0,1]}{[1,1,1]}=0
\een(which then excludes the original voter model). 
From here if we replace $g_{a,b}=1/M_{a,b}$ in the basis and look at the remaining equations, apart those corresponding to $M_{i,0}+M_{i,1}=1$ and to the non nullity of the $M_{a,b}$'s, hen it only remains:
\ben
M_{{0,0}}(\T{[0,0,1]}{[0,1,1]}+\T{[1,0,0]}{[1,1,0]})-M_{{1,1}
}(\T{[0,1,1]}{[0,0,1]}+\T{[1,1,0]}{[1,0,0]})\een
whose nullity is the only constraint (together with \eref{eq:TTTTT}) for the Markov law with positive kernel $M$ to be invariant by  $\TT$ on the line (since $M_{0,0}/M_{1,1}$ can take any value in $(0,+\infty)$, this is a trivial system to solve from here:  there is a Markov law invariant by this dynamic on the line iff $\T{[0,1,1]}{[0,0,1]}+\T{[1,1,0]}{[1,0,0]}$ and $\T{[0,0,1]}{[0,1,1]}+\T{[1,0,0]}{[1,1,0]}$ are both positive (or both 0, in which case all Markov laws are invariant).
\subsubsection{The contact process and some extensions}
\label{sec:MTSMcontact}

The Dirac measure on $0^{\Z}$ is invariant for the contact process. Another invariant distribution exists for $\lambda$ large enough with no atom at $0^{\Z}$ (Liggett \cite[Theo.1.33, Sec. VI]{LTIPS}).
We prove that this other invariant distribution is not Markovian with memory $m$, for any $\lambda>0$ and any $m\geq 1$.
The JRM $\TT$ of the contact process is not identically 0 and the contact process possesses $0^n$ as absorbing state on $\ZnZ$, thus, an immediate consequence of Theorem \ref{theo:fs} is:
\begin{cor}\label{cor:contcontact}
If a distribution $\mu\neq \delta_{0^{\mathbb{Z}}}$ is invariant for the contact process on the line, then $\mu$ is not a Markov law with memory $m$, for any $m$.    
\end{cor}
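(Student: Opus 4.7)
The plan is to combine Theorem \ref{theo:fs} with a dynamical reduction: any Markov law invariant for the contact process on $\Z$ other than $\delta_{0^{\Z}}$ must have full support, after which Theorem \ref{theo:fs} rules it out.

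\textbf{Verifying the hypothesis of Theorem \ref{theo:fs}.} The JRM $\TT$ of the contact process is clearly not identically zero. For each $n \geq 3$ the singleton $S_n = \{0^n\} \subset \{0,1\}^{\ZnZ}$ is a proper non-empty absorbing subset: at $0^n$ every site has only $0$-valued neighbors, so the infection rate vanishes and no jump is enabled, whereas from any configuration with at least one $1$ a finite sequence of recoveries reaches $0^n$. Thus Theorem \ref{theo:fs} rules out any Markov law with memory $m$ and full support $\{0,1\}^{\Z}$ invariant under the contact process on the line, for any $m \geq 1$.

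\textbf{Reducing to the full-support case.} I claim that a Markov law $\mu$ with memory $m$ invariant for the contact process either equals $\delta_{0^{\Z}}$ or has full support $\{0,1\}^{\Z}$. The key general lemma, valid for any invariant measure on $\{0,1\}^{\Z}$, is: if $\eta \in \Sup(\mu)$ and $\eta'$ is obtained from $\eta$ by a single positive-rate jump at some position $i$, then $\eta' \in \Sup(\mu)$. For its proof, take the cylinder $D = \{\xi : \xi(W) = \eta'(W)\}$ for $W$ a finite window containing the perturbed range, and assume for contradiction that $\mu(D)=0$. The infinitesimal balance for $D$ forces the total influx into $D$ to equal the total outflux from $D$; the outflux is proportional to $\mu(D)=0$, while the influx contains the strictly positive term $\mu(D') \cdot r$, where $D' = \{\xi : \xi(W) = \eta(W)\}$ satisfies $\mu(D') > 0$ (since $\eta \in \Sup(\mu)$) and $r > 0$ is the rate of the jump $\eta \to \eta'$, all other influx contributions being non-negative. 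Iterating, $\Sup(\mu)$ is closed under any finite sequence of positive-rate jumps.

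\textbf{Reachability and conclusion.} Suppose $\mu \neq \delta_{0^{\Z}}$; then there exists $\eta \in \Sup(\mu)$ with some coordinate equal to $1$. For any cylinder $C = \{\xi : \xi(W) = x(W)\}$ determined by a finite window $W$ and a pattern $x(W) \in \{0,1\}^{W}$, one can construct a finite sequence of positive-rate contact-process jumps starting from $\eta$ that first transports a $1$ into $W$ via successive infection--recovery pairs along a path from a $1$ of $\eta$ to $W$, and then locally rearranges the configuration on $W$ to match $x(W)$ (recovering unwanted $1$'s and infecting the desired sites using a neighboring $1$ as seed). The resulting configuration $\eta''$ lies in $C$ and, by the lemma of the previous paragraph, in $\Sup(\mu)$, so $C \cap \Sup(\mu) \neq \varnothing$. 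Since cylinders form a basis of the product topology, the closed set $\Sup(\mu)$ is dense, hence equals $\{0,1\}^{\Z}$. Combined with the first paragraph this contradicts the existence of such a $\mu$, proving the corollary. The main obstacle is the support-closure lemma of the second paragraph: it is a purely measure-theoretic consequence of invariance, and once it is granted the reachability construction and the appeal to Theorem~\ref{theo:fs} are routine.
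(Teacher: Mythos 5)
Your proof is correct and follows essentially the same route as the paper: both reduce the statement to Theorem \ref{theo:fs} via the absorbing singletons $S_n=\{0^n\}$ in $\Ek^{\ZnZ}$, which rules out full-support invariant Markov laws of any memory. The only difference is that where the paper dismisses degenerate kernels with the one-line remark ``by the nature of the contact process, no other kernels are possible,'' you supply the missing detail explicitly — the support-closure lemma (the support of an invariant measure is stable under positive-rate jumps) plus the infection--recovery reachability argument showing any invariant $\mu\neq\delta_{0^{\Z}}$ must have full support — which is a genuine strengthening of the exposition rather than a different method.
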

In fact, Theorem \ref{theo:fs} just states the non existence of invariant Markov law with memory $m$ with positive kernel. By the nature of the contact process, no other kernels are neither possible.

When solving the system for general rates and using $\T{[0,0,0]}{[0,1,0]}$ as a free parameter, meaning that it can take any real value, we found that a necessary condition to have a Markov law invariant by $\TT$ is that $\T{[0,0,0]}{[0,1,0]}>0$ which means that there is a sort of ``spontaneous infection''.

\subsubsection{Around TASEP}
\label{sec:MTSM}

The TASEP is the PS defined on the line, or on a segment (see Section \ref{sec:SB}) whose JRM $\TT$ is null, except for $\T{[1,0]}{[0,1]}=1$. Some variants of this model have been defined, we will explore some of them.

\paragraph{3-coloured TASEP}
The 3 coloured model ($\Ek=\{0,1,2\}$) for which again the JRM is null except for
\beq\label{eq:sdt2tasep}
\T{[1,0]}{[0,1]}=\T{[2,0]}{[0,2]}=\T{[2,1]}{[1,2]}=1
\eq
meaning that a particle can overtake smaller ones, with a constant rate. For more information on this type of PS and its invariant measure on some special cases see Angel \cite{AO} or Zhong-Jun \& al \cite{DZ}.

Here, we propose to replace the common value \eref{eq:sdt2tasep} by parameters, and use our theorems to characterize the set of 3-tuple $(\T{[1,0]}{[0,1]},\T{[2,0]}{[0,2]},\T{[2,1]}{[1,2]})$ for which an invariant Markov law with positive $M$ exists ($\TT$ being null besides).
The computation of the Gr\"obner basis of the system (with the additional polynomials $M_{i,j}g_{i,j}-1$ to prevent the $M_{i,j}$ to be 0) is rapid, but the expression of a Gr\"obner basis is too large to be written here. What can be observed is that a polynomial of the basis
is $-\T{[2, 0]}{[0, 2]}+\T{[2, 1]}{[1, 2]}+\T{[1, 0]}{[0, 1]}$ so that the nullity of this polynomial
is a necessary condition for existence of an invariant Markov law, in which case appears that $M$ must have constant lines, so that the distribution is a product measure with marginal $M_{0,.}$. Examining further the very simple Gr\"obner basis, appears that any product distribution $\rho^\Z$ with $\rho$ having support over $\{0,1,2\}$ is invariant! This can be checked by hand on $\Cycle_3^{\rho,\TT}\equiv 0$. 

\paragraph{-Variant: if particle $i$ can overtake $i-1 \mod 3$ only.}

Here the parameters are
\[\T{[0, 2]}{[2, 0]}, \T{[1, 0]}{[0, 1]}, \T{[2, 1]}{[1, 2]},\] meaning that now 0 can overtake 2, but not the contrary. The computation of a Gröbner basis provides a list of polynomials, among which one can found: 
$\T{[0, 2]}{[2, 0]}+\T{[2, 1]}{[1, 2]}+\T{[1, 0]}{[0, 1]}$. Since the $\TT$ are non negative numbers, the 3 parameters must be 0. Hence, the only case where a Markov law with positive kernel $M$ is invariant, is when no particle are allowed to move!

\paragraph{-Variant with parameters $\protect\T{[a,b]}{[b,a]}$}
This is a generalization of the two previous points. In this case, each particle can overtake the other ones. This is a case where the Gröbner basis are huge (more than seven hundred polynomials), with many very simple polynomial of the following kind:
\[\T{(2, 0)}{(0, 2)}\T{(2, 1)}{(1, 2)}(g_{0,1}-g_{2,1})^2\]
meaning that one of this three factors must be 0 to have a solution. In order to study completely this system a method consists from here, to choose such an equation and to constitute 3 systems from here, each of them, constituted by the initial system at which is added one of the factor above, as a new polynomial.

Due to the complexity of the system, the constitutions of these 3-subsystems is not enough to conclude (the obtained Gröbner basis stays large), but this method can be iterated if the complete set of solutions need to be found. 

\paragraph{Zero-range type processes}

We start with a preliminary definition
\begin{defi}\label{def:mp}
	A JRM is said to be mass preserving if $\T{(a,b)}{(c,d)}>0\imp a+b = c+d$. 
\end{defi}
In the literature, PS's associated to mass preserving $\TT$'s are called \textit{mass migration processes (MMP)} \cite{FGS} and \textit{mass transport models} \cite{GL,EMZ}.

The following definition will be useful to define this type of systems.
\begin{defi}\label{def:mp2} A mass preserving $\TT$ is said to be zero range mass preserving if there exists a function $g : \Ek^2 \to \R$ such that 
\[
\T{(a,b)}{(c,d)} = g(a,k) \1_{(c,d)=(a-k,b+k)}, \qquad\forall a,b,c,d,1\leq k\leq a.
\]
\end{defi} 
In words: the rate at which a part $k$ of the mass $a$ jumps to the next vertex at its right is $g(a,k)$ (for any $k$ legal, that is $1\leq k\leq a$).

PS's associated to zero range mass preserving $\TT$'s are called \textit{zero range mass migration processes (MMP-ZR)} \cite{FGS}. These types of processes are generalizations of TASEP, since they could be interpreted as particle systems where each site can host more than one particle and where particles in the same sites can jump at the same time (See \cite{AE,FGS}).

The zero-range mass migration process is a process on $E_\infty$ whose JRM is zero range mass preserving.

Let $\rho \in {\cal M}(\Ek)$ such that $\rho_0>0$.
In \cite[Proposition 3.10]{FGS} they obtained that $\rho^{\Z}$ is invariant for the MMP-ZR iff $\rho_a\rho_kg(k,k) = \rho_{a+k}\rho_0g(a+k,k)\quad \forall k\geq 1,\; \forall a\geq 1$.
\begin{defi}We say that a distribution $\rho$ on $\Ek$ is almost-geometrically distributed if there exists a function ${\sf g}:\Ek\to \R^+$ such that 
	\beq\label{eq:ergtu}
	\rho_u\rho_v = {\sf g}(u+v) \textrm{ for any }(u,v)\in\Ek^2.
	\eq
\end{defi}
The support of an almost-geometric distribution can be either finite or infinite. If the support is $\mathbb{N}$, then it is a geometric distribution (since $\rho_{a}\rho_b=\rho_{a+b}\rho_0$).

From \cite[Proposition 3.10]{FGS} and Theorem \ref{theo:alsdh}, we get immediately:
\begin{cor}
	Consider a zero range mass preserving $\TT$, with $g : \Ek^2 \to \R$ positive in the diagonal and $\rho\in {\cal M}(\Ek)$ with $\rho_0>0$ such that $\rho^\Z$ is $\AI$ by $\TT$. If for some $h:\Ek\to[0,\infty)$, $g(b,k) = h(b)g(k,k)$ for all $b\in \Ek$, then  $\nu^\mathbb{Z}$ is also $\AI$ by $\TT$, for all almost-geometric distributions $\nu$ with same support as $\rho$.
\end{cor}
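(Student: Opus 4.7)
The plan is to translate the invariance of $\rho^\Z$ into a concrete scalar identity on the marginals, use the factorization $g(b,k) = h(b)g(k,k)$ to isolate a constraint involving only $\rho$, and then transport it to $\nu$ via the almost-geometric structure. By \cite[Prop.~3.10]{FGS}, the hypothesis that $\rho^\Z$ is \AI by $\TT$ is equivalent to $\rho_a\rho_k\,g(k,k) = \rho_{a+k}\rho_0\,g(a+k,k)$ for every $a,k\geq 1$. Substituting $g(a+k,k) = h(a+k)\,g(k,k)$ and cancelling the positive factor $g(k,k)$ yields the scalar identity
\[
(\star)\qquad \rho_a\rho_k \;=\; h(a+k)\,\rho_0\,\rho_{a+k},\qquad \forall\, a,k\geq 1.
\]
By the same proposition applied to $\nu$, it will suffice to establish $(\star)$ with $\rho$ replaced by $\nu$.

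The right-hand side of $(\star)$ depends on $(a,k)$ only through the sum $a+k$, so the left-hand side does as well. Hence $(a,k)\mapsto \rho_a\rho_k$ is a function of $a+k$ on $\{a,k\geq 1\}$, which (together with the trivial cases $a=0$ or $k=0$) says that $\rho$ is almost-geometric on its support $S:=\Sup(\rho)$. In particular $S$ is stable under addition: if $a,b\in S$, then $\rho_a\rho_b>0$ equals the common value of $\tilde g(a+b)=\rho_0\rho_{a+b}$, forcing $a+b\in S$. Comparing $(\star)$ with the almost-geometric identity $\rho_a\rho_k = \rho_0\rho_{a+k}$ on $S$ and using $\rho_0>0$, one obtains $h(s) = 1$ for every $s\in S$ with $s\geq 2$.

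Now let $\nu$ be almost-geometric with $\Sup(\nu)=S$. Fix $a,k\geq 1$. If $a+k\in S$, then almost-geometricity of $\nu$ yields $\nu_a\nu_k = \nu_0\nu_{a+k}$, and the previous paragraph gives $h(a+k)=1$, so $(\star)$ for $\nu$ holds. If instead $a+k\notin S$, then $\nu_{a+k}=0$ kills the right-hand side; and since $S$ is closed under addition, at least one of $a,k$ lies outside $S$ (otherwise $a+k$ would be in $S$), so $\nu_a\nu_k=0$ as well. In both cases $(\star)$ holds for $\nu$, which by \cite[Prop.~3.10]{FGS} is exactly the \AI property of $\nu^\Z$ under $\TT$.

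The main obstacle will be the bookkeeping of supports: one must verify that the initial scalar identity really forces $\rho$ to be almost-geometric on $S$, that $S$ is closed under addition, and that the two edge cases $a+k\in S$ versus $a+k\notin S$ are both consistent with the product structure of $\nu$. Beyond these support considerations the argument is purely algebraic substitution, so the only genuine content of the statement is the transfer of the scalar constraint $(\star)$ from $\rho$ to $\nu$, which the factorization $g(b,k)=h(b)g(k,k)$ renders transparent.
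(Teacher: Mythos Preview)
Your argument has a circular step. You write that $(a,k)\mapsto\rho_a\rho_k$ being a function of $a+k$ on $\{a,k\ge 1\}$, ``together with the trivial cases $a=0$ or $k=0$'', shows $\rho$ is almost-geometric, and \emph{then} you use the almost-geometric identity $\rho_a\rho_k=\rho_0\rho_{a+k}$ to extract $h(s)=1$. But almost-geometricity requires the function governing $\rho_a\rho_k$ to agree with $\rho_0\rho_s$ on the boundary pairs $(0,s)$, and from $(\star)$ one only gets $\rho_a\rho_k=h(a+k)\rho_0\rho_{a+k}$: the boundary case matches the interior one \emph{only if} $h(a+k)=1$, which is exactly what you are trying to deduce. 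So as written, step~2 presupposes step~3.

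The fix is immediate and does not need $\rho$ at all: set $b=k$ in the hypothesis $g(b,k)=h(b)g(k,k)$ to obtain $g(k,k)=h(k)g(k,k)$, and use positivity of $g$ on the diagonal to conclude $h(k)=1$ for every $k$. With $h\equiv 1$, $(\star)$ becomes $\rho_a\rho_k=\rho_0\rho_{a+k}$ (so $\rho$ is indeed almost-geometric), and the analogue of $(\star)$ for $\nu$ is just the almost-geometric identity $\nu_a\nu_k=\nu_0\nu_{a+k}$, which holds by assumption on $\nu$; your support bookkeeping then goes through unchanged.

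Once repaired, your route differs from the paper's: the paper first observes that $\rho$ is almost-geometric (via the same substitution $b=k$ implicitly) and then invokes Theorem~\ref{theo:alsdh}, whose proof passes through the $\NCycle_3$ criterion and works for \emph{any} mass-preserving $\TT$. You instead stay entirely inside the FGS characterisation (Prop.~3.10), verifying its scalar identity for $\nu$ directly. Your approach is more self-contained for this specific zero-range situation, while the paper's factors through a general transfer principle that does not rely on the zero-range structure or on the cited external result.
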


Before presenting the next variant, we introduce a result that we will apply to it. This result holds in a more general setting and because of it we state it as a separate result.

\subsubsection{A family of models with an infinite number of invariant product distributions} 

The next theorem states that the family of mass preserving kernels having almost-geometric distributions as invariant distributions, have an infinite number of invariant distributions.

\begin{theo}\label{theo:alsdh} If $\rho^{\Z}$  is \AI by a mass preserving kernel $\TT$ for $\rho$ an almost-geometric distribution such that  \eref{eq:dfvasbv} and \eref{eq:fqd} hold, then for all almost-geometric distributions $\nu$ with same support as $\rho$, $\nu^\mathbb{Z}$ is also $\AI$ by $\TT$.
\end{theo}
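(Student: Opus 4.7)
The plan is to exploit the mass-preservation hypothesis together with the almost-geometric structure to show that $\wZ^{\rho,\TT}_{b,c}$ depends only on $S:=\Sup(\rho)$ and on $\TT$, not on the particular almost-geometric distribution $\rho$ with support $S$. Once this is established, it follows automatically that $\wZ^{\nu,\TT}_{b,c}=\wZ^{\rho,\TT}_{b,c}$ on $S^{2}$, and the conclusion comes from Theorem \ref{theo:t3b} via Theorem \ref{theo:partial_support}.

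First I would record the local preservation condition \eref{eq:LPCC} for $\rho$: since $\rho^{\Z}$ is \AI by $\TT$, for any $(b,c)\in S^{2}$ every jump $(b,c)\to(u,v)$ with positive rate satisfies $(u,v)\in S^{2}$, and symmetrically every $(u,v)\to(b,c)$ with positive rate and $\rho_u\rho_v>0$ forces $(u,v)\in S^{2}$. Because $\nu$ has the same support as $\rho$, \eref{eq:LPCC} holds verbatim for $\nu$. Next, for $(b,c)\in S^{2}$, expand
\[
\wZ^{\rho,\TT}_{b,c}=\sum_{(u,v)\in \Ek^{2}} \T{(u,v)}{(b,c)}\,\frac{\rho_u\rho_v}{\rho_b\rho_c}-\To{b,c}.
\]
In the first sum, terms with $(u,v)\notin S^{2}$ vanish (numerator zero). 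For the surviving terms, mass preservation forces $u+v=b+c$, and the almost-geometric identity $\rho_u\rho_v={\sf g}(u+v)={\sf g}(b+c)=\rho_b\rho_c$ (both pairs in $S^{2}$) makes the quotient equal to $1$. Thus
\[
\wZ^{\rho,\TT}_{b,c}=\sum_{(u,v)\in S^{2}:\,u+v=b+c}\T{(u,v)}{(b,c)}-\To{b,c},
\]
an expression depending only on $S$ and $\TT$. The same computation carried out with $\nu$ (which is almost-geometric with the same support $S$) yields exactly the same right-hand side; hence $\wZ^{\nu,\TT}\equiv\wZ^{\rho,\TT}$ on $S^{2}$.

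By hypothesis, $\rho^{\Z}$ is \AI by $\TT$, so by the almost-geometric-restricted version of Theorem \ref{theo:t3b} (applied within the support $S$ via Theorem \ref{theo:partial_support}), $\NCycle_3^{\rho,\TT}(a_0,a_1,a_2)=\wZ^{\rho,\TT}_{a_0,a_1}+\wZ^{\rho,\TT}_{a_1,a_2}+\wZ^{\rho,\TT}_{a_2,a_0}=0$ for every $(a_0,a_1,a_2)\in S^{3}$. Substituting $\wZ^{\nu,\TT}$ for the identical $\wZ^{\rho,\TT}$ gives $\NCycle_3^{\nu,\TT}\equiv 0$ on $S^{3}$. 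Combined with \eref{eq:LPCC} for $\nu$ (already verified), Theorem \ref{theo:partial_support} yields the algebraic invariance of $\nu^{\Z}$ by $\TT$.

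The only subtlety lies in the case $\#\Ek=\infty$: the chain of reasoning requires that the normalised and un-normalised equations $\NLineq^{\nu,\TT}\equiv 0$ and $\Lineq^{\nu,\TT}\equiv 0$ are equivalent for $\nu$, i.e.\ \eref{eq:dfvasbv}-\eref{eq:fqd} transfer from $\rho$ to $\nu$. This transfer is immediate from the structural computation above: the absolute convergence of the series defining $\Lineq^{\nu,\TT}$ reduces (by the same almost-geometric cancellation of ratios) to a bound involving only the rates $\T{(u,v)}{(b,c)}$ and $\To{b,c}$, which are independent of the choice of almost-geometric measure. Hence the conditions on $\rho$ propagate to $\nu$, and the finite-$\kappa$ argument applies without modification. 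The main potential obstacle would have been a discrepancy between the supports of the two distributions, but this is ruled out by the hypothesis $\Sup(\nu)=\Sup(\rho)$.
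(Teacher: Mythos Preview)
Your proof is correct and follows essentially the same approach as the paper's. Both arguments hinge on the observation that, under mass preservation and the almost-geometric property, the ratios $\rho_u\rho_v/(\rho_b\rho_c)$ collapse to $1$, so that the relevant balance quantities depend only on $\TT$ and the support $S$, not on the particular almost-geometric law. You phrase this at the level of $\wZ^{\rho,\TT}_{b,c}$ and then deduce $\NCycle_3^{\nu,\TT}\equiv 0$, whereas the paper computes $\NCycle_3^{\rho,\TT}$ directly and observes that the resulting expression \eref{eq:sg} is free of $\rho$; this is only a cosmetic difference. Your explicit discussion of why conditions \eref{eq:dfvasbv}--\eref{eq:fqd} transfer from $\rho$ to $\nu$ in the infinite-$\kappa$ case is in fact a bit more careful than the paper's own treatment of that point.
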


\begin{proof}
	We use Theorem \ref{theo:tyuttr}.  Assume that $\nu$ is \AI by $\TT$.  Taking into account the discussion just above, consider $\Ek'=\Sup(\nu)$. A necessary condition for $\nu$ to be \AI by $\TT$ is that \eref{eq:LPCC} holds.
	Theorem \ref{theo:t3b} says: $\rho^\Z$ is \AI by $\TT$ iff $\NCycle_3^{\rho,\TT}({a,b,c})=0$ { for $(a,b,c)$ such that $\rho_a\rho_b\rho_c>0$.}
	Now, $\NCycle_3^{\rho,\TT}(a,b,c)=$
	\beq\label{eq:ryhfg}
	\sum_{u,v}\frac{\rho_{u}\rho_{v}}{\rho_{a}\rho_{b}}\TT_{(u,v)\atop (a,b)}-\TT_{(a,b)\atop (u,v)}+
	\sum_{u,v}\frac{\rho_{u}\rho_{v}}{\rho_{b}\rho_{c}}\TT_{(u,v)\atop (b,c)}-\TT_{(b,c)\atop (u,v)}+
	\sum_{u,v}\frac{\rho_{u}\rho_{v}}{\rho_{c}\rho_{a}}\TT_{(u,v)\atop (c,a)}-\TT_{(c,a)\atop (u,v)}.
	\eq
	From the definition of a mass preserving kernel, the first sum can be restricted to $I_{a+b}$, where for any $k$, $I_k$ is the set of pairs $(u,v)$ such that $u+v=k$; the second one can be restricted to $I_{b+c}$, and the last, to $I_{a+c}$.
	Now, \eref{eq:ergtu} can be used since for all $(u,v)\in I_{a+b}$, $\rho_{u}\rho_v/(\rho_{a}\rho_b)=1$, and one sees that \eref{eq:ryhfg} rewrites in this case $\NCycle_3^{\rho,\TT}(a,b,c)=$
	\beq\label{eq:sg}
	\sum_{(u,v)\in I_{a+b}}\TT_{(u,v)\atop (a,b)}-\TT_{(a,b)\atop (u,v)}+
	\sum_{(u,v)\in I_{b+c}}\TT_{(u,v)\atop (b,c)}-\TT_{(b,c)\atop (u,v)}+
	\sum_{(u,v)\in I_{a+c}}\TT_{(u,v)\atop (c,a)}-\TT_{(c,a)\atop (u,v)}.
	\eq
	The steps which brings us to \eref{eq:sg}  is valid for any $(\rho,{\sf g})$ satisfying \eref{eq:ergtu} so the theorem is proved.
\end{proof}

Now we return to a new variant of TASEP.

\paragraph{PushTASEP:} The PushASEP is the PS defined on $E_2^{\mathbb{Z}}$ where $0$ represents an empty site and 1 an occupied site. The dynamics are described as follows: each particle tries to jump to the right at rate 1, and it actually jumps if the site is empty. Moreover, each particle jumps to the closest empty site at its left with rate 1. This type of PS has range $L=\infty$. However, each configuration can be encoded by the consecutive size of the blocks along the line, where a block is constituted with an empty site together with the set of consecutive occupied sites at its left. The dynamics of the PushASEP induces a PS on the ``block size process'' with range $L=2$ and $\kappa=\infty$ (all block sizes starting by 1 are possible). For this induced PS,  the product measure with marginal the geometric distribution (for any parameter in (0,1) by Theorem \ref{theo:alsdh}) is $\AI$ by $\TT$. This provides a description of some invariant distributions for the PushTASEP.

\subsection{Projection and hidden Markov chain}
\label{sec:HMC}
This part also illustrates our theorems: with Theorem \ref{theo:t0prime} one can find JRM $\TT$ on $\Ek^\mathbb{Z}$ for some $\kappa\geq 3$ (with more than 3 colours) having some Markovian invariant distribution. Some of them, possess some nice projection properties: they allow to characterize some PS invariant distributions on $\{0,1\}$ (and probably of some PS with more than 2 colours) having as invariant distribution, the distribution of some hidden Markov chain (see Cappé \& al. for more information on these models). 

Consider $\TT$ and $\TTp$ be two JRM of two PS defined respectively ${E}^\Z$ and $F^{\mathbb{Z}}$, where $E$ and $F$ are two spaces of colours such that  $\# F < \# E$. Consider $\pi$ a surjective map from $E$ on $F$: with each colour $c$ in $F$, one or several colours $\pi^{-1}(c)$ of $E$ are associated by $\pi$ (on an exclusive basis). 
\begin{defi}
$\TT'$ is said to be the $\pi$-projection of $\TT$ if for any $a,b,c,d \in F$, any $(A,B)\in \l(\pi^{-1}(a),\pi^{-1}(b)\r)$
\beq\label{eq:Tproj}
\sum_{(C,D)\in \pi^{-1}(c)\times \pi^{-1}(d)}\T{[A,B]}{[C,D]}=\Tp{[a,b]}{[c,d]}.
\eq
\end{defi}
In words: starting from any representative $(A,B)$ of $(a,b)$, the total jump rate to the representatives of $(c,d)$ does not depend on $(A,B)$, but only on $(a,b)$. 

\begin{lem} Consider $\eta=(\eta_t, t\geq 0)$ with $\eta_t=(\eta_t(k),k\in \Z)$  a well defined PS defined on $E^{\Z}$ with JRM $\TT$, for some finite $E$. Assume that $\TT'$ is the $\pi$-projection of $\TT$ for a surjection $\pi:E\to F$, for some set $F$. 
Under these hypothesis $\eta'=(\eta'_t,t\geq 0)$ defined by
$\eta'_t=\l(\pi\l(\eta_t(k)\r),k \in \Z\r)$ 
is a PS with JRM $\TT'$. Hence, if $\mu$ is a measure invariant by $\TT$ on $E^{\Z}$, then $\mu\circ \pi^{-1}$ is invariant by $\TT'$ on $F^\Z$. 
\end{lem}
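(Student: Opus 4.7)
The plan is to check that the generator of the coordinate-wise projected process $\eta'$ acts on cylinder functions of $\eta'$ exactly as the generator associated with the JRM $\TT'$. Since projections of Markov processes need not in general be Markov, the crux of the argument is the identity \eref{eq:Tproj}, which says that the total rate to leave a representative of $(a,b)$ towards the set of representatives of $(c,d)$ is a function of $(a,b,c,d)$ only, and not of the particular representative $(A,B)\in \pi^{-1}(a)\times\pi^{-1}(b)$ chosen.

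First, I would pick a bounded cylinder function $g:F^{\mathbb{Z}}\to \mathbb{R}$, depending on finitely many coordinates, and consider $f:=g\circ \pi^{\otimes\mathbb{Z}}:E^{\mathbb{Z}}\to \mathbb{R}$, which is a bounded cylinder function on $E^{\mathbb{Z}}$. Applying the generator $G$ associated with $\TT$ to $f$ at $\eta\in E^{\mathbb{Z}}$, each local jump $m_{i,w,w'}$ with $w\in E^2$, $w'\in E^2$ contributes $\TT_{[w\,|\,w']}[f(m_{i,w,w'}(\eta))-f(\eta)]$. Because $f$ depends only on the projection, two jumps $(i,w,w')$ and $(i,w,w'')$ with $\pi^{\otimes 2}(w')=\pi^{\otimes 2}(w'')$ produce the same increment of $f$. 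Grouping the jumps at site $i$ according to the value of $(a,b):=\pi^{\otimes 2}(\eta(i),\eta(i+1))$ and to the projected target $(c,d)\in F^2$, and using \eref{eq:Tproj}, gives
\[
(Gf)(\eta)=\sum_{i\in \mathbb{Z}}\sum_{(c,d)\in F^2}\Tp{[\pi(\eta(i)),\pi(\eta(i+1))]}{[c,d]}\bigl[g(m'_{i,(a,b),(c,d)}(\pi^{\otimes\mathbb{Z}}(\eta)))-g(\pi^{\otimes\mathbb{Z}}(\eta))\bigr],
\]
where $m'$ denotes the local map on $F^{\mathbb{Z}}$. The right-hand side is precisely $(G'g)(\pi^{\otimes\mathbb{Z}}(\eta))$, where $G'$ is the generator associated with $\TT'$. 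Thus $G(g\circ\pi^{\otimes\mathbb{Z}})=(G'g)\circ \pi^{\otimes\mathbb{Z}}$ on all bounded cylinder functions.

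Since $F$ is finite and $g$ is a cylinder function, the process $(g(\eta'_t))_{t\geq 0}$ is a càdlàg semimartingale with predictable drift $\int_0^t (G'g)(\eta'_s)\,ds$, which identifies $\eta'$ as a Markov process on $F^{\mathbb{Z}}$ with generator $G'$, hence a PS with JRM $\TT'$ (uniqueness of the martingale problem for bounded-range PS on a finite alphabet is the result recalled below \eref{eq:gen}).

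Finally, if $\mu$ is invariant by $\TT$ on $E^{\mathbb{Z}}$, then starting with $\eta_0\sim\mu$ we have $\eta_t\sim\mu$ for every $t\geq 0$, hence $\eta'_t=\pi^{\otimes\mathbb{Z}}(\eta_t)\sim\mu\circ\pi^{-1}$ for every $t\geq 0$. By the preceding paragraph, $\eta'$ is a PS with JRM $\TT'$, so $\mu\circ\pi^{-1}$ is invariant by $\TT'$ on $F^{\mathbb{Z}}$. The main (and only delicate) step in the argument is the verification that \eref{eq:Tproj} is exactly what makes the rate of each admissible projected jump independent of the representative in $E$, which is what allows us to assemble the terms of $Gf$ into a bona fide $F^{\mathbb{Z}}$-generator.
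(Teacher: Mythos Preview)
Your proof is correct and follows essentially the same approach as the paper's. Both arguments rest on the same observation: condition \eref{eq:Tproj} ensures that the total rate at which a pair $(A,B)\in\pi^{-1}(a)\times\pi^{-1}(b)$ jumps into $\pi^{-1}(c)\times\pi^{-1}(d)$ depends only on $(a,b,c,d)$, so the projected process has well-defined jump rates given by $\TT'$. You phrase this via the generator intertwining relation $G(g\circ\pi^{\otimes\Z})=(G'g)\circ\pi^{\otimes\Z}$ and then invoke well-posedness of the martingale problem, whereas the paper argues more directly by computing the jump rate of a projected subword; the content is the same.
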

\begin{proof} Since the PS under investigation is translation invariant, we focus on the finite dimensional distribution evolution at the right of zero. Consider any word $(x_1,\cdots,x_k)$ whose projection is $(\pi(x_j), 1\leq j \leq k)=(y_1,\cdots,y_k)$. Now, consider the rate of jumps from any subword $(y_\ell,y_{\ell+1})=(a,b)$ to  $(y'_\ell,y'_{\ell+1})=(c,d)$.
By definition, $\pi(x_\ell)=y_\ell,\pi(x_{\ell+1})=y_{\ell+1}$, and $(y_\ell,y_{\ell+1})$ jumps to $(y'_\ell,y'_{\ell+1})$ if $(x_\ell,x_{\ell+1})$ jumps to any representative $(c,d)$ that is, if its jumps to $\pi^{-1}(c)\times\pi^{-1}(d)$. 
Hence, the total jump rate from $(a,b)$ to $(c,d)$ is given by $\sum_{(C,D) \in \pi^{-1}(c)\times \pi^{-1}(d)}\TT{[x_\ell,x_{\ell+1}]}{[C,D]}$, and this is indeed $\Tp{[a,b]}{[c,d]}$ (for any value $(x_\ell,x_{\ell+1})\in\pi^{-1}(a)\times\pi^{-1}(b)$). The statement concerning the invariance of $\mu\circ \pi^{-1}$ is direct.
\end{proof}

There exist in the literature several definitions for the notion of hidden Markov chains. The most classical is the following:
\begin{defi} $(Y_k,k\in \mathbb{Z})$ is said to be a hidden Markov chain taking its values in $F^{\mathbb{Z}}$, if it has the following representation:\\
  -- there exists a Markov chain $(Z_k, k \in\mathbb{Z})$ taking its values in some set $E^{\mathbb{Z}}$,\\
  -- there exists a transition kernel $K=(K(a,b))_{a \in E, b \in F}$;\\
  such that, conditionaly on $Z=(Z_k,k\in \cro{a,b})$, the $Y_j$'s are independent and  conditionaly on $Z$  the distribution of $Y_j$ is given by $K(Z_j,.)$.
\end{defi}
Hence, if $(X_k,k\in \Z)$ is a Markov chain with state space $E$, and $\pi:E\to F$ is a surjection (or just a map) then since the process $\l(\pi(X_k),k\in \mathbb{Z}\r)$ is a hidden Markov chain.  If $X$ has initial distribution $\rho$ at time 0, and kernel $M$, then
\beq\label{eq:projm}
\P(Y_j=y_j, 0 \leq j \leq n)=\sum_{(x_0,\cdots,x_n)\in E^{n+1}\atop {x_j \in \pi^{-1}(y_j) , 0\leq j \leq n}} \rho_{x_0} \prod_{j=1}^{n} M_{x_{j-1},x_j}
\eq
From there, it may be checked that a hidden Markov chain is not a Markov chain in general (with any memory), since in general, \eref{eq:projm} does not factorize suitably.

Now, we state the following result:
\begin{theo} There exist some PS on $\{0,1\}^{\mathbb{Z}}$ which admits some hidden Markov chain (which are not Markov chains) as invariant distributions.
\end{theo}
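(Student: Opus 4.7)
The plan is to produce an explicit construction by running the machinery of Theorem~\ref{theo:t0prime} on a 3-colour PS and then projecting it down to 2 colours via a surjection $\pi:E_3\to E_2$. Concretely, I would first search the affine/algebraic variety $\{\NCycle^{M,\TT}_7\equiv 0\}$ with $\kappa=3$ and $L=2$, using (for example) Gröbner basis computation as discussed in Section~\ref{sec:Grobn}, to produce a positive Markov kernel $M$ on $E_3=\{0,1,2\}$ and a JRM $\TT$ such that the $(\rho,M)$-Markov law is invariant under $\TT$ on the line, where $\rho$ is the invariant distribution of $M$. The space of solutions is large (as already seen in Section~\ref{sec:MTSM}), which leaves plenty of room to impose extra linear constraints on $\TT$.

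Second, I fix the surjection $\pi:E_3\to E_2$ defined by $\pi(0)=\pi(1)=0$, $\pi(2)=1$, and I add to the previous algebraic system the linear equations \eref{eq:Tproj} which express that $\TT$ is $\pi$-projectable: for each quadruple $(a,b,c,d)\in E_2^4$ and every two representatives $(A,B)\in \pi^{-1}(a)\times\pi^{-1}(b)$, the sum $\sum_{(C,D)\in\pi^{-1}(c)\times\pi^{-1}(d)}\T{[A,B]}{[C,D]}$ must not depend on $(A,B)$. A new Gröbner basis computation confirms that this enlarged system still admits positive solutions $(M,\TT)$; pick one. By the lemma preceding the statement, the projected PS $\eta'_t=(\pi(\eta_t(k)),k\in\mathbb{Z})$ on $\{0,1\}^{\mathbb{Z}}$ has JRM $\TT'$ given by \eref{eq:Tproj}, and the pushforward $\mu\circ\pi^{-1}$ of the $(\rho,M)$-Markov law is invariant under $\TT'$.

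Third, the distribution $\mu\circ\pi^{-1}$ is by construction the law of the hidden Markov chain $(\pi(X_k))_{k\in\mathbb{Z}}$ where $X$ is a $(\rho,M)$-Markov chain. It therefore suffices to exhibit a pair $(M,\TT)$ from the solution set above for which this hidden Markov chain is genuinely not Markovian for any memory $m$. This is where the main obstacle lies: one must rule out Markovianity at every finite order simultaneously. The cleanest route is by contradiction using Theorem~\ref{theo:t0prime}: suppose $\mu\circ\pi^{-1}$ were a $(\rho',M')$-Markov law on $E_2^{\mathbb{Z}}$ with memory $m\ge 1$ and full support. Then by the linearity principle of Remark~\ref{rem:LP} applied to $\TT'$, the Gibbs measure with kernel $M'$ of memory $m$ would be invariant on $\Z/n\Z$ for all $n\geq m+L$, and in particular the $n$-marginals of $\mu\circ\pi^{-1}$ would factorize as a degree-$m$ Markov product. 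Using formula \eref{eq:projm}, the $n$-marginals of $\mu\circ\pi^{-1}$ are explicit polynomials in the entries of $M$, and the Markov-factorization property becomes a finite set of polynomial identities in the $M_{i,j}$'s; generically these identities fail. The key check is then to verify, for the specific numerical $M$ selected, that these identities indeed fail for each $m$ up to some bound sufficient to preclude all memories by a rank argument on the family of conditional measures $\mathbb{P}(Y_n\,|\,Y\cro{n-m,n-1})$.

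The main obstacle, as flagged above, is ruling out Markovianity of every finite memory at once. To do so cleanly I would choose the explicit solution $M$ so that its three left eigenvectors have no zero coordinates and no pair of eigenvalues has a rational quotient among its logarithms; then the family of conditional distributions of $Y_n$ given a long history of past $Y$'s generates an affine subspace of dimension strictly greater than $|E_2|^m$ for every $m$, which is incompatible with being Markov of memory $m$. Once such an $M$ is identified (and this is where an explicit numerical example is most persuasive and can be displayed), combining it with the projectable $\TT$ constructed in the second step completes the proof: $\TT'$ admits the hidden Markov law $\mu\circ\pi^{-1}$ as an invariant distribution, and this law is not a Markov chain of any finite memory.
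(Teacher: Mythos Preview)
Your overall strategy---build a 3-colour invariant Markov law, impose the projectability constraints, then push forward---is exactly the paper's. The substantive gap is in your third step.

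First, you are overshooting the target. In the paper's Definition~\ref{defi:Mar}, a ``Markov chain'' means memory $m=1$, so the theorem only asks for a hidden Markov chain that is not memory-1 Markov. The paper therefore settles the non-Markovianity by a single numerical check on the explicit example: it computes $\mu([1,1,1])/\mu([1,1])$ and $\mu([1,1])/\mu([1])$ and observes they differ. You instead try to exclude every finite memory, which is a much harder (and unnecessary) statement.

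Second, the argument you sketch for that harder statement has a genuine hole. Your claim that ``the family of conditional distributions of $Y_n$ given a long history generates an affine subspace of dimension strictly greater than $|E_2|^m$'' cannot be right as written: conditional laws of $Y_n$ live in the 1-simplex over $E_2$, so they span at most a 1-dimensional affine set regardless of the history length. A correct ``infinitely many memories'' argument would require something like showing the belief-state filter on the hidden chain has infinitely many distinct limit points, and the eigenvalue irrationality condition you invoke does not by itself deliver that; in any case none of this is needed for the theorem.

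Third, and more practically, you never actually produce an example: you assert that the enlarged Gr\"obner system has positive solutions but do not exhibit one. The paper's proof is explicit: it takes $L=3$ (not $L=2$), the projection $\pi(0)=0$, $\pi(1)=\pi(2)=1$, writes down a specific rational kernel $M$ and integer rates $\TT$, verifies $Z^{M,\TT}\equiv 0$ (hence $\NCycle_9^{M,\TT}\equiv 0$ and invariance by Theorem~\ref{theo:t0prime}), computes the projected JRM $\TT'$, and checks the single ratio above. That is all the statement requires.
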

The proof is constructive, we will provide an example. 
Consider the 4-tuples $(\TT,\TT',M,\pi)$ as follows\\
-- Take $\pi:E_3\to E_2$ defined by $\pi(0)=0, ~~\pi(1)=\pi(2)=1$.\\
-- Take $L=3$ (the range), and $\TT$ with entries all 0 except 
\be
\T{[0,0,0]}{[0,1,0]}&=& 255,~~
\T{[0,0,0]}{[0,2,0]}= 15\\ 
\T{[0,1,0]}{[0,0,0]} &=& \T{[0,2,0]}{[0,0,0]} = 294\\
\T{[0,1,0]}{[0,2,0]} &=& \T{[0,2,0]}{[0,1,0]} = 49 
\ee
-- The projected JRM $\TT'$ has all its entries 0 except $\Tp{[0,0,0]}{[0,1,0]}=270$, $\Tp{(0,1,0)}{(0,0,0)}=294$ (notice that the jump $(0,1,0)\to(0,2,0)$ does not project on a ``true jump'' since it would correspond to the jump $(0,1,0)\to(0,1,0)$).  \\
-- Take the Markov kernel 
\[M=\begin{bmatrix} 7/{15} & {1}/{3} & {1}/{5} \\
  {1}/{2} & {1}/{6} & {1}/{3}\\
  {1}/{6} & {1}/{2} & {1}/{3}
\end{bmatrix}\]
and initial  distribution is $r_0 = 35/89, r_1 = 29/89, r_2 = 25/89$.\\
If $(X_j,j\geq 0)$ is a Markov chain with kernel $M$ and initial distribution $r$, then $(\pi(X_k),k\geq 0)$ is not a Markov chain since $\mu$ the projected measure satisfies $\mu([1,1,1])/\mu([1,1])=71/106 \neq \mu([1,1])/\mu([1])=53/81$, when Markovianity would imply equality of these quantities; it is a hidden Markov chain.\\
-- It remains to say that the Markov law $(\rho,M)$ is invariant by $\TT$. This can be proved by checking that for any $a,b,c,d,e \in \Ek$, $\NCycle_9^{M,\TT}(a,b,c,d,e,0,0,0,0)=0$: in fact, the corresponding function $Z^{M,\TT}\equiv 0$.

Notice that, for example, the Dirac measure $\delta_{1}^{\Z}$ is invariant on the line for this PS, and the analogous on $\ZnZ$, but  this configuration is not attractive.

\subsection{Exhaustive solution for the $\kappa=2$-colour case with $m=1$ and $L=2$}

\subsubsection{Invariant Markov laws}
\label{sec:AIMD}
To find all JRM $\TT$ for which exists an invariant Markov law when $\kappa=2$ can be solved completely by computation of a Gröbner basis.  Instead of writing $\T{(a,b)}{(c,d)}$, we write $t_{x,y}$ where $x$ and $y$ are the numbers in base 10 corresponding to $ab$ and $cd$ seen as a number in base 2: write for short $x=(a,b)_2$, and $y=(c,d)_2$, so that $3=(1,1)_2$, $1=(0,1)_2$. Hence, we have $t_{i,i}=0$ for $i$ from 0 to 3, and $t_{3,2}=\T{(1,1)}{(1,0)}$.\par
Now, set $M_{0,0}=1-M_{0,1}$, $M_{1,0}=1-M_{1,1}$ so that $M_{0,1}$ and $M_{1,1}$ are the remaining variables and now write the system which contains:\\
\tb $\Cycle_7^{M,\TT}\equiv 0$,\\
\tb the equations $M_{a,b}g_{a,b}-1$ for any $a,b\in \{0,1\}$ for additional variables  $g_{0,0},\cdots,g_{1,1}$,\\
\tb an additional equation  $(M_{0,1}-M_{1,1})x-1$ in order to remove the i.i.d. case (treated below), for some new variables $x$.\\
The Gröbner basis, of this system, is too long to be written here; nevertheless, here are the first polynomials of the obtained basis, which provide some necessary conditions:
\[\begin{array}{l} 
t_{3,0},\;\; t_{2,1},\;\;t_{1,2},\;\;  t_{0,3},\\
t_{{0,1}}t_{{1,0}}+t_{{0,1}}t_{{2,0}}+t_{{0,2}}t_{{1,0}}+t_{{0,2}}t_{{
		2,0}}-t_{{1,3}}t_{{3,1}}-t_{{1,3}}t_{{3,2}}-t_{{2,3}}t_{{3,1}}-t_{{2,3
	}}t_{{3,2}},\\
 \left( t_{{1,0}}+t_{{2,0}}-t_{{3,1}}-t_{{3,2}} \right) {M_{{1,1}}}^{2
}+ \left( -t_{{1,0}}-t_{{1,3}}-t_{{2,0}}-t_{{2,3}} \right) M_{{1,1}}+t
_{{1,3}}+t_{{2,3}}
\end{array}\]
Hence,  $t_{3,0} = t_{2,1} = t_{1,2} =  t_{0,3}=0$ is a necessary condition. The polynomial on the second line expresses somehow ``the important condition'', and the third line polynomial (and subsequent, see \cite{LF-JFM}) allow to compute the kernel $M$. We infer that
\begin{cor} 
	For $\kappa=2$. If $\TT$ is mass preserving, then it does not exist any  $(\rho,M)$-Markov law with $M_{0,1}\neq M_{1,1}$, and with coefficients in $(0,1)$, invariant by $\TT$.
\end{cor}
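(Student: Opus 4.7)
The plan is to deduce the corollary directly from the Gröbner basis analysis displayed just above the statement; the content of the corollary is really just a reading of what that basis gives.

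First, I would record exactly what the basis says. It was computed in the polynomial ring enlarged by the auxiliary variable $x$ together with the generator $(M_{0,1}-M_{1,1})x-1$, whose vanishing is equivalent to $M_{0,1}\neq M_{1,1}$; the additional generators $M_{a,b}g_{a,b}-1$ similarly enforce that the entries of $M$ are non-zero (so, being convex combinations, lie in $(0,1)$). Therefore every $(\rho,M)$-Markov law of the form prescribed by the corollary that is $\AI$ by $\TT$ gives a point in the variety of the full ideal, and in particular kills each basis element. Reading off the first four displayed generators yields the necessary conditions
$$t_{3,0} = t_{2,1} = t_{1,2} = t_{0,3} = 0.$$

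Second, I would translate mass-preservation into constraints on the $t_{x,y}$. With the encoding $x=(a,b)_2$, the digit sums of $x=0,1,2,3$ are respectively $0,1,1,2$, and $\T{(a,b)}{(c,d)}>0\Rightarrow a+b=c+d$ forces $t_{x,y}=0$ whenever the digit sums of $x$ and $y$ differ. A short enumeration then shows that, modulo the automatic $t_{i,i}=0$, the only off-diagonal entries compatible with mass preservation are $t_{1,2}$ and $t_{2,1}$ (the swap $(0,1)\leftrightarrow(1,0)$); in particular, $t_{3,0}$ and $t_{0,3}$ are already zero for free.

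Third, putting both constraints together, the two remaining mass-preserving degrees of freedom $t_{1,2}$ and $t_{2,1}$ are also zero by the second and third generators of the basis. Hence every off-diagonal entry of $\TT$ vanishes and $\TT\equiv 0$. Interpreting the corollary as a statement about a \emph{non-trivial} mass-preserving $\TT$ (necessarily implicit, since the null JRM vacuously preserves every measure), this contradiction establishes non-existence.

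The main obstacle is not mathematical but clerical: the whole argument rests on trusting, or reproducing with a computer algebra system, the Gröbner basis computation with the specified monomial order and auxiliary variables, and on correctly identifying the first four generators as the required linear forms in $\TT$. Once that is granted, the rest is the two-line enumeration of mass-preserving off-diagonal entries in the $\kappa=2$, $L=2$ case.
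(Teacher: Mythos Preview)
Your proposal is correct and follows essentially the same route as the paper: the corollary is presented there as an immediate consequence (``We infer that'') of the displayed Gr\"obner basis, and your argument makes that inference explicit by combining the necessary conditions $t_{1,2}=t_{2,1}=0$ with the observation that these are the only off-diagonal entries a mass-preserving JRM can have when $\kappa=L=2$. Your remark that the statement should be read for non-trivial $\TT$ (since $\TT\equiv 0$ trivially preserves every law) is a fair caveat the paper leaves implicit.
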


\subsubsection{Invariant product measure}
\label{sec:AIMD2}
First, we claim that, 
\begin{lem}
	If $\kappa = 2$, then $\rho^\Z$ is invariant by $\TT$ on the line iff  $\NCycle_2^{\rho,\TT}\equiv0$.\end{lem}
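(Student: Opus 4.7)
The plan is to reduce the claim to Theorem \ref{theo:t3b}. That theorem already establishes the equivalence between invariance of $\rho^\Z$ on the line and $\NCycle_3^{\rho,\TT} \equiv 0$ (condition $(vii)$), so the forward direction is immediate: invariance implies $\NCycle_n^{\rho,\TT} \equiv 0$ for every $n\geq 2$, and in particular for $n=2$. The content of the lemma is therefore the converse, namely that when $\kappa=2$ the weaker condition $\NCycle_2^{\rho,\TT}\equiv 0$ already forces $\NCycle_3^{\rho,\TT}\equiv 0$.

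First I would unpack the two cycle balances using \eref{lasduhf}:
\[
\NCycle_2^{\rho,\TT}(a,b) = \wZ^{\rho,\TT}_{a,b}+\wZ^{\rho,\TT}_{b,a},
\qquad
\NCycle_3^{\rho,\TT}(a,b,c) = \wZ^{\rho,\TT}_{a,b}+\wZ^{\rho,\TT}_{b,c}+\wZ^{\rho,\TT}_{c,a}.
\]
The hypothesis $\NCycle_2^{\rho,\TT}\equiv 0$ specializes to two useful facts: taking $a=b$ gives $2\wZ^{\rho,\TT}_{a,a}=0$, hence $\wZ^{\rho,\TT}_{a,a}=0$ for every $a\in\Ek$, and for $a\neq b$ we obtain $\wZ^{\rho,\TT}_{a,b}=-\wZ^{\rho,\TT}_{b,a}$.

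The key geometric observation, specific to $\kappa=2$, is that any triple $(a,b,c)\in\{0,1\}^3$ must contain at least two equal letters by pigeonhole. Since $\NCycle_3^{\rho,\TT}$ is invariant under cyclic permutations of its arguments, we may assume without loss of generality that $a=b$. Then
\[
\NCycle_3^{\rho,\TT}(a,a,c) = \wZ^{\rho,\TT}_{a,a}+\wZ^{\rho,\TT}_{a,c}+\wZ^{\rho,\TT}_{c,a}
= \tfrac{1}{2}\NCycle_2^{\rho,\TT}(a,a)+\NCycle_2^{\rho,\TT}(a,c)=0.
\]
Thus $\NCycle_3^{\rho,\TT}\equiv 0$, and invariance on the line follows from Theorem \ref{theo:t3b}.

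There is no real obstacle here: the whole argument rests on the combinatorial accident that three letters drawn from a two-letter alphabet cannot all be distinct, which collapses $\NCycle_3$ onto a sum of $\NCycle_2$ values. This equivalence would fail as soon as $\kappa\geq 3$, since then the triple $(0,1,2)$ gives a $\NCycle_3$ value that is not a linear combination of $\NCycle_2$ values.
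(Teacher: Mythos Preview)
Your proof is correct and follows essentially the same approach as the paper: reduce to Theorem \ref{theo:t3b}, use the pigeonhole observation that any triple over $\{0,1\}$ has a repeated letter, exploit cyclic invariance of $\NCycle_3^{\rho,\TT}$, and then collapse $\NCycle_3^{\rho,\TT}(a,a,c)$ into $\NCycle_2^{\rho,\TT}$ terms. The only cosmetic difference is that for the forward direction you invoke condition $(vi)$ of Theorem \ref{theo:t3b} directly, whereas the paper deduces $\NCycle_2^{\rho,\TT}\equiv 0$ from $\NCycle_3^{\rho,\TT}\equiv 0$ by the same computation run backwards.
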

\begin{proof} By Theorem \ref{theo:t3b} $(vii)$, it suffices to prove that 
	$\NCycle_3^{\rho,\TT}\equiv 0 \equi \NCycle_2^{\rho,\TT}\equiv0$. 
	Start by the implication: from $\NCycle_3^{\rho,\TT}(a,a,a)=0=3\wZ^{\rho,\TT}_{a,a}$ we infer that $\wZ^{\rho,\TT}_{a,a}=0$ for any $a\in\Ek =\{0,1\}$.
	Now, write 
	\beq\label{eq:zezg}
	\NCycle_3^{\rho,\TT}(aab)=\wZ^{\rho,\TT}_{a,a}+\wZ^{\rho,\TT}_{a,b}+\wZ^{\rho,\TT}_{b,a}=0+\NCycle^{\rho,\TT}_2(a,b),\eq
	so that the implication holds. For the converse, note that any words $w$ with three letters on $E_2=\{0,1\}$ possesses one letter repeated. Given the cyclical structure of the equation $\NCycle_3^{\rho,\TT}$ (that is $\NCycle_3^{\rho,\TT}(abc)= \NCycle_3^{\rho,\TT}(bca)$) it suffices to prove that $\NCycle_3^{\rho,\TT}(aab)=0$ for any $a,b \in \Ek$.
	Now, from  $\NCycle_2^{\rho,\TT}\equiv0$, we deduce $\wZ^{\rho,\TT}_{a,a}=0$, and still from \eref{eq:zezg}, $\NCycle_3^{\rho,\TT}\equiv 0$.  
\end{proof}

Now solving explicitly $\NCycle_2^{\rho,\TT}\equiv0$ using a computer algebra system is possible.
The result is presented in \cite{LF-JFM}; there are 5 polynomial, including the following one (product of the 2 first lines minus the third)
\ben
&& \left( t_{{1,0}}t_{{3,0}}+t_{{1,0}}t_{{3,1}}+t_{{1,0}}t_{{3,2}}+t_{{1
,3}}t_{{3,0}}+t_{{2,0}}t_{{3,0}}+t_{{2,0}}t_{{3,1}}+t_{{2,0}}t_{{3,2}}
+t_{{2,3}}t_{{3,0}} \right)\\
&&\left( t_{{0,1}}t_{{1,3}}+t_{{0,1}}t_{{2,
3}}+t_{{0,2}}t_{{1,3}}+t_{{0,2}}t_{{2,3}}+t_{{0,3}}t_{{1,0}}+t_{{0,3}}
t_{{1,3}}+t_{{0,3}}t_{{2,0}}+t_{{0,3}}t_{{2,3}} \right)\\
&&- \left( t_{{0
,1}}t_{{3,0}}+t_{{0,1}}t_{{3,1}}+t_{{0,1}}t_{{3,2}}+t_{{0,2}}t_{{3,0}}
+t_{{0,2}}t_{{3,1}}+t_{{0,2}}t_{{3,2}}+t_{{0,3}}t_{{3,1}}+t_{{0,3}}t_{
{3,2}} \right) ^{2}
\een
which is the a necessary condition on $t$ to have a product measure as invariant distribution.

\subsection{2D applications}
\label{sec:2dapp}
The criterion provided by Theorem \ref{theo:2D} seems to depend on all the colourings of the neighbors of $\Gamma_0$, $\Gamma_1$ and $\Gamma_2$, which represents for this last case, as many as $\kappa^{14}$ possibilities, and this for each of the $\kappa^5$ different configurations in $\Gamma_2$. So, the total number of equations seems out of reach, but in fact, again, \eref{eq:lineq2D} is decomposed on a sum of ${\bf Z}_{x(h\cap C)}^{h\cap C,h}$ (defined in \eref{eq:Zh}) so that it suffices to express these functions which intersect the domain $D$ under inspection: the contribution of each square can be computed independently.
This provides a small finite set of functions with 1, 2, 3 or 4 variables (as discussed in \eref{eq:nl2} and in the proof of Theorem \ref{theo:2D}):  When $\Ek=E_2=\{0,1\}$, this provides a small quantity of functions, each of them being a sums of at most $2^3$ elementary quantities. The corresponding set of equations can be written easily, or even automatically if needed. When $\TT$ is totally specified, searching invariant distribution amounts then just to solving a polynomial system with unknown $\rho_0,\cdots,\rho_{\kappa-1}$ in the set $\{(r_0,\cdots,r_{\kappa-1}) \in [0,1]^\kappa : \sum r_i=1\}$. Here are some cases we have investigated (we insist on the fact that all these examples have been found with very few manipulations, in a very short time):\\
{\bs} If all the $\TT$'s are zero except $
\tt11100001=a, \tt00011110=1$
(this generalizes a bit \eref{eq:egrda}) then the Bernoulli product measure with parameter $\rho_1\in(0,1)$ is invariant iff $a\rho_1^2-\rho_1^2+2\rho_1-1=0$ (so that for a given $a$, the density is $\rho_1=1/(\sqrt{a}+1)$). This can be checked by hand with our criterion, or just using a reversibility argument, as \eref{eq:fqdq}, for example.\\
{\bs} Similarly, with the same methods, one checks that if all the $\TT$'s are zero except
  $
    \tt10100101=a, \tt01011010=b
  $
    then for any $(a,b)\in (0,+\infty)^2$, all product measures are invariant if $a=b$, and none otherwise (the first statement is a consequence of reversibility, but reversibility cannot be used to prove the second).\\
    {\bs} if all the $\TT$'s are zero, except
    $\tt11000110 $, then no product measures with full support are invariant.\\
{\bs}  if all the $\TT$'s are zero, except
    \[\tt11000110 =a,\tt01100011 =b,
          \tt00111001 =c,
          \tt10011100 =d \]
           then if $a=b=c=d$, all Bernoulli product measure with parameter in $(0,1)$ are invariant, otherwise, there is no invariant product measure.\\
{\bs} Now, if one lets many free parameters:
          If all the $\TT$'s are 0, except those with the form
          \[\tt{a}{b}{c}{d}{a}{{1-b}}{{1-c}}{d} \textrm{ for }a,b,c,d\in\{0,1\}.\]
In this case, the space of parameters for which there exists invariant product measures is quite complex: see \cite{LF-JFM}.\\
{\bs} In the 3 colours case $E_3=\{0,1,2\}$ with all the parameters $\TT$ equal to zero, except
  \[\tt{i}iii{{i+1 \mod 3}}{{i+1\mod 3}}{{i+1\mod 3}}{{i+1\mod 3}}=a_i.\] The set of invariant product measures with marginal having support $E_2$, are the measures $\rho\in{\cal M}(E_2)$ satisfying $ a_1\rho_1^4-a_2\rho_2^4= a_0\rho_0^4-a_2\rho_2^4=0$ and $\rho_0,\rho_1,\rho_2>0$.\\
{\bs}    We may design similarly many JRM $\TT$ preserving $P_\lambda^{\Z^2}$ where $P_\lambda$ is the Poisson$(\lambda)$ distribution, by considering mass preserving $\TT$, which moreover, preserves the Poisson distribution on a square (still using  Corollary \ref{cor:dsdq}). Many such dynamics exist, and this can be analyzed  on the square: condition on the sum $m$ of the 4 values around the square, and interpret the 4 (multinomial) concerned variables as the number of balls in 4 urns in which $m$ balls labelled from $1$ to $m$ have been dropped uniformly and independently (in a larger probability space). Picking a ball at random and moving it in the next urn around the square, or shifting the urns around the square, or taking a ball and reinserting it randomly in any of the other three urns, are three examples of dynamics that preserve the multinomial distribution. \par
  For the last example,  for each positive map $m\to W_m$, the following JRM $\TT$ preserves $P_\lambda^{\Z^2}$:
  \[
	   \tt{{x_1}}{{x_2}}{{x_3}}{{x_4}}{{y_1}}{{y_2}}{{y_3}}{{y_4}} =
	             W_{\|x\cro{1,4}\|_1}\times \dis\frac{x_i}{3}\]
 if $ y\cro{1,4}=x\cro{1,4}-e_i+e_j$  for $j\in \{ 1,2,3,4\} \setminus \{i\}$, where $e_k$ is the $k$-th canonical vector of $\R^4$.

\section{Proofs}\label{sec:proofs}

\subsection{Proof of Theorem \ref{theo:t0}}
\label{sec:PF_theo:t0}

Before proving Theorem \ref{theo:t0}, we establish a Lemma used all along the proof.  
Recall the representation formula of  $\NLineq_n^{\rho,M,\TT}$ (for $n\geq 3$) in terms of the functions $\wZ$ given in \eref{eq:rsgfqe12}.
\begin{lem}
\label{lem:balanced}
Assume that $M$ is a positive Markov kernel, then for any JRM $\TT$,
\[
	\sum_{b,c}\wZ_{a,b,c,d} M_{a,b}M_{b,c}M_{c,d}=0,\quad \forall a,d\in \Ek.
\]
\end{lem}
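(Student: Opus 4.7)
The plan is to verify this by a direct expansion followed by a symmetric relabeling of dummy indices. Multiplying the definition
\[
\wZ_{a,b,c,d}=\sum_{(u,v)\in\Ek^2}\T{(u,v)}{(b,c)}\frac{M_{a,u}M_{u,v}M_{v,d}}{M_{a,b}M_{b,c}M_{c,d}}-\To{b,c}
\]
through by $M_{a,b}M_{b,c}M_{c,d}$ clears the denominator and yields
\[
\wZ_{a,b,c,d}\,M_{a,b}M_{b,c}M_{c,d}=\sum_{u,v}\T{(u,v)}{(b,c)}M_{a,u}M_{u,v}M_{v,d}-\To{b,c}\,M_{a,b}M_{b,c}M_{c,d}.
\]

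Summing over $b,c\in\Ek$ (the sums are finite, and for the $\kappa=+\infty$ version one would assume the integrability conditions of the $C_1,C_2$ bound in Section \ref{sec:kappainfinite}), the second piece is directly $\sum_{b,c}M_{a,b}M_{b,c}M_{c,d}\To{b,c}$. For the first piece, swap the dummy pairs $(b,c)\leftrightarrow(u,v)$; then
\[
\sum_{b,c}\sum_{u,v}\T{(u,v)}{(b,c)}M_{a,u}M_{u,v}M_{v,d}=\sum_{b,c}M_{a,b}M_{b,c}M_{c,d}\sum_{u,v}\T{(b,c)}{(u,v)}=\sum_{b,c}M_{a,b}M_{b,c}M_{c,d}\To{b,c},
\]
and the two contributions cancel.

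There is no real obstacle here: the identity is essentially the bookkeeping statement that the total ``in-flow rate into $(b,c)$'' summed against the weight $M_{a,\cdot}M_{\cdot,\cdot}M_{\cdot,d}$ equals the total ``out-flow rate out of $(b,c)$'' against the same weight, once one sums over $(b,c)$. Positivity of $M$ is used only to make the individual fractions defining $\wZ$ well-defined (so that multiplying by $M_{a,b}M_{b,c}M_{c,d}$ genuinely clears the denominators rather than creating $0/0$ entries); the combinatorial cancellation itself is just a relabeling of summation variables and works for any non-negative $\TT$.
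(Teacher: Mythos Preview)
Your proof is correct and follows essentially the same approach as the paper: expand $\wZ$, clear denominators, and observe that the incoming and outgoing contributions cancel once one uses $\sum_{b,c}\T{(u,v)}{(b,c)}=\To{u,v}$ (equivalently, your relabeling $(b,c)\leftrightarrow(u,v)$). The paper's version is just a bit terser, but the mechanism is identical.
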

\begin{proof}
Just expand $\wZ$. By definition
\[
\sum_{b,c}\wZ_{a,b,c,d} M_{a,b}M_{b,c}M_{c,d}=
\sum_{u,v}\sum_{b,c}\T{(u,v)}{(b,c)}M_{a,u}M_{u,v}M_{v,d}-\sum_{b,c}\To{b,c}M_{a,b}M_{b,c}M_{c,d}.
\]
This is zero since $\sum_{b,c}\T{(u,v)}{(b,c)}=\To{u,v}$.
\end{proof}

\begin{rem}\label{rem:rthgf}
  The Lemma does not use the fact that the $(\rho,M)$-Markov law is invariant by $\TT$ on the line, but just a kind of local equilibrium and the form of $\wZ$. In fact, what is true in all generality is that, for any function $f$ taking its values in $\R^{\star}$,  for $\wZ'_{a,b,c,d}= -\To{b,c}+ \sum_{u,v}\frac{f(a,u,v,d)}{f(a,b,c,d)} \T{(u,v)}{(b,c)}$  then, one has $\sum_{b,c}\wZ'_{a,b,c,d} f({a,b,c,d})=0$. This is particularly true if $f(a,b,c,d)=M_{a,b}M_{b,c}M_{c,d}$ or $f(a,b,c,d)=\alpha_a M_{a,b}M_{b,c}M_{c,d} \beta_d$.
  \end{rem}

\noindent To prove Theorem \ref{theo:t0} we will show two cyclical implications $(i)\imp (ii)\imp(iii)\imp(iv)\imp (v)\imp (i)$ and  $(v)\imp (vi)\imp (vii)\imp (viii)\imp(ix)\imp (v)$.\\
$\bullet$ {\bf Proof of $(i)\imp (ii)$}
Observe the contribution of the term with index $j$  for a $j$ in $\cro{2,n-2}$ that is, far from 0 and from $n$ in \eref{eq:tjht}.
One sees that since for any $a$, $\sum_{b} M_{a,b}=1$, and $\rho M=\rho$, for any  $1<j<n-1$, 
\beq\label{eq:tjht2}
\sum_{x_{-1},x_0,\atop x_{n+1},x_{n+2}} \wZ^{M,\TT}_{x\cro{j-1,j+2}} \rho_{x_{-1}}\prod_{k\in\{ -1,0,n,n+1 \}}M_{x_k,x_{k+1}} =\rho_{x_1}  \wZ^{M,\TT}_{x\cro{j-1,j+2}}.
\eq
Take three arbitrary words $x\cro{1,n}$, $y\cro{1,m}$ and $a\cro{1,7}$ with letters in $\Ek$,  and $a_4'\in \Ek$. Define 
\be
w=x\cro{1,n}\,a\cro{1,4}\,000\,y\cro{1,m} \quad \text{ and }\quad w'=x\cro{1,n}\,a\cro{1,3}\,0000\,y\cro{1,m},
\ee
we recall that the concatenation gives for example: $a\cro{2,4}\,b\,y\cro{3,6}=a_2a_3a_4yb_3b_4b_5b_6$.
Using the property \eref{eq:tjht2} and the fact that the boundary terms are the same (those for $j\in\{0,1,n-1,n\}$), we get 
\[
\NLineq_{N}^{\rho,M,\TT}(w)-\NLineq_{N-1}^{\rho,M,\TT}(w')=\rho_{x_1}\Rep_7^{M,\TT}(a_1,a_2,a_3,a_4,0,0,0;0).
\]
If the $(\rho,M)$-Markov law is invariant by $\TT$, then $\NLineq_{N}^{\rho,M,\TT}\equiv 0$ as well as $\NLineq_{N-1}^{\rho,M,\TT}\equiv 0$ so that $\Rep_7^{M,\TT}(a_1,a_2,a_3,a_4,0,0,0;0)= 0$.\\
$\bullet$ \textbf{Proof of $(ii)\imp (iii)$}
 For $n\geq 4$ define the map $H_n:\Ek^{\cro{1,n}}\to \R$ by
 \ben\label{eq:etjsrgryj}
 H_n(a\cro{1,n}) &:=&\l(\sum_{j=1}^{n-3} \wZ^{M,\TT}_{a\cro{j,j+3}}\r)
 \\
&&+\l(\wZ^{M,\TT}_{a_{n-2},a_{n-1},a_n,0}
 +\wZ^{M,\TT}_{a_{n-1},a_n,0,0}+\wZ^{M,\TT}_{a_n,0,0,0}\r)-(n-4)\wZ^{M,\TT}_{0,0,0,0}.\een
If $\Rep_7^{M,\TT}(a,b,c,d,0,0,0;0) = 0$ for all $a,b,c,d \in \Ek$, then for $n\geq 4$,
\ben\label{eq:HnHnm1} H_n(a\cro{1,n})-H_{n-1}(a\cro{1,n-1})=0.\een
Indeed, since $\Rep_7(a\cro{n-3,n}\,000;0)=0$,  
\[\wZ^{M,\TT}_{a_{n-2},a_{n-1},a_n,0}+\wZ^{M,\TT}_{a_{n-1},a_n,0,0}+\wZ^{M,\TT}_{a_n,0,0,0}+4\wZ^{M,\TT}_{0,0,0,0}=\wZ^{M,\TT}_{a_{n-2},a_{n-1},0,0}+\wZ^{M,\TT}_{a_{n-1},0,0,0}+\wZ^{M,\TT}_{0,0,0,0}+4\wZ^{M,\TT}_{0,0,0,0}.\]
Hence by \eref{eq:HnHnm1}, $H_n(a\cro{1,n})=H_3(a_1,a_2,a_3)$ and then, depends only on $a_1,a_2,_3$. It follows that $H_7(a\cro{1,7}) = H_7(a_1,a_2,a_3,a_4',a_5,a_6,a_7)$ which implies $\Rep_7(a\cro{1,7};a_4')=0$.~\\
$\bullet$ \textbf{Proof of $(iii)\imp (iv)$}
We first prove that when $(iii)$ holds,  $\NCycle_4^{M,\TT}\equiv 0$. For this just observe that $\Rep_7(a,b,c,d,a,b,c;0)=0$ implies  
\[
\NCycle_4^{M,\TT}(a,b,c,d) =  \NCycle_4^{M,\TT}(a,b,c,0).
\]
By cyclical invariance of the map $\NCycle_4^{M,\TT}$ this implies that
\[\NCycle_4^{M,\TT}(a,b,c,d) =  \NCycle_4^{M,\TT}(0,0,0,0)=4\wZ^{M,\TT}_{0,0,0,0}.\]
From that equality, multiplying both sides by $M_{a,b}M_{b,c}M_{c,d}M_{d,a}$ and summing over $a,b,c,d \in \Ek$ we obtain
\be
	4\wZ^{M,\TT}_{0,0,0,0}\times \Trace(M^4) &=& \sum_{a,b,c,d\in \Ek}\NCycle_4(a,b,c,d) M_{a,b}M_{b,c}M_{c,d}M_{d,a}\\
	&=& 4\sum_{a,d\in \Ek}M_{d,a}\Big( \sum_{b,c} \wZ^{M,\TT}_{a,b,c,d}M_{a,b}M_{b,c}M_{c,d}\Big),
        \ee  By Lemma \ref{lem:balanced}, this last quantity is 0, and then $\wZ^{M,\TT}_{0,0,0,0}=0$.\par
To end the proof, it suffices to observe that when $\wZ^{M,\TT}_{0,0,0,0} = 0$, for any $a,b,c,d \in \Ek$, 

$\Rep_7^{M,\TT}(a,b,c,d,0,0,0;0) = \ME_7^{M,\TT}(a,b,c,d,0,0,0).$
~\\
$\bullet$  {\bf Proof of  $(iv)\imp (v)$} Suppose $(iv)$. $\ME_7(0,0,0,0,0,0,0)=0$ implies that $\wZ^{M,\TT}_{0,0,0,0}=0$. Recall the map $H_n$ defined in \eref{eq:etjsrgryj} and replace $\wZ^{M,\TT}_{0,0,0,0}$ by 0 inside.
 Using now that  $\ME_7(a\cro{n-3,n}\,000)=0$, one has for any $n\geq 4$,
 \[H_n(a\cro{1,n})=H_{n-1}(a\cro{1,n-1}).\]
 Hence $H_n(a\cro{1,n})$ is a function of $(a_1,a_2,a_3)$ only and it does not depend on $n$. Therefore, since 
$
\ME_7(a,b,c,d,e,f,g)=H_{7}(a,b,c,d,e,f,g)-H_{6}(a,b,c,e,f,g)
$
we see that this quantity is 0.\\
{\bf Proof of  $(v)\imp (i)$} 	
We will need three intermediate results:
\begin{lem}\label{lem3} Assume that $M$ is a positive Markov kernel.
  If $\ME_7^{M,\TT}\equiv 0$ then for all $n\geq 3$, all $m \in \cro{ 2, n-1 }$, 
	\[	\NLineq_n^{\rho,M,\TT}(x\cro{1,n})=\NLineq_{n-1}^{\rho,M,\TT}(x\cro{1,n}^{\{m\}})   , \textrm{ for all } x\cro{1,n}\in \Ek^n    \]
        and then
        \[\NLineq_n^{\rho,M,\TT}(x\cro{1,n})=        \NLineq_{2}^{\rho,M,\TT}(x_1,x_n)  , \textrm{ for all } x\cro{1,n}\in \Ek^n .	\]
\end{lem}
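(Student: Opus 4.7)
The plan is to exploit the representation \eref{eq:rsgfqe12} of $\NLineq_n^{\rho,M,\TT}(x\cro{1,n})$ as a sum of $Z$ terms weighted by boundary factors, and apply the hypothesis $\ME_7^{M,\TT}\equiv 0$ locally around the position $m$ we wish to remove. The first step is to identify which of the $n+1$ terms $Z_{x\cro{j-1,j+2}}$ (for $j=0,\ldots,n$) involve the letter $x_m$: these are exactly the four with $j\in\{m-2,m-1,m,m+1\}$, since $x_m$ belongs to $x\cro{j-1,j+2}$ iff $j-1\le m\le j+2$. Because $m\in\cro{2,n-1}$, all four indices lie in $[0,n]$, so none of them is clipped by the endpoints of the summation.

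Next, I would apply $\ME_7^{M,\TT}\equiv 0$ to the specific 7-tuple $(x_{m-3},x_{m-2},x_{m-1},x_m,x_{m+1},x_{m+2},x_{m+3})$: this gives, by definition of $\ME_7$ (see \eref{eq:Meq}), the identity
\[
\sum_{j=m-2}^{m+1} Z_{x\cro{j-1,j+2}} \;=\; Z_{x_{m-3},x_{m-2},x_{m-1},x_{m+1}}+Z_{x_{m-2},x_{m-1},x_{m+1},x_{m+2}}+Z_{x_{m-1},x_{m+1},x_{m+2},x_{m+3}}.
\]
The three terms on the right are precisely the $Z$'s that ``bridge the gap'' left by the removal of $x_m$ in the reduced word $y:=x\cro{1,n}^{\{m\}}$, while the remaining $n-3$ terms in $\sum_{j=0}^n Z_{x\cro{j-1,j+2}}$ do not involve $x_m$ and so transfer verbatim to the corresponding sum for $y$. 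Consequently
\[
\sum_{j=0}^{n} Z_{x\cro{j-1,j+2}} \;=\; \sum_{j=0}^{n-1} Z_{y\cro{j-1,j+2}},
\]
as an equality of functions of $(x_{-1},x_0,x_{n+1},x_{n+2})$ and the remaining interior letters.

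Because $m\in\cro{2,n-1}$, removing $x_m$ does not affect any of the four boundary variables $x_{-1},x_0,x_{n+1},x_{n+2}$, so the outer weight $\rho_{x_{-1}}\prod_{k\in\{-1,0,n,n+1\}}M_{x_k,x_{k+1}}$ in \eref{eq:rsgfqe12} is identical on both sides. Summing over the boundary variables therefore yields $\NLineq_n^{\rho,M,\TT}(x\cro{1,n})=\NLineq_{n-1}^{\rho,M,\TT}(y)$. The second statement of the lemma then follows by induction: repeatedly removing the current position $2$ (which stays in the allowed interior range as long as the word has length at least $3$) collapses $\NLineq_n$ down to $\NLineq_2(x_1,x_n)$ in $n-2$ steps.

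The only subtlety that will require care is the case when $m$ is close to $2$ or to $n-1$: then the 7-tuple $(x_{m-3},\ldots,x_{m+3})$ on which we invoke $\ME_7^{M,\TT}\equiv 0$ involves one or more of the artificial boundary letters $x_{-1},x_0,x_{n+1},x_{n+2}$. This is not a genuine obstacle since $\ME_7^{M,\TT}\equiv 0$ is a pointwise identity over $\Ek^7$ and therefore survives arbitrary assignments (and summation) of those letters; however, one must be honest about bookkeeping of the boundary indices here, which I expect to be the most error-prone part of writing the proof cleanly.
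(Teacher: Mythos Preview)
Your proof is correct and follows essentially the same route as the paper's own argument: both use the representation \eref{eq:rsgfqe12} and invoke $\ME_7^{M,\TT}\equiv 0$ (the paper via Remark \ref{rem:kyrje}) to rewrite the four $Z$-terms containing $x_m$ as the three ``bridging'' terms, noting that the boundary weight is unaffected since $m\in\cro{2,n-1}$. Your treatment of the edge cases $m\in\{2,n-1\}$, where the relevant $7$-tuple reaches into the summed boundary letters, is the right observation and is exactly what the paper's proof leaves implicit.
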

\begin{proof}The second statement is a consequence of the first one.
  Following Remark \ref{rem:kyrje}, when $\ME_7^{M,\TT}\equiv 0$, we may replace any sum of the form
  \[S(x\cro{-1,n+2}):=\sum_{j=0}^{n} \wZ_{x\cro{j-1,j+2}},\] which depends on the word $x\cro{-1,n+2}$, with the sum $S(x\cro{-1,n+2}^{\{m\}})$, that is corresponding with the same word with the letter with index $m$ removed for any $m\in \cro{2,n-1}$. Hence from \eref{eq:rsgfqe12},
    \be
    \NLineq_n^{\rho,M,\TT}(x\cro{1,n}) &=& \sum_{x_{-1},x_0,\atop x_{n+1},x_{n+2}} S(x\cro{-1,n+2}) \rho_{x_{-1}}\prod_{k\in\{ -1,0,n,n+1 \}}M_{x_k,x_{k+1}} \\
    &=&\sum_{x_{-1},x_0,\atop x_{n+1},x_{n+2}}S(x\cro{-1,n+2}^{\{m\}})
  \rho_{x_{-1}}\prod_{k\in\{ -1,0,n,n+1 \}}M_{x_k,x_{k+1}}    \ee
and this is  $\NLineq_n^{\rho,M,\TT}(x\cro{1,n}^{\{m\}})$ since $m$ is not equal to 1 or to $n$.   
\end{proof}

\begin{lem}\label{lem2} Assume that $M$ is a positive Markov kernel.
 For all  $a \in \Ek$,
	\[
	\sum_{b}\NLineq_2^{\rho,M,\TT}(a,b)M_{a,b}=\NLineq_{1}^{\rho,M,\TT}(a) = \sum_{b}\NLineq_2^{\rho,M,\TT}(ba)M_{b,a}.
    \]
    and more generally, for any $x\cro{1,n}$ for $n\geq 1$,
    	\[
	\sum_{b}\NLineq_{n+1}^{\rho,M,\TT}(x\cro{1,n}b)M_{x_n,b}=\NLineq_n^{\rho,M,\TT}(x\cro{1,n})= \sum_{b}\NLineq_{n+1}^{\rho,M,\TT}(bx\cro{1,n})M_{b,x_1}.
    \]
          \end{lem}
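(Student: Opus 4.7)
The identities in the lemma hold for a reason essentially independent of the invariance of $(\rho,M)$ under $\TT$: they express a compatibility between the balance functional $\Lineq^{\Z}(\cdot,\nu)$ and marginalization of $\nu$. My plan is to first establish, for an arbitrary measure $\nu\in{\cal M}(\Ek^{\Z})$ (so for any $\TT$) and any word $x\cro{1,n}\in\Ek^n$, the identity
\[
\Lineq^{\Z}(x\cro{1,n},\nu) \;=\; \sum_{b\in \Ek}\Lineq^{\Z}(x\cro{1,n}\,b,\nu) \;=\; \sum_{b\in \Ek}\Lineq^{\Z}(b\,x\cro{1,n},\nu), \qquad (\star)
\]
and then to specialize $\nu$ to the $(\rho,M)$-Markov law and normalize.

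The cleanest derivation of $(\star)$ will be probabilistic. By construction, $\Lineq^{\Z}(y\cro{1,m},\nu)$ is the initial rate of change of $\nu(\eta\cro{1,m}=y\cro{1,m})$ under the dynamics with JRM $\TT$ (or, when this fails to define a bona fide process, as a purely formal balance read off \eref{eq:lineq}). Since the event $\{\eta\cro{1,n}=x\cro{1,n}\}$ decomposes as a disjoint union over $b\in\Ek$ by splitting on $\eta_{n+1}$ (resp.\ on $\eta_0$), differentiating at $t=0$ yields both equalities in $(\star)$. Once $(\star)$ is in hand, the reduction to the lemma is a bare cancellation. By the definition of the $(\rho,M)$-Markov law,
\[
\prod_{j=1}^{n} M_{(x\cro{1,n}b)_j,(x\cro{1,n}b)_{j+1}} \;=\; M_{x_n,b}\prod_{j=1}^{n-1}M_{x_j,x_{j+1}},
\]
so multiplying $\NLineq_{n+1}^{\rho,M,\TT}(x\cro{1,n}\,b)$ by $M_{x_n,b}$ cancels this extra factor in the denominator and leaves $\Lineq_{n+1}^{\rho,M,\TT}(x\cro{1,n}\,b)\big/\prod_{j=1}^{n-1}M_{x_j,x_{j+1}}$; summing over $b$ and invoking the right half of $(\star)$ produces exactly $\NLineq_n^{\rho,M,\TT}(x\cro{1,n})$. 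The left-addition case is symmetric: the Markov product for $b\,x\cro{1,n}$ contains the factor $M_{b,x_1}$, which cancels against the multiplier $M_{b,x_1}$, and the left half of $(\star)$ then concludes. Specializing $n=1$ gives the first display of the lemma, since $\NLineq_1^{\rho,M,\TT}=\Lineq_1^{\rho,M,\TT}$ (empty denominator).

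The one delicate step is $(\star)$: the dependence set $D\cro{1,n}$ widens \emph{at both ends} when one passes from $\cro{1,n}$ to $\cro{1,n+1}$ or $\cro{0,n}$, so a purely algebraic verification directly from \eref{eq:lineq} would require tracking the contributions of jumps touching the newly-added boundary sites and verifying that they telescope when summed over $b$; in particular one uses $\sum_{w'}\nu_{D'}(w,w')=\nu_D(w)$ on the sites of $D'\setminus D$ together with the indicator decomposition $\1_{z\cro{1,n+1}=x\cro{1,n}b}$ summed over $b$. The probabilistic interpretation above bypasses this bookkeeping entirely. Note also that for the left-addition form, the basic consistency $\sum_b \nu(b\,y) = \nu(y)$ is already built into the very existence of the $(\rho,M)$-Markov law on $\Z$ via $\rho M = \rho$, so the positivity of $M$ plays no role here beyond making the denominators in $\NLineq$ non-zero.
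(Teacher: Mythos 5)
Your proposal is correct, and it reaches the identities by a genuinely different route from the paper. The paper's proof stays entirely at the normalized level: it plugs the $\wZ$-representation \eref{eq:rsgfqe12} into $\sum_{b}\NLineq_{n+1}^{\rho,M,\TT}(x\cro{1,n}b)M_{x_n,b}$, observes that the one extra term (the window sticking out past the added letter, i.e.\ the index $j=n+1$, resp.\ $j=0$ on the left) is annihilated by Lemma \ref{lem:balanced}, and then collapses the remaining superfluous sums using $\sum_b M_{a,b}=1$ (resp.\ $\rho M=\rho$). You instead prove the unnormalized, measure-general identity $(\star)$ — that $\Lineq^{\Z}(\cdot,\nu)$ commutes with marginalization over an adjacent site — by reading $\Lineq^{\Z}(y,\nu)$ as $\tfrac{d}{dt}\big|_{t=0}\P_\nu(\eta^t=y \text{ on the window})$ and splitting the event on the extra coordinate, and only then divide by the Markov weights. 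This is cleaner and makes transparent that the lemma has nothing to do with the Markov structure beyond the normalization; your boundary-window telescoping remark is in fact exactly Lemma \ref{lem:balanced} in unnormalized clothing. The trade-off is that your main argument presupposes a well-defined process and a finite sum over $b$, so it is confined to $\kappa<+\infty$; the paper's purely algebraic computation is the one that transfers verbatim to the $\kappa=+\infty$ setting of Theorem \ref{theo:tyuttr}, where invoking the semigroup would be circular (the whole point of ``algebraic invariance'' is to decouple from existence of the dynamics). If you want your proof to serve in that generality, you should carry out the algebraic verification of $(\star)$ that you only sketch — it is a two-line computation once you isolate the contribution of the single new $L$-window and note that its creation and destruction terms coincide after relabelling the summation variables.
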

      \begin{proof}By \eref{eq:rsgfqe12} and \eref{eq:tjht}
\[\sum_{x_2}\NLineq_2^{\rho,M,\TT}(x_1,x_2)M_{x_1,x_2}= \sum_{x_{-1},x_0,x_2,\atop x_{3},x_{4}}  \l(\rho_{x_{-1}}\prod_{k=-1}^3 M_{x_k,x_{k+1}}\r) \sum_{j=0}^{2} \wZ^{M,\TT}_{x\cro{j-1,j+2}}.\]
The contribution of the term $j=2$, is
\[ \sum_{x_{-1},x_0}  \l(\rho_{x_{-1}}\prod_{k=-1}^0 M_{x_k,x_{k+1}}\r) \sum_{x_2,x_3,x_4} M_{x_1,x_2}M_{x_2,x_3}M_{x_3,x_4}\wZ^{M,\TT}_{x\cro{1,4}},\]
which is 0 by the Lemma \ref{lem:balanced}.
Therefore \[\sum_{x_2}\NLineq_2^{\rho,M,\TT}(x_1,x_2)M_{x_1,x_2}=  \sum_{x_{-1},x_0,x_2,\atop x_{3},x_{4}}  \l(\rho_{x_{-1}}\prod_{k=-1}^3 M_{x_k,x_{k+1}}\r) \sum_{j=0}^{1} \wZ^{M,\TT}_{x\cro{j-1,j+2}};\]
the sum on $x_4$ simplifies (because $\sum_{x_4}M_{x_3,x_4}=1$), which gives the expected result. The proof of the second statement and of the generalization to larger words, can be obtained similarly.
\end{proof}

\begin{lem}\label{lem:4}
If $M$ is a positive Markov kernel and if $\ME_7^{M,\TT}\equiv 0$, then $\NLineq_2^{\rho,M,\TT}\equiv 0 $ and $\NLineq_1^{\rho,M,\TT} \equiv 0$. 
\end{lem}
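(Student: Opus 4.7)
} The plan is to combine Lemmas \ref{lem3} and \ref{lem2} with a Perron--Frobenius style uniqueness argument, and then close the loop with the trivial ``conservation of probability'' identity $\sum_a \Lineq_1(a)=0$. Throughout, let $\alpha,\beta,g,h$ denote auxiliary functions on $\Ek$ to be determined.

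First, by Lemma \ref{lem3} (applied with $\ME_7^{M,\TT}\equiv 0$) one has, for all $n\geq 3$ and all $x\cro{1,n}\in\Ek^n$,
\[
\NLineq_n^{\rho,M,\TT}(x\cro{1,n})=\NLineq_2^{\rho,M,\TT}(x_1,x_n).
\]
Plugging this into the two identities of Lemma \ref{lem2} applied to $n\geq 3$ gives, for all $a,c\in\Ek$,
\[
\sum_{b\in\Ek}\NLineq_2^{\rho,M,\TT}(a,b)\,M_{c,b}=\NLineq_2^{\rho,M,\TT}(a,c),\qquad
\sum_{b\in\Ek}\NLineq_2^{\rho,M,\TT}(b,c)\,M_{b,a}=\NLineq_2^{\rho,M,\TT}(a,c).
\]
The first identity says that for each fixed $a$, the function $f_a:b\mapsto \NLineq_2^{\rho,M,\TT}(a,b)$ satisfies $Mf_a=f_a$, i.e.\ is a right eigenvector of $M$ for the eigenvalue $1$. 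Since $M$ has positive entries, Perron--Frobenius forces $f_a$ to be a constant; call it $g(a)$. Thus $\NLineq_2^{\rho,M,\TT}(a,b)=g(a)$ depends only on the first argument.

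Substituting $\NLineq_2^{\rho,M,\TT}(b,c)=g(b)$ into the second identity yields $\sum_b g(b)M_{b,a}=g(a)$, i.e.\ $gM=g$ as a row vector. Again since $M>0$ is positive recurrent with unique invariant distribution $\rho$, any left $1$-eigenvector of $M$ is proportional to $\rho$. Hence there exists a scalar $\alpha\in\R$ with $g=\alpha\rho$, so that
\[
\NLineq_2^{\rho,M,\TT}(a,b)=\alpha\,\rho_a,\qquad\forall a,b\in\Ek.
\]

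Finally, to pin down $\alpha$, apply Lemma \ref{lem2} with $n=1$:
\[
\NLineq_1^{\rho,M,\TT}(a)=\sum_b \NLineq_2^{\rho,M,\TT}(a,b)M_{a,b}=\alpha\rho_a\sum_b M_{a,b}=\alpha\rho_a.
\]
Since $\NLineq_1^{\rho,M,\TT}=\Lineq_1^{\rho,M,\TT}$, and since summing the defining expression \eqref{eq:lineq} of $\Lineq^{\Z}(x_1,\nu)$ over $x_1\in\Ek$ amounts to swapping the roles of $w$ and $z$, one gets $\sum_a \Lineq_1^{\rho,M,\TT}(a)=0$. Therefore $\alpha \sum_a\rho_a=\alpha=0$, and consequently $\NLineq_2^{\rho,M,\TT}\equiv 0$ and $\NLineq_1^{\rho,M,\TT}\equiv 0$. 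The only delicate point is the invocation of Perron--Frobenius on both sides (left and right $1$-eigenspaces of $M$), which is justified by the strict positivity of $M$; everything else is routine bookkeeping on top of Lemmas \ref{lem3} and \ref{lem2}.
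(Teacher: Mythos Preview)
Your proof is correct and follows essentially the same strategy as the paper: reduce $\NLineq_n$ to $\NLineq_2$ via Lemma \ref{lem3}, use a Perron--Frobenius argument twice to force $\NLineq_2^{\rho,M,\TT}(a,b)=\alpha\rho_a$, and finally show $\alpha=0$.

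There are two minor differences worth noting. For the second eigenvector step (showing $g=\alpha\rho$), you use the left-hand identity of Lemma \ref{lem2} directly, whereas the paper instead invokes translation invariance of the balance equation, namely $\sum_a \Lineq_2^{\rho,M,\TT}(a,b)=\sum_a\Lineq_2^{\rho,M,\TT}(b,a)$; your route is cleaner since both directions of Lemma \ref{lem2} are already available. For the final step $\alpha=0$, you use the trivial conservation identity $\sum_a\Lineq_1^{\rho,M,\TT}(a)=0$ (which is immediate from the antisymmetry of \eqref{eq:lineq} upon summing over $x_1$), while the paper expands $\sum_{x_1}\NLineq_1^{\rho,M,\TT}(x_1)$ explicitly in terms of $\wZ$ and appeals to Lemma \ref{lem:balanced}. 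Your argument here is more elementary and avoids the detour through $\wZ$.
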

\begin{proof}
Using Lemma \ref{lem2} and then Lemma \ref{lem3} which asserts that one can suppress the middle letter in the argument of $\NLineq_3^{\rho,M,\TT}$, for any $a,b\in \Ek$,
\beq\label{eq:ethhgzf}
\NLineq_2^{\rho,M,\TT}(ab) 
= \sum_{c}\NLineq_3^{\rho,M,\TT}(abc)M_{b,c}
= \sum_{c}\NLineq_2^{\rho,M,\TT}(ac)M_{b,c}.
\eq Set $M^t$ the matrix transposed of $M$, 
and, for a fixed $a\in \Ek$, consider the row vector
\[v_a= \begin{bmatrix} \NLineq_2^{\rho,M,\TT}(ab) , b \in \Ek \end{bmatrix}.\]
The equality between the leftmost and rightmost quantities in \eref{eq:ethhgzf} 
can be written
$v_a= v_a M^t$,
so that it is apparent that $v_a$ is a left eigenvector of $M^t$, associated with the eigenvalue 1. Since $M$ is a positive Markov kernel, 
$v_a = \lambda_a \begin{bmatrix} 1,\cdots,1\end{bmatrix}$
for some $\lambda_a\in \R$. Therefore
$\NLineq_2^{\rho,M,\TT}(a,b)=\lambda_a,$
 then  $\NLineq_2^{\rho,M,\TT}(a,b)$ does not depend on $b$.
  Now, notice that by translation invariance $\sum_{a} \Lineq_2^{\rho,M,\TT}(a,b)= \sum_{a} \Lineq_2^{\rho,M,\TT}(b,a)$ since both measure the balance of the state $b$.
  Since $\Lineq_2^{\rho,M,\TT}(a,b)=\NLineq_{a,b}^{\rho,M,\TT}M_{a,b}$ the previous considerations lead to
  \[\sum_a \lambda_a M_{a,b}= \sum_a \lambda_b M_{b,a}\]
  and since the RHS is $\lambda_b$, this says $\lambda=(\lambda_a,a\in \Ek)$ is a right eigenvector of $M$ associated with the eigenvalue 1, so that $\lambda_a=\alpha \rho_a$ for a constant $\alpha$. It remains to compute $\alpha$. 

Write 
 \[\sum_a \NLineq_1^{\rho,M,\TT}(a)=\sum_a\sum_{b} \NLineq_2^{\rho,M,\TT}(a,b)M_{a,b}=\alpha \sum_a \rho_a=  \alpha.\]
Now, using that $\ME_7^{M,\TT}\equiv 0$, let us prove that $\alpha=0$. For this consider
\be
\alpha=	\sum_{x_1}\NLineq_1^{\rho,M,\TT}(x_1)
	&=&\sum_{x_{-1},x_0,x_1,x_2}\rho_{x_{-1}}\wZ_{x_{-1},x_0,x_1,x_{2}}\prod_{k \in \{-1,0,1\}}M_{x_k,x_{k+1}}\\
	&&\hspace{0.2cm}+\sum_{x_0,x_1,x_2,x_{3}}\rho_{x_0}\wZ_{x_{0},x_1,x_{2},x_{3}}\prod_{k \in \{0,1,2\}}M_{x_k,x_{k+1}}
	\ee
and this is 0 by Lemma \ref{lem:balanced}. Hence $\alpha=0$, and therefore $\NLineq_1^{\rho,M,\TT}\equiv 0$ and $\NLineq_2^{\rho,M,\TT}\equiv 0$.
\end{proof}
Putting together the three previous Lemmas, we see that when $\ME_7^{M,\TT}\equiv  0$, then $\NLineq^{\rho,M,\TT}\equiv 0$ for any $n$, and then $\Lineq^{\rho,M,\TT}\equiv 0$ too.

$\bullet$  {\bf Proof of  $(v)\imp (vi)$}  Observe the linear form of $\NCycle_n^{M,\TT}$ given in \eref{eq:CyZ} (valid for $n\geq 3$) and check that for $n\geq 7$ and any $x\in\Ek^{\cro{0,n-1}}$, 
\beq\label{eq:yjr}
	\NCycle_n^{M,\TT} (x) =\NCycle_{n-1}^{M,\TT} (x^{\{n-4\}})+\ME_7^{M,\TT}(x\cro{n-7,n-1}),
\eq
which rewrites $\NCycle_n^{M,\TT} (x) =\NCycle_{n-1}^{M,\TT} (x^{\{n-4\}})$, since $\ME_7^{M,\TT}\equiv 0$.
This formula implies that $\NCycle_n^{M,\TT}(x)$ does not depend on $x_{n-4}$, and then since  $\NCycle_n^{M,\TT}$ is cyclically invariant, does not depend on any letter. It is then equal to $\NCycle_n^{M,\TT} (0^n)$ where $0^n$ is the word formed with $n$ repetitions of $0$, and then since $\NCycle_n^{M,\TT} (0^n)=n\wZ_{0,0,0,0}$, we can conclude using $\ME^{M,\TT}_7(0^7)= Z_{0,0,0,0}=0$.\par
  It remains to treat the case $n=3$ to 6. But observe \eref{eq:CyZ}, and consider for $m\in\cro{3,6}$ 
a word $w$ of size $m$, and the word $w^g$ obtained by the concatenation of $g$ copies of $w$ for the $g\geq 3$ of your choice. Then one sees that   
\[\NCycle_{m}^{M,\TT}(w)=\NCycle_{gm}^{M,\TT}(w^g)/g=0,\]
since $gm\geq 9$.\\
$\bullet$ \textbf{Proof of $(vi)\imp (vii)\imp (viii)$} Trivial\\
$\bullet$ \textbf{Proof of $(viii)\imp (ix)$.}
Consider the map
\[\W{a,b,c}:=	\wZ_{0,0,0,a}+\wZ_{0,0,a,b}+\wZ_{0,a,b,c}.\]
We will use $\NCycle_7^{M,T}(a,b,c,d,0,0,0)\equiv 0$ with different parameters.  
Start with $a=b=c=d=0$ to obtain that $\wZ^{M,\TT}_{0,0,0,0}=0$.
Now use arbitrary $a,b,c\in \Ek$ and $d=0$ to obtain that  
 \[ 	\W{a,b,c}=\wZ_{0,0,0,a}+\wZ_{0,0,a,b}+\wZ_{0,a,b,c}=  -\wZ_{a,b,c,0}-\wZ_{a,b,0,0}-\wZ_{a,0,0,0}.
\]
 	Now for arbitrary $a,b,c,d\in \Ek$,  $\NCycle_7^{M,T}(a,b,c,d,0,0,0)\equiv 0$ is equivalent to
 	\be
 	\wZ_{a,b,c,d} &=& -\wZ_{0,0,0,a}-\wZ_{0,0,a,b}-\wZ_{0,a,b,c}-\wZ_{b,c,d,0}-\wZ_{c,d,0}-\wZ_{d,0,0,0}\\
 	& =& -\W{(a,b,c)}+\W{(b,c,d)}.
 	\ee
        $\bullet$ \textbf{Proof of $(ix)\imp (v)$.}
        If $\wZ_{a,b,c,d} = -\W{(a,b,c)}+\W{(b,c,d)}$, then a telescopic simplification allows us to see that  for all $n\geq 4$
\[
	\sum_{w \in \Sub{a\cro{1,n}}{4}} \wZ_w = \W{(a_{n-2},a_{n-1},a_n)}-\W{(a_1,a_2,a_3)}
        \]
        from what we infer by \eref{eq:Meq} that $\ME_7^{M,\TT}\equiv 0$.
~~~\hfill $\Box$

\subsection{Proof of Theorem \ref{theo:t3b}}
\label{sec:theo:prop1}
The proof is almost the same as that of  Theorem \ref{theo:t0}. The only differences concern $(v)\imp(vi)$ and $(viii)\imp(ix)$.\\
To prove $(v) \imp (vi)$, take the proof of the corresponding statement in Theorem \ref{theo:t0} noticing that now the linear form \eref{lasduhf} of $\NCycle_n^{\rho,\TT} $ is valid from $n\geq 2$ and replace \eref{eq:yjr} by
\[
	\NCycle_n^{\rho,\TT} (x) =\NCycle_{n-1}^{\rho,\TT} (x^{\{n-2\}})+\ME_3^{\rho,\TT}(x\cro{n-3,n-1}).
\]
For $(viii)\imp(ix)$: Take $ab0=000$ to deduce $\wZ^{\rho,\TT}_{0,0}=0$. Use this and take $ab0=a00$ to find that 
	$
	\wZ^{\rho,\TT}_{a,0}+\wZ^{\rho,\TT}_{0,a} =0.
	$
	For general $ab0$ we obtain the identity
	$
		\wZ^{\rho,\TT}_{ab} = \wZ^{\rho,\TT}_{0,b}-\wZ^{\rho,\TT}_{0,a}.
	$
	So it is enough to define $\W{(a)}= \wZ^{\rho,\TT}_{0,a}+C$ (for any constant $C$).

\subsection{Proof of Theorem \ref{theo:ppppp}\label{sec:CIP2}}
We will adapt the proof of Theorem 3.1. in \cite{FGS} (steps 4 and 5). The main difference is that they use that a measure is invariant iff 
		$
		\int Gf(\eta)d\rho^\Z(\eta) = 0
		$
		for every bounded cylinder function $f:\Ek^{\Z^d} \to \R$.
                This is equivalent to $\Lineq^{\rho,\TT,p}(x(A))= 0$ for any $A\subset \Z^d$ finite, and $x(A)\in \Ek^{A}$.		
	We do not need to take the limit to get (91) and (92) and the last part of step 5, just $n$ sufficiently large, given that our $p$ is a finite rate transition probability.

\subsection{Proof of Theorem \ref{theo:cand3}}\label{sec:CFF}\label{seq:Pcand3}

$\bullet$ Let us first assume that $\nu_{a,b,c}=\frac{M_{a,b}M_{b,c}M_{c,a}}{t^3}$ for $t=\Tr(M^3)^{1/3}$ for some Markov kernel $M$.
In this case, $N_a$ given in \eref{eq:qff} satisfies  $N_a=(1/t)\begin{bmatrix} \dis \frac{M_{x,y}M_{y,a}}{M_{x,a}} \end{bmatrix}_{x,y\in \Ek}$, and then a main right eigenvector of $N_a$ is given by $r_a'={}^t\begin{bmatrix} 1/M_{y,a} \end{bmatrix}_{ y \in E_k}$. One sees that $\lambda=1/t$ is the common main eigenvalue to all the $N_a$'s. In the same way, one sees that $\ell'_a=\begin{bmatrix} \rho_xM_{x,a} \end{bmatrix}_{ x \in E_k}$ is a main left eigenvector associated to $N_a$. \par
The vectors $r_a$ and $\ell_a$ of the theorem are obtained after normalisation: $\ell_a=\begin{bmatrix} \rho_x M_{x,a}/\rho_a\end{bmatrix}_{x\in \Ek}$, $r_a=\rho_a r_a'={}^t\begin{bmatrix} \rho_a/M_{y,a} \end{bmatrix}_{ y \in E_k}$. Now 
$L_{a,b}= \l(M_{a,b}M_{b,x}M_{x,a}/t^3,x\in \Ek\r)$, giving $L_{a,b}r_a=\rho_a M_{a,b}/t^3$ and indeed, $\sum_{a,b} L_{a,b}r_a=1/t^3=\lambda^3$.\\
$\bullet$ Now, assume that $\nu$ is given, and \eref{eq:gfghs} possesses a positive recurrent solution $M$. From the previous point, the $N_a$'s have same main eigenvalues. 
The main argument of the proof we will develop  relies  on the structure of $\nu$, which allows to show that $M$ exists, it is characterized by \eref{eq:gfghs}.
Equation \eref{eq:gfghs} motivates to consider $\nu_{a,b,c}$ as the weight of a cycle $abc$ of length 3, which may be expanded as a product on its edges: 
\be
\nu_{a,b,c}=\prod_{e\in\{(a,b),(b,c),(c,a)\}} w_e,
\ee
where 
\[w_{(u,v)}={M_{u,v}}/{t},~~~\textrm{ for }t=\Tr(M^3)^{1/3}.\]
More generally, for any directed graph $G=(V,E)$, let 
\[W(G)= \prod_{e\in E} w_{e}.\]
We will see that the knowledge of $\nu_{a,b,c}$ allows to determine the weight of the cycles of every size, and then, by taking a limit, we will determine $M$. 
First, taking $(a,b,c)=(a,a,a)$ provides
\be
M_{a,a}= t\, \nu_{a,a,a}^{1/3} 
\ee
 and for the cycle $(a,b,a)$, since $\nu_{a,b,a}=M_{a,b}M_{b,a}M_{a,a}/t^3$,
\[
	M_{a,b}M_{b,a}=t^2\,\nu_{a,b,a}\,\nu_{a,a,a}^{-1/3}.
\]
Consider a cycle $\mathcal{C}_n = (a_1,\dots ,a_{n-1},a_{n},a_1)$ of length $n$ on $E_\kappa$, and for some $1<j<n$ add the directed edge $(a_1,a_j)$ as well as the edge $(a_j,a_1)$ to get the graph $\mathcal{C}'_n$. We may partition this oriented graph also as the union of
 $\mathcal{C}_{j}$ of length $j$ and $\mathcal{C}_{j,n} = (a_{j},\dots ,a_{n-1},a_{n},a_1,a_j)$. 
Therefore 
\beq \label{eq:addt}
W(\mathcal{C}_n)=\frac{W(\mathcal{C}'_n)}{w_{a_1,a_j}w_{a_j,a_1}}=  \frac{W(\mathcal{C}_{j})W(\mathcal{C}_{j,n})}{w_{a_1,a_j}w_{a_j,a_1}}= W(\mathcal{C}_{j})W(\mathcal{C}_{j,n})\times \frac{\nu_{a_1,a_1,a_1}^{1/3}}{\nu_{a_1,a_j,a_1}}.
\eq
A simple iteration argument allows one to express the weight of a cycle of any length with the weights of cycles of length $3$. A particular way to do that, is to see \eref{eq:addt} as the algebraic effect of the addition of the edge $(a_1,a_j)$ and $(a_j,a_1)$ in the cycle $\mathcal{C}_n$: adding all the edges from and to $a_1$ yields to
\beq\label{eq:sgsge}
	W(\mathcal{C}_n) =\nu_{a_1,a_2,a_3}\prod_{j=3}^{n-1}\frac{\nu_{a_1,a_j,a_{j+1}}\nu_{a_1,a_1,a_1}^{1/3}}{\nu_{a_1,a_j,a_{1}}}.
\eq

Using the matrices $L,N,R$, \eref{eq:sgsge} implies that
\[
\sum_{a_3,\cdots,a_n}W(\mathcal{C}_n)=L_{a_1,a_2}N^{n-3}_{a_1} {\bf 1}.
\]
Using Perron-Frobeniüs theorem and $(i)$ (here is used the fact that the $N_a$'s have the same eigenvalues),
\beq\label{eq:yrjhkx}
\sum_{a_3,\cdots,a_n}\frac{W(\mathcal{C}_n)}{\lambda_{a_1}^{n-3}}=\sum_{a_3,\cdots,a_n}\frac{W(\mathcal{C}_n)}{\lambda^{n-3}}\sous{\longrightarrow}{n\to+\infty}  L_{a_1,a_2} r_{a_1}\ell_{a_1} R = L_{a_1,a_2}r_{a_1}>0.
\eq
It is important to notice that the formula hence obtained, is independent from the Markov kernel $M$ solution of \eref{eq:gfghs} chosen, so that every $M$ which solves \eref{eq:gfghs} must satisfy $t^n\,	W(\mathcal{C}_n) =M_{a_n,a_1}\prod_{j=1}^{n-1} M_{a_j,a_{j+1 }}$.
Summing the previous relation over all the values of $a_3,\cdots,a_n$, we get that it must also satisfy
\ben\label{eq:jdbdfd}
\sum_{a_3,\cdots,a_n} t^n\,	W(\mathcal{C}_n) =M_{a_1,a_2}M^{n-1}_{a_2,a_1}.
\een
Now we make some connections.
Compare \eref{eq:yrjhkx} with \eref{eq:jdbdfd}. Taking $\lambda=1/t$, 
we see that 
\[M_{a_1,a_2}M^{n-1}_{a_2,a_1} \xrightarrow[n\to +\infty]{}  t^3 L_{a_1,a_2}r_{a_1}.\]
Since $M$ is assumed to be positive recurrent, $M^{n-1}_{a_2,a_1}\to \rho_{a_1}$ for some probability measure $\rho$. Hence we have established that for all pair $(\rho,M)$ where $M$ is a positive recurrent Markov kernel $M$, and $\rho$ its invariant probability measure, satisfies
$\rho_{a_1}M_{a_1,a_2}= t^3 L_{a_1,a_2}r_{a_1}$ for any $(a_1,a_2)\in E_\kappa^2$.
But a unique pair $(\rho,M)$ is solution of this equation when the RHS is given, since $\rho_a$ must be equal to $\sum_{b}t^3 L_{a,b}r_{a}$.

\section{Annexe}
\label{sec:ann}

\subsection{Proof of Theorem \ref{theo:t2}}
For any $I\subset \N$, denote by $\mathcal{E}_{I}=\{(M,\TT):  \NCycle_i^{M,\TT}\equiv 0, i \in I \}$.
First, we claim that 
\begin{lem}\label{lem:E456}
 $\mathcal{E}_{4,5,6} = \mathcal{E}_7$.
\end{lem}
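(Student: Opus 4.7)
The plan is to prove the two inclusions $\mathcal{E}_7\subseteq \mathcal{E}_{4,5,6}$ and $\mathcal{E}_{4,5,6}\subseteq \mathcal{E}_7$ separately, making use of Theorem~\ref{theo:t0} which is already proved at this point in the paper.

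The first inclusion is immediate: by Theorem~\ref{theo:t0} $(vii)\Rightarrow(vi)$, the condition $\NCycle_7^{M,\TT}\equiv 0$ forces $\NCycle_n^{M,\TT}\equiv 0$ for every $n\geq 3$, and in particular for $n\in\{4,5,6\}$.

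For the reverse inclusion $\mathcal{E}_{4,5,6}\subseteq \mathcal{E}_7$, the plan is to reduce to showing $\NCycle_7^{M,\TT}(a,b,c,d,0,0,0)=0$ for every $a,b,c,d\in\Ek$, since Theorem~\ref{theo:t0} $(viii)\Rightarrow(vii)$ then upgrades this pointwise identity to $\NCycle_7^{M,\TT}\equiv 0$. To produce such an identity I will expand each $\NCycle_n$ using \eqref{eq:CyZ} and search for a telescoping linear combination of $\NCycle_4$, $\NCycle_5$, $\NCycle_6$ evaluated at suitable zero-padded words whose $\wZ$-terms match those of $\NCycle_7(a,b,c,d,0,0,0)$.

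The specific, purely algebraic identity I expect to establish is
\[
\NCycle_7^{M,\TT}(a,b,c,d,0,0,0)=\NCycle_6^{M,\TT}(a,b,c,d,0,0)+\NCycle_5^{M,\TT}(d,0,0,0,a)-\NCycle_4^{M,\TT}(d,0,0,a).
\]
It stems from two elementary observations. First, the three $\wZ$-terms $\wZ_{0,0,a,d}$, $\wZ_{0,a,d,0}$, $\wZ_{a,d,0,0}$, which straddle the cyclic boundary from $a$ back to $d$, appear identically in $\NCycle_5^{M,\TT}(d,0,0,0,a)$ and in $\NCycle_4^{M,\TT}(d,0,0,a)$, so they cancel in the difference, leaving $\wZ_{d,0,0,0}+\wZ_{0,0,0,a}-\wZ_{d,0,0,a}$. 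Second, comparing $\NCycle_7(a,b,c,d,0,0,0)$ with $\NCycle_6(a,b,c,d,0,0)$ shows that they share all terms except precisely the same discrepancy: inserting an extra $0$ in the padding splits $\wZ_{d,0,0,a}$ into $\wZ_{d,0,0,0}+\wZ_{0,0,0,a}$. Under $\{\NCycle_4,\NCycle_5,\NCycle_6\equiv 0\}$ the right-hand side of the displayed identity vanishes for every $a,b,c,d$, and Theorem~\ref{theo:t0} closes the argument.

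The only real obstacle is guessing the identity. Once one notices that condition $(viii)$ of Theorem~\ref{theo:t0} already restricts the test-set for $\NCycle_7\equiv 0$ to 7-words padded by three consecutive $0$'s, the task reduces to telescoping that block of zeros, which is exactly what the combination $\NCycle_5(d,0,0,0,a)-\NCycle_4(d,0,0,a)$ performs; after that, verification is just a routine expansion using \eqref{eq:CyZ}.
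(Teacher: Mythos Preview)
Your proof is correct. Both inclusions are established, and the displayed identity
\[
\NCycle_7^{M,\TT}(a,b,c,d,0,0,0)=\NCycle_6^{M,\TT}(a,b,c,d,0,0)+\NCycle_5^{M,\TT}(d,0,0,0,a)-\NCycle_4^{M,\TT}(d,0,0,a)
\]
checks out term by term against the expansion \eqref{eq:CyZ}, exactly as you describe.

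Your route differs from the paper's in an interesting way. The paper proves the reverse inclusion by exhibiting a \emph{universal} identity: it writes $\NCycle_7(abcdefg)$, for an arbitrary $7$-tuple, as an explicit signed sum of eleven instances of $\NCycle_4$, $\NCycle_5$, $\NCycle_6$ evaluated at various words built from $a,\dots,g$. This is heavier to verify but has the advantage of being self-contained: it does not appeal back to Theorem~\ref{theo:t0}. You instead exploit Theorem~\ref{theo:t0} $(viii)\Rightarrow(vii)$ to shrink the test set to zero-padded words $(a,b,c,d,0,0,0)$, after which a three-term identity suffices. Your argument is shorter and more transparent; the paper's argument is more ``structural'' in that it shows directly that $\NCycle_7$ lies in the linear span of lower $\NCycle_n$'s on the full domain $\Ek^7$, without any reduction step.
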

\begin{proof}
  $\bullet$ From Theorem \ref{theo:t0}, if $(M,\TT)\in\mathcal{E}_7$ then $(M,\TT)\in \mathcal{E}_{4,5,6}$. \par
$\bullet$  For the converse: we use the following identity
  \be
  \NCycle_7(abcdefg)&=& -\NCycle_6(e f g d e f)+\NCycle_4(e f g d)+\NCycle_6(e f a d e f)\\
  &&-\NCycle_4(e f a d) +\NCycle_6(e f g c e f)-\NCycle_4(e f g c) \\
  && -\NCycle_6(e f a c e f)+\NCycle_4(e f a c) +\NCycle_6(a b c d e f)  \\
&&+\NCycle_6(a b c e f g)-\NCycle_5(a b c e f),
  \ee
  which can be checked by expansion in terms of $\wZ_w$, and by making an inventory of the multiplicity of each word $w$ involved. Hence, $\NCycle_7^{M,\TT}$ is a linear combination of some instances of $\NCycle_j$ for $j$ from 4 to 6. 
\end{proof}
Here is a short explanation of the origin of the formula appearing in the lemma  proof:  
Take $(M,\TT)$ a solution of $\mathcal{E}_{4,5,6}$. Since $0=\NCycle_4^{M,\TT}(a,b,a,b)$, therefore $\wZ_{a,b,a,b}= -\wZ_{b,a,b,a}$.
Now, since 
\be
0&=&\NCycle_6^{M,\TT}(a,b,c,d,a,b)-\NCycle_4^{M,\TT}(a,b,c,d)\\
&=& -\wZ_{d,a,b,c}+\wZ_{d,a,b,a}+\wZ_{a,b,a,b}+\wZ_{b,a,b,c},	
\ee
replacing $\wZ_{a,b,a,b}$ by  $-\wZ_{b,a,b,a}$ in this equation, gives the identity
\[
\wZ_{d,a,b,c}-\wZ_{d,a,b,a}-\wZ_{b,a,b,c}+\wZ_{b,a,b,a} \quad \forall a,b,c,d\in\N.
\]
In these equations, the parameters of $\wZ$ have the form $Z_{x,a,b,y}$ for different values of $x$ and $y$. Two terms depend on $d$ and two on $c$: this implies that the differences between the elements that depend on $d$ (respectively $c$) do not depend on $d$ (respectively $c$). This provides new identities. Playing with the dependence of the differences in the variables involve, leads eventually to the formula. But the formula can be checked directly independently from these considerations as indicated in the Theorem proof.
\begin{proof}[Proof of Theorem \ref{theo:t2}]
$(ii)$ We start by proving that if $(M,\TT) \in \mathcal{E}_{4,6}$, then $(M,\TT) \in \mathcal{E}_5$.
For this just check that
\be
\NCycle_5(abcde)&=&\NCycle_6(d e e c d e)-\NCycle_6(d e a c d e)+\NCycle_4(d e a c)\\
&&\NCycle_6(d e a 0 d e)-\NCycle_4(d e e c)-\NCycle_4(d e a 0)\\
&&-\NCycle_6(d e e 0 d e)+\NCycle_4(d e e 0).
\ee
$(i)$ We prove that if $(M,\TT)\in \mathcal{E}_{5,6}$ then it is in $\mathcal{E}_4$ too. By expansion, one checks that
\be
\NCycle_4^{M,\TT}(a,b,c,d)&=&  \NCycle_5^{M,\TT}(a,b,c,d,a)+ \NCycle_5^{M,\TT}(a,b,c,c,d)\\
&&-\NCycle_6^{M,\TT}(a,b,c,c,d,a).
\ee
\end{proof}

\subsection{Proof of Theorem \ref{theo:prop1}}\label{sec:CIP}

Here is a Lemma which implies Theorem \ref{theo:prop1}.
\begin{lem}\label{lem:l0} Let $M$ and $M'$ be two positive Markov kernels on $\Ek$, for $1\leq \kappa\leq +\infty$.
	The following three properties $(1)$ $(2)$ and $(3)$ are equivalent
	\begin{enumerate}
		\item[(1)] $\dis\frac{M_{a,u}M_{u,v}M_{v,d}}{M_{a,b}M_{b,c}M_{c,d}} =\frac{M'_{a,u}M'_{u,v}M'_{v,d}}{M'_{a,b}M'_{b,c}M'_{c,d}}\quad\forall a,b,c,d,u,v$.
		\item[(2)] $\dis\frac{M_{a,u}M_{u,v}M_{v,a}}{M_{a,b}M_{b,c}M_{c,a}} =\frac{M'_{a,u}M'_{u,v}M'_{v,a}}{M'_{a,b}M'_{b,c}M'_{c,a}}\quad\forall a,b,c,u,v$.
		\item[(3)] $\dis\frac{M'_{a,b}M'_{b,c}}{M_{a,b}M_{b,c}}= \alpha \frac{M'_{a,c}}{M_{a,c}}\quad\forall a,b,c$, for some $\alpha>0$, independent of $a,b,c$.
              \end{enumerate}
	Moreover, if $M$ and $M'$ are positive recurrent, then each of the previous properties implies that $M=M'$. 
\end{lem}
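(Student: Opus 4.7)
The plan is to introduce the ratio $R_{a,b}:=M'_{a,b}/M_{a,b}$ (well defined and positive since both kernels are positive) and reformulate the three statements in terms of $R$:
\begin{itemize}
\item (1) becomes $R_{a,u}R_{u,v}R_{v,d}=R_{a,b}R_{b,c}R_{c,d}$ for all $a,b,c,d,u,v$;
\item (2) becomes $R_{a,u}R_{u,v}R_{v,a}=R_{a,b}R_{b,c}R_{c,a}$ for all $a,b,c,u,v$;
\item (3) becomes $R_{a,b}R_{b,c}=\alpha\,R_{a,c}$ for some constant $\alpha>0$.
\end{itemize}
The implication $(1)\Rightarrow(2)$ is trivial (take $d=a$), and I will handle the cyclic chain $(2)\Rightarrow(3)\Rightarrow(1)$ followed by the uniqueness assertion.

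\smallskip
\noindent For $(2)\Rightarrow(3)$, the key observation is that the quantity $\Phi(a,b,c):=R_{a,b}R_{b,c}R_{c,a}$ enjoys two independent symmetries: by (2) it depends only on $a$, and because the cycle $M_{a,b}M_{b,c}M_{c,a}$ is invariant under rotation of $(a,b,c)$ one also has $\Phi(a,b,c)=\Phi(b,c,a)$. Combining, $\Phi$ is a global constant, call it $g>0$. Specializing $b=c=a$ gives $R_{a,a}^{3}=g$ and hence $R_{a,a}=\alpha:=g^{1/3}$. Specializing $b=a$ and using $R_{a,a}=\alpha$ yields $R_{a,c}R_{c,a}=\alpha^{2}$. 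Then $R_{a,b}R_{b,c}=g/R_{c,a}=\alpha^{3}\cdot R_{a,c}/\alpha^{2}=\alpha R_{a,c}$, which is (3). The implication $(3)\Rightarrow(1)$ is immediate by iteration of (3): one checks $R_{a,u}R_{u,v}R_{v,d}=\alpha R_{a,v}R_{v,d}=\alpha^{2}R_{a,d}$ and similarly for $R_{a,b}R_{b,c}R_{c,d}$, so both sides of (1) equal $\alpha^{2}R_{a,d}$.

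\smallskip
\noindent For the final uniqueness claim, first deduce from (3) that $R_{a,b}=\alpha f(a)/f(b)$ for some $f:\Ek\to(0,\infty)$ (fix a reference state $0$ and set $f(a):=\alpha/R_{a,0}$, then use (3) to check the formula). Setting $g:=1/f$, the stochasticity $\sum_{b}M'_{a,b}=1$ rewrites as $Mg=g/\alpha$, i.e.\ $g$ is a positive right-eigenfunction of $M$ with eigenvalue $1/\alpha$; by the same computation exchanging the roles of $M$ and $M'$, $f$ is a positive right-eigenfunction of $M'$ with eigenvalue $\alpha$. I will then use a martingale argument under positive recurrence to conclude $\alpha=1$ and $g\equiv\mathrm{const}$: $\alpha^{n}g(X_{n})$ is a nonnegative $M$-martingale, so $\mathbb{E}_{a}[g(X_{n})]=\alpha^{-n}g(a)$; if $\alpha>1$, the left-hand side is bounded below by $g(b)\,\mathbb{P}_{a}(X_{n}=b)\to g(b)\pi(b)>0$ by positive recurrence of $M$, contradicting the right-hand side tending to $0$. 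Symmetrically, $\alpha<1$ contradicts positive recurrence of $M'$ via $f$. Once $\alpha=1$, $g$ is a positive $M$-harmonic function, hence a convergent nonnegative martingale along $(X_{n})$; since $(X_{n})$ visits every state infinitely often a.s., $g$ must be constant, forcing $M'=M$.

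\smallskip
\noindent The two routine pieces are $(3)\Rightarrow(1)$ and the telescoping in $(2)\Rightarrow(3)$ once the cyclic symmetry is noticed. The \emph{main obstacle} is the uniqueness in the infinite-state case: the positivity and stochasticity alone force $g$ to be a positive eigenfunction of $M$, but eliminating eigenvalues $\neq 1$ and eigenfunctions $\neq\mathrm{const}$ is not purely algebraic and must genuinely exploit positive recurrence of both $M$ and $M'$ through the martingale/recurrence argument above.
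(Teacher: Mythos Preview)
Your equivalence argument $(1)\Leftrightarrow(2)\Leftrightarrow(3)$ is essentially the paper's: both proceed by specializing variables in $(2)$ and exploiting commutativity, though you phrase it via the cyclic symmetry of $\Phi(a,b,c)=R_{a,b}R_{b,c}R_{c,a}$ whereas the paper sets $u=a$, $v=c$ directly. One small slip: with your choice $f(a):=\alpha/R_{a,0}$, relation $(3)$ with $c=0$ gives $R_{a,b}=\alpha R_{a,0}/R_{b,0}=\alpha f(b)/f(a)$, not $\alpha f(a)/f(b)$; this only swaps $f$ and $g$ downstream and does not affect the logic.

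Your uniqueness proof, however, takes a genuinely different route. The paper iterates $(3)$ along paths $a,x_1,\ldots,x_n,d$ and sums over the intermediate points to obtain $(M')^{n+1}_{a,d}\big/M^{n+1}_{a,d}=\alpha^{n}\,M'_{a,d}/M_{a,d}$; positive recurrence (aperiodicity coming from $M_{a,a}>0$) then forces the limit of the left side to be $\rho'_d/\rho_d$, hence $\alpha=1$ and $M'=M$. You instead recognize $(3)$ as saying that $M'$ is the Doob $h$-transform of $M$ by a positive eigenfunction with eigenvalue $1/\alpha$, and eliminate $\alpha\neq 1$ and non-constant eigenfunctions by a martingale/recurrence argument. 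The paper's computation is shorter and entirely elementary; your approach exposes the underlying Perron--Frobenius structure and would transfer more readily to settings where one knows a Liouville-type property for harmonic functions rather than convergence of $M^{n}$.
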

\begin{proof}
Taking $d=a$ in $(1)$ suffices to see that $(1)\imp (2)$.\\
Proof of $(2)\imp (3)$. Taking  $u=a$ and $v=c$ in $(2)$ provides
     \ben\label{eq:erhgfze}\frac{M'_{a,a}M'_{a,c}}{M_{a,a}M_{a,c}} =\frac{M'_{a,b}M'_{b,c}}{M_{a,b}M_{b,c}}.\een

Taking now $b=c$ in the last equation, gives
\ben\label{eq:etejryj}
\frac{M'_{a,a}}{M_{a,a}} =\frac{M'_{c,c}}{M_{c,c}},\een so that $a\mapsto \frac{M'_{a,a}}{M_{a,a}}$ is constant, say, equals to $\alpha$. Replacing $\frac{M'_{a,a}}{M_{a,a}}$ by $\alpha$ in \eref{eq:erhgfze} gives $(3)$.\\
To prove $(3)\imp (1)$, it may be useful to see $(3)$ as an equation ruling the addition of any letter $b$ between $a$ and $c$. Let us add two letters $u$ and $v$ (or $b$ and $c$) between $a$ and $d$...
	\begin{align*}
		\alpha\frac{M'_{a,d}}{M_{a,d}} =\frac{M'_{a,v}M'_{v,d}}{M_{a,v}M_{v,d}}=\frac{M'_{a,c}M'_{c,d}}{M_{a,c}M_{c,d}}
		\imp &\frac{M'_{a,u}M'_{u,v}M'_{v,d}}{M_{a,u}M_{u,v}M_{v,d}}=\alpha^2\frac{M'_{a,d}}{M_{a,d}} =\frac{M'_{a,b}M'_{b,c}M'_{c,d}}{M_{a,b}M_{b,c}M_{c,d}}.
\end{align*}
        This gives $(1)$.\par
        It remains to prove the last statement. 
Using point $(3)$, for a right $\alpha>0$, for  any word $x\in E_\kappa^n$
	\[
	\frac{M'_{a,x_1}M'_{x_1,x_2}\dots M'_{x_n,d}}{M_{a,x_1}M_{x_1,x_2}\dots M_{x_n,d}} = \alpha\frac{M'_{a,x_2}M'_{x_2,x_3}\dots M'_{x_n,d}}{M_{a,x_2}M_{x_2,x_3}\dots M_{x_n,d}}=\dots=\alpha^n\frac{M'_{a,d}}{M_{a,d}}.
	\]
	Then multiplying by the LHS denominator and summing over all values of $x_1,\cdots,x_n$, we obtain
	\[
	\frac{(M')^{n+1}_{a,d}}{(M)^{n+1}_{a,d}} =\alpha^n\frac{M'_{a,d}}{M_{a,d}}.
	\]
	Since $M$ and $M'$ are positive recurrent, taking the limit when $n\rightarrow \infty$, we get
        \ben
	\label{eq:inter}
	\frac{\rho'_d}{\rho_d} = \frac{M'_{a,d}}{M_{a,d}}\lim_{n\rightarrow \infty}\alpha^n,
	\een
        where $\rho$ and $\rho'$ are the invariant measures for the  Markov kernels $M$ and $M'$. 	Hence $\alpha=1$.
        Taking $b=c=a$ in $(3)$ then gives
        $M_{a,a}'/M_{a,a}=1$ for any $a$.  Using this relation and taking $d=a$ and $\alpha=1$ in \eref{eq:inter}, we obtain $\rho'_a=\rho_a$, and still from \eref{eq:inter}, this implies $M'_{a,d}=M_{a,d}$ for any $(a,d)$.
\color{black}
\end{proof}

\subsection{Proof of Theorem  \ref{theo:t0prime}}
We start with a preliminary lemma.
\begin{lem}(Analogue of Lemma \ref{lem:balanced})\label{lem:balance2}
If $M$ is a positive Markov kernel with memory $m$, then for all $a,c \in \Ek^{\cro{1,m}}$, any function $f,g:\Ek^m\to \R$, 
\[	\sum_{b \in \Ek^{L}} f(a)g(c)\wZ_{w} \prod_{i=1}^{L+m}M_{w\cro{i,i+m}}=0
\]
where in the sum, $w=w\cro{1,L+2m}$ is used instead of $abc$ (meaning that $w\cro{1,m}=a$ and $ w\cro{L+m+1,L+2m}=c$).
\end{lem}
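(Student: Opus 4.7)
My plan is to reduce the identity to the basic fact $\sum_{b\in\Ek^L}\T{u}{b} = \To{u}$, together with the observation that the weight $\prod_{i=1}^{L+m}M_{w\cro{i,i+m}}$ is designed precisely to cancel the denominator hidden in $\wZ_w$.

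First I would observe that $f(a)g(c)$ is constant with respect to the summation variable $b\in\Ek^L$, so it factors out. It therefore suffices to show that
\[
\sum_{b\in\Ek^L}\wZ_w\,\prod_{i=1}^{L+m}M_{w\cro{i,i+m}}=0,\qquad w=a\,b\,c.
\]
Writing $P_M(v):=\prod_{i=1}^{L+m}M_{v\cro{i,i+m}}$ for any $v\in\Ek^{L+2m}$, the defining formula \eref{eq:rgerh} for $\wZ_w$ gives
\[
\wZ_w\cdot P_M(w)=\sum_{u\in\Ek^L}\T{u}{b}\,P_M(a\,u\,c)\;-\;\To{b}\,P_M(a\,b\,c),
\]
since the product $\prod_{j=1}^{m+L}M_{w'\cro{j,j+m}}$ appearing in the numerator of each term of $\wZ_w$ is exactly $P_M(a\,u\,c)$ by definition.

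Next I would sum both sides over $b\in\Ek^L$. In the first term, exchange the order of summation: for each fixed $u$, $P_M(a\,u\,c)$ does not depend on $b$, and $\sum_{b\in\Ek^L}\T{u}{b}=\To{u}$. Hence the first double sum equals $\sum_{u\in\Ek^L}\To{u}\,P_M(a\,u\,c)$. The second sum is $\sum_{b\in\Ek^L}\To{b}\,P_M(a\,b\,c)$, which is the same quantity up to renaming the dummy variable $b\leftrightarrow u$. Their difference vanishes, proving the claim.

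The only subtle point — and the reason it is worth recording this lemma separately — is bookkeeping: one must check that the normalising product $P_M(w)$ over $i\in\cro{1,L+m}$ matches exactly the denominator $\prod_{j=1}^{m+L}M_{w\cro{j,j+m}}$ appearing inside $\wZ_w$, so that after multiplication only the product $P_M(a\,u\,c)$ remains in the first piece. Once this notational alignment is secured, the rest is simply the basic balance $\sum_b\T{u}{b}=\To{u}$. I do not expect any genuine obstacle here; this is a local equilibrium identity that holds regardless of whether the Markov law $(\rho,M)$ is invariant under $\TT$ (compare Remark \ref{rem:rthgf}).
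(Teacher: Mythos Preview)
Your proof is correct and is essentially the same as the paper's: the paper simply says the argument is identical to that of Lemma~\ref{lem:balanced} together with Remark~\ref{rem:rthgf}, which amounts precisely to expanding $\wZ_w$, cancelling the denominator against $P_M(w)$, and using $\sum_{b}\T{u}{b}=\To{u}$. Your explicit bookkeeping of the product $P_M$ is exactly the ``taking into account'' step the paper leaves implicit.
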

The proof is the same as that of Lemma \ref{lem:balanced} taking into account  Remark \ref{rem:rthgf}.

\noindent The proof of Theorem  \ref{theo:t0prime} is very similar to that of Theorem \ref{theo:t0}; we discuss only the main differences. \\
\noindent We will prove the two cyclical implications $(i)\imp (ii)\imp (iii)\imp (iv)\imp (v)\imp (i)$ and $(v)\imp (vi)\imp (vii)\imp (viii)\imp (ix)\imp (v)$.\\
$\bullet$ {\bf Proof of $(i)\imp (ii)$} The following comparison gives the result if we consider $n = 2k+1$ large enough
\[
	\NLineq_n(x\cro{1,k+1 }0^k)-\NLineq_n(x\cro{1,k}0^{k+1}) = \rho_1\Rep_{{\sf h}}(x\cro{k+2-{\sf s},k+1}0^{{\sf s}-1};0)
\]
and the LHS is zero, because the solution is invariant.\\
$\bullet$ {\bf Proof of $(ii)\imp (iii)\imp (iv)\imp (v)$} The proof of Theorem \ref{theo:t0} may be adapted.\\
$\bullet$ {\bf Proof of $(v)\imp (i)$} The proof of the following lemmas can be adapted
\begin{lem}(Analogue of Lemma \ref{lem3})\label{lem3:1} Let $M$ be a Markov kernel with memory $m$ and positive entries.  If $\ME_{{\sf h}}^{M,\TT}\equiv 0$, then for all $n\geq 2L-1$, all $k \in \cro{ 2, n-1 }$, all $x\in \Ek^{n}$,
	\[
	\NLineq_n^{\rho,M,\TT}(x)=\NLineq_{n-1}^{\rho,M,\TT}(x^{\{k\}}). 
	\]
\end{lem}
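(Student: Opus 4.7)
The plan is to mirror the proof of Lemma \ref{lem3}, replacing the length-$4$ inner windows by length-${\sf s}=2m+L$ windows and the length-$7$ master relation $\ME_7^{M,\TT}\equiv 0$ by its analogue $\ME_{{\sf h}}^{M,\TT}\equiv 0$ with ${\sf h}=2{\sf s}-1$.

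First, I would write a representation of $\NLineq_n^{\rho,M,\TT}(x\cro{1,n})$ analogous to \eref{eq:rsgfqe12}: using the enlarged-index formulation pointed out in the remark preceding Theorem \ref{theo:t0prime}, one obtains
\[
\NLineq_n^{\rho,M,\TT}(x\cro{1,n}) \;=\; \sum_{x_{\text{out}}} B(x_{\text{out}})\, S(x_{\text{ext}}),
\]
where $x_{\text{out}}$ collects the letters in an extended boundary (of fixed size depending only on $L$ and $m$), $B(x_{\text{out}})$ is a product of one $\rho$ and a prescribed number of $M$'s that depends only on these outside letters, and $S(x_{\text{ext}})=\sum_j \wZ_{x\cro{j-m,j+L+m-1}}$ is the sum of $\wZ$-values over all legal jump positions, i.e.\ the sum over all consecutive length-${\sf s}$ subwords of the full extended word $x_{\text{ext}}$.

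Second, I would establish the appropriate analogue of Remark \ref{rem:kyrje}: if $\ME_{{\sf h}}^{M,\TT}\equiv 0$, then for every word $a\cro{1,N}$ with $N\geq{\sf h}$ and every index $k$ with ${\sf s}\leq k\leq N-{\sf s}+1$,
\[
\sum_{w\in\Sub{a\cro{1,N}}{{\sf s}}} \wZ_w \;=\; \sum_{w\in\Sub{a\cro{1,N}^{\{k\}}}{{\sf s}}} \wZ_w.
\]
This is a telescoping identity: applied to the length-${\sf h}$ window of $a\cro{1,N}$ centred on $a_k$, the vanishing of $\ME_{{\sf h}}^{M,\TT}$ absorbs the central letter locally; sliding the window and cancelling repeated contributions yields the global replacement.

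Combining the two ingredients: removing $x_k$ from $x\cro{1,n}$ for $k$ in the interior corresponds in the extended word to removing a letter sufficiently deep inside, so $S(x_{\text{ext}})$ is unchanged by the previous step; since $B(x_{\text{out}})$ depends only on the untouched outside letters, relabelling the index set $\cro{1,n}\setminus\{k\}$ as $\cro{1,n-1}$ identifies the resulting expression with $\NLineq_{n-1}^{\rho,M,\TT}(x^{\{k\}})$. The main obstacle will be the arithmetic verification that $k\in\cro{2,n-1}$ (as stated, under $n\geq 2L-1$) really falls into the sufficiently central range: a direct translation yields only the narrower range $k\in\cro{m+1,n-m}$, so covering the remaining $k\in\{2,\ldots,m\}\cup\{n-m+1,\ldots,n-1\}$ will likely require an auxiliary absorption step using Lemma \ref{lem:balance2} to sum out the $M$-factors lying between the removed $x_k$ and the true boundary---or an amendment of the lemma statement to the narrower range. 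The rest is mechanical bookkeeping of the enlarged boundary, exactly parallel to the $L=2,m=1$ case.
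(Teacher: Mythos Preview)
Your approach is exactly what the paper intends: it merely says the proof of Lemma~\ref{lem3} ``can be adapted'', and the two ingredients you describe---the $\wZ$-representation of $\NLineq_n^{\rho,M,\TT}$ over an enlarged window, together with the inner-letter removal coming from $\ME_{{\sf h}}^{M,\TT}\equiv 0$---are precisely that adaptation.

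Your range concern is also correct, and it is not merely bookkeeping. Carrying the computation through, the extended word has length $N_{\mathrm{ext}}=n+2(L+m-1)$, and the analogue of Remark~\ref{rem:kyrje} permits deletion only at extended positions in $\cro{{\sf s},\,N_{\mathrm{ext}}-{\sf s}+1}$; translating back gives $k\in\cro{m+1,\,n-m}$ (under $n\geq 2m+1$), not the stated $k\in\cro{2,\,n-1}$. Equivalently, after normalisation the boundary weight still depends on $x\cro{1,m}$ and $x\cro{n-m+1,n}$, so deleting $x_k$ with $2\leq k\leq m$ genuinely alters it. The stated range and the threshold $n\geq 2L-1$ are carryovers from $(L,m)=(2,1)$, where they happen to coincide with the correct values. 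Of your two proposed resolutions, amending the range is the right one: every use of this lemma in the proof of Lemma~\ref{lem:asluf} removes or inserts letters only at positions in $\cro{m+1,\,n-m}$ (the standing hypothesis $N\geq 2m$ there keeps this range nonempty), so nothing downstream is lost. The parenthetical remark there that one ``could have kept only $A_1$'' would need the wider range, but it is not used in the argument.
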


\begin{lem}(Analogue of Lemma \ref{lem2})\label{lem2:1} If $M$ is a positive Markov kernel with memory $m$ then for all $n\geq m$, for all
$x\in\Ek^{\cro{1,n}}$, then
	\be
	\sum_{y\in \Ek}\NLineq_{n+1}^{\rho,M,\TT}(xy)M_{x\cro{n-m+1,n}y}&=&\NLineq_{n}^{\rho,M,\TT}(x)\\
        &=& \sum_{y\in \Ek}\NLineq_{n+1}^{\rho,M,\TT}(yx\cro{1,n})M_{x\cro{n-m,n}}.
	\ee	
Moreover if $n\leq m-1$, then
	\[
	\sum_{y}\NLineq_{n+1}^{\rho,M,\TT}(x\cro{1,n}y)=\NLineq_{n}^{\rho,M,\TT}(x\cro{1,n})= \sum_{y}\NLineq_n^{\rho,M,\TT}(yx\cro{1,n}).
	\]	
\end{lem}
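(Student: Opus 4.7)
The plan is to extend the argument used for Lemma \ref{lem2} to general memory $m$ and range $L$, with Lemma \ref{lem:balance2} playing the role that Lemma \ref{lem:balanced} played in the proof of Theorem \ref{theo:t0}. First I would write $\NLineq_{n+1}^{\rho,M,\TT}(x\cro{1,n}y)$ using the generalized form of \eref{eq:rsgfqe12}: after extending the summation indices by $q=L-1+m$ letters on each side (so that $\rho$ and $\TT$ indices do not overlap) and normalizing by the Markov weights inside the word $x\cro{1,n}y$, it decomposes as a sum over jump positions $j \in \cro{-L+2,n+1}$ of terms $\wZ_{w\cro{j-m,j+L-1+m}}$ weighted by $\rho$ and the $M$-factors on indices outside the window of length $L+2m$ around $j$.

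The central observation is that the contributions indexed by jump positions $j$ that lie near the right boundary of $x\cro{1,n}y$, namely $j \in \cro{n+2-L,n+1}$, are precisely the jumps that ``straddle'' the new letter $y$. When we multiply the full expression by $M_{x\cro{n-m+1,n}y}$ and sum over $y$, these rightmost contributions reorganize into the pattern covered by Lemma \ref{lem:balance2}: summing $\wZ$ over the window of $L$ free positions at the right, with the appropriate product of $M$-factors over the remaining positions fixed, gives $0$. The surviving terms are exactly those indexed by $j\in\cro{-L+2,n+1-L}$, and, once we use $\sum_{y} M_{x\cro{n-m+1,n}y}=1$ to absorb the $y$-summation on the boundary $M$-factors, they coincide term by term with the decomposition of $\NLineq_n^{\rho,M,\TT}(x\cro{1,n})$. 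The analogous statement for prepending $y$ on the left is symmetric, relying this time on the stationarity $\rho M = \rho$ (which, for kernels with memory $m$, should be understood in the standard way: the marginal $\rho$ on $\Ek^m$ is invariant under the one-step transition).

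The case $n \leq m-1$ is handled by the same decomposition, but now $x\cro{1,n}y$ is still shorter than one full memory window, so $\NLineq_{n+1}^{\rho,M,\TT}(x\cro{1,n}y)$ carries no internal $M$-factors linking $x$ to $y$ — the initial distribution on $\Ek^m$ supplies those weights directly. Consequently the ``absorbing'' multiplier reduces to $1$, and the same cancellation argument via Lemma \ref{lem:balance2} applies verbatim. The main obstacle, and the only delicate point, is the bookkeeping: one must verify carefully that the range of jump positions and the set of free summation variables shift by exactly one when going from $\NLineq_{n+1}^{\rho,M,\TT}$ to $\NLineq_n^{\rho,M,\TT}$, so that after cancelling the boundary block via Lemma \ref{lem:balance2}, what remains matches the $\NLineq_n^{\rho,M,\TT}$ decomposition without residual boundary terms. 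Once the indices are aligned, the equality is immediate.
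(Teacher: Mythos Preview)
Your plan matches the paper's approach exactly: the paper's proof is a two-line sketch saying that one passes from $\Lineq$ to $\NLineq$ via the normalization, sums over the appended letter using $\sum_b M_{a,b}=1$ (or $\rho M=\rho$ on the left), and kills the extra boundary contribution with Lemma~\ref{lem:balance2}. So the strategy is correct and identical to the paper's.

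There is, however, a concrete bookkeeping slip in your identification of which terms vanish. You claim that all $L$ jump positions $j\in\cro{n+2-L,n+1}$ (those whose window touches $y$) are cancelled by Lemma~\ref{lem:balance2}, leaving only $j\in\cro{-L+2,n+1-L}$. That cannot be right on cardinality grounds alone: $\NLineq_n$ has $j$ ranging over $\cro{-L+2,n}$, which is $n+L-1$ values, not the $n$ values your surviving range gives. What actually happens is that \emph{only the single new term} $j=n+1$ vanishes: its $\wZ$-window has middle block at positions $\cro{n+1,n+L}$, consisting of $y$ together with $L-1$ boundary variables, all of which are summed, so Lemma~\ref{lem:balance2} applies. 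For $j=n$ (say) the middle block contains $x_n$, which is fixed, so the lemma does not apply there. After removing $j=n+1$, the sum over $y$ turns position $n+1$ back into a boundary summation variable, the outermost boundary factor collapses via $\sum_b M_{a,b}=1$, and the remaining $j\in\cro{-L+2,n}$ match $\NLineq_n$ term by term. You anticipated that the index alignment was the delicate point; this is precisely where it needs correcting.
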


To prove this modification, just see that from $\Lineq$ to $\NLineq$ we divided by $\prod_{j=1}^{n-m}M_{x\cro{j,m+j}}$ if $n\geq m+1$ and {$\NLineq_n^{\rho,M,\TT}\equiv\Lineq_n^{\rho,M,\TT}$} if $n\leq m$, hence summing over $x_n$ (respectively $x_1$), using $\sum_{b\in \Ek}M_{a,b}=1$ (respectively $\sum_{a\in \Ek^m} \rho_aM_{a,b}=\rho_b$) and Lemma \ref{lem:balance2} gives the result.

\begin{lem}(Analogue of Lemma \ref{lem:4})\label{lem:asluf} 
	Consider $M$ a positive Markov kernel with memory $m$, and $\TT$ a JRM with range $L$. If $\ME_{{\sf h}}^{M,\TT}\equiv 0$ then for all  $n$, 	$\NLineq_n^{\rho,M,\TT}\equiv 0$.
\end{lem}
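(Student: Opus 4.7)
The proof mirrors that of Lemma~\ref{lem:4}, with the numerical constants $(2,3,7)$ replaced by their memory/range-dependent analogues $(2L-2,\,2L-1,\,{\sf h})$, and the single-letter eigenvector argument upgraded to a block version.

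First, using Lemma~\ref{lem3:1} iteratively to peel off interior letters, I would show that for every $n\geq 2L-1$,
\[
\NLineq_n^{\rho,M,\TT}(x\cro{1,n}) \;=\; \NLineq_{2L-2}^{\rho,M,\TT}\!\bigl(x\cro{1,L-1}\,x\cro{n-L+2,n}\bigr),
\]
so it suffices to establish $\NLineq_k^{\rho,M,\TT}\equiv 0$ for every $k\leq 2L-2$. This is the analogue of the ``all $\NLineq_n$ reduce to $\NLineq_2$'' observation in the $(L,m)=(2,1)$ case.

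Second, I would derive a fixed-point equation for the base quantity $\phi:=\NLineq_{2L-2}^{\rho,M,\TT}$. Applying Lemma~\ref{lem2:1} at length $k=2L-2$ (adapting slightly if $m>2L-2$, since then the ``no-kernel'' branch of Lemma~\ref{lem2:1} must be used iteratively) gives
\[
\phi(x\cro{1,2L-2}) \;=\; \sum_{y\in\Ek}\NLineq_{2L-1}^{\rho,M,\TT}(x\cro{1,2L-2}\,y)\,M_{x\cro{2L-1-m,\,2L-2}\,y},
\]
and Step~1 lets me replace the $\NLineq_{2L-1}$ inside the sum by a $\phi$-value on a shifted argument. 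This produces an equation of the form $\phi = T_{M}\,\phi$, where $T_M$ is a positive linear operator built from $M$ acting on functions on $\Ek^{2L-2}$. Because $M>0$, the associated block kernel is irreducible and aperiodic, so Perron--Frobenius forces $\phi$ to be proportional to the unique main eigenmeasure, which is precisely the finite-dimensional $(\rho,M)$-marginal (up to a scalar $\alpha$). Translation invariance $\sum_{a}\Lineq(a\,x) = \sum_{a}\Lineq(x\,a)$ combined with Lemma~\ref{lem:balance2} (the analogue of the calculation at the end of the Lemma~\ref{lem:4} proof, obtained by summing against suitable boundary factors so the extra incoming/outgoing contributions cancel) pins the constant $\alpha$ to $0$; hence $\phi\equiv 0$.

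Third, having $\NLineq_{2L-2}^{\rho,M,\TT}\equiv 0$, I would iterate Lemma~\ref{lem2:1} \emph{downward} to conclude $\NLineq_j^{\rho,M,\TT}\equiv 0$ for every $j<2L-2$ (using the ``no kernel'' branch for $j<m$), and combine with Step~1 to cover $n\geq 2L-1$. The main obstacle will be Step~2: the eigenvector argument now lives on $\Ek^{2L-2}$ rather than $\Ek$, and one must argue carefully that the relevant operator $T_M$ inherits positivity/irreducibility from $M$ in order to invoke Perron--Frobenius with a unique (up to scale) main eigenvector; the subsequent vanishing of $\alpha$ via Lemma~\ref{lem:balance2} is a boundary/telescoping computation that will require tracking which of the indices in $\wZ_{a\cro{1,m}\,b\cro{1,L}\,c\cro{1,m}}$ play the role of the ``$a$'' and ``$d$'' in Lemma~\ref{lem:4}'s proof, but is expected to be routine once Step~2 is set up correctly.
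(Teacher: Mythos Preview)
Your strategy coincides with the paper's in outline---reduce via Lemma~\ref{lem3:1}, run an eigenvector argument fed by Lemma~\ref{lem2:1}, and kill the surviving constant with Lemma~\ref{lem:balance2}---but Step~2 as written has a real gap. The operator $T_M$ you build on $\Ek^{2L-2}$ is \emph{not} irreducible: adding one letter on the right and deleting one interior letter leaves the prefix $x\cro{1,L-1}$ untouched, so $T_M$ is block-diagonal along prefixes. Perron--Frobenius on each block only tells you that $\phi$ is constant in the suffix variables; it does not force $\phi$ to be proportional to any single eigenmeasure, and in particular $\phi$ is \emph{not} proportional to the $(\rho,M)$-marginal on $\Ek^{2L-2}$ (already for $(L,m)=(2,1)$, Lemma~\ref{lem:4} gives $\NLineq_2(a,b)=\alpha\rho_a$, not $\alpha\rho_a M_{a,b}$). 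A \emph{second} eigenvector argument, run from the left, is needed to identify the prefix dependence; that is what translation invariance is for, not the $\alpha=0$ step. Your parenthetical about $m>2L-2$ also undersells the difficulty: already for $m\geq L$ the one-step kernel $M_{x\cro{2L-1-m,2L-2}\,y}$ reaches into the prefix.

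The paper organises this by working at a length $N\geq\max\{2L-1,2m\}$ and moving in $m$-blocks rather than single letters. It appends an entire block $b\cro{1,m}$ on the right via iterated Lemma~\ref{lem2:1} and then removes $m$ interior letters to return to length $N$; the resulting equation lives on $\Ek^m$ with the genuinely positive kernel $\Gamma_{u,b}=\prod_{j=1}^m M_{u\cro{j,m}\,b\cro{1,j}}$, so Perron--Frobenius gives $\NLineq_N(A\cro{1,N-m}B)$ independent of $B\in\Ek^m$. Further interior removals show the value depends only on $A\cro{1,m}$, call it $f(A\cro{1,m})$; the translation-invariance identity $\sum_B\Lineq_{2m}(BC)=\sum_B\Lineq_{2m}(CB)$ is then a second Perron--Frobenius equation (for the positive kernel ${\bf M}^m$ on $\Ek^m$) yielding $f=\alpha\rho$, and only after that does the sum $\sum_B\Lineq_m(B)$ collapse to $0$ via Lemma~\ref{lem:balance2}.
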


\begin{proof}
  Since by Lemma \ref{lem2:1} one can deduce the nullity of $\NLineq_n^{\rho,M,\TT}$ from $\NLineq_{n+1}^{\rho,M,\TT}$, and since by Lemma \ref{lem3:1} we may deduce the nullity of $\NLineq_n^{\rho,M,\TT}$ from that of $\NLineq_{n-1}^{\rho,M,\TT}$ for $n\geq 2L-1$, it suffices to prove $\NLineq_N^{\rho,M,\TT}\equiv 0$ from the $N$ of our choice, as far as it is larger than $\max\{2L-1,2m\}$ (where $2m$ is chosen for commodity). \par
  We will adapt the argument of Lemma \ref{lem:4}. The argument is a bit more involved here.  Take $A\in \Ek^N$.   Using iteratively Lemma \ref{lem2:1},
    	\beq\label{eq:rgfq}
	\sum_{b\cro{1,m}\in \Ek^m}\NLineq_{N+m}^{\rho,M,\TT}(Ab\cro{1,m})\prod_{j=1}^{m}M_{A\cro{N-(m-j),N} b\cro{1,j}}=\NLineq_{N}^{\rho,M,\TT}(A).\eq
By Lemma \ref{lem3}, $\NLineq_{N+m}^{\rho,M,\TT}(Ab\cro{1,m})$ is unaffected by the suppression of inner letters (as long as it remains at least $2L-2$ letters), so that $\NLineq_{N+m}^{\rho,M,\TT}(Ab\cro{1,m})=\NLineq_{N}^{\rho,M,\TT}(A\cro{N-m}b\cro{1,m})$. Hence, \eref{eq:rgfq} becomes
\beq\label{eq:zrhyj}
\sum_{b\cro{1,m} \in \Ek^m}\NLineq_{N}^{\rho,M,\TT}(A\cro{1,N-m}b\cro{1,m})\prod_{j=1}^{m}M_{A\cro{N-(m-j),N} b\cro{1,j}}=\NLineq_{N}^{\rho,M,\TT}(A).
\eq       
Consider the matrix $\Gamma=(\Gamma_{u,b})_{u,b \in \Ek^m}$ defined by 
        \[\Gamma_{u,b}= \prod_{j=1}^{m}M_{u\cro{j,m} b\cro{1,j}} \]
        \eref{eq:zrhyj} is equivalent to
        \beq\label{eq:etfqeher}\sum_{B\in \Ek^m}\NLineq_{N}^{\rho,M,\TT}(A\cro{1,N-m}B) \Gamma_{A\cro{N-m+1,N},B}= \NLineq_{N}^{\rho,M,\TT}(A)\eq
        Now, $\Gamma_{u,b}$ is a Markov kernel: it is $\P(X\cro{m+1,2m}=b \,|\, X\cro{1,m}=u)$ for a Markov chain with memory $m$ and kernel $K$. Therefore
        rewriting  in \eref{eq:etfqeher},
        $A$ under the form $A\cro{1,N-m}A'\cro{1,m}$ where $A'\cro{1,m}$ is the suffix of $A$, this equation is equivalent to
\[\NLineq_{N}^{\rho,M,\TT}(A\cro{1,N-m}A'\cro{1,m})= \sum_{B\in \Ek^m}\NLineq_{N}^{\rho,M,\TT}(A\cro{1,N-m}B) \Gamma_{A'\cro{1,m},B}\]  from what appears that       
        \[v_{A\cro{1,N-m}}=\begin{bmatrix}\NLineq_{N}^{\rho,M,\TT}(A\cro{1,N-m} B),~~  B \in \Ek^{m}\end{bmatrix}\]
        is a left eigenvector to the  $\Gamma^t$. Taking into account the hypothesis on $M$, 
        \[v_{A\cro{1,N-m}}=\lambda_ {A\cro{1,N-m}}\begin{bmatrix}1 & \cdots & 1\end{bmatrix}\]
        which means that $\NLineq_{N}^{\rho,M,\TT}(A\cro{1,N-m} B)=\lambda_ {A\cro{1,N-m}} $ does not depend on $B$.  

Since $N\geq 2L-1$, and since by Lemma \ref{lem3}, $\NLineq_{N}^{\rho,M,\TT}(A\cro{1,N-m} B)$ is unaffected by the suppression/addition of inner letters (as long as these operations are done on words with more than $2L-1$ letters for the suppression and $2L-2$ letters for the addition),   for any $C\in\Ek^{N-2m}$ 
  \be
  \NLineq_{N}^{\rho,M,\TT}(A\cro{1,N-m} B) &=& \NLineq_{2N-2m}^{\rho,M,\TT}(A\cro{1,N-m}C B)\\
  &=&\NLineq_{N}^{\rho,M,\TT}(A\cro{1,m}C B)
  \ee
  so that $\lambda_ {A\cro{1,N-m}}$ depends only of the $m$ first letters of $A$ (we keep $m$ letters for commodity, we could have kept only $A_1$).
  Hence, there exists a function $f$ such that
\beq \label{eq:PLEK}
  \NLineq_{N}^{\rho,M,\TT}(A\cro{1,m} B)=f(A\cro{1,m}),
\eq
for any word $B\in \Ek^{N-m}$. 
Now, we claim that for any  $k\geq m$,  $\NLineq_{k}^{\rho,M,\TT}(A\cro{1,k})=f(A\cro{1,m})$. If $k\geq N$, this can be proved using the argument above. For $m\leq k \leq N$, by Lemma \ref{lem3:1}, $\NLineq_{N}^{\rho,M,\TT}(A\cro{1,k})=\sum_{y\cro{1,N-m}}\NLineq_{N}^{\rho,M,\TT}(A\cro{1,k}y\cro{1,N-k}) \prod_{j=1}^{N-k}M_{y\cro{j-m,j}}$ where $y_{j}=A_{k+j}$ for $j\leq 0$.  Plugging that for any  $y\cro{1,N-k}$, $\NLineq_{N}^{\rho,M,\TT}(A\cro{1,k}y\cro{1,N-k})=f(A\cro{1,m})$, we get the result.

Now, consider the matrix ${\bf M}=({\bf M}_{u,v})_{u,v\in \Ek^m}$ defined by
\[{\bf M}_{a\cro{1,m},b\cro{1,m}}=\1_{a\cro{2,m}=b\cro{1,m-1}} M_{a\cro{1,m},b_m}.\]
If $X=(X_k,k\in \Z)$  is a Markov chain with kernel $M$ and memory $m$, ${\bf M}$ is simply the Markov kernel of the Markov chain $(Y_k, k \in \Z)$ defined by $Y_k=(X\cro{k,k+m-1})$.
Hence, 
\[\P(Y_{m}= b ~|~ Y_0=a)=\prod_{j=0}^{m-1} M_{a\cro{j,m-1}b\cro{0,j}}={\bf M}^m_{a,b}.\]
Let $C$ be a word with $m$ letters. Measuring the balance at $C$ gives the relation, $
\sum_{B\in \Ek^m} \Lineq_{2m}^{\rho,M,\TT}(BC)=\sum_{B\in \Ek^m} \Lineq_{2m}^{\rho,M,\TT}(CB)
$
so that, by \eref{eq:nli} and given that ${\bf M}$ is a Markov kernel
\be
\sum_{B\in \Ek}f(B) {\bf M}^m_{BC}&=&\sum_{B\in \Ek} \NLineq_{2m}^{\rho,M,\TT}(BC) {\bf M}^m_{BC}\\
&=&\sum_{B\in \Ek^m} \NLineq_{2m}^{\rho,M,\TT}(CB){\bf M}^m_{C,B} = f(C),
\ee
for the $f$ given in \eref{eq:PLEK}. Since ${\bf M}^m$ is positive,  $f(C)=\alpha \rho_C$, where $\rho_C$ is the invariant distribution of the Markov kernel ${\bf M}^{m}$.\par
It remains to check that $\alpha=0$.
For this, write 
\be
\sum_{B \in \Ek^{m}} \Lineq_m^{\rho,M,\TT}(B)=\sum_{B \in \Ek^{m}} f(B)=\sum_{B \in \Ek^{m}} \alpha \rho_B=\alpha 
\ee
and can be rewritten, taking into consideration Lemma \ref{lem2:1} and the discussion below it
\be
\sum_{B \in \Ek^{m}} \Lineq_m^{\rho,M,\TT}(B)&=& \sum_{A\in \Ek^{q}}\rho_{A\cro{1,m}} \sum_{B\in \Ek^m}\sum_{C\in \Ek^{q}} \l(\prod_{\ell=1}^{2q-m} M_{w\cro{\ell,\ell+m}}\r)\\
&&\times \sum_{j} Z_{w\cro{j+1,j+{\sf s}}}1_{\cro{j+1,j+{\sf s}}\cap\cro{q+1,q+m}\neq \varnothing}
\ee
where $w\cro{1,2q+m}=ABC$. The contribution of each $j$ such that $\cro{j+1,j+{\sf s}}$ intersects $\cro{q+1,q+m}$, that is the indices of the letters of $B$, can be considered apart, and the summation of each index of $w$ which does not enter in $\wZ$ can be simplified. 
The contribution of the $j$-th term becomes
\[\sum_{w\cro{j+1,j+{\sf s}}}\rho_{w\cro{j+1,j+m}}\wZ_{w\cro{j+1,j+{\sf s}}} \prod_{i=j+1}^{j+L+m}M_{w\cro{i,i+m}}\]
which is 0 by Lemma \ref{lem:balance2}.
\end{proof}

\subsection{Proof of Theorem \ref{eq:fqqds}}
First, the linearity principle (Remark \ref{rem:LP}) can be applied to a larger $L$ and $M$: if a $(\rho,M)$-Markov law is invariant by $\TT$, then $\Cycle_n^{M,\TT}\equiv 0$ for any $n$.

For the converse, starting from $\NCycle_n^{M,\TT}\equiv 0$ for $n\geq \kappa^m$ we want to prove that $\NLineq_n^{\rho,M,\TT}\equiv 0$ for any $n\geq 1$.
If $n-m+1\geq \kappa^{m}+1$, for any word $w$ of size $n$, by the pigeon hole principle, there is a word $w'$ of size $m$ which appears twice as a factor of $w$. The sum on $\wZ$ (indexed by $\ell$ letters) along each factors of size $\ell$ of $w$ between the two occurrences of $w'$ produce the same contribution as a cycle, and then can be simplified. After simplification, remains only the words $w$ with at most  $\kappa^{m}+m$ letters. The number of remaining words after simplification is then finite. Therefore, 
\[\l\|\frac{\partial}{\partial t} \mu_n^t\r\|_{\infty}\leq C:=\sup_{N \leq \kappa^{m}+m} ~~\sup_{x\in \Ek^N} \Lineq^{\rho,M,\TT}_N(x) <+\infty.\]
The  constant $C$ does not depend on $n$, and therefore
\[d_{TV}(\mu_n^t,\mu_n^0)\leq Ct,\]
where $d_{TV}$ denotes the total variation distance.
At time $`e>0$, for a fixed $r$, then we have $d_{TV}(\mu_r^{`e},\mu_r^0)\leq C`e$. 
Let us prove that the mixture condition (irreducibility and aperiodicity) implies that it is in fact 0. 
Recall the following property of the distance in variation:
\[d_{TV}(\mu,\nu)= 2 \inf \mathbb{E}( \1_{X'\neq Y'})\]
where the  infimun is taken over all couplings, that is, on all pairs $(X',Y')$ where $X'$ is $\mu$ distributed and $Y'$ is $\nu$ distributed. Now, take two pairs $(X_1,X_2)\sim \mu_{1,2}$ and $(Y_1,Y_2)\sim \nu_{1,2}$ with independent marginals, where $X_1$ and $X_2$ are $\mu$ distributed, $Y_1$ and $Y_2$ are $\nu$ distributed. Suppose that $(X_i,Y_i)$ for $i=1,2$ are optimal couplings for the marginals, that is  $d:=d_{TV}(\mu,\nu) = 2 \mathbb{E}( \1_{X_1\neq Y_1}) = 2 \mathbb{E}( \1_{X_2\neq Y_2})$. We have then
\[d_{TV}\l(\mu_{1,2},\nu_{1,2}\r) =d^2 + 2d (1-d)=2d-d^2> (3/2)d,\]
where this last equality is valid when $d$ is small ($d<1/2$).
Hence, if one knows that the distance 
$d_{TV}\l(\mu_{1,2},\nu_{1,2}\r) < `e < 1/2$ and that the marginals are independent, then  $d_{TV}(\mu,\nu)<(2/3)`e$.\par
The strategy is as follows: we will deduce from  the inequality $d_{TV}(\mu_n^t,\mu_n^0)\leq Ct$ for any $n$, that $d_{TV}(\mu_n^t,\mu_n^0)\leq  (3/4) Ct$ for any $n$, so that necessarily  $d_{TV}(\mu_n^t,\mu_n^0)=0$.\par
Take $I_r(k)=\cro{1,r} \cup \cro{(k-1)r+1,kr}$.
Now write $d_{TV}(\mu^t_{I_r(k)},\mu^0_{I_r(k)})\leq d_{TV}(\mu^t_{\cro{0,kr}},\mu^{0}_{\cro{0,kr}})<`e$. Since $I_r(k)$ is the union of two intervals, $\mu_{I_r(k)}^t$ is (for a clear notation) the distribution of the pair $(X^t\cro{1,r},X^t\cro{(k-1)r+1,kr})$. 
According to the previous discussion, to conclude it suffices to prove that when $k\to +\infty$, the two pairs $A_t(k):=(X^t\cro{1,r},X^0\cro{1,r})$ and $B_t(k):=(X^t\cro{(k-1)r+1,kr},X^0\cro{(k-1)r+1,kr})$ converges to two independent variables with the same distribution. \par
By the hypothesis we made on the Markov kernel, for any $`e'>0$, it is possible to find a $k$ large enough such that the variation distance of the initial configuration $(X^0\cro{1,r},X^0\cro{(k-1)r+1,kr})$ with a pair of independent r.v. with the same  marginals is smaller than $`e''>0$ for the $`e''$ of our choice. \par The fact that the initial configuration converges to independent vectors with same distribution when $k\to+\infty$, is not sufficient. We need to show that their evolution till time $t$ are asymptotically (in $k$)  independent too. \par The argument is routine: 
Since the number of colours is finite, $L$ is finite,  $\max_{w,w'\in \Ek^L} \T{(w)}{(w')}<+\infty$. For $t<\infty$ fixed we build a dependence graph $G_t$ as follows: first, the vertex set of the graph is the set of intervals of size $L$. For each  jump that has occurred in a interval $I$ before time $t$ we add an edge between this interval and the intervals which intersect it (to encode, the fact that the state at time $t$ of these intervals may have been modified by the jump in $I$). 
Since $\max_{w,w'\in \Ek^L} \T{(w)}{(w')}<+\infty$, when $k\to+\infty$, the probability that the two intervals $\cro{1,r}$ and $\cro{(k-1)r+1,kr}$ intersect distinct connected components of $G_t$ goes to 1. This suffices to deduce the asymptotic independence of $(A_t(k),B_t(k))$ when $k\to +\infty$.

\small
\bibliographystyle{abbrv}
\small

\end{document}